\DeclareSymbolFont{rsfs}{U}{rsfs}{m}{n}
\DeclareSymbolFontAlphabet{\mathrsfs}{rsfs}
\definecolor{darkspringgreen}{rgb}{0.09, 0.45, 0.27}
\definecolor{deepjunglegreen}{rgb}{0.0, 0.29, 0.29}
\newenvironment{NB}{
\color{red}{\bf NB}. \footnotesize
}{}
\newenvironment{NB2}{
\color{blue}{\bf NB2}. \footnotesize
}{}
\newenvironment{NB3}{
\color{purple}{\bf NB3}. \footnotesize
}{}
\crefname{Theorem}{Theorem}{Theorems}
\crefname{section}{\S}{\S\S}
\crefname{Lemma}{Lemma}{Lemmas}
\crefname{Proposition}{Proposition}{Propositions}
\crefname{Corollary}{Corollary}{Corollaries}
\crefname{Definition}{Definition}{Definitions}
\crefname{Remark}{Remark}{Remarks}
\crefname{Remarks}{Remark}{Remarks}
\crefname{Conjecture}{Conjecture}{Conjectures}
\crefname{figure}{Figure}{Figure}
\crefname{appendix}{Appendix}{Appendices}
\crefname{equation}{}{}
\renewcommand{\thesubsection}{\thesection(\@roman\c@subsection)}
\newenvironment{aenume}{%
  \begin{enumerate}%
  }{\end{enumerate}}
\newcounter{number}
\newtheorem{Theorem}[equation]{Theorem}
\newtheorem{Corollary}[equation]{Corollary}
\newtheorem{Lemma}[equation]{Lemma}
\newtheorem{Proposition}[equation]{Proposition}
\theoremstyle{definition}
\newtheorem{Conjecture}[equation]{Conjecture}
\theoremstyle{remark}
\newtheorem{Remark}[equation]{Remark}
\newtheorem*{Claim}{Claim}
\numberwithin{equation}{section}
\newcommand{\defeq}{\overset{\operatorname{\scriptstyle def.}}{=}}
\newcommand{\CC}{{\mathbb C}}
\newcommand{\ZZ}{{\mathbb Z}}
\newcommand{\QQ}{{\mathbb Q}}
\newcommand{\RR}{{\mathbb R}}
\newcommand{\proj}{{\mathbb P}}
\newcommand{\SL}{\operatorname{\rm SL}}
\newcommand{\GL}{\operatorname{GL}}
\newcommand{\algsl}{\operatorname{\mathfrak{sl}}} 
\newcommand{\gl}{\operatorname{\mathfrak{gl}}}
\newcommand{\Spec}{\operatorname{Spec}\nolimits}
\newcommand{\End}{\operatorname{End}}
\newcommand{\Hom}{\operatorname{Hom}}
\newcommand{\Ker}{\operatorname{Ker}}
\newcommand{\Ima}{\operatorname{Im}}
\newcommand{\codim}{\mathop{\text{\rm codim}}\nolimits}
\newcommand{\tr}{\operatorname{tr}}
\newcommand{\id}{\operatorname{id}}
\renewcommand{\MR}[1]{}
\newcommand{\vin}[1]{\operatorname{i}(#1)} 
\newcommand{\vout}[1]{\operatorname{o}(#1)} 
\newcommand{\bN}{\mathbf N}
\newcommand{\tslabar}{\mathbin{
\setbox0=\hbox{/\!\!/\!\!/}\rule[0.4\ht0]{\wd0}{.3\dp0}\kern-\wd0\box0}}
\newcommand{\aff}{\mathrm{aff}}
\newcommand{\Gr}{\mathrm{Gr}}
\newcommand{\cR}{\mathcal R}
\newcommand{\cK}{\mathcal K}
\newcommand{\cO}{\mathcal O}
\newcommand{\cA}[1][{}]{%
  \@ifmtarg{#1}%
  {\mathcal A}
  {\mathcal A(#1)}
}
\newcommand{\cAh}[1][{}]{%
  \@ifmtarg{#1}%
  {\mathcal A_\hbar}
  {\mathcal A_\hbar(#1)}
}
\newcommand{\fA}{\mathfrak A}
\newcommand{\po}{\ar@{}[dr]|{\text{\pigpenfont R}}}
\newcommand{\pb}{\ar@{}[dr]|{\text{\pigpenfont J}}}
\newcommand{\pp}{\ar@{}[dr]|{\text{\pigpenfont P}}}
\newcommand{\cM}{\mathcal M}
\newcommand{\BA}{{\mathbb{A}}}
\newcommand{\CG}{{\mathscr{G}}}
\newcommand{\oW}{\overline{\mathcal{W}}{}}
\newcommand{\CZ}{{\mathscr{Z}}}
\newcommand{\fg}{{\mathfrak{g}}}
\newcommand{\fM}{{\mathfrak{M}}}
\newcommand{\tn}{{\mathrm{new}}}
\newcommand{\xl}{{\color{purple}
x}}
\newcommand{\xp}{{
\sigma}}
\newcommand{\bv}{{\mathbf v}} 
\newcommand{\bw}{{\mathbf w}} 
\newcommand{\GV}{\mathcal G}
\newcommand{\hf}{\hfill}
\newdimen\y@inside
\definecolor{halfgray}{gray}{0.5}
\newcommand{\rf}{\color{halfgray}
\rule[-.2\y@inside]{\y@inside}{\y@inside}
}
\newcommand{\graysquare}{\color{halfgray}\blacksquare}
\DeclareSymbolFont{symbolsC}{U}{pxsyc}{m}{n}
\DeclareMathSymbol{\medcirc}{\mathbin}{symbolsC}{7}
\newcommand{\circfilled}{\tikz\draw[gray,fill=gray] (0,0) circle (.7ex);}
\newcommand{\vG}{{G^\vee}}
\newcommand{\bG}{\mathbf G}
\begin{document}

\title[Cherkis bow varieties and affine Lie algebras of type $A$]{
Towards geometric Satake correspondence for Kac-Moody algebras
-- Cherkis bow varieties and affine Lie algebras of type $A$
}
\author[H.~Nakajima]{Hiraku Nakajima}
\address{Research Institute for Mathematical Sciences,
Kyoto University, Kyoto 606-8502,
Japan}
\address{Kavli Institute for the Physics and Mathematics of the Universe (WPI),
  The University of Tokyo,
  5-1-5 Kashiwanoha, Kashiwa, Chiba, 277-8583,
  Japan
}
\email{hiraku.nakajima@ipmu.jp}

\subjclass[2000]{}
\begin{abstract}
  We give a provisional construction of the Kac-Moody Lie algebra
  module structure on the hyperbolic restriction of the intersection
  cohomology complex of the Coulomb branch of a framed quiver gauge
  theory, as a refinement of the conjectural geometric Satake
  correspondence for Kac-Moody algebras proposed in an earlier paper
  with Braverman, Finkelberg in 2019. This construction assumes several
  geometric properties of the Coulomb branch under the torus
  action. These properties are checked in affine type A, via the
  identification of the Coulomb branch with a Cherkis bow variety
  established in a joint work with Takayama.
\end{abstract}

\maketitle

\setcounter{tocdepth}{2}

\section*{Introduction}

Let $Q = (Q_0,Q_1)$ be a quiver without edge loops and
$\mathfrak g_{\mathrm{KM}}$ be the corresponding symmetric Kac-Moody
Lie algebra.
Let $\cM(\lambda,\mu)$ be the Coulomb branch of the framed quiver
gauge theory associated with dimension vectors specified by a dominant
weight $\lambda$ and a weight $\mu$ with $\mu\le\lambda$, defined as
an affine algebraic variety by Braverman, Finkelberg, and the author
\cite{2016arXiv160103586B}. (See also the earlier paper
\cite{2015arXiv150303676N} for motivation and references to physics
literature.)
In the subsequent paper \cite[\S3(viii)]{2016arXiv160403625B} it was
conjectured that there is a geometric construction of an integrable
highest weight $\mathfrak g_{\mathrm{KM}}$-module structure on the
direct sum (over $\mu$) via $\cM(\lambda,\mu)$:
Recall $\cM(\lambda,\mu)$ is equipped with an action of the torus
$T = (\CC^\times)^{Q_0}$, the Pontryagin dual of the fundamental group
of the gauge group, which is $\ZZ^{Q_0}$ in this case.
The conjecture in \cite{2016arXiv160403625B} refines and generalizes
the earlier conjecture by Braverman-Finkelberg \cite{braverman-2007},
which uses instanton moduli spaces on $\RR^4/(\ZZ/\ell\ZZ)$ in the
affine case. The Coulomb branch $\cM(\lambda,\mu)$ for an affine type
with dominant $\mu$ is conjectured to be instanton moduli
spaces. (This is proved for affine type $A$.)

Let $\Phi$ denote the hyperbolic restriction functor
(\cite{Braden,MR3200429}) with respect to a generic one parameter
subgroup in $T$. Let us apply it to the intersection cohomology
complexes $\mathrm{IC}$ of $\cM(\lambda,\mu)$ with coefficients in
$\QQ$. It is conjectured that $\Phi$ is hyperbolic semismall in the
sense of \cite[3.5.1]{2014arXiv1406.2381B}, and the fixed point set is
either empty or a single point. Hence
$\mathcal V_\mu(\lambda)\defeq \Phi(\mathrm{IC}(\cM(\lambda,\mu)))$ is
a vector space.
The main part of the conjecture states that
$\mathcal V(\lambda) = \bigoplus_\mu \mathcal V_\mu(\lambda)$ has a
structure of an integrable highest weight
$\mathfrak g_{\mathrm{KM}}$-module $V(\lambda)$ with the highest weight
$\lambda$ so that $\mathcal V_\mu(\lambda)$ is a weight space with
weight $\mu$.
It is regarded as the geometric Satake correspondence for the
Kac-Moody Lie algebra $\mathfrak g_{\mathrm{KM}}$, as a generalization
of the usual geometric Satake for a finite dimensional complex
reductive group due to Lusztig, Ginzburg, Beilinson-Drinfeld and
Mirkovi\'c-Vilonen
\cite{Lus-ast,1995alg.geom.11007G,Beilinson-Drinfeld,MV2}. (See also
\cite{fnkl_icm} for a review of the conjecture.)
In particular, the hyperbolic restriction functor was used in
Mikovi\'c-Vilonen's realization of representations. Ginzburg's
realization of $V(\lambda)$ by the cohomology over the affine
Grassmannian has no analog in general $\fg_{\mathrm{KM}}$ setting at
the moment when this paper is written.

In this paper, we give a provisional construction of the
$\mathfrak g_{\mathrm{KM}}$-module structure, assuming several
geometric properties of $\cM(\lambda,\mu)$. This is a refinement of
the conjecture in \cite{2016arXiv160403625B}, as well as its
supporting evidence since these geometric properties are technical in
nature, and not mysterious unlike the
$\mathfrak g_{\mathrm{KM}}$-module structure.
We then check the properties when $\mathfrak g_{\mathrm{KM}}$ is
of affine type $A$, using the identification of relevant Coulomb
branches with Cherkis bow varieties proved by Takayama and the author
\cite{2016arXiv160602002N}.
Bow varieties are symplectic reduction, and easier to handle than
Coulomb branches. We also use Hanany-Witten transition (see
\cref{subsec:HWtransition}) at various points. It is an isomorphism
between two bow varieties. This technique is useful by the following
reason.
When a bow variety satisfies a balanced condition (see 
\cref{subsec:coulomb-branch}), it is isomorphic to Coulomb
branch $\cM(\lambda,\mu)$.
A fixed point component of a balanced bow variety is another bow
variety, but it does not necessarily satisfy the balanced
condition. We then show that it is isomorphic to one with the balanced
condition by Hanany-Witten transition.

The idea of the construction is simple. The
$\mathfrak g_{\mathrm{KM}}$-structure should be compatible with
restriction to a Levi subalgebra $\mathfrak l$, and realized by the
hyperbolic restriction functor with respect to a one parameter
subgroup $\chi_{\mathfrak l}$ corresponding to the Levi
subalgebra. When the one parameter subgroup is generic, the Levi
subalgebra is Cartan, and we recover the above construction. This
compatibility is well-known for the usual geometric Satake
correspondence and is a key ingredient of the construction. Therefore
we define operators $e_i$, $f_i$, $h_i$ corresponding to $i\in Q_0$ by
using the hyperbolic restriction with respect to $\chi_i$ for the Levi
subalgebra $\mathfrak l_i$ and the reduction to the case $A_1$. It is
easy to prove the conjecture in the $A_1$ case. The check of the
defining relations on $e_i$, $f_i$, $h_i$, say $[e_i, f_j] = 0$ for
$i\neq j$, is reduced to rank $2$ cases. By considering tensor
products as explained below, it is enough to check them when $\lambda$
is a fundamental weight. For $\algsl(3)$ relevant bow varieties are
affine spaces, and we check them by direct computation. We also
realize the embedding
$\widehat{\algsl}(n)\to \widehat{\mathfrak{gl}}(\infty)$ by a variant
of a family $\underline{\cM}(\lambda,\mu)$ below. This argument covers
the case $\widehat{\algsl}(2)$. Since we consider only affine types,
these are enough.

Unlike in \cite{MV2} we take $\QQ$ as field of coefficients. We believe that
some of the arguments survive even in positive characteristic, but we
leave the study for the future.

Suppose that $Q$ is of finite type, and hence
$\mathfrak g_{\mathrm{KM}}$ is a finite dimensional complex simple Lie
algebra $\mathfrak g$ of type $ADE$. Then $\cM(\lambda,\mu)$ is
isomorphic to a transversal slice to an orbit in the closure of
another orbit in the affine Grassmannian when $\mu$ is dominant
\cite{2016arXiv160403625B}. The group $G^\vee$ for the affine
Grassmannian is Langlands dual to the simply-connected $G$ with Lie
algebra $\mathfrak g$. (Hence representations of $G$ are nothing but
representations of $\mathfrak g$.)
This is one of the reasons why we expect the geometric Satake
correspondence for $\mathfrak g_{\mathrm{KM}}$ via $\cM(\lambda,\mu)$.
Moreover the hyperbolic restriction
$\Phi(\mathrm{IC}(\cM(\lambda,\mu)))$ is naturally identified with one
in the affine Grassmannian even for non-dominant $\mu$ by a recent
result by Krylov \cite{2017arXiv170900391K}.
Therefore the $\mathfrak{g}$-module structure is induced from the
usual geometric Satake correspondence.

From this point of view, our construction above resembles the
definition of Kashiwara crystal structure on the set of irreducible
components of Mirkovi\'c-Vilonen cycles by Braverman-Gaitsgory
\cite{bragai}. It is also similar to Vasserot's construction
\cite{MR1895920} of a $\mathfrak g$-module structure.
The main difference between these constructions and ours is a
construction of an isomorphism between multiplicity spaces appearing
in the hyperbolic restriction with respect to $\chi_i$, which we will
explain in \cref{subsec:new_conj}. In our case, the isomorphism is
given by the factorization property of Coulomb branches. This
isomorphism comes for free or is unnecessary in the usual setting
\cite{bragai,MR1895920}.
%
See \cref{subsec:Krylov} for a comparison between our construction and
the usual one.

After the author gave a talk on this work at Sydney, B.~Webster
explained to him an approach to a construction of a
$\mathfrak g_{\mathrm{KM}}$-module structure via symplectic
duality. It is not clear to the author that how much can be said in
this approach at the time this paper is written. The construction in
this paper is nothing to do with the symplectic dual side, which is a
quiver variety, where a geometric construction of
$\fg_{\mathrm{KM}}$-modules was given in \cite{Na-quiver,Na-alg}. See
\cite[\S3(viii)]{2016arXiv160403625B} for parallel explanation of two
constructions.

After the first version of this paper was written, the author and
Weekes generalize the definition of the Coulomb branch of a framed
quiver gauge theory to a quiver with \emph{symmetrizer}
\cite{2019arXiv190706552N}. Our geometric Satake correspondence is
naturally generalized to cover the case when $\fg_{\mathrm{KM}}$ is
\emph{symmetrizable} but not necessarily symmetric. The type of the
quiver with symmetrizer is Langlands dual $\fg_{\mathrm{KM}}^\vee$ as
in the usual geometric Satake correspondence.

The paper is organized as follows. In \cref{sec:conj} we formulate
conjectures on geometric properties of Coulomb branches under the
torus action. In \cref{sec:weights-affine-lie} we fix notation for
weights of affine Lie algebras.
In \cref{sec:bow} we review the quiver description and important
properties of bow varieties studied in \cite{2016arXiv160602002N}.
\cref{sec:torus} is the heart of this paper and is devoted to study of
torus action on bow varieties.
In \cref{sec:construction} we use results in \cref{sec:torus} to
define a $\mathfrak{g}_{\mathrm{KM}}$ structure on the hyperbolic
restriction for affine type $A$.
In \cref{sec:Maya} we parametrize torus fixed points in bow varieties
when they are smooth. Fixed points are in bijection to Maya diagrams
which appear in the infinite wedge space.

\subsection*{Notation}

The symmetric group of $n$ letters is denoted by $\mathfrak S_n$.

Let $J_k$
\begin{NB}
(resp. $e_k$)    
\end{NB}%
denote the regular nilpotent Jordan matrix of
size $k$
\begin{NB}
 (resp.\ the $k$-th coordinate vector of size $k$)    
\end{NB}%
:
\begin{equation*}
  J_k = 
  \begin{bmatrix}
    0 & 1 & 0 & \dots & 0 \\
    0 & 0 & 1 & \dots & 0 \\
    \vdots && \ddots & \ddots & 0 \\
    0 && \dots & 0 & 1\\
    0 && \dots && 0
  \end{bmatrix}
  \begin{NB}
  ,
  \qquad
  e_k =   \begin{bmatrix}
    0 \\ \vdots \\ 0 \\ 1
  \end{bmatrix}
  \end{NB}%
.
\end{equation*}

For an irreducible algebraic variety $X$ we denote by $\mathrm{IC}(X)$
its intersection cohomology complex associated with the trivial rank
$1$ local system on its regular locus with rational coefficients.

\subsection*{Acknowledgments}

The author thanks A.~Braverman, M.~Finkelberg, and D.~Muthiah for
useful discussion. He also thanks the referees for their careful reading.

The result of this paper was reported at several places including
the Third Pacific Rim Mathematical Association (PRIMA) Congress 
in August 2017,
the Banff International Research Station in August 2017,
the University of Sydney in December 2017,
and Korea Institute for Advanced Study in December 2017.
The author is grateful to the hospitality of institutes and organizers
of workshops.

The research of HN was supported in part by the World Premier
International Research Center Initiative (WPI Initiative), MEXT,
Japan, and by JSPS Grant Number 16H06335.

\section{Conjectures}\label{sec:conj}

\subsection{Earlier conjectures}

We first recall conjectures from \cite{2016arXiv160403625B}.

Let $Q=(Q_0,Q_1)$ and $\fg_{\mathrm{KM}}$ be as in Introduction.
Let $\cM(\lambda,\mu)$ be the Coulomb branch associated with a
dominant weight $\lambda$ and a weight $\mu$ with $\mu\le\lambda$.
It has a structure of a Poisson variety such that its regular locus is
symplectic (see \cite[\S3(iv) and
Prop.~6.15]{2016arXiv160103586B}). It is conjectured that there are
only finitely many symplectic leaves. This conjecture is known when
$\fg_{\mathrm{KM}}$ is affine type $A$. See \cref{thm:stratification}
below. We assume it from now, and we call symplectic leaves
\emph{strata} of $\cM(\lambda,\mu)$.

Let $T$ be the torus acting on $\cM(\lambda,\mu)$ as in
Introduction.
Let us take a generic 1-parameter family $\chi\colon\CC^\times\to T$
and consider a diagram
\begin{equation*}
  \mathrm{pt} = \cM(\lambda,\mu)^T \xleftarrow{p}\fA_\chi(\lambda,\mu)
  \xrightarrow{j} \cM(\lambda,\mu),
\end{equation*}
where $\fA_\chi(\lambda,\mu)$ is the attracting set with respect to
$\chi$. When there is no fear of confusion, we denote it simply by $\fA$.
Here $j$ is the inclusion, and $p$ is the map given by taking the
limit $\rho(t)$ for $t\to 0$.
The hyperbolic restriction functor $\Phi = p_*j^!$ is defined on the
derived category of equivariant constructible sheaves.

\begin{Conjecture}[\protect{\cite[Conj.~3.25]{2016arXiv160403625B}}]
  \label{conj:old}
  (1)
  $\cM(\lambda,\mu)^{T} = \cM(\lambda,\mu)^{\chi(\CC^\times)}$
  is either a single point or empty.
  
  (2) Intersections of $\fA_\chi(\lambda,\mu)$ with symplectic leaves
  of $\cM(\lambda,\mu)$ are lagrangian. Hence $\Phi$ is hyperbolic
  semi-small for the intersection cohomology complex
  $\mathrm{IC}(\cM(\lambda,\mu))$. In particular,
  $\mathcal V_\mu(\lambda) \defeq \Phi(\mathrm{IC}(\cM(\lambda,\mu)))$
  is a vector space.

  (3) The direct sum
  $\mathcal V(\lambda)=\bigoplus \mathcal V_\mu(\lambda)$ has a
  structure of an integrable highest weight
  $\mathfrak g_{\mathrm{KM}}$-module $V(\lambda)$.
\end{Conjecture}

\subsection{New conjectures}\label{subsec:new_conj}

Let us introduce several notations in order to state conjectural
geometric properties and the construction of the
$\mathfrak g_{\mathrm{KM}}$-module structure in more detail.

Set
${|\underline\bv|} = \sum_{i\in Q_0} \bv_i$ and
$\BA^{\underline\bv} = \BA^{|\underline\bv|}/ \prod_{i\in Q_0} \mathfrak
S_{\bv_i}$, where $\lambda - \mu = \sum_i \bv_i \alpha_i$ with simple
roots $\alpha_i$. We consider $\BA^{\underline\bv}$ as the
configuration space of $Q_0$-colored points in $\BA$.
We have the factorization morphism
$\varpi\colon\cM(\lambda,\mu)\to \BA^{\underline\bv}$
(\cite[(3.17)]{2016arXiv160103586B}). This was denoted by $\Psi$ in
the context of bow varieties \cite{2016arXiv160602002N}, and played
fundamental roles in the analysis of Coulomb branches and their
identification with bow varieties. In particular, it enjoys the
factorization property that says $\cM(\lambda,\mu)$ factorizes over an
open subset of disjoint configurations. (See \cref{subsec:factor} for
a brief review.)

Let $w_{i,r}$, $\mathsf y_{i,r}$ ($i\in Q_0$, $r=1,\dots,\bv_i$) be
regular functions on
$\cM(\lambda,\mu)\times_{\BA^{\underline\bv}} \BA^{|\underline{\bv}|}$
introduced in \cite[\S3(iii)]{2016arXiv160403625B} ($\mathsf y_{i,r}$
was denoted by $\overline{\mathsf y}_{i,r}$ there) and
\cite[\S6.8.1]{2016arXiv160602002N}. They induce a birational
isomorphism $
\begin{tikzcd}
  \cM(\lambda,\mu)\times_{\BA^{\underline\bv}} \BA^{|\underline{\bv}|}
  \arrow[dashed]{r}{\approx}
  & (\BA\times \mathbb G_m)^{|\underline{\bv}|}
\end{tikzcd}
$. The factorization morphism $\varpi$ corresponds to the projection
to the first factor $\BA$. In other words, it is given by
$\left((w_{i,r})_r\bmod \mathfrak S_{\bv_i}\right)_i$.
See \cref{subsec:coord} for the definition of $w_{i,r}$,
$\mathsf y_{i,r}$ in terms of bow varieties.

We take the one parameter subgroup $\chi$ of $T$ from the `negative'
Weyl chamber, i.e., $\chi(t) = (t^{m_j})_{j\in Q_0}$ with $m_j < 0$
for all $j\in Q_0$. We consider the corresponding hyperbolic
restriction functor $\Phi$ with respect to $\chi$. Choosing
$i\in Q_0$, we take another one parameter subgroup $\chi_i$ so that
$\chi_i(t) = (t^{m_j})$ with $m_i = 0$, $m_j < 0$ for $j\neq i$. This
$\chi_i$ lives at the boundary of the chamber containing $\chi$. We
then consider the fixed point set $\cM(\lambda,\mu)^{\chi_i}$ with
respect to $\chi_i$.

\begin{Conjecture}\label{conj:1}
  \textup{(1)} The fixed point set $\cM(\lambda,\mu)^{\chi_i}$ is
  either empty or isomorphic to a Coulomb branch
  $\cM_{A_1}(\lambda',\mu')$ of an $A_1$ type framed quiver gauge
  theory with weights $\lambda'$, $\mu'$, where
  $\mu' = \langle \mu, h_i\rangle$.

  \textup{(2)} The intersection of $\cM(\lambda,\mu)^{\chi_i}$ with a
  stratum is either empty or a stratum
  $\cM_{A_1}^{\mathrm{s}}(\kappa',\mu')$ of
  $\cM_{A_1}(\lambda',\mu')$. (Here
  $\cM_{A_1}^{\mathrm{s}}(\kappa',\mu')$ is the smooth locus of
  $\cM_{A_1}(\kappa',\mu')$. The stratification of
  $\cM_{A_1}(\lambda',\mu')$ was determined in
  \cite[\S7.5]{2016arXiv160602002N}.)
  \begin{NB}
    The following follows from the description of strata in
    \cite[\S7.5]{2016arXiv160602002N} for finite type $A$:

    Moreover the factorization morphism of $\cM_{A_1}(\lambda',\mu')$
    is restricted to that of $\cM_{A_1}^{\mathrm{s}}(\kappa',\mu')$.
  \end{NB}%

  \textup{(3)} The restrictions $w_{j,r}$,
  $\mathsf y_{j,r}|_{\cM(\lambda,\mu)^{\chi_i}}$ are zero for
  $j\neq i$, and are equal to the corresponding functions on
  $\cM_{A_1}(\lambda',\mu')$ or zero for $j=i$.
  In particular, the restriction of the $i$-th component of the
  factorization morphism $\varpi$ of $\cM(\lambda,\mu)$ to
  $\cM_{A_1}(\lambda',\mu')$ is equal to the factorization morphism of
  $\cM_{A_1}(\lambda',\mu')$ up to adding $0$.
\end{Conjecture}

This conjecture will be shown for affine type $A$ in \cref{thm:smaller}. 

The condition (3) uniquely fixes the isomorphism
$\cM(\lambda,\mu)^{\chi_i}\cong \cM_{A_1}(\lambda',\mu')$ since the
restrictions of $w_{i,r}$, $\mathsf y_{i,r}$ give the birational
coordinates on $\cM_{A_1}(\lambda',\mu')$.

Once we prove that $\mathcal V(\lambda)\cong V(\lambda)$, $\lambda'$
is determined as follows. Let $\mathfrak l_i$ be the $i$-th Levi
subalgebra of $\fg_{\mathrm{KM}}$. We consider $\mathfrak l_i$-modules
with highest weight $\mu+\bv\alpha_i$ ($\bv\in\ZZ_{\ge 0}$) which
appear in the restriction of the integrable highest weight module
$V(\lambda)$. Then $(\lambda'-\mu')/2$ is the maximum among such
$\bv$.
When \cref{conj:1} will be discussed, this would not be clear. See the
proof of \cref{thm:smaller}.

\begin{Remark}\label{rem:char}
  After the first version of this paper was written,
  we find that the above $\lambda'=\mu'+2\bv$ is characterized as
  \begin{equation}\label{eq:h:2}
    \cM(\lambda,\mu+\bv\alpha_i)^T \neq \emptyset,\quad
    \cM(\lambda,\mu+(\bv+1)\alpha_i)^T = \emptyset
  \end{equation}
  during a discussion with D.~Muthiah. This condition is phrased in
  terms of Coulomb branches, and is equivalent to the above
  representation theoretic one by \cref{conj:old}(1).
  The condition \eqref{eq:h:2} is checked for affine type
  $A$. See \cref{rem:char2}.
\end{Remark}

Since $\chi_i$ lives in the boundary of a chamber containing $\chi$,
the hyperbolic restriction $\Phi$ factors as
$\Phi = \Phi^i\circ \Phi_i$. Here $\Phi_i$ is the hyperbolic
restriction with respect to $\chi_i$, and $\Phi^i$ is the hyperbolic
restriction with respect to $\chi$, restricted to the fixed point set
$\cM(\lambda,\mu)^{\chi_i}$. Assuming \cref{conj:old}(2) (and also
that for type $A_1$, which was already proved in
\cite[Prop.~7.33]{2016arXiv160602002N}), we see that $\Phi_i$ is
hyperbolic semismall in the sense of
\cite[3.5.1]{2014arXiv1406.2381B}, hence it sends
$\mathrm{IC}(\cM(\lambda,\mu))$ to a semisimple perverse sheaf. See
the argument in \cite[App.~A]{2014arXiv1406.2381B}.

We further conjecture
\begin{Conjecture}\label{conj:2}
  $\Phi_i(\mathrm{IC}(\cM(\lambda,\mu)))$ is a direct sum of
  $\mathrm{IC}(\cM_{A_1}(\kappa',\mu'))$ with various $\kappa'$ with
  $\mu'\le\kappa'\le\lambda'$.
\end{Conjecture}

This conjecture means that we do not have \emph{nontrivial} local
systems on the regular locus of $\cM_{A_1}(\kappa',\mu')$. For affine
type $A$, it follows from the smoothness of deformation of
$\cM(\lambda,\mu)$, and compatibility between \cref{conj:1} and the
deformation of $\cM(\lambda,\mu)$ explained later. See the proof of
\cref{lem:nontrivial}.
By \cref{conj:2}
\begin{equation}\label{eq:11}
  \Phi_i 
  (\mathrm{IC}(\cM(\lambda,\mu)))
  \cong\bigoplus_{\kappa'} M^{\lambda,\mu}_{\kappa',\mu'}\otimes
  \mathrm{IC}(\cM_{A_1}(\kappa',\mu'))
\end{equation}
for vector spaces $M^{\lambda,\mu}_{\kappa',\mu'}$, called
\emph{multiplicity spaces}. Hence we also deduce
\begin{equation*}
  \mathcal V_\mu(\lambda) =
  \Phi(\mathrm{IC}(\cM(\lambda,\mu)))
  \cong \bigoplus_{\kappa'} M^{\lambda,\mu}_{\kappa',\mu'} \otimes
  \Phi^i 
  (\mathrm{IC}(\cM_{A_1}(\kappa',\mu')))
\end{equation*}
as $\Phi = \Phi^i\circ \Phi_i$.

The factor $\Phi^i 
(\mathrm{IC}(\cM_{A_1}(\kappa',\mu')))$ is
$\mathcal V_{\mu'}(\kappa')$ for the finite $A_1$ case. Therefore it
should be the weight space of a finite dimensional irreducible
$\algsl(2)$-module. We indeed construct operators
$\Phi^i(\mathrm{IC}(\cM_{A_1}(\kappa',\mu'))) \xtofrom[e]{f}
\Phi^i(\mathrm{IC}(\cM_{A_1}(\kappa',\mu'-2)))$ in \cref{thm:A1}.

The factorization property of the Coulomb branch says that there is a
$\chi_i$-equivariant isomorphism between open subsets of
$\cM(\lambda,\mu)\times(\BA\times\mathbb G_m)$ and
$\cM(\lambda,\mu-\alpha_i)$ after a base change. See \cref{thm:fact}.
Here $\chi_i$ acts trivially on $\BA\times\mathbb G_m$.
The multiplicity spaces $M^{\lambda,\mu}_{\kappa',\mu'}$,
$M^{\lambda,\mu-\alpha_i}_{\kappa',\mu'-2}$ are determined by
restriction to open subsets. Therefore we have an isomorphism
$M^{\lambda,\mu}_{\kappa',\mu'} \cong
M^{\lambda,\mu-\alpha_i}_{\kappa',\mu'-2}$. (See
\cref{prop:naturaliso}.)
We define operators $e_i$, $f_i$ 
\begin{equation*}
  \mathcal V_\mu(\lambda) \xtofrom[e_i]{f_i}
  \mathcal V_{\mu-\alpha_i}(\lambda),
\end{equation*}
as $(\text{the above isomorphism})\otimes (e, f \text{ for $A_1$
  case})$.

The multiplicity space $M^{\lambda,\mu}_{\kappa',\mu'}$ is
$\Hom_{\mathfrak l_i}(V_{\mathfrak l_i}(\tilde{\kappa}'), \mathcal
V(\lambda))$ once $\mathcal V(\lambda)$ is equipped with
an $\mathfrak l_i$-module as above, where
$\tilde{\kappa}' = \mu + (\kappa'-\mu')\alpha_i/2$. Here
$V_{\mathfrak l_i}(\tilde{\kappa}')$ is a finite dimensional
irreducible representation of $\mathfrak l_i$ with highest weight
$\tilde{\kappa}'$. Note $\tilde{\kappa}'$ and hence
$\Hom_{\mathfrak l_i}(V_{\mathfrak l_i}(\tilde{\kappa}'), \mathcal
V(\lambda))$ are unchanged under simultaneous shifts
$\mu\to \mu - \alpha_i$, $\mu' \to \mu' - 2$. Thus the construction of
the isomorphism
$M^{\lambda,\mu}_{\kappa',\mu'}\cong
M^{\lambda,\mu-\alpha_i}_{\kappa',\mu'-2}$ is crucial in the
definition of a $\fg_{\mathrm{KM}}$-module structure on
$\mathcal V(\lambda)$.
This isomorphism is obscure in the usual geometric Satake
correspondence, as we mentioned in the Introduction.

  


\subsection{Tensor products}

Next, we consider the realization of tensor products in this framework. We
recall \cite[Conj.~3.27]{2016arXiv160403625B} and give its refinement
as in the previous subsection.

Take a decomposition $\lambda = \lambda^1+\lambda^2$ into a sum of two
dominant weights $\lambda^1$, $\lambda^2$. Then it gives a one
parameter subgroup in the flavor symmetry group of the quiver gauge
theory. (See \cite[\S3(viii)]{2016arXiv160103586B}.)
This gives rise a family $\underline{\cM}(\lambda,\mu)\to \BA^1$
parametrized by the affine line $\BA^1$ together with
$\pi\colon\underline{\widetilde\cM}(\lambda,\mu)\to\underline{\cM}(\lambda,\mu)$
which is expected to be a small birational morphism, and the second
family $\underline{\widetilde\cM}(\lambda,\mu)\to\BA^1$ is
topologically trivial over $\BA^1$.
We further conjecture that the fixed point set
$\underline{\cM}(\lambda,\mu)^\chi$ is a union of finitely many copies
of $\BA^1$, glued at the origin such that components correspond, in
bijection, to a decomposition $\mu = \mu^1 + \mu^2$ with
$\mathcal V_{\mu^1}(\lambda^1)$, $\mathcal V_{\mu^2}(\lambda^2)\neq 0$.
Finally, we conjecture that there are isomorphisms
\begin{equation}\label{eq:14}
  \Phi\circ \pi_*(\mathrm{IC}(\widetilde{\cM}(\lambda,\mu)))
 \cong \psi\circ\Phi(\mathrm{IC}(\underline{\cM}(\lambda,\mu)))
    \cong  \bigoplus_{\mu=\mu^1+\mu^2} \mathcal V_{\mu^1}(\lambda^1)
    \otimes \mathcal V_{\mu^2}(\lambda^2),
\end{equation}
where $\widetilde{\cM}(\lambda,\mu)$ is the fiber of
$\underline{\widetilde\cM}(\lambda,\mu)$ over $0$, and $\psi$ is the
nearby cycle functor. The first isomorphism is a consequence of the
triviality of $\underline{\widetilde\cM}(\lambda,\mu)\to\BA^1$ and the
commutativity of the nearby cycle and hyperbolic restriction functors.
(See \cref{rem:comm} below for a comment on the commutativity.)

Since the second isomorphism in \eqref{eq:14} was stated in
\cite[Conj.~3.27(2)]{2016arXiv160403625B} without reason, let us give
explanation. Take a fiber $\cM^{\nu^{\bullet,\CC}}(\lambda,\mu)$ of
$\underline{\cM}(\lambda,\mu)$ over $1\in\BA^1$. The parameter
$\nu^{\bullet,\CC}$ is determined by a nonzero complex number
$\dot\nu$ which corresponds to the second weight $\lambda^2$, while
$0$ corresponds to $\lambda^1$. See \cref{subsec:deformed-case} for
detail. Then
\begin{Conjecture}
  \textup{(1)} The fixed point set
  $\cM^{\nu^{\bullet,\CC}}(\lambda,\mu)^\chi$ is bijective to
  $\bigsqcup_{\mu^1+\mu^2 = \mu}
  \cM(\lambda^1,\mu^1)^\chi\times\cM(\lambda^2,\mu^2)^\chi$. (The
  bijection realizes the above bijection between irreducible components
  of $\underline{\cM}(\lambda,\mu)^\chi$ and decomposition.)
  
  \textup{(2)} The image of a fixed point in
  $\cM^{\nu^{\bullet,\CC}}(\lambda,\mu)^\chi$ under the factorization
  morphism $\varpi$ is supported at $0$ and $\dot\nu$. The
  factorization gives an isomorphism
  $\cM^{\nu^{\bullet,\CC}}(\lambda,\mu)$ and
  $\cM(\lambda^1,\mu^1)\times\cM(\lambda^2,\mu^2)$ in neighborhoods of
  fixed points.
\end{Conjecture}

The factorization isomorphism in (2) gives the second isomorphism in
\eqref{eq:14}. We will prove this conjecture in \cref{prop:deformed}
for affine type $A$.

We thus have
\begin{equation}\label{eq:h:5}
  \mathcal V(\lambda) = \bigoplus_\mu \Phi(\mathrm{IC}(\cM(\lambda,\mu)))\to
  \bigoplus_\mu \Phi\circ\pi_*(\mathrm{IC}(\widetilde{\cM}(\lambda,\mu)))
  \xrightarrow[\eqref{eq:14}]{\cong}
  \mathcal V(\lambda^1)\otimes\mathcal V(\lambda^2),
\end{equation}
where the middle arrow is induced from the inclusion of the direct
summand
$\mathrm{IC}(\cM(\lambda,\mu))\hookrightarrow
\pi_*(\mathrm{IC}(\widetilde{\cM}(\lambda,\mu)))$. The composite is
conjectured (\cite[Conj.~3.27(3)]{2016arXiv160403625B}) to be a
homomorphism of $\mathfrak g_{\mathrm{KM}}$-modules.

We refine this conjecture as follows. We apply the above construction
of the $\mathfrak g_{\mathrm{KM}}$-module structure via $\chi_i$ to
$\pi_*(\mathrm{IC}(\widetilde{\cM}(\lambda,\mu)))$. The construction
is compatible with the inclusion of the direct summand, hence the
middle arrow in \eqref{eq:h:5} is a
$\mathfrak g_{\mathrm{KM}}$-homomorphism.

We then prove that $\xrightarrow[\eqref{eq:14}]{\cong}$ in
\eqref{eq:h:5} is a $\mathfrak g_{\mathrm{KM}}$-homomorphism by a
reduction to the $A_1$ case, which is easy to show by direct
computation.
The $A_1$ reduction is guaranteed by a geometric property of the fixed
point set in the deformed case, as will be shown in
\cref{prop:tensor}:
\begin{Conjecture}
  Let us take $\chi_i$ as above. The fixed point set
  $\cM^{\nu^{\bullet,\RR}}(\lambda,\mu)^{\chi_i}$ is a union of $A_1$
  type Coulomb branches where the parameter is induced from
  the original parameter $\nu^{\bullet,\RR}$.
\end{Conjecture}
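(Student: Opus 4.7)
The plan is to prove this conjecture in affine type $A$ by exploiting the identification of $\cM(\lambda,\mu)$ with a Cherkis bow variety from \cite{2016arXiv160602002N}, reducing the statement to an explicit analysis on bow varieties; the statement for general Kac-Moody type remains a conjecture. This is essentially what should appear as \cref{prop:tensor}.

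First I would translate the deformation $\underline{\cM}(\lambda,\mu)\to\BA^1$ into bow-variety language. The decomposition $\lambda = \lambda^1+\lambda^2$ corresponds to partitioning the $\times$-points of the bow diagram into two blocks, and the parameter on $\BA^1$ records the relative position of these two blocks along the base line. The deformed Coulomb branch $\cM^{\nu^{\bullet,\CC}}(\lambda,\mu)$ is then the bow variety built from this bow diagram after the two blocks of $\times$-points have been moved apart by an amount prescribed by $\nu^{\bullet,\CC}$.

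Second I would import the $\chi_i$-fixed point analysis from \cref{sec:torus}: for the undeformed bow variety, the $\chi_i$-fixed locus is a disjoint union of bow varieties realizing $A_1$ type Coulomb branches $\cM_{A_1}(\kappa',\mu')$ with $\mu'\le\kappa'\le\lambda'$ and $\mu'=\langle\mu,h_i\rangle$, which already gives \cref{conj:1} in this setting. The combinatorics of which $(\kappa',\mu')$ appears is controlled by how the bow data splits under the subtorus $\chi_i$.

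Third and most substantively, I would promote this to the deformed family. Since $\chi_i$ commutes with the flavor torus whose one-parameter subgroup defines the family $\underline{\cM}(\lambda,\mu)\to\BA^1$, taking $\chi_i$-fixed points is compatible with restricting to a fiber. Thus the fixed locus of $\cM^{\nu^{\bullet,\CC}}(\lambda,\mu)$ consists of the same $A_1$ components, now built from bow sub-diagrams whose $\times$-points carry shifts induced from $\nu^{\bullet,\CC}$. I would then check that on each $A_1$ component this induced shift is precisely the deformation parameter of an $A_1$ type deformed Coulomb branch.

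The main obstacle will be the bookkeeping in the third step: one must verify that each $A_1$ fixed component persists as $\nu^{\bullet,\CC}$ varies, that no new components appear or collide, and that the restricted parameter on each component is of the correct form. Topological triviality of $\underline{\cM}(\lambda,\mu)\to\BA^1$ together with the factorization morphism $\varpi$ and \cref{conj:1}(2) should handle persistence, while the explicit bow-diagram normal forms for $\chi_i$-fixed loci developed in \cref{sec:torus} should pin down the parameter identification on each $A_1$ component.
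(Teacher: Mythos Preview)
Your proposal has several concrete gaps and does not match the paper's approach.

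First, two factual slips: the decomposition $\lambda=\lambda^1+\lambda^2$ corresponds to partitioning the $\boldsymbol\medcirc$-points (the $h$'s, which carry the parameters $\nu_h^\CC$), not the $\boldsymbol\times$-points; and it is $\underline{\widetilde\cM}(\lambda,\mu)$, not $\underline{\cM}(\lambda,\mu)$, that is topologically trivial over $\BA^1$. These matter because your persistence argument leans on both.

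The deeper problem is that your ``promote the undeformed analysis via persistence'' strategy cannot work as stated. In the undeformed case (\cref{thm:smaller}) the $\chi_i$-fixed locus is a \emph{single} $\cM_{A_1}(\lambda',\mu')$, whereas in the deformed/resolved case it is a \emph{disjoint union} of several components with different dimension vectors. A continuity or commutativity argument will not manufacture new components; you must analyze the deformed fixed locus directly. Moreover, even once you have the components described as bow varieties of the form \eqref{eq:12}, there is no a priori reason they are \emph{balanced} (hence genuine $A_1$ Coulomb branches). This is the actual content to be proved, and your outline does not address it.

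The paper's argument (\cref{lem:perm}, not \cref{prop:tensor}) is of a different nature. It takes each fixed component in the form \eqref{eq:12}, applies reflection functors to arrange $N_{h_2}\ge\cdots\ge N_{h_{m+1}}$, and then observes that Hanany-Witten transitions produce a balanced bow diagram provided $N_{h_2}\le 2$. The key step is to prove this bound: one factorizes the data \eqref{eq:12} according to eigenvalues of $B_+$ (which are forced to be entries of $\nu^\CC$, as in \cref{prop:deformed}), normalizes the two-way maps on off-diagonal eigenspaces, and reduces each factor to the parameter-$0$ situation already handled in \cref{thm:smaller}, where $N_{h_\sigma}\in\{0,1,2\}$. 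Summing gives $N_{h_\sigma}\le 2$. The induced parameter on each component is then read off from which $h$'s survive. None of this is captured by your proposal.
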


We do not have a good understanding of what we mean by the induced
parameter. At this moment, we just say that the entries of new
parameters, once we forget multiplicities, are entries
$\nu^{\bullet,\RR}_h$ of the original parameter.

We check this for affine type $A$ in \cref{lem:perm}.

\begin{Remark}\label{rem:comm}
  The reference for the commutativity was
  \cite[Prop.~5.4.1]{MR3752464} in
  \cite[Conj.~3.27(2)]{2016arXiv160403625B}. The last part of the
  proof of \cite[Prop.~5.4.1(2)]{MR3752464} works only the case when
  $\widetilde{\cM}(\lambda,\mu)$ is smooth. This is true for affine
  type A, but not in general. We need to use
  $\tilde c_X^* = \tilde c_X^!$. The commutativity, as well as the
  commutativity between the nearby cycle functor and the isomorphism
  $p_*j^! \cong p^-_! (j^-)^{*}$ are proved in \cite{MR3912059}. Here
  $p^-$, $j^-$ is the projection and the inclusion for the attracting
  set with respect to the opposite $1$-parameter subgroup
  $\chi^{-1}$. The isomorphism is a main theorem of \cite{Braden}, and
  used here as semisimplicity of
  $\Phi_i(\mathrm{IC}(\cM(\lambda,\mu)))$.
\end{Remark}

\section{Weights of affine Lie algebras}\label{sec:weights-affine-lie}

We fix our convention on weights of affine Lie algebras of type $A$ in
this section.

We denote the central extension of the loop Lie algebra of $\algsl(n)$
by $\widehat{\algsl}(n)$ while the affine Lie algebra containing the
degree operator $d$ is denoted by $\algsl(n)_\aff$. We also use
versions for $\gl(n)$, which are denoted by $\widehat{\gl}(n)$,
$\gl(n)_\aff$ respectively.

Let us take the Cartan subalgebra $\mathfrak h_{\gl(n)}$ of $\gl(n)$
as the space of diagonal matrices. The weight lattice $P_{\gl(n)}$ of
$\gl(n)$ is $\ZZ^n$, where the $i$-th coordinate vector $e_i$ is
$\mathfrak h_{\gl(n)}\to\CC$ given by taking the $i$-th diagonal entry
of $h\in\mathfrak h_{\gl(n)}$. The weight lattice $P_{\algsl(n)}$ of
$\algsl(n)$ is its quotient $\ZZ^n/\ZZ(1,\dots,1)$, considered as
$n$-tuples of integers $[\lambda_1,\dots,\lambda_n]$ up to
simultaneous shifts.
We let $\alpha_i \defeq e_i - e_{i+1} \bmod \ZZ[1,\dots,1]$, the
$i$-th simple root of $\algsl(n)$.
The $i$-th fundamental weight $\Lambda_i$ of $\algsl(n)$ is
$(\underbrace{1,\dots,1}_{\text{$i$
    times}},\underbrace{0,\dots,0}_{\text{$(n-i)$ times}}) \bmod
\ZZ[1,\dots,1]$.

\begin{NB}
  Then roots of $\algsl(n)$ are $e_i - e_j$ (${i\neq j}$). Take
  $\{ e_i - e_j\}_{i < j}$ as the set of positive roots. Then simple
  roots are $\{ \alpha_i \defeq e_i - e_{i+1}\}_{i=1,\dots,n-1}$. The
  Cartan subalgebra of $\algsl(n)$ is the subspace
  $\sum_{i=1}^n e_i = 0$, and roots, positive roots, and simple roots
  are the same.

  We set
  $h_i =
  \operatorname{diag}(0,\dots,0,\underset{i}{1},\underset{i+1}{-1},0,\dots,0)$
  ($i=1,\dots,n-1$). This is a base of $\mathfrak h_{\algsl(n)}$, and
  is equal to $\alpha_i^\vee$. We have
  $\langle h_i, \alpha_j\rangle = 2\delta_{ij} - \delta_{i,j-1} -
  \delta_{i,j+1}$. This is the Cartan matrix of type $A_{n-1}$.
 
  Let $\gl(n)\otimes\CC[z,z^{-1}]$ denote the loop Lie algebra of
  $\gl(n)$. Its central extension is
  $\widehat{\gl}(n) = \gl(n)\otimes\CC[z,z^{-1}]\oplus \CC c$ with
  \begin{equation*}
    [x\otimes z^k, y\otimes z^l] = [x,y]\otimes z^{k+l} + k\delta_{k,-l}(x,y)c,
  \end{equation*}
  where $(x,y)=\tr(xy)$. Here $[c, x\otimes z^k] = 0$, as $c$ is
  central. The degree operator $d$ is defined by
  $[d, x\otimes z^k] = k x\otimes z^k$, $[d,c] = 0$. We have
  $\gl(n)_\aff = \widehat{\gl}(n)\oplus\CC d$.

  The Cartan subalgebra of $\gl(n)_\aff$ is
  $\mathfrak h_{\gl(n)_\aff} = \mathfrak h_{\gl(n)} \oplus \CC c\oplus
  \CC d$. We extend $\alpha_i$ to $\mathfrak h_{\gl(n)_\aff}$ by $0$
  on $\CC c\oplus \CC d$. We then define
  $\delta\in \mathfrak h_{\gl(n)_\aff}^*$ by
  $\langle \mathfrak h_{\gl(n)}\oplus\CC c, \delta\rangle = 0$,
  $\langle d,\delta\rangle = 1$. We then set
  $\alpha_0 = \delta - \theta$, where $\theta$ is the highest weight,
  i.e., $\theta = e_1 - e_n$. Since
  $e_1 - e_n = \alpha_1 + \alpha_2 + \dots + \alpha_{n-1}$, we have
  $\delta = \alpha_0 + \alpha_1 + \dots + \alpha_{n-1}$.

  We have $h_0 = c - \theta^\vee\otimes 1$. See \cite[(7.4.2)]{Kac}.
\end{NB}%

We denote simple roots of $\algsl(n)_\aff$ by $\alpha_0$, \dots,
$\alpha_{n-1}$. Here the primitive positive imaginary root is
$\delta = \alpha_0+\cdots+\alpha_{n-1}$, hence
$\alpha_0 = \delta - (\alpha_1+\dots+\alpha_n)$.
We denote fundamental weights by $\Lambda_0$, \dots,
$\Lambda_{n-1}$. Our convention is $\langle d, \Lambda_i\rangle = 0$,
$\langle d, \alpha_i\rangle = \delta_{0i}$.
\begin{NB}
  See \cite[\S6.2, \S12.4]{Kac}. In the above convention, we have
  $\langle d,\alpha_i\rangle = 0$ if $i\neq 0$,
  $\langle d,\alpha_0\rangle = \langle d, \delta - \theta\rangle = 1$.

  Note also $\langle c,\alpha_i\rangle = 0$.
\end{NB}%
The weight `lattice' $P_{\algsl(n)_\aff}$ of $\algsl(n)_\aff$ is
$\bigoplus_{i=0}^{n-1} \ZZ \Lambda_i \oplus \CC\delta$. 
(This is not a lattice, but we keep this terminology.)
The weight lattice $P_{\widehat{\algsl}(n)}$ of $\widehat{\algsl}(n)$ is
identified with $\bigoplus_{i=0}^{n-1} \ZZ \Lambda_i$.
\begin{NB}
  Note
  $\langle \mathfrak h_{\algsl(n)}\oplus\CC c, \delta\rangle =
  0$. Hence $\delta$ is $0$ on the Cartan subalgebra of
  $\widehat{\algsl}(n)$.
\end{NB}%
The level of a weight $\lambda$ in $P_{\algsl(n)_\aff}$ (or
$P_{\widehat{\algsl}(n)}$) is $\langle c,\lambda\rangle$. If
$\lambda = \sum_{i=0}^{n-1} \bw_i\Lambda_i (+ a\delta)$, the level is
equal to $\sum_{i=0}^{n-1} \bw_i$. We often fix a level $\ell$, then
the set of level $\ell$ weights is identified with
$\ZZ^n/\ZZ[1,\dots,1]\times\CC$ (or $\ZZ^n/\ZZ[1,\dots,1]$), where
$\bw_i$ with $1\le i\le n$ is read off from $\ZZ^n/\ZZ[1,\dots,1]$,
and $\bw_0$ is given by $\ell - \sum_{i=1}^{n-1} \bw_i$. Namely
$[\lambda_1,\dots,\lambda_n]$ defines
$\bw_1 = \lambda_1 - \lambda_2$,\dots,
$\bw_{n-1} = \lambda_{n-1}-\lambda_n$.
\begin{NB}
And $\bw_0 = \ell - \sum_{i=1}^{n-1}\bw_i = \ell - \lambda_1 + \lambda_n$.
Conversely $\lambda = \sum_{i=0}^n \bw_i\Lambda_i$ gives
$[\sum_{j=1}^{n-1}\bw_j, \sum_{j=2}^{n-1} \bw_j,\dots, \bw_{n-1},0]$.
\end{NB}%
In the same way a level $\ell$ weight of ${\mathfrak{gl}}(n)_\aff$
(resp.\ $\widehat{\mathfrak{gl}}(n)$) is an element in
$\ZZ^n\times\CC$ (resp.\ $\ZZ^n$).

Let $W_\aff$ be the Weyl group of $\algsl(n)_\aff$. It is the
semi-direct product $W\ltimes \ZZ^{n-1}$ of a finite Weyl group (= the
symmetric group of $n$ letters) and the root lattice
$\ZZ^{n-1} \cong \bigoplus_{i=1}^{n-1}\ZZ \alpha_i$.
It acts on the set of level $\ell$ weights of
$P_{\widehat{\algsl}(n)}$, identified with $\ZZ^n/\ZZ[1,\dots,1]$ by
permutation for the $W$ part, and translation by $\ell\ZZ^{n-1}$ for
the root lattice part.
\begin{NB}
  The translation part is multiplied by $\ell$ from \cite[\S6.6]{Kac}.
\end{NB}%
The fundamental alcove is $\{ [\lambda_1,\dots,\lambda_n] \mid
\lambda_1\ge\dots\ge\lambda_n\ge \lambda_1 - \ell\}$.
\begin{NB}
  By \cite[\S6.6]{Kac} the condition is
  $0\le (\lambda,\alpha_i) = \lambda_i - \lambda_{i+1}$ for $i\neq 0$,
  $\ell\ge (\lambda,\theta) = \lambda_1 - \lambda_n$.
\end{NB}%
A level $\ell$ weight $\lambda$ is dominant if and only if the
corresponding $[\lambda_1,\dots,\lambda_n]$ is contained in the
fundamental alcove.

It is convenient to extend $\lambda_i$ to all $i\in\ZZ$ so that
$\lambda_{i+n} = \lambda_i - \ell$. For example, the fundamental
alcove consists of
$\cdots\ge \lambda_0 \ge \lambda_1\ge \cdots \ge \lambda_n \ge
\lambda_{n+1} \ge \cdots$. The $j$-th permutation $s_j$
($j=0,\dots,n-1$) of $W_\aff$ acts on $(\lambda_i)_{i\in\ZZ}$ by
exchanging $\lambda_i$ and $\lambda_{i+1}$ if $i\equiv j\bmod n$.

A level $\ell$ dominant weight of $\widehat\gl(n)$ corresponds to a
\emph{generalized Young diagram with the level $\ell$ constraint} (see
\cite[App.~A]{Na-branching},\cite[\S7.6]{2016arXiv160602002N})
\begin{equation*}
   \text{\scriptsize $n$ rows }\Big\{
   \Yvcentermath1
   \cdots
   \underbrace{\overset{-3/2}{\young(\rf\rf\rf,\rf\rf\rf)}}_{\text{$\ell$ columns}}
   \,
   \overset{-1/2}{\young(\rf\rf\rf,\rf\rf\hf)}
   \,
   \overset{1/2}{\young(\rf\rf\hf,\hf\hf\hf)}
   \,
   \overset{3/2}{\young(\hf\hf\hf,\hf\hf\hf)}
   \cdots,
\end{equation*}
where a box is indexed by $(i,\xp,N)$ with $1\le i\le n$, $1\le \xp\le
\ell$, $N\in\ZZ+1/2$ and we put a gray box $\graysquare$ if
$\ell(N-1/2)+\xp\le \lambda_i$. The above figure is
$[\lambda_1,\lambda_2] = [2, -1]$ for $n=2$, $\ell = 3$.
We define the transpose of a generalized Young diagram by the
transposition of each rectangle. Then we get a sequence
$[{}^t\!\lambda_1,\dots,{}^t\!\lambda_\ell]$ which satisfies
${}^t\!\lambda_1\ge{}^t\!\lambda_2\ge\cdots\ge{}^t\!\lambda_\ell\ge
{}^t\!\lambda_1-n$, i.e., a generalized Young diagram with the level
$n$ constraint. The above example gives
$[{}^t\!\lambda_1,{}^t\!\lambda_2,{}^t\!\lambda_3] = [1,1,-1]$.

Recall that a simultaneous shift
$[\lambda_1,\dots,\lambda_n]\mapsto [\lambda_1-1,\dots,\lambda_n-1]$
does not change the weight of $P_{\widehat{\algsl}(n)}$. It
corresponds to $[{}^t\!\lambda_1,\dots,{}^t\!\lambda_\ell]\mapsto
[{}^t\!\lambda_2,\dots,{}^t\!\lambda_\ell,{}^t\!\lambda_1-n]$.
If we consider $[{}^t\!\lambda_1,\dots,{}^t\!\lambda_\ell]$ as a level
$n$ weight of $P_{\widehat{\algsl}(\ell)}$, it corresponds to a
diagram automorphism $0\to 1\to 2\to\cdots\to (\ell-1)\to 0$.
\begin{NB}
  Note
  ${}^t\lambda_\ell - ({}^t\lambda_1 - n) = n - {}^t\!\lambda_1 +
  {}^t\!\lambda_\ell$ is ${}^t\bw_0$,
  $n - {}^t\!\lambda_2 + {}^!\!\lambda_1 - n = {}^t\!\lambda_1 -
  {}^t\!\lambda_2$ is ${}^t\bw_1$.
\end{NB}%

\begin{NB}
  Note that $\sum_{i=1}^n \lambda_i$ is the number of gray boxes with
  $N > 0$ minus the number of white boxes with $N < 0$. It is clear
  that this is preserved under the transpose.
\end{NB}%

\section{Bow varieties}\label{sec:bow}

\subsection{Definition}\label{subsec:definition}

Let us recall the quiver description of bow varieties in
\cite{2016arXiv160602002N}. It is associated with a bow diagram such
as \cref{fig:bow}.
\begin{figure}[htbp]
  \centering
\begin{equation*}
  \begin{tikzpicture}
    \node (0,0) {$\vdots$};
    \draw[thick] (3,0) arc (0:15:3);
    \node at (15:3) {$\boldsymbol\times$};
    \node at (12:3.6) {$\xl_{n-2}$};
    \draw[thick] (15:3) arc (15:30:3);
    \node at (30:3) {$\boldsymbol\times$};
    \node at (27:3.6) {$\xl_{n-3}$};
    \draw[thick] (30:3) arc (30:45:3);
    \node at (45:3) {$\boldsymbol\medcirc$};
    \node at (45:3.5) {$h_4$};
    \draw[thick] (45:3) arc (45:60:3);
    \node at (60:3) {$\boldsymbol\times$};
    \node at (60:3.5) {$\xl_{n-2}$};
    \draw[thick] (60:3) arc (60:75:3);
    \node at (75:3) {$\boldsymbol\medcirc$};
    \node at (75:3.5) {$h_3$};
    \draw[thick] (75:3) arc (75:90:3);
    \node at (90:3) {$\boldsymbol\times$};
    \node at (90:3.5) {$\xl_{n-1}$};
    \draw[thick] (90:3) arc (90:108:3);
    \node at (98:2.5) {$\scriptstyle R(\zeta')$};
    \node at (108:3) {$\boldsymbol\medcirc$};
    \node at (108:3.5) {$h_2$};
    \draw[thick] (108:3) arc (108:126:3);
    \node at (118:2.5) {$\scriptstyle R(\zeta)$};
    \node at (126:3) {$\boldsymbol\medcirc$};
    \node at (126:3.5) {$h_1$};
    \draw[thick] (126:3) arc (126:144:3);
    \node at (144:3) {$\boldsymbol\times$};
    \node at (144:3.5) {$\xl_0$};
    \draw[thick] (144:3) arc (144:162:3);
    \node at (162:3) {$\boldsymbol\times$};
    \node at (162:3.5) {$\xl_1$};
    \draw[thick] (162:3) arc (162:179.5:3);
\end{tikzpicture}
\end{equation*}
  \caption{bow diagram}
  \label{fig:bow}
\end{figure}
A bow diagram consists of $\boldsymbol\times$, $\boldsymbol\medcirc$
on a circle, and nonnegative integers $R(\zeta)$ for segments $\zeta$
cut by either $\boldsymbol\times$ or $\boldsymbol\medcirc$.
We index $\boldsymbol\times$ as $\xl_0$, $\xl_1$, \dots, $\xl_{n-1}$,
$\boldsymbol\medcirc$ as $h_1$, $h_2$, \dots, $h_\ell$ in
anticlockwise and clockwise orientation respectively. The number $n$
of $\boldsymbol\times$ will be the rank of the affine Lie algebra
$\algsl(n)_\aff$. The number $\ell$ of $\boldsymbol\medcirc$ will be
the level of an integrable highest weight representation.
%
In \cref{fig:bow} only $R$ for segments $\zeta$, $\zeta'$ between
$h_1$ and $h_2$, $h_2$ and $\xl_{n-1}$ are drawn for simplicity.

We assign a vector space $V_\zeta$ for each segment $\zeta$ with
$\dim V_\zeta = R(\zeta)$. We also assign a $1$-dimensional vector
$\CC_{\xl_i}$ for each $\xl_i$.

We assume $n > 1$ throughout this paper except \cref{rem:Hilb,rem:Hilb2}.

We have a complex parameter $\nu^\CC_{h}$ and a real parameter
$\nu^\RR_{h}$ for each $h$ (i.e., $h_\xp$ for $1\le \xp\le \ell$), and
also one additional pair $\nu^\CC_*$, $\nu^\RR_*$.
We use the convention in \cite[(6.3)]{2016arXiv160602002N}, rather
than in \cite[\S2.2]{2016arXiv160602002N}.

A quiver description consists of the following:
\begin{enumerate}
      \item A linear endomorphism $B_\zeta\colon V_\zeta\to V_\zeta$.

      \item\label{item:triangle} Let $\xl$ be $\boldsymbol\times$. Let
    $\vout{\xl}$, $\vin{\xl}$ be the adjacent segments so that
\begin{tikzpicture}[baseline=0pt]
    \draw[decorate, decoration = {segment
        length=2mm, amplitude=.4mm},->] 
    (0,0) -- (2,0);
    \node at (1,0) {$\boldsymbol\times$};
    \node at (0.3,0.3) {$\vout{\xl}$};
    \node at (1.7,0.3) {$\vin{\xl}$};
    \node at (1,0.3) {$\xl$};
\end{tikzpicture}
in the anticlockwise orientation. We assign triple of linear maps
    \begin{gather*}
        A_{\xl}\colon V_{\vout{\xl}}\to V_{\vin{\xl}},
        \\
        a_{\xl}\colon \CC_{\xl}\to V_{\vin{\xl}}, \qquad
        b_{\xl} \colon V_{\vout{\xl}}\to \CC_{\xl}.
    \end{gather*}
    
      \item\label{item:twoways} Let $h$ be $\boldsymbol\medcirc$. Let
    $\vout{h}$, $\vin{h}$ be the adjacent segments so that
\begin{tikzpicture}[baseline=0pt]
    \draw[decorate, decoration = {segment
        length=2mm, amplitude=.4mm},->] 
    (0,0) -- (2,0);
    \node at (1,0) {$\boldsymbol\medcirc$};
    \node at (0.3,0.3) {$\vout{h}$};
    \node at (1.7,0.3) {$\vin{h}$};
    \node at (1,0.4) {$h$};
\end{tikzpicture}
in the anticlockwise orientation. We assign a pair of linear maps
\begin{equation*}
    C_h\colon V_{\vout{h}}\to V_{\vin{h}}, \qquad
    D_h\colon V_{\vin{h}}\to V_{\vout{h}}.
\end{equation*}
\begin{NB}
\item\label{item:triangleNB} Let $\xl$ be $\boldsymbol\times$. Let
    $\zeta^-$, $\zeta^+$ be the adjacent segments so that
\begin{tikzpicture}[baseline=0pt]
    \draw[decorate, decoration = {segment
        length=2mm, amplitude=.4mm},->] 
    (0,0) -- (2,0);
    \node at (1,0) {$\boldsymbol\times$};
    \node at (0.4,0.3) {$\zeta^-$};
    \node at (1.8,0.3) {$\zeta^+$};
    \node at (1,0.3) {$\xl$};
\end{tikzpicture}
in the anticlockwise orientation. We assign triple of linear maps
    \begin{gather*}
        A_{\xl}\colon V_{\zeta^-}\to V_{\zeta^+},
        \\
        a_{\xl}\colon \CC_{\xl}\to V_{\zeta^+}, \qquad
        b_{\xl} \colon V_{\zeta^-}\to \CC_{\xl}.
    \end{gather*}
    
      \item\label{item:twowaysNB} Let $h$ be $\boldsymbol\medcirc$. Let
    $\zeta^-$, $\zeta^+$ be the adjacent segments so that
\begin{tikzpicture}[baseline=0pt]
    \draw[decorate, decoration = {segment
        length=2mm, amplitude=.4mm},->] 
    (0,0) -- (2,0);
    \node at (1,0) {$\boldsymbol\medcirc$};
    \node at (0.4,0.3) {$\zeta^-$};
    \node at (1.7,0.3) {$\zeta^+$};
    \node at (1,0.4) {$h$};
\end{tikzpicture}
in the anticlockwise orientation. We assign a pair of linear maps
\begin{equation*}
    C_h\colon V_{\zeta^-}\to V_{\zeta^+}, \qquad
    D_h\colon V_{\zeta^+}\to V_{\zeta^-}.
\end{equation*}
\end{NB}%
\end{enumerate}
See \cite[Fig.~1]{2016arXiv160602002N}.

We denote the direct sum
$\bigoplus B_\zeta \in \End(\bigoplus V_\zeta)$ by $B$, and similarly
for $a$, $b$, $C$, $D$. However we also denote $B_\zeta$ by $B$ when
$\zeta$ is clear from the context. The same applies for $A_\xl$, etc.

We require the following conditions:
\begin{aenume}
\item Let $\xl$, $\vout{\xl}$, $\vin{\xl}$ as in (\ref{item:triangle})
  above.  As a linear map $V_{\vout{\xl}} \to V_{\vin{\xl}}$ we have
    \begin{equation*}
        (B_{\vin{\xl}} + \delta_{\xl,\xl_0}\nu^\CC_*)
        A_\xl - A_\xl B_{\vout{\xl}} + a_\xl b_\xl = 0.
    \end{equation*}
    They satisfy the two conditions (S1),(S2):
    \begin{description}
          \item[(S1)] There is no nonzero subspace $0\neq S\subset
        V_{\vout{\xl}}$ with $B_{\vout{\xl}}(S) \subset S$, $A(S) = 0 = b(S)$.

          \item[(S2)] There is no proper subspace $T\subsetneq
        V_{\vin{\xl}}$ with $B_{\vin{\xl}}(T) \subset T$, $\Ima A + \Ima a
        \subset T$.
    \end{description}

  \item Let $h$, $\vout{h}$, $\vin{h}$ as in (\ref{item:twoways}). As
    endomorphisms on $V_{\vin{h}}$ and $V_{\vout{h}}$, we have
  \begin{equation*}
      C_hD_h + B_{\vin{h}} = \nu^\CC_{h},
      \qquad
      D_hC_h + B_{\vout{h}} = \nu^\CC_h
  \end{equation*}
  respectively.
  \begin{NB}
    Consider
    \begin{tikzcd}[baseline=0pt]
      V_3 \arrow[shift left=1]{r}{C_2} & 
      V_2 \arrow[shift left=1]{r}{C_1} \arrow[shift left=1]{l}{D_2}
      \arrow[out=120,in=60,loop,looseness=3, "B_2"]
      & V_1 \arrow[shift left=1]{l}{D_1}
    \end{tikzcd}
    with $C_1 = C_{h_1}$, etc. Then $C_2 D_2 + B_2 = \nu_2^\CC$,
    $D_1 C_1 + B_2 = \nu_1^\CC$. Hence
    $C_2 D_2 - D_1 C_1 = \nu_2^\CC - \nu_1^\CC$.
  \end{NB}%

  \begin{NB}
    Consider
    \begin{tikzcd}[baseline=(V.base)]
        |[alias=V]|
      V_{n-1}^{\bw_{n-1}} \arrow{rr}{A_0} \arrow[rd,"b_{n-1}"'] 
      \arrow[out=120,in=60,loop,looseness=3, "B_{n-1}"] 
      && V_0^0
      \arrow[out=120,in=60,loop,looseness=3, "B'_0"] 
      \arrow[shift left=1]{r}{C_{1,0}}
      & V_0^1. \arrow[shift left=1]{l}{D_{1,0}}
      \\
      & \CC \arrow[ru,"a_0"'] &&
    \end{tikzcd}
    We have $D_{1,0} C_{1,0} + B'_0 = \nu_0^\CC$,
    $(B'_0 + \nu_*^\CC) A_0 - A_0 B_{n-1} + a_0 b_{n-1} = 0$. If we replace
    $B'_0 + \nu_*^\CC$ by $B'_0$, we get
    $D_{1,0} C_{1,0} + B'_0 = \nu_0^\CC + \nu_*^\CC$.
    This is the defining equation in \cite[(6.3)]{2016arXiv160602002N}.
  \end{NB}%
  Because of these defining equations, we often omit $B_\zeta$ when
  $\zeta$ has $\boldsymbol\medcirc$ on the boundary.

  \item We say $(A,B,C,D,a,b)$ is \emph{$\nu^\RR$-semistable} if the following conditions are satisfied:
\begin{description}
      \item[($\boldsymbol\nu\bf 1$)] Suppose a graded subspace
    $S = \bigoplus S_\zeta \subset \bigoplus V_\zeta$ invariant under
    $A$, $B$, $C$, $D$ with $b(S) = 0$ is given. We further assume
    that $A_x$ is an isomorphism
    $S_{\vout{\xl}}\xrightarrow[\cong]{A_\xl} S_{\vin{\xl}}$ for all
    \begin{tikzpicture}[baseline=0pt]
      \draw[decorate, decoration = {segment
          length=2mm, amplitude=.4mm},->] 
      (0,0) -- (2,0);
      \node at (1,0) {$\boldsymbol\times$};
      \node at (0.4,0.3) {$\vout{\xl}$};
      \node at (1.7,0.3) {$\vin{\xl}$};
      \node at (1,0.3) {$\xl$};
    \end{tikzpicture}.
    \begin{NB}
      \begin{tikzpicture}[baseline=0pt]
        \draw[decorate, decoration = {snake, segment length=2mm,
          amplitude=.4mm},->] (0,0) -- (1,0); \node at (0.5,0)
        {$\boldsymbol\times$}; \node at (0.2,0.3) {$\zeta^-$}; \node
        at (0.95,0.3) {$\zeta^+$}; \node at (0.5,0.3) {$\xl$};
      \end{tikzpicture}.
    \end{NB}%
    Then
    \begin{NB}
        This was the original:
    \begin{equation*}
        \sum_h \nu_{h}^\RR \dim S_{\vin{h}} \le 0.
    \end{equation*}
    \end{NB}%
    \begin{equation*}
        \nu_*^\RR \dim S_{\vin{\xl_0}} + 
        \sum_h \nu_{h}^\RR (\dim S_{\vin{h}} - \dim S_{\vout{h}})
        \le 0.
    \end{equation*}
    Here $\vin{h}$, $\vout{h}$ are determined by $h$ by the rule in
    (\ref{item:twoways}).
    \begin{NB}
      Consider
    \begin{tikzcd}[baseline=0pt]
      V_3 \arrow[shift left=1]{r}{C_2} & 
      V_2 \arrow[shift left=1]{r}{C_1} \arrow[shift left=1]{l}{D_2}
      &  V_1. \arrow[shift left=1]{l}{D_1}
    \end{tikzcd}
    The contribution of $S_2$ in the above inequality is
    \(
      (\nu^\RR_2 - \nu^\RR_1)  \dim S_2.
    \)

    Also in the above expression,
    $\nu^\RR_* - \nu^\RR_{\bw_{n-1},n-1} + \nu^\RR_{0,i}$ appears for
    the contribution from $S_{n-1}^{\bw_{n-1}}\cong S_0^0$.
    \end{NB}%

      \item[($\boldsymbol\nu\bf 2$)] Suppose a graded subspace
    $T = \bigoplus T_\zeta \subset \bigoplus V_\zeta$ invariant under
    $A$, $B$, $C$, $D$ with $\Ima a\subset T$ is given. We further
    assume that the restriction of $A$ induces an isomorphism
    $V_{\vout{\xl}}/T_{\vout{\xl}}\xrightarrow[\cong]{A_\xl}
    V_{\vin{\xl}}/T_{\vin{\xl}}$ for all
    \begin{tikzpicture}[baseline=0pt]
      \draw[decorate, decoration = {segment
          length=2mm, amplitude=.4mm},->] 
      (0,0) -- (2,0);
      \node at (1,0) {$\boldsymbol\times$};
      \node at (0.4,0.3) {$\vout{\xl}$};
      \node at (1.7,0.3) {$\vin{\xl}$};
      \node at (1,0.3) {$\xl$};
    \end{tikzpicture}.
  Then
    \begin{equation*}
        \nu^\RR_* \codim T_{\vin{\xl_0}} +
        \sum_h \nu_{h}^\RR (\codim T_{\vin{h}} -\codim T_{\vout{h}}) \ge 0.
    \end{equation*}
    
    We say $(A,B,C,D,a,b)$ is \emph{$\nu^\RR$-stable} if we have
    strict inequalities in $(\boldsymbol\nu\bf 1)$,
    $(\boldsymbol\nu\bf 2)$ unless $S_\zeta = 0$, $T_\zeta = V_\zeta$
    for all $\zeta$.
\end{description}
\end{aenume}

We have a natural group action of $\GV := \prod \GL(V_\zeta)$ by
conjugation, which preserves the above conditions.  Let
$\widetilde\cM^{\nu\text{-}\mathrm{ss}}$ (resp.\ $\widetilde\cM^{\nu\text{-}\mathrm{s}}$)
denote the set of $\nu^\RR$-semistable (resp.\ $\nu^\RR$-stable)
points satisfying other conditions (a),(b).
(We understand that the parameter $\nu$ is a pair $(\nu^\CC,\nu^\RR)$.)
We introduce the \emph{$S$-equivalence relation} $\sim$ on the
$\widetilde\cM^{\nu\text{-}\mathrm{ss}}$ by defining $m\sim m'$ if and only if
orbit closures $\overline{\GV m}$ and $\overline{\GV m'}$ intersect in
the $\widetilde\cM^{\nu\text{-}\mathrm{ss}}$. Let $\cM^\nu$ denote
$\widetilde\cM^{\nu\text{-}\mathrm{ss}}/\!\!\sim$. It is a geometric invariant
theory quotient, hence in particular has a structure of a
quasi-projective variety.
Let $\cM^{\nu\text{-}\mathrm{s}}$ denote the open subvariety
of $\cM$ given by $\widetilde\cM^{\nu\text{-}\mathrm{s}}/\GV$.

When $\nu^\CC = 0 = \nu^\RR$, we denote $\cM^\nu$,
$\cM^{\nu\text{-}\mathrm{s}}$ by $\cM$, $\cM^{\mathrm{s}}$ for brevity.
We also understand that $\cM^{\nu^\CC}$ is a bow variety with
vanishing real parameters, i.e., $\nu^\CC$ is understood as an
abbreviation of $(0,\nu^\CC)$.
We also use $\cM^{\nu^\RR}$ for a bow variety with vanishing complex
parameters, i.e., $\nu^\RR$ means $(\nu^\RR,0)$.

We have a projective morphism
\begin{equation}\label{eq:7}
    \pi\colon \cM^\nu \to \cM^{\nu^\CC},
\end{equation}
where $\nu^\CC$ is the complex part of $\nu$, i.e.,
$\nu = (\nu^\RR,\nu^\CC)$, and $\cM^{\nu^\CC}$ is a bow variety with
vanishing real parameters as above.

\begin{Remark}\label{rem:inequ}
Note that the inequality in $(\boldsymbol\nu\bf 1)$ can be rewritten as
\begin{equation*}
  \nu_*^\RR \dim S_{\vin{\xl_0}} +
  \sum_h (\nu_h^\RR - \nu_{h+1}^\RR) \dim S_{\vin{h}} \le 0.
\end{equation*}
The first term can be absorbed in the second term with $h$ such that
$\vin{h}$ is connected to $\vin{\xl_0}$ through triangle parts. For
example, we have
\(
   (\nu_*^\RR + \nu_1^\RR - \nu_2^\RR) \dim S_{\vin{h_1}}
   + \sum_{h\neq 1} (\nu_h^\RR - \nu_{h+1}^\RR) \dim S_{\vin{h}}
\)
in \cref{fig:bow}.
   
From this reformulation, it is clear that an overall shift of
$\nu^\RR_h$ is irrelevant. The same is true for $\nu^\CC_h$ as we can
simultaneously subtract a scalar from all $B_\zeta$'s. Therefore the
total number of real or complex parameters is $\ell$.
\end{Remark}

\subsection{Coulomb branch}\label{subsec:coulomb-branch}

We say that the \emph{balanced condition} is satisfied if
$R({\vout{h}}) = R({\vin{h}})$ for any
\begin{tikzpicture}[baseline=0pt]
    \draw[decorate, decoration = {segment
        length=2mm, amplitude=.4mm},->] 
    (0,0) -- (2,0);
    \node at (1,0) {$\boldsymbol\medcirc$};
    \node at (0.4,0.3) {$\vout{h}$};
    \node at (1.7,0.3) {$\vin{h}$};
    \node at (1,0.4) {$h$};
\end{tikzpicture}.
Then $R(\zeta)$ depends only the arc $\xl_{i-1}\to \xl_{i}$ which
contains $\zeta$. In particular $R$ is determined by an $n$-tuple of
integers $\bv_0$, $\bv_1$, \dots, $\bv_{n-1}$ corresponding to
$\xl_{0}\to\xl_1$, $\xl_1\to\xl_2$, \dots, $\xl_{n-1}\to\xl_{0}$.
Let $\bw_0$, $\bw_1$, \dots, $\bw_{n-1}$ be the numbers of
$\boldsymbol\medcirc$ on the corresponding arcs.
Let $\underline{\bv} = (\bv_0,\dots,\bv_{n-1})$,
$\underline{\bw} = (\bw_0,\dots,\bw_{n-1})$.

\begin{Theorem}[\protect{\cite[\S6]{2016arXiv160602002N}}]\label{thm:NT}
  Suppose the balanced condition is satisfied, and determine
  $\underline{\bv}$, $\underline{\bw}$ as above. Then the
  corresponding bow variety with parameters $\nu^\RR = 0$,
  $\nu^\CC = 0$ is isomorphic to the Coulomb branch of the framed
  affine quiver gauge theory with dimensions $\underline{\bv}$,
  $\underline{\bw}$.
\end{Theorem}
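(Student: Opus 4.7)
The plan is to construct an explicit isomorphism between the bow variety (with vanishing parameters and satisfying the balanced condition) and the BFN Coulomb branch of the framed $\widehat{A}_{n-1}$ quiver gauge theory with dimension vectors $\underline{\bv}$, $\underline{\bw}$. I would try to match the two sides through their common factorization structure over $\BA^{\underline{\bv}}$: the bow variety carries the morphism $\varpi$ (denoted $\Psi$ on the bow side), and the Coulomb branch carries the factorization morphism of \cite[(3.17)]{2016arXiv160103586B}. Since Coulomb branches are determined as affine varieties by their coordinate ring together with $\varpi$, it is enough to build a factorization-compatible isomorphism.

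My first step would be to decompose the bow variety into local building blocks associated with each $\boldsymbol\times$ (a ``triangle part'' encoding the data $A, a, b$ with the moment map equation and the stability conditions (S1)/(S2)) and each $\boldsymbol\medcirc$ (a ``two-way part'' encoding the data $C, D$ with the moment map equation). Under the balanced condition the two-way part behaves like a rank-$R(\vin h)$ gauge vertex with an adjoint pair, while the triangle part provides exactly the morphisms of a framed $A$-type quiver gauge theory. Composing Hamiltonian reductions along the circular diagram should identify the entire bow variety with a symplectic reduction closely related to the BFN presentation.

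The second step is to verify the identification over a generic stratum of $\BA^{\underline{\bv}}$, where by the factorization property of \cref{subsec:factor} the bow variety splits into a product of minimal pieces indexed by points of the configuration space. At a generic configuration, each local factor on either side is either a point or the minimal resolution of an $A_k$-singularity (or the corresponding Coulomb branch $\cM_{A_1}(\kappa',\mu')$ that appears in \cref{sec:conj}), and the identification is a direct calculation. Combined with the fact that both spaces are normal and equipped with the same factorization morphism, this forces the global isomorphism.

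The main obstacle I would expect is the passage from the generic (factorized) identification to the full bow variety: verifying that the coordinate rings agree on the non-generic locus requires knowing enough algebraic functions on both sides. On the Coulomb branch side one has access only to the abstract BFN convolution algebra with its monopole generators, while on the bow side one has the explicit trace functions $\tr(B_\zeta^k)$, cyclic words in $C, D, A, a, b$, and their deformations. The bridge is supplied by the Hanany-Witten transition of \cite{2016arXiv160602002N}, which permutes the positions of $\boldsymbol\times$ and $\boldsymbol\medcirc$ without changing the isomorphism class; iterating it normalizes the diagram to a form where the bow variety is manifestly a quiver variety-like construction whose ring of functions is directly comparable to the BFN algebra, in particular matching the Cartan/monopole generators of \cite[\S3(vii)]{2016arXiv160103586B}.
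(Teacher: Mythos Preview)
The paper does not prove this theorem; it is quoted from \cite[\S7]{2016arXiv160602002N}, and the only hints given here are that the factorization morphism $\varpi$ ``played an important role'' and that $\cM^\nu$ is normal with equidimensional fibers over $\BA^{\underline\bv}$ (\cref{thm:fact}). So there is no in-paper proof to compare against, only the cited one.

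Your outline is in the right spirit---the argument in \cite{2016arXiv160602002N} is indeed organized around the factorization morphism---but step~2 as you phrase it has a real gap. An isomorphism over the disjoint locus of $\BA^{\underline\bv}$ together with normality of both sides does \emph{not} force a global isomorphism: the complement of the disjoint locus has codimension~$1$, so Hartogs does not apply, and you have not yet produced a global morphism in either direction. The actual proof does what your last paragraph only gestures at: one writes down explicit elements of $\CC[\cM]$ (built from traces of $B_\zeta$ and from the triangle/two-way data) that realize the Cartan and minuscule monopole generators of $H^{G_\cO}_*(\cR)$, checks the BFN relations hold, and thus obtains a \emph{global} ring homomorphism. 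Only then is factorization invoked, together with the equidimensionality of fibers and normality, to upgrade the generic-fiber isomorphism to a global one. In other words, the ``obstacle'' you flag is not a loose end but the core of the argument, and Hanany--Witten is used to put the diagram in a form where those generators can be written down, not to sidestep constructing the map.
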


Here the Coulomb branch is one defined in
\cite{2016arXiv160103586B}. We take
\begin{equation*}
  \bN = \bigoplus_{i=0}^{n-1} \Hom(\CC^{\bv_{i-1}},\CC^{\bv_i})
  \oplus \Hom(\CC^{\bw_i},\CC^{\bv_i}),
\end{equation*}
as a representation of $\bG \defeq \prod_{i=0}^{n-1} \GL(\bv_i)$, and
consider the variety
$\cR = \{ ([g],s)\in \bG_\cK/\bG_\cO \times \bN_\cO \mid
g^{-1}s\in\bN_\cO\}$ where $\cO = \CC[[z]]\subset\cK = \CC((z))$. Then
we consider its equivariant Borel-Moore homology group
$H^{\bG_\cO}_*(\cR)$ equipped with the convolution product. The Coulomb
branch $\cM_C$ of the gauge theory associated with $\bG,\bN$ is defined
as the spectrum of the ring $H^{\bG_\cO}_*(\cR)$.

This $\bN$ is equipped with an action of a larger group
$\tilde\bG = (\bG\times T(\underline{\bw}))/\CC^\times\times
\CC^\times_{\mathrm{dil}}$
($T(\underline{\bw}) = \prod_i T^{\bw_i}$), where $\CC^\times$ is the
diagonal scalar in $\bG\times T(\underline{\bw})$ and
$\CC^\times_{\mathrm{dil}}$ acts on $\bN$ by scaling
$\Hom(\CC^{\bv_{n-1}}, \CC^{\bv_0})$.
We can consider the spectrum of $H^{\tilde\bG_\cO}_*(\cR)$, which is a
deformation of $\cM_C$ parametrized by
$\Spec H^*_{\bG_F}(\mathrm{pt}) = \operatorname{Lie}(\bG_F)$, where
$\bG_F = \tilde\bG/\bG$.
\cref{thm:NT} is generalized to this deformation (see
\cite[\S6.8.2]{2016arXiv160602002N}). Therefore it is identified with
$\cM^{\nu^\CC}$ (with vanishing real parameters) where
$\nu^\CC\in \operatorname{Lie}(\bG_F)$ is identified with a tuple of
complex numbers above under the standard coordinates. (The above
definition of bow varieties is slightly modified from one in
\cite[\S2]{2016arXiv160602002N} so that we have the identification of
parameters. See \cite[\S6.2.2]{2016arXiv160602002N}. We also change
the action of $\CC^\times_{\mathrm{dil}}$ from the scaling on
$\bigoplus_{i=0}^{n-1}\Hom(\CC^{\bv_i}, \CC^{\bv_{i+1}})$ to the
scaling only on the factor $i=n-1$.)

Similarly the variety $\cR_{\tilde\bG, \bN}$ defined for the larger
group $\tilde\bG$ (and $\bN$) defines a quasi-projective variety
equipped with a projective morphism to $\cM^{\nu^\CC}$ depending on a
$\QQ$-coweight $\varkappa$ of $\bG_F$. This is identified with $\cM^\nu$
so that this projective morphism coincides with
$\pi\colon\cM^\nu\to \cM^{\nu^\CC}$ in \eqref{eq:7}, when the real
parameters $\nu^\RR$.
(See \cite[\S4]{2018arXiv180511826B}.)
%

In what follows, we will not use the original definition of Coulomb
branches and discuss only bow varieties. Hence we do not explain the
further detail.
\begin{NB}
However dimension vectors $\underline{\bv}$,
$\underline{\bw}$ should be kept in mind.
\end{NB}%

\begin{Remark}\label{rem:reflect}
  If the balanced condition is satisfied, the ordering of parameters
  $\nu_h$ in the arc $\xl_i\to\xl_{i+1}$ is irrelevant. Consider the
  relevant part
  \begin{equation*}
    \begin{tikzcd}[column sep=small,baseline=(V.base)]
      |[alias=V]|
      V_{\vout{\xl_i}} \arrow{rr}{A_{\xl_i}} \arrow[rd,"b_{\xl_i}"'] 
      \arrow[out=120,in=60,loop,looseness=3, "B_{\vout{\xl_i}}"] 
      && 
      V_i^0 \arrow[shift left=1, r]
      \arrow[out=120,in=60,loop,looseness=3, "B_{\vin{\xl_i}}"] 
      & V_i^1 \arrow[shift left=1, l]
      \arrow[shift left=1, r]
      & \cdots \arrow[shift left=1, l]
      \arrow[shift left=1, r]
      & V_i^{\bw_i}\arrow[shift left=1, l]
            \arrow[out=120,in=60,loop,looseness=3, "B_{\vout{\xl_{i+1}}}"] 
      \arrow[rd,"b_{\xl_{i+1}}"'] 
      \arrow{rr}{A_{\xl_{i+1}}}
      && 
      V_{\vin{\xl_{i+1}}}\rlap{ .}
      \arrow[out=120,in=60,loop,looseness=3,"B_{\vin{\xl_{i+1}}}"] 
      \\
      & \CC_{\xl_i} \arrow[ru,"a_{\xl_i}"'] &&&&&
      \CC_{\xl_{i+1}} \arrow[ru,"a_{\xl_{i+1}}"']
      &
    \end{tikzcd}
  \end{equation*}
  We can apply reflection functors \cite{Na-reflect} at $V_i^1$,
  \dots, $V_i^{\bw_i-1}$ to change the ordering of parameters
  $\nu_h$. Since the balanced condition means
  $\dim V_i^0 = \cdots = \dim V_i^{\bw_i}$, dimensions of vector
  spaces are preserved under reflection functors.
\end{Remark}

\subsection{Factorization}\label{subsec:factor}

Let
$\BA^{\underline\bv} = \prod_{i=0}^{n-1} \BA^{\bv_i}/\mathfrak
S_{\bv_i} = \Spec H^*_{\bG}(\mathrm{pt})$.
In the proof of \cref{thm:NT}, the factorization morphism
$\varpi\colon \cM^\nu\to\BA^{\underline\bv}$ played an important
role. Since we will use it later, let us recall its definition and
properties. Suppose that $\cM^\nu$ is a bow variety with the balanced
condition. For each $\xl_i$ we consider $\vin{\xl_i}$ and the
associated linear map $B_{\vin{\xl_i}}$. We count its eigenvalues
with multiplicities and let it as the $i$-th component of $\varpi$.
Since $B_{\vin{\xl_i}}$ and $B_{\vout{\xl_{i+1}}}$ have the same
eigenvalues by the defining equations thanks to the following lemma, we can
also use $B_{\vout{\xl_{i+1}}}$.

\begin{NB}
    Consider
    \begin{tikzcd}[baseline=(V.base)]
        |[alias=V]|
      V_{i-1}^{\bw_{i-1}} \arrow{rr}{A_{i-1}} \arrow[rd,"b_{i-1}"'] 
      \arrow[out=120,in=60,loop,looseness=3, "B_{i-1}"] 
      && V_i^0
      \arrow[out=120,in=60,loop,looseness=3, "B'_i"] 
      \arrow[shift left=1]{r}{C_{1,i}}
      & V_i^1 \arrow[shift left=1]{l}{D_{1,i}}
      \arrow[shift left=1]{r}{C_{2,i}}
      & \cdots \arrow[shift left=1]{l}{D_{2,i}}
      \arrow[shift left=1]{r}{C_{\bw_i,i}}
      & V_i^{\bw_i} \arrow[shift left=1]{l}{D_{\bw_i,i}}
      \arrow[out=120,in=60,loop,looseness=3, "B_i"] 
      \arrow[rd,"b_{i}"'] 
      \arrow{rr}{A_{i}}
      && 
      V^0_{i+1}.
      \arrow[out=120,in=60,loop,looseness=3, "B'_{i+1}"] 
      \\
      & \CC_{\xl_i} \arrow[ru,"a_i"'] &&&&&
      \CC_{\xl_{i+1}} \arrow[ru,"a_{i+1}"']
      &
    \end{tikzcd}
    Then $\operatorname{Spec} B'_i = \nu_{1,i}^\CC - \operatorname{Spec}D_{1,i} C_{1,i} = \nu_{1,i}^\CC - \operatorname{Spec} C_{1,i} D_{1,i}
    = \nu_{2,i}^\CC - \operatorname{Spec} D_{2,i} C_{2,i} = \cdots
    = \nu_{\bw_i, i}^\CC - \operatorname{Spec} C_{\bw_i,i} D_{\bw_i,i}
    = \operatorname{Spec} B_{i}$.
\end{NB}%

\begin{Lemma}\label{lem:traceCD}
  Let $C\colon V \to V'$, $D\colon V'\to V$ be a pair of linear
  maps. We have
  \begin{equation*}
    \tr_{V'} (t + CD)^N = \tr_{V}(t + DC)^N + t^N (\dim V' - \dim V)
  \end{equation*}
  for any $N\in \ZZ_{\ge 0}$.
\end{Lemma}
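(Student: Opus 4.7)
The plan is to expand both sides using the binomial theorem (the variable $t$ commutes with both $CD$ and $DC$), reducing the claim to comparing $\tr_{V'}(CD)^k$ with $\tr_V(DC)^k$ for $k = 0, 1, \dots, N$.

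For $k \ge 1$ I would use the standard identity $\tr_{V'}(CD)^k = \tr_V(DC)^k$. This follows by writing $(CD)^k = C \cdot (DC)^{k-1} \cdot D$ and invoking cyclicity of the trace: $\tr_{V'}(C X D) = \tr_V(D C X)$ applied with $X = (DC)^{k-1}$. The only term where the two sides differ is $k = 0$, where $\tr_{V'}(\mathrm{id}_{V'}) = \dim V'$ while $\tr_V(\mathrm{id}_V) = \dim V$. That $k = 0$ term carries the coefficient $\binom{N}{0} t^N = t^N$, producing exactly the correction $t^N(\dim V' - \dim V)$.

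Putting it together:
\begin{equation*}
  \tr_{V'}(t+CD)^N - \tr_V(t+DC)^N
  = \sum_{k=0}^N \binom{N}{k} t^{N-k} \bigl(\tr_{V'}(CD)^k - \tr_V(DC)^k\bigr)
  = t^N(\dim V' - \dim V),
\end{equation*}
since every $k \ge 1$ summand vanishes. There is no real obstacle here; the only thing to be slightly careful about is the $k=0$ case, which is precisely what produces the right-hand side correction term.
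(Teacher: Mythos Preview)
Your proof is correct and is essentially identical to the paper's own argument: binomial expansion, cyclicity of trace for the $k\ge 1$ terms, and the $k=0$ term accounting for the $t^N(\dim V'-\dim V)$ correction.
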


\begin{NB}
  \begin{proof}
    We have
    \begin{equation*}
      (t + CD)^N = \sum_{i=0}^N \binom{N}{i} t^{N-i} (CD)^i.
    \end{equation*}
    Then $\tr_{V'} (CD)^i = \tr_{V'} (CDCD\cdots CD) = \tr_{V} (DCDC\cdots DC)
    = \tr_{V}(DC)^i$ if $i\neq 0$. The assertion follows.
  \end{proof}
\end{NB}%

Let $\underline{\bv} = \underline{\bv}' + \underline{\bv}''$ be a
decomposition of the dimension vector $\bv$. Let
$(\BA^{\underline{\bv}'}\times \BA^{\underline{\bv}''})_{\text{disj}}$
be the open subset of
$\BA^{\underline{\bv}'}\times \BA^{\underline{\bv}''}$ consisting of
$(w'_{i,1}+\dots+w'_{i,\bv'_i},
w''_{i,1}+\dots+w''_{i,\bv''_i})_{i=0}^{n-1}$ such that
$w'_{i,j} \neq w''_{i,k}$, 
$w'_{i,j} \neq w''_{i\pm 1,k}$,
$w'_{n-1,\sigma}\neq w''_{0,\tau} + \nu_*^\CC$,
$w'_{0,\sigma}\neq w''_{n-1,\tau} + \nu_*^\CC$,
where $\sigma$, $\tau$ run over the set of indices for data at $i$,
$i\pm 1$, $n-1$ or $0$ appropriately.
Note that the last two conditions correspond to the additional term
$\delta_{\xl,\xl_0}\nu^\CC_*$ in the defining equation.

\begin{Theorem}\label{thm:fact}
  \textup{(1)(\cite[Th.~6.9]{2016arXiv160602002N})} $\cM^\nu$
  satisfies the factorization property:
  \begin{equation*}
    \cM^\nu\times_{\BA^{\underline{\bv}}}
    (\BA^{\underline{\bv}'}\times \BA^{\underline{\bv}''})_{\mathrm{disj}}
    \cong
    (\cM^\nu(\underline{\bv}',\underline{\bw}) \times 
    \cM^\nu(\underline{\bv}'',\underline{\bw}))
    \times_{\BA^{\underline{\bv}'}\times \BA^{\underline{\bv}''}}
    (\BA^{\underline{\bv}'}\times \BA^{\underline{\bv}''})_{\mathrm{disj}},
  \end{equation*}
  where $\cM^\nu(\underline{\bv}',\underline{\bw})$,
  $\cM^\nu(\underline{\bv}'',\underline{\bw})$ are bow varieties
  associated with dimension vectors
  $\underline{\bv}'$,$\underline{\bw}$ and
  $\underline{\bv}''$,$\underline{\bw}$ respectively.

  \textup{(2)(\cite[Th.~6.9]{2016arXiv160602002N})} $\cM^\nu$ is
  normal, and all fibers of $\varpi$ have the same dimension.
\end{Theorem}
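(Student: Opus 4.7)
The plan is to build the factorization isomorphism in part (1) eigenvalue-by-eigenvalue, and then deduce part (2) by combining factorization with a local study near the zero fiber. Fix a point $(A,B,C,D,a,b)\in\widetilde\cM^{\nu\text{-}\mathrm{ss}}$ whose image in $\BA^{\underline\bv}$ lies in the disjoint locus, so that the spectrum of each $B_{\vout{\xl_i}}$ splits as a disjoint union $\Sigma'_i\sqcup\Sigma''_i$ corresponding to $\underline\bv'$ and $\underline\bv''$. First I would decompose each $V_\zeta = V'_\zeta \oplus V''_\zeta$ into the generalized eigenspace parts of $B_\zeta$; by \cref{lem:traceCD} and the equations $C_hD_h+B_{\vin{h}}=\nu^\CC_h$, $D_hC_h+B_{\vout{h}}=\nu^\CC_h$, the maps $C_h$, $D_h$ send prime to prime and double-prime to double-prime (the potential mixed blocks vanish because the eigenvalues of $B_{\vout{h}}$ and $B_{\vin{h}}$ on opposite sides differ by a nonzero shift forbidden by the disjointness condition at $h$).

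The subtle step is handling $A_\xl$ across a $\boldsymbol\times$-vertex. Here the defining relation reads $(B_{\vin{\xl}}+\delta_{\xl,\xl_0}\nu^\CC_*)A_\xl - A_\xl B_{\vout{\xl}} = -a_\xl b_\xl$. The Sylvester operator on the mixed blocks $\Hom(V'_{\vout{\xl}},V''_{\vin{\xl}})$ and $\Hom(V''_{\vout{\xl}},V'_{\vin{\xl}})$ is invertible (again by the disjointness conditions, including the $\nu^\CC_*$-shifted ones at $\xl_0$), so the mixed components of $A_\xl$ are determined by the mixed components of $a_\xl b_\xl$. Since $a_\xl b_\xl$ is rank one and its image and coimage must lie in a single summand for (S1)/(S2) to be satisfied on each summand, we conclude that $a_\xl$, $b_\xl$ live entirely in the prime or entirely in the double-prime part, and correspondingly $A_\xl$ is block diagonal. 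This gives a well-defined map in one direction; the inverse is the obvious product construction, and $\GV$-equivariance together with preservation of the $\nu^\RR$-semistability inequalities (each test subspace $S$ splits as $S'\oplus S''$ so the inequalities decouple) shows it descends to an isomorphism of GIT quotients.

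For part (2), I would first show that the general fiber of $\varpi$ has the expected dimension by using (1): over the totally disjoint locus in $\BA^{\underline\bv}$ (all points distinct), the fiber is a product of fibers of bow varieties with $\bv_i=1$, which can be computed explicitly to be a single point. Then a semicontinuity argument together with the $\CC^\times$-action scaling the $B_\zeta$ (and simultaneously the $\nu^\CC_h$, $\nu^\CC_*$) contracts any fiber to the zero fiber, so all fibers have the same dimension provided the zero fiber has the right dimension; the zero fiber is studied directly via its Hilbert-scheme-type description in \cite{2016arXiv160602002N}. For normality I would argue that $\cM^\nu$ is smooth on the open stable locus $\cM^{\nu\text{-}\mathrm{s}}$, that the complement has codimension at least two (reducing via factorization to the local situation near a strictly semistable point and checking codimension there), and that the coordinate ring is Cohen-Macaulay by a flat-deformation argument to the zero fiber; Serre's criterion $(R_1)+(S_2)$ then yields normality.

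The main obstacle I anticipate is the Sylvester analysis at the $\boldsymbol\times$-vertices together with verifying that the $\nu^\RR$-(semi)stability conditions really are preserved on both factors; in particular, one must check that a destabilizing $S$ for the product would give a destabilizing object in at least one of the factors, which requires some care because the conditions $(\boldsymbol\nu\mathbf{1})$, $(\boldsymbol\nu\mathbf{2})$ mix data at different segments. Normality is the second hard point, and it seems unavoidable to leverage the identification with the Coulomb branch from \cref{thm:NT} (so that one inherits normality from the general theory of $H^{G_\cO}_*(\cR)$) rather than attack the bow-variety quotient directly.
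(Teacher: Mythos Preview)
The paper does not prove this theorem; both parts are quoted from \cite[Th.~6.9]{2016arXiv160602002N}, so there is no in-paper argument to compare against.  That said, your sketch contains two concrete errors worth flagging.

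\textbf{Part (1), the triangle step.}  Your claim that ``$a_\xl$, $b_\xl$ live entirely in the prime or entirely in the double-prime part'' is false.  In the factorization $\cM^\nu(\underline\bv',\underline\bw)\times\cM^\nu(\underline\bv'',\underline\bw)$ both factors carry the full framing $\underline\bw$, i.e.\ each factor has its own copy of every $\CC_\xl$.  So $a_\xl$ splits as $a_\xl'+a_\xl''$ with $a_\xl'\in V'_{\vin{\xl}}$, $a_\xl''\in V''_{\vin{\xl}}$, and generically both are nonzero; likewise for $b_\xl$.  Consequently the off-diagonal blocks $A_{12},A_{21}$ of $A_\xl$ need \emph{not} vanish: the Sylvester equations $B'_{\vin{\xl}}A_{12}-A_{12}B''_{\vout{\xl}}=-a'_\xl b''_\xl$ (and the analogous one for $A_{21}$) determine them uniquely from the diagonal data, so they are redundant rather than zero.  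The correct map is ``take diagonal blocks $(A',a',b')$, $(A'',a'',b'')$''; its inverse reconstructs $A_{12},A_{21}$ via Sylvester.  One then checks (S1),(S2) for each factor by observing that $\Ima A_{12}$ lies in the $B'_{\vin{\xl}}$-cyclic span of $a'_\xl$ (from the Sylvester solution), so the projection of $\Ima A+\Ima a$ to $V'_{\vin{\xl}}$ is already generated by $\Ima A'+\CC a'_\xl$.

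\textbf{Part (2), the fiber dimension.}  The fibers over the totally disjoint locus are \emph{not} single points.  Already for the $A_1$ bow variety with $\bv=1$, $\bw\ge1$ one has $\cM_{A_1}\cong\CC^2$ while the base $\BA^1$ is one-dimensional, so the generic fiber is a curve.  In general $\dim\cM^\nu=2|\underline\bv|$ and $\dim\BA^{\underline\bv}=|\underline\bv|$, so fibers have dimension $|\underline\bv|$.  The equidimensionality argument in \cite{2016arXiv160602002N} does use factorization plus a translation action to reduce to the central fiber, but the input is not triviality of the generic fiber.  Your instinct in the last paragraph---that normality ultimately comes from the Coulomb-branch identification (\cref{thm:NT}) rather than a bare GIT argument---matches how the cited reference proceeds.
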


In fact, the balanced condition is \emph{not} essential in (1) once we
note that eigenvalues of $B_{\vin{\xl_i}}$ and $B_{\vout{\xl_{i+1}}}$
may differ, but differences are determined by $\nu^\CC_h$ and
differences of dimensions of $V_{\vin{h}}$ and $V_{\vout{h}}$ thanks
to \cref{lem:traceCD}.

When there is no fear of confusion and the open subset
$(\BA^{\underline{\bv}'}\times
\BA^{\underline{\bv}''})_{\mathrm{disj}}$ is clear from the context,
we simply write the above isomorphism after the base change as
$\cM^\nu\approx \cM^\nu(\underline{\bv}',\underline{\bw})\times
\cM^\nu(\underline{\bv}'',\underline{\bw})$ for brevity.

In the context of Coulomb branches the factorization morphism $\varpi$
corresponds to $H^*_\bG(\mathrm{pt})\to H^{\bG_\cO}(\cR)$. The
factorization property above follows from the localization theorem in
the equivariant cohomology and was essentially proved in
\cite[\S5]{2016arXiv160103586B}. The factorization of bow varieties is
an essential ingredient for the identification of bow varieties and
Coulomb branches in \cref{thm:NT}.

\subsection{Hanany-Witten transition}\label{subsec:HWtransition}

Let us recall the Hanany-Witten transition of bow varieties, which is
formulated as isomorphisms between bow varieties with adjacent
$\boldsymbol\times$ and $\boldsymbol\medcirc$ swapped
\cite[\S7]{2016arXiv160602002N}.

Consider the following part of bow data:
\begin{equation*}
  \begin{tikzcd}[column sep=small,baseline=(V.base)]
    |[alias=V]|
    V_1 \arrow[shift left=1,rr,"C"]
    \arrow[out=120,in=60,loop,looseness=3, "B_1"]
    &&
    V_2 \arrow[shift left=1,ll,"D"] \arrow[shift left=1,rr,"A"]
    \arrow[out=120,in=60,loop,looseness=3, "B_2"]
    \arrow[dr,"b"']
    &&
    V_3 
    \arrow[out=120,in=60,loop,looseness=3, "B_3"] \\
    &&&
    \CC \arrow[ur,"a"']
    &
  \end{tikzcd}
\qquad
\begin{aligned}[t]
    & CD + B_2 = \nu^\CC,
    \quad DC + B_1 = \nu^\CC,
    \\
    & B_3A - AB_2 + ab = 0.
\end{aligned}
\end{equation*}
We assume the triangle part is not $\xl_0$ for a moment.
We replace this part by
\begin{equation*}
  \begin{tikzcd}[column sep=small,baseline=(V.base)]
    |[alias=V]|
    V_1 \arrow[rr,"A^\tn"]
    \arrow[out=120,in=60,loop,looseness=3, "B_1"]
    \arrow[dr,"b^\tn"']
    &&
    V_2^\tn \arrow[shift left=1,rr,"C^\tn"] 
    \arrow[out=120,in=60,loop,looseness=3, "B_2^\tn"]
    &&
    V_3 \arrow[shift left=1,ll,"D^\tn"] 
    \arrow[out=120,in=60,loop,looseness=3, "B_3"]
    \\
    & \CC \arrow[ur,"a^\tn"']
  \end{tikzcd}
\qquad
\begin{aligned}[t]
    & C^\tn D^\tn + B_3 = \nu^\CC,
    \quad D^\tn C^\tn + B_1 = \nu^\CC,
    \\
    & B_2^\tn A^\tn - A^\tn B_1 + a^\tn b^\tn = 0.
\end{aligned}
\end{equation*}
so that we have a commutative square with the exact middle row:
\begin{equation*}
    \begin{CD}
        @. V_2 @>\alpha =\left[
          \begin{smallmatrix}
              D \\ A \\ b
          \end{smallmatrix}\right]>>
        V_1 \oplus V_3\oplus \CC @>{\beta
        = \left[
          \begin{smallmatrix}
              AC & (B_3-\nu^\CC) & a
            \end{smallmatrix}\right]}>> V_3 @.
        \\
        @. @| @| @AA{C^\tn}A 
        \\
        0 @>>> V_2 @>\alpha
        >>
        V_1 \oplus V_3\oplus \CC @>{\beta^\tn
        }>> V_2^\tn @>>> 0
        \\
           @. @A{C}AA @| @|
        \\
                @. V_1 @>\alpha^\tn =\left[
          \begin{smallmatrix}
              \nu^\CC - B_1 \\ C^\tn A^\tn \\ b^\tn
            \end{smallmatrix}\right]>>
        V_1 \oplus V_3\oplus \CC @>{\beta^\tn
        = \left[
          \begin{smallmatrix}
              A^\tn & -D^\tn & a^\tn
            \end{smallmatrix}\right]}>> V_2^\tn @..
    \end{CD}
\end{equation*}
This gives an isomorphism between bow varieties where adjacent
$\boldsymbol\medcirc$ and $\boldsymbol\times$ are swapped, and the
dimensions of vector spaces are changed by the rule
\begin{equation*}
   \dim V_2 + \dim V_2^\tn = \dim V_1 + \dim V_3 + 1.
\end{equation*}

When the triangle part is $\xl_0$, the defining equations are changed
to $(B_3+\nu^\CC_*)A - A B_2 + ab=0$,
$(B_2^\tn +\nu^\CC_*)A^\tn - A^\tn B_1 + a^\tn b^\tn =0$. Thus $B_3$
and $B_2^\tn$ must be shifted, hence other defining equations must be
changed to
\begin{equation*}
  C^\tn D^\tn + B_3 = \nu^\CC - \nu_*^\CC,
    \quad D^\tn C^\tn + B_1 = \nu^\CC-\nu_*^\CC.
\end{equation*}

We consider two $\CC^\times$-actions on the relevant part. The first
one is the action induced from the weight $1$ action on $\CC$, hence
$a\mapsto t^{-1} a$, $b\mapsto tb$, $a^\tn\mapsto t^{-1}a^\tn$,
$b^\tn \mapsto tb^\tn$, and other data are unchanged.
The second one is $A$, $b\mapsto t A$, $t b$, $A^\tn$,
$b^\tn\mapsto tA^\tn$, $t b^\tn$, and others are unchanged.
See \cite[\S6.9.2]{2016arXiv160602002N}.

The following was not stated in \cite{2016arXiv160602002N}, but clear
from the definition.

\begin{Lemma}\label{lem:hanany-witt-trans_torus}
  The Hanany-Witten transition respects the $(\CC^\times)^2$-action.
\end{Lemma}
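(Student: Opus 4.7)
My plan is to exhibit, for each of the two $\CC^\times$-factors, a diagonal automorphism $\sigma_t$ of the ambient space $V_1\oplus V_3\oplus \CC$ appearing in the cokernel presentation of the Hanany-Witten transition, such that $\sigma_t$ intertwines the map $\alpha=\begin{bmatrix} D\\ A\\ b\end{bmatrix}\colon V_2\to V_1\oplus V_3\oplus\CC$ with its transform $\alpha_t$ obtained from the action on the side-$1$ data. By universality of the cokernel $V_2^\tn=\operatorname{coker}\alpha$, such a $\sigma_t$ descends to an isomorphism $\varphi_t\colon V_2^\tn\xrightarrow{\sim} V_{2,t}^\tn$ of the old and new cokernels, and this (possibly after an overall scalar twist) will play the role of the gauge identification realizing the induced $\CC^\times$-action on the side-$2$ data.

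Concretely, for the first $\CC^\times$ (weight $1$ on the framing $\CC_\xl$, so $a\mapsto t^{-1}a$, $b\mapsto tb$) I take $\sigma_t=\operatorname{diag}(1,1,t)$, and for the second (with $A,b\mapsto tA,tb$) I take $\sigma_t=\operatorname{diag}(1,t,t)$; in both cases $\sigma_t\circ\alpha=\alpha_t$ is immediate. The remaining verification then proceeds by reading off the induced transformations on side-$2$ from the presentation of the transition. The intertwining relation $\beta_t^\tn\circ\sigma_t=\varphi_t\circ\beta^\tn$ restricted to each summand $V_1$, $V_3$, $\CC$ yields the transformations of $A^\tn=\beta^\tn|_{V_1}$, $-D^\tn=\beta^\tn|_{V_3}$, and $a^\tn=\beta^\tn|_{\CC}$; the identity $b^\tn=b\circ C$ gives that of $b^\tn$ directly; the characterization $C^\tn\circ\beta^\tn=\beta$, combined with a comparison of $\beta_t\circ\sigma_t$ against $\beta$, produces $C_t^\tn$; and $B_2^\tn=\nu^\CC-D^\tn C^\tn$ transports by conjugation.

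For the first action, $\varphi_t$ already matches the prescribed transformation on all side-$2$ data. For the second action, since $\sigma_t$ scales two of the three summands uniformly, a uniform factor of $t^{-1}$ appears in the induced identification; I will correct this by taking the gauge iso to be $\psi_t:=t^{-1}\varphi_t$, which then yields $A^\tn\mapsto tA^\tn$, $b^\tn\mapsto tb^\tn$, while $C^\tn$, $D^\tn$, $a^\tn$, $B_2^\tn$ remain invariant. The case when the triangle part is $\xl_0$ is handled identically, because the extra $\nu^\CC_*$-shifts modify the defining scalars on both sides simultaneously and do not affect the cokernel construction. I do not anticipate a genuine obstacle: the only care needed is the bookkeeping of factors of $t$ distributed among $\sigma_t$, the induced $\varphi_t$, and the scalar twist $\psi_t$, which is precisely consistent with the author's remark that the lemma is ``clear from the definition.''
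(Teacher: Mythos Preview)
Your proof is correct and supplies exactly the routine verification that the paper omits: the paper itself gives no argument beyond the remark that the lemma is ``clear from the definition,'' so there is nothing further to compare. Your choice of $\sigma_t=\operatorname{diag}(1,1,t)$ and $\operatorname{diag}(1,t,t)$, the cokernel identification, and the scalar gauge correction $\psi_t=t^{-1}\varphi_t$ for the second action are all on the mark; the relations $b^{\tn}=bC$ and $C^{\tn}\beta^{\tn}=\beta$ that you invoke are precisely those encoded in the commutative diagram defining the transition.
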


\begin{NB}
\begin{Lemma}
  We have the followings for $N\in\ZZ_{\ge 0}$:
\begin{equation*}
  \begin{split}
    & \tr_{V_1} (t + B_1)^N
  \begin{NB2}
      {\scriptscriptstyle
    = \tr_{V_1}(t + \nu^\CC - DC)^N
    = \tr_{V_2} (t + \nu^\CC - CD)^N + (t+\nu^\CC)^N(\dim V_1 - \dim V_2)}
  \end{NB2}%
  = \tr_{V_2} (t + B_2)^N + (t+\nu^\CC)^N(\dim V_1 - \dim V_2), \\
  & \tr_{V_3} (t + B_3)^N
  \begin{NB2}
      {\scriptscriptstyle
    = \tr_{V_3}(t+\nu^\CC - C^\tn D^\tn)^N
    = \tr_{V_2^\tn}(t+\nu^\CC - D^\tn C^\tn)^N
    + (t+\nu^\CC)^N(\dim V_3 - \dim V_2^\tn)}
  \end{NB2}%
  = \tr_{V_2^\tn} (t + B_2^\tn)^N
  + (t+\nu^\CC)^N(\dim V_3 - \dim V_2^\tn).
  \end{split}
\end{equation*}
\end{Lemma}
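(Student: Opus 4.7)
The plan is to deduce both identities as direct corollaries of \cref{lem:traceCD}, after absorbing the parameter $\nu^\CC$ into the indeterminate. Using the defining equations of the two-way quiver part, I would rewrite
\[
t + B_1 = (t + \nu^\CC) - DC, \qquad t + B_2 = (t + \nu^\CC) - CD,
\]
where $DC \in \End(V_1)$ and $CD \in \End(V_2)$. Setting $s := t + \nu^\CC$, I would then apply \cref{lem:traceCD} to the pair $(-C, D)$ (equivalently, after replacing $D$ by $-D$), which substitutes $CD \mapsto -CD$ and $DC \mapsto -DC$. This yields
\[
\tr_{V_2}(s - CD)^N = \tr_{V_1}(s - DC)^N + s^N(\dim V_2 - \dim V_1),
\]
and rearranging, then substituting $s = t + \nu^\CC$ back, gives the first claimed identity.

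The second identity is proved in exactly the same way, applied instead to the Hanany-Witten transformed data. Here $C^\tn \colon V_2^\tn \to V_3$ and $D^\tn \colon V_3 \to V_2^\tn$ satisfy the relations $C^\tn D^\tn + B_3 = \nu^\CC$ and $D^\tn C^\tn + B_2^\tn = \nu^\CC$, so $V_2^\tn$ and $V_3$ play the roles of $V_1$ and $V_2$ above respectively. The same argument then produces the desired
\[
\tr_{V_3}(t + B_3)^N = \tr_{V_2^\tn}(t + B_2^\tn)^N + (t+\nu^\CC)^N(\dim V_3 - \dim V_2^\tn).
\]
The statement tacitly assumes that the triangle vertex involved in the Hanany-Witten move is not $\xl_0$; in the shifted case the right-hand side above should instead carry $(t + \nu^\CC - \nu_*^\CC)^N$, reflecting the $-\nu_*^\CC$ shift in the defining equations recorded just before \cref{lem:hanany-witt-trans_torus}.

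There is essentially no obstacle here: both identities are bookkeeping on top of \cref{lem:traceCD}. The only mild subtlety is the sign flip needed to convert the $-DC$, $-CD$ coming from the defining equations into the $+DC$, $+CD$ form of \cref{lem:traceCD}. As an alternative one can bypass \cref{lem:traceCD} entirely and argue in-line, binomially expanding $(s - DC)^N = \sum_{k=0}^N \binom{N}{k} s^{N-k} (-1)^k (DC)^k$, invoking the cyclic invariance of the trace to identify $\tr_{V_1}(DC)^k = \tr_{V_2}(CD)^k$ for all $k \geq 1$, and observing that only the $k = 0$ term contributes the discrepancy $s^N(\dim V_1 - \dim V_2)$.
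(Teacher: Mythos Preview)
Your proposal is correct and matches the paper's own argument exactly: the NB2 hints embedded in the statement show precisely the substitution $t\mapsto t+\nu^\CC$ followed by an application of \cref{lem:traceCD} (with the sign absorbed into one of $C,D$), and your in-line binomial alternative is the same computation the paper records for \cref{lem:traceCD} itself. Your remark about the $\nu_*^\CC$ shift when the triangle is $\xl_0$ is also on point.
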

\end{NB}%
 
Note also that the factorization morphism does not essentially change
under Hanany-Witten transition by \cref{lem:traceCD}. It is because we
can use spectra of $B_1$, $B_3$ for the definition of the
factorization morphism, which are unchanged under Hanany-Witten
transition.

\subsection{Invariants}

Let $h$ be $\boldsymbol\medcirc$ in a bow diagram. Let $N_h$ be
$R(\vin{h}) - R(\vout{h})$ where $\vin{h}$, $\vout{h}$ are as in
(\ref{item:twoways}) of \cref{subsec:definition}. Let $h_\sigma$,
$h_{\sigma+1}$ be consecutive $\boldsymbol\medcirc$. We define
\begin{equation*}
  N(h_\sigma,h_{\sigma+1}) \defeq N_{h_\sigma} - N_{h_{\sigma+1}} +
  (\text{the number of $\boldsymbol\times$ between $h_{\sigma+1}\to h_\sigma$}),
\end{equation*}
where $h_{\sigma+1}\to h_\sigma$ means that on the arc starting from $h_{\sigma+1}$
towards $h_\sigma$ in the anticlockwise direction.

Similarly we define $N_{\xl}$ and $N(\xl_i, \xl_{i+1})$ in the same
way by replacing $\boldsymbol\medcirc$ by $\boldsymbol\times$, and the
anticlockwise direction by clockwise one.

Then $N(h_\sigma,h_{\sigma+1})$, $N(\xl_i,\xl_{i+1})$ are invariant under
Hanany-Witten transition (\cite[Lem.~7.6]{2016arXiv160602002N}).

We have two other invariants
\begin{equation}\label{eq:2}
  -\sum_{\sigma=1}^\ell N_{h_\sigma}^2
  + \sum_{i=0}^{n-1} (R({\vout{\xl_i}}) + R({\vin{\xl_i}})),
  \qquad
  -\sum_{i=0}^{n-1} N_{\xl_i}^2
  + \sum_{\sigma=1}^\ell (R({\vout{h_\sigma}}) + R({\vin{h_\sigma}})),
\end{equation}
where
\begin{tikzpicture}[baseline=0pt]
    \draw[decorate, decoration = {segment
        length=2mm, amplitude=.4mm}] 
    (0,0) -- (2,0);
    \node at (1,0) {$\boldsymbol\times$};
    \node at (0.3,0.3) {$\vout{\xl_i}$};
    \node at (1.8,0.3) {$\vin{\xl_i}$};
    \node at (1,0.4) {$\xl_i$};
\end{tikzpicture}
and
\begin{tikzpicture}[baseline=0pt]
    \draw[decorate, decoration = {segment
        length=2mm, amplitude=.4mm}] 
    (0,0) -- (2,0);
    \node at (1,0) {$\boldsymbol\medcirc$};
    \node at (0.3,0.3) {$\vout{h_\sigma}$};
    \node at (1.8,0.3) {$\vin{h_\sigma}$};
    \node at (1,0.4) {$h_\sigma$};
\end{tikzpicture}
invariant under Hanany-Witten
transition (\cite[Lem.~7.6]{2016arXiv160602002N}).

The following is stated in \cite[Prop.~7.19]{2016arXiv160602002N}, but
it is based on a wrong statement.

\begin{Proposition}\label{prop:corrected}
  There is at most one bow diagram satisfying the balanced condition
  among those obtained by successive applications of Hanany-Witten
  transitions.
\end{Proposition}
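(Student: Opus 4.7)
The plan is to show, purely combinatorially, that under the balanced hypothesis the Hanany-Witten invariants $N(h_\sigma, h_{\sigma+1})$, $N(\xl_i, \xl_{i+1})$ together with the quadratic invariants in \eqref{eq:2} reconstruct the whole bow diagram uniquely. So suppose $D_1$ and $D_2$ are two balanced bow diagrams related by a chain of Hanany-Witten transitions; I want to conclude $D_1 = D_2$.

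First I would observe that a Hanany-Witten transition only exchanges an adjacent $\boldsymbol\medcirc$ with an adjacent $\boldsymbol\times$, never two marks of the same kind. Hence the cyclic order of the $\boldsymbol\medcirc$'s alone, and of the $\boldsymbol\times$'s alone, is preserved; since individual $\boldsymbol\medcirc$'s are distinguished by their parameters $\nu^\CC_h, \nu^\RR_h$, the labels $h_1, \dots, h_\ell$ and $\xl_0, \dots, \xl_{n-1}$ match between $D_1$ and $D_2$. The balanced condition forces $N_{h_\sigma} = 0$ for every $\sigma$, so
\[
N(h_\sigma, h_{\sigma+1}) = \#\{\boldsymbol\times\text{'s in the arc from } h_{\sigma+1} \text{ to } h_\sigma \text{ anticlockwise}\}.
\]
By Hanany-Witten invariance this count agrees in $D_1$ and $D_2$; combined with the already-fixed cyclic order of the $\xl_i$'s, this pins down which $\xl$ lies in which arc between consecutive $\boldsymbol\medcirc$'s, so the cyclic arrangements of marks in $D_1$ and $D_2$ coincide.

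Next I would reconstruct the dimensions. Under the balanced condition $R$ is constant across every $\boldsymbol\medcirc$, hence depends only on the arc between consecutive $\boldsymbol\times$'s; call these values $\bv_0, \dots, \bv_{n-1}$. Then each $N_{\xl_i}$ is (up to sign) $\bv_i - \bv_{i-1}$, and $\sum_i N_{\xl_i} = 0$ cyclically. Invariance of $N(\xl_i, \xl_{i+1})$, together with the fact (from the previous paragraph) that the number $\bw_i$ of $\boldsymbol\medcirc$'s in each arc between consecutive $\boldsymbol\times$'s is the same in $D_1$ and $D_2$, yields that the differences $N_{\xl_i} - N_{\xl_{i+1}}$ agree for $D_1$ and $D_2$; with the cyclic sum condition this forces the $N_{\xl_i}$ themselves to agree, and hence the $\bv_i$ of $D_1$ and $D_2$ coincide up to a common additive constant $c$. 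To pin down $c$ I would invoke the second invariant in \eqref{eq:2}, which under balanced reduces to $-\sum_i N_{\xl_i}^2 + 2\sum_i \bw_i \bv_i$: the first sum is already fixed, so $\sum_i \bw_i \bv_i$ must agree for $D_1$ and $D_2$. Substituting $\bv_i^{D_1} = \bv_i^{D_2} + c$ gives $c \sum_i \bw_i = 0$, and since $\sum_i \bw_i = \ell > 0$ (the case $\ell = 0$ being vacuous as no Hanany-Witten move is possible), we conclude $c = 0$ and $D_1 = D_2$.

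The main point to watch will be matching the sign and orientation conventions in the definitions of $\vin{\cdot}$, $\vout{\cdot}$, $N_h$, $N_\xl$, $N(-,-)$ against those in \cite{2016arXiv160602002N}, so that the telescoping of the $N_{\xl_i}$'s and the quadratic computation close up correctly; beyond this bookkeeping the whole argument is elementary linear algebra over $\ZZ$.
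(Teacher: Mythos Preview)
Your argument has a genuine gap at the step where you claim that the counts $N(h_\sigma,h_{\sigma+1})$ ``combined with the already-fixed cyclic order of the $\xl_i$'s pins down which $\xl$ lies in which arc.'' This implication is false as a purely combinatorial statement: knowing the cyclic order $\xl_0,\xl_1,\dots,\xl_{n-1}$ and the number of $\boldsymbol\times$'s in each arc between consecutive $\boldsymbol\medcirc$'s determines the interleaving only up to a simultaneous cyclic shift $\xl_i\mapsto\xl_{i+i_0}$ of the $\boldsymbol\times$-labels through the arcs. For instance with $n=\ell=2$ and one $\boldsymbol\times$ in each of the two arcs, the two assignments ($\xl_0$ in the arc $h_2\to h_1$, $\xl_1$ in $h_1\to h_2$) and its swap are both consistent with your data. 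This is precisely the subtlety the paper flags as having been ``overlooked in \cite[Prop.~7.19]{2016arXiv160602002N}.''

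To close this gap the paper supplies an extra argument you are missing: under a single Hanany--Witten move, $N_{h_\sigma}$ changes by $+1$ or $-1$ according to whether a $\boldsymbol\times$ crosses $h_\sigma$ anticlockwise or clockwise. Since $N_{h_\sigma}=0$ in both $D_1$ and $D_2$, the signed crossing number at each $h_\sigma$ is zero; combined with the fact that the $\boldsymbol\times$'s cannot cross one another, this forces the first $\boldsymbol\times$ after $h_\sigma$ (in either direction) to be the same in $D_1$ and $D_2$, ruling out the shift. Your treatment of the dimension vector in the second paragraph is essentially correct and equivalent to the paper's (you go via the $N_{\xl_i}$ while the paper goes via $\underline{\mathbf u}=\underline{\bw}-C\underline{\bv}$, and either quadratic invariant in \eqref{eq:2} fixes the additive constant). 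The remark that the $\boldsymbol\medcirc$'s are ``distinguished by their parameters'' is unnecessary, since the marks carry labels by definition; but that is harmless.
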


\begin{proof}
  Let us suppose a bow diagram satisfying the balanced condition is
  given.
  Then $N(h_{\sigma}, h_{\sigma+1})$ is the number of
  $\boldsymbol\times$ between $h_{\sigma+1}\to h_\sigma$. Hence the
  collection $\{ N(h_\sigma,h_{\sigma+1}) \}_{\sigma=1}^\ell$
  determines the distribution of $\boldsymbol\medcirc$ and
  $\boldsymbol\times$. On the other hand the vector $\underline{\bw}$
  in \cref{subsec:coulomb-branch} is given by the number of
  $\boldsymbol\medcirc$ on the arc $\xl_{i}\to\xl_{i+1}$. Therefore
  $\underline{\bw}$ is determined up to a cyclic permutation. This is
  because the numbering of $\boldsymbol\times$ by $\xl_i$ is not fixed
  by $N(h_{\sigma},h_{\sigma+1})$, but the only ambiguity is given by
  a shift $\xl_i\mapsto \xl_{i+i_0}$ (modulo $n$) for some
  $i_0$. (This ambiguity was overlooked in
  \cite[Prop.~7.19]{2016arXiv160602002N}.)

  But this shift cannot be achieved by Hanany-Witten transitions. By
  Hanany-Witten transitions, $N_{h_\sigma}$ is changed by the number
  of $\boldsymbol\times$ crossing
  $\overset{h_\sigma}{\boldsymbol\medcirc}$ in the anticlockwise
  direction minus the number of $\boldsymbol\times$ crossing in the
  clockwise direction. Therefore in order to keep $N_{h_\sigma}$
  vanishing, those two numbers must be equal. Therefore the numbering
  for the first $\boldsymbol\times$ after
  $\overset{h_\sigma}{\boldsymbol\medcirc}$ (in either direction)
  remains the same. Thus the shift is not possible.
  Thus the numbering of $\boldsymbol\times$ by $\xl_i$ is unique,
  hence $\underline{\bw}$ is determined.

  Next note that $N(\xl_i,\xl_{i+1})$ is the $i$-th entry of
  $\underline{\mathbf u} = \underline{\bw} - C\underline{\bv} = (\bw_i
  + \bv_{i-1} + \bv_{i+1} - 2\bv_i)_{i=0}^{n-1}$
  \cite[Lem.~7.18]{2016arXiv160602002N}. Therefore the collection
  $\{ N(\xl_i,\xl_{i+1}) \}$ and $\underline{\bw}$ determine
  $\underline{\bv}$ up to an addition of a multiple of
  ${}^t(1,1,\dots,1)$. But an addition of ${}^t(1,1,\dots,1)$
  increases two invariants in \eqref{eq:2} by $2n$ and $2\ell$
  respectively. Hence $\bv$, i.e., numbers $R(\zeta)$ on segments are
  determined.
\end{proof}

\begin{NB}
  Let us note that
  \begin{equation*}
    \bv_0 + \bv_1 + \cdots + \bv_{n-1} = 
    - \frac12 \sum_{\sigma=1}^\ell N_{h_\sigma}^2
    + \frac12 \sum_{i=0}^{n-1} (\bv_{\vout{\xl_i}} + \bv_{\vin{\xl_i}})
  \end{equation*}
  for a balanced case.
\end{NB}%

\subsection{Another form}\label{subsec:another-form}

Let us take a bow diagram satisfying the balanced condition, and
define dimension vectors $\underline\bv = (\bv_0,\dots,\bv_{n-1})$,
$\underline\bw = (\bw_0,\dots,\bw_{n-1})$ as in
\cref{subsec:coulomb-branch}. We apply Hanany-Witten transitions
successively so that we separate $\boldsymbol\times$ and
$\boldsymbol\medcirc$ as follows.
\begin{equation}\label{eq:16}
    \begin{tikzpicture}[baseline=(current  bounding  box.center),
        label distance=1pt]
        \draw[rounded corners=15pt] (0,0) rectangle ++(8,2);
        \node[label=below:$\xl_1$,label=above:$\mu_1$] (x1) at (1,0) 
        {$\boldsymbol\times$};
        \node[label=below:$\xl_2$,label=above:$\mu_2$] at (2,0)
        {$\boldsymbol\times$};
        \node[label=below:$h_1$,label=above:${}^t\!\lambda_1$] at (2,2)
        {$\boldsymbol\medcirc$};
        \node[label=below:$\xl_3$,label=above:$\mu_3$] at (3,0)
        {$\boldsymbol\times$};
        \node[label=below:$h_2$,label=above:${}^t\!\lambda_2$] at (3,2)
        {$\boldsymbol\medcirc$};
        \node at (4,-.5) {$\cdots$};
        \node at (4,1.4) {$\cdots$};
        \node[label=below:$\xl_{n-2}$,label=above:$\mu_{n-2}$] at (5,0)
        {$\boldsymbol\times$};
        \node[label=below:$h_{\ell-1}$,label=above:${}^t\!\lambda_{\ell-1}$]
        at (5,2) {$\boldsymbol\medcirc$};
        \node[label=below:$\xl_{n-1}$,label=above:$\mu_{n-1}$] at
        (6,0) {$\boldsymbol\times$};
        \node[label=below:$h_{\ell}$,label=above:${}^t\!\lambda_{\ell}$] at
        (6,2) {$\boldsymbol\medcirc$};
        \node[label=below:$\xl_{0}$,label=above:$\mu_{n}$] at (7,0)
        {$\boldsymbol\times$};
        \node at (8.5,1) {$\bv_{0}$};
        \node at (-1.2,1) {$\bv_{0}+\sum_i i\bw_i$};
    \end{tikzpicture}
\end{equation}
See the proof of \cite[Cor.~7.21]{2016arXiv160602002N}. We do not move
$\boldsymbol\medcirc$ across $\xl_0$, hence the dimension $\bv_0$ next
to $\xl_0$ is unchanged.
Numbers ${}^t\!\lambda_\xp$, $\mu_i$ above $\boldsymbol\medcirc$,
$\boldsymbol\times$ indicate the values of $N_{h_\xp}$, $N_{\xl_i}$
respectively. Two numbers $\bv_0$ and $\bv_0+ \sum_i i\bw_i$ are
dimensions of vector spaces on two segments, between $\xl_0$ and
$h_\ell$, $h_1$ and $\xl_1$ respectively.

The numbers ${}^t\!\lambda_\xp$, $\mu_i$ are $N_{h_\xp}$ and $N_{\xl_i}$
respectively. In order to explain how ${}^t\!\lambda_\xp$, $\mu_i$ are
given in terms of $\underline{\bv}$, $\underline{\bw}$, we introduce
weights of $P_{\algsl(n)_\aff}$, $P_{\widehat{\gl}(n)}$.
We first define two weights $\lambda$, $\mu$ of $P_{\algsl(n)_\aff}$ by
\begin{equation}\label{eq:6}
   \lambda = \sum_{i=0}^{n-1} \bw_i \Lambda_i, \qquad
   \mu = \sum_{i=0}^{n-1} (\bw_i \Lambda_i - \bv_i \alpha_i).
\end{equation}
We have $\langle d, \lambda\rangle = 0$,
$\langle d,\mu\rangle = -\bv_0$. (Note that this is different from the
convention in \cite[\S7.6]{2016arXiv160602002N} by
$-\bv_0\delta$. Since we change $\bv_0$, the current convention is
more natural.)

Let $\ell$ be the level of $\lambda$, which is equal also to the level
of $\mu$. It is $\langle c,\lambda\rangle = \langle c,\mu\rangle$,
where $c$ is the central element in $\widehat{\algsl}(n)$. Concretely
it is equal to $\sum_{i=0}^{n-1} \bw_i$, hence the number of
$\boldsymbol\medcirc$. Therefore we can number $\boldsymbol\medcirc$ as
$h_1$, \dots, $h_\ell$ as in \eqref{eq:16}.

We define two integer vectors $[\lambda_1,\dots,\lambda_n]$,
$[\mu_1,\dots,\mu_n]$ by
\begin{equation*}
  \lambda_i = \sum_{j=i}^{n-1} \bw_j, \quad
  \mu_i = \bv_{n-1} - \bv_0 + \sum_{j=i}^{n-1} \mathbf u_j,
  \begin{NB}
    \lambda_n = 0, \quad \mu_n = \bv_{n-1} - \bv_0
  \end{NB}%
\end{equation*}
where $\mathbf u_i$ is the $i$-th entry of
$\underline{\mathbf u} = \underline{\bw} - C\underline{\mathbf v}$ as
in the proof of \cref{prop:corrected}.
\begin{NB}
  Note that we have $\lambda_n = 0$, which is equivalent to
  ${}^t\!\lambda_1 < n$.
\end{NB}%
We consider them as level $\ell$ weights of
$\widehat{\mathfrak{gl}}(n)$. Note that
$\sum_{i=1}^n \lambda_i = \sum_{i=1}^n \mu_i$
\begin{NB}
  We have $\sum \lambda_i = \sum_{j=1}^{n-1} j\bw_j$. On the other
  hand,
  \begin{equation*}
    \begin{split}
      & \sum \mu_i = n(\bv_{n-1} - \bv_0) + \sum_{j=1}^{n-1} j \mathbf
      u_j
      = n(\bv_{n-1} - \bv_0) + \sum_{j=1}^{n-1}
      j(\bw_j + \bv_{j-1} + \bv_{j+1} - 2\bv_j) \\
      =\; & n(\bv_{n-1} - \bv_0) + \sum_{j=1}^{n-1} j\bw_j +
      \sum_{j=0}^{n-2} (j+1)\bv_j + \sum_{j=2}^n (j-1)\bv_j -
      2\sum_{j=1}^{n-1} j\bv_j \\
      =\; & -n \bv_0 + \sum_{j=1}^{n-1} j\bw_j
      + \sum_{j=0}^{n-1} (j+1)\bv_j + \sum_{j=1}^n (j-1)\bv_j -
      2\sum_{j=1}^{n-1} j\bv_j
      = \sum_{j=1}^{n-1} j\bw_j.
    \end{split}
  \end{equation*}
\end{NB}%
It means that the pairings with the central element
$\operatorname{diag}(1,\dots,1)$ in $\mathfrak{gl}(n)$ (charges) are
the same for $\lambda$ and $\mu$.
\begin{NB}
We have the following:
\begin{equation*}
  \begin{split}
    & \lambda = \sum_{i=1}^{n-1} (\lambda_i - \lambda_{i+1})\Lambda_i
    + (\ell + \lambda_n - \lambda_1)\Lambda_0,
    \begin{NB2}
      = \ell \Lambda_0 +\sum_{i=1}^{n} \lambda_i (\Lambda_i -
      \Lambda_{i-1})
    \end{NB2}%
    \\
    & \mu = \sum_{i=1}^{n-1} (\mu_i - \mu_{i+1})\Lambda_i + (\ell +
    \mu_n - \mu_1) \Lambda_0 - \bv_0 \delta.
  \end{split}
\end{equation*}
\end{NB}%

Note that $\lambda$ is dominant by its definition. Hence it is
contained in the fundamental alcove, i.e.,
\begin{equation*}
    \lambda_1 \ge \lambda_2 \ge \cdots \ge \lambda_n \ge \lambda_1 - \ell.
\end{equation*}
Let us consider the corresponding generalized Young diagram
$[\lambda_1,\dots,\lambda_n]$ and its transpose
$[{}^t\!\lambda_{1},\dots,{}^t\!\lambda_{\ell}]$. The latter is a
generalized Young diagram with the level $n$ constraint, i.e.,
\begin{equation*}
  {}^t\!\lambda_1\ge {}^t\!\lambda_2\ge \cdots \ge {}^t\lambda_\ell
  \ge {}^t\lambda_1 - n.
\end{equation*}
See~\cref{sec:weights-affine-lie}.
Now numbers in \eqref{eq:16} are given by these rules.

Recall that we did not move $\boldsymbol\medcirc$ over $\xl_0$ in this
procedure. Let us move $\overset{h_1}{\boldsymbol\medcirc}$
anticlockwise overall $\xl_i$ including $\xl_0$ to return back to the
same picture as \eqref{eq:16}. But this process changes numbers on
$\boldsymbol\medcirc$, $\boldsymbol\times$, and also dimensions of
vector spaces on two segments. The result is as follows.
\begin{equation}\label{eq:3}
    \begin{tikzpicture}[baseline=(current  bounding  box.center),
        label distance=1pt]
        \draw[rounded corners=15pt] (0,0) rectangle ++(8,2);
        \node[label=below:$\xl_1$,label=above:$\mu_1\!-\!1$] (x1) at (1,0) 
        {$\boldsymbol\times$};
        \node[label=below:$\xl_2$,label=above:$\mu_2\!-\!1$] at (2.2,0)
        {$\boldsymbol\times$};
        \node[label=below:$h_2$,label=above:${}^t\!\lambda_2$] at (2,2)
        {$\boldsymbol\medcirc$};
        \node[label=below:$h_3$,label=above:${}^t\!\lambda_3$] at (3,2)
        {$\boldsymbol\medcirc$};
        \node at (4,-.5) {$\cdots$};
        \node at (4,1.4) {$\cdots$};
        \node[label=below:$h_{\ell}$,label=above:${}^t\!\lambda_{\ell}$]
        at (5,2) {$\boldsymbol\medcirc$};
        \node[label=below:$\xl_{n-1}$,label=above:$\mu_{n-1}\!-\!1$] at
        (5.6,0) {$\boldsymbol\times$};
        \node[label=below:$h_{1}$,label=above:${}^t\!\lambda_{1}\!-\!n$] at
        (6,2) {$\boldsymbol\medcirc$};
        \node[label=below:$\xl_{0}$,label=above:$\mu_{n}\!-\!1$] at (7,0)
        {$\boldsymbol\times$};
        \node at (9.2,1) {$\bv_{0}\!-\!{}^t\!\lambda_1\!+\!n$};
        \node at (-1.5,1) {$\bv_{0}\!+\!\sum_i i\bw_i\!-\!{}^t\!\lambda_1$};
    \end{tikzpicture}
\end{equation}
\begin{NB}
  We have $\sum_i i\bw_i - n = \sum_i (\mu_i - 1) = {}^t\!\lambda_2 +\dots + {}^t\!\lambda_\ell + {}^t\!\lambda_1 - n$.
\end{NB}%
Note that
$[{}^t\!\lambda_1,\dots,{}^t\!\lambda_\ell]\mapsto
[{}^t\!\lambda_2,\dots,{}^t\!\lambda_\ell,{}^t\!\lambda_1-n]$
corresponds to a simultaneous shift
$[\lambda_1,\dots,\lambda_n]\mapsto [\lambda_1-1,\dots,\lambda_n-1]$
(see \cref{sec:weights-affine-lie}). Therefore this process shifts both
$\lambda$ and $\mu$ simultaneously.

\begin{NB}
  Let us check that the invariants in \eqref{eq:2} are unchanged. Let
  us consider the first invariant. Since $N_{h_\sigma}$ is unchanged
  for $\sigma\ge 2$ and $N_{h_1}$ is changed from ${}^t\!\lambda_1$ to
  ${}^t\!\lambda_1 - n$, $\sum_{\sigma} N_{h_\sigma}^2$ is increased by
  $n^2 - 2n{}^t\!\lambda_1$. On the other hand,
  $\bv_{\vout{\xl_1}}+\bv_{\vin{\xl_1}}$ is decreased by
  $2{}^t\!\lambda_1 - 1$, $\bv_{\vout{\xl_2}}+\bv_{\vin{\xl_2}}$ is
  decreased by $2{}^t\!\lambda_1 - 3$, and so on, and finally
  $\bv_{\vout{\xl_0}}+\bv_{\vin{\xl_0}}$ is decreased by
  $2{}^t\!\lambda_1 - 2n + 1$. In total, the decrease in the second sum
  is $2n{}^t\!\lambda_1 - n^2$. So the sum is unchanged.

  Next consider the second invariant. Each $N_{\xl_i}$ is changed from
  $\mu_i$ to $\mu_i - 1$. Therefore $\sum_i N_{\xl_i}^2$ is increased
  by $\sum_i (-2\mu_i + 1) = n - 2\sum_i \mu_i$. On the other hand,
  the sum of dimensions of vector spaces next to the first
  $\boldsymbol\medcirc$ decreases by
  ${}^t\!\lambda_1 + {}^t\!\lambda_2$, the next sum decreases by
  ${}^t\!\lambda_2 + {}^t\!\lambda_3$, and the sum one before the
  last decreases by ${}^t\!\lambda_{\ell-1} + {}^t\!\lambda_\ell$. And
  the last one decreases by
  ${}^t\!\lambda_\ell + {}^t\!\lambda_1 - n$. The total is
  $2\sum{}^t\!\lambda_\sigma - n = 2\sum_i \mu_i - n$.
\end{NB}%

By \cite[Prop.~7.20]{2016arXiv160602002N} (more precisely its dual
version), bow diagrams \eqref{eq:16} can be transformed to a balanced one by
successive applications of Hanany-Witten transition, as
$[{}^t\!\lambda_1,\dots,{}^t\!\lambda_\ell]$ is in the fundamental
alcove. And it is unique by \cref{prop:corrected}.
Dimension vectors are read off from numbers in \eqref{eq:16} as
\begin{equation*}
  \begin{split}
    & \bw_i = \lambda_i - \lambda_{i+1} \quad (1\le i\le n-1), \qquad
    \bw_0 = \lambda_n - (\lambda_1 - \ell), \\
    & \mathbf u_i = \mu_i - \mu_{i+1} \quad (1\le i\le n-1), \qquad
    \mathbf u_0 = \mu_n - (\mu_1 - \ell),
  \end{split}
\end{equation*}
and ambiguity $(\bv_0,\dots,\bv_{n-1}) + \ZZ (1,\dots,1)$ is fixed by
one of invariants in \eqref{eq:2} by
\begin{equation*}
  \bv_0 + \dots + \bv_{n-1} = 
  - \frac12 \sum_{\sigma=1}^\ell {}^t\!\lambda_\sigma^2
        + \frac12 \sum_{i=0}^{n-1} (R({\vout{\xl_i}}) + R({\vin{\xl_i}})).
\end{equation*}

Let us denote the bow variety with the balanced condition with
dimension vectors $\underline\bv$, $\underline\bw$ by
$\cM(\lambda,\mu)$ hereafter, where $\lambda$, $\mu$ are given by
\eqref{eq:6}.
(Note that it was denoted by $\cM(\mu,\lambda)$ in
\cite{2016arXiv160602002N}.)

\subsection{Stratification}

By \cite[Prop.~4.5, Th.~7.26]{2016arXiv160602002N} 
\begin{Theorem}\label{thm:stratification}
    \textup{(1)} Suppose $\ell\neq 1$. We have a stratification
\begin{equation*}
    \cM(\lambda,\mu) =
    \bigsqcup_{\kappa,\underline{k}} \cM^{\mathrm{s}}(\kappa,\mu)
    \times S^{\underline{k}}(\CC^2\setminus\{0\}/(\ZZ/\ell \ZZ)),
\end{equation*}
where $\underline{k} = [k_1,k_2,\dots]$ is a partition and $\kappa$ is
a dominant weight with $\mu\le\kappa\le \lambda - |\underline{k}|\delta$.
The same is true if we replace $\CC^2\setminus\{0\}/(\ZZ/\ell \ZZ)$ by
$\CC^2$ and we allow only $\kappa = \lambda - |\underline{k}|\delta$
when $\ell = 1$.

\textup{(2)} Take a generic real parameter $\nu^\RR$. Then
$\pi\colon \cM^{\nu^\RR}(\lambda,\mu) \to \cM(\lambda,\mu)$ in \eqref{eq:7}
is a semismall resolution with respect to the above stratification,
and all strata are relevant.
\end{Theorem}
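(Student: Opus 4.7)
The plan is to prove part (1) by decomposing points of $\cM(\lambda,\mu)$ along the factorization morphism $\varpi$ and matching local contributions with transverse Kleinian slices, and to deduce part (2) from a fiber dimension count using genericity of $\nu^\RR$. The starting point for (1) is to identify the open stratum $\cM^{\mathrm{s}}(\lambda,\mu)$ (the case $\kappa = \lambda$, $\underline k = \emptyset$) as the locus where $\GV$ acts freely, which is open and smooth by the stability conditions $(\boldsymbol\nu\mathbf 1)$ and $(\boldsymbol\nu\mathbf 2)$. More generally, $\cM^{\mathrm{s}}(\kappa,\mu)$ is constructed intrinsically from the bow data corresponding to the smaller weight $\kappa$ in place of $\lambda$, and by construction it is a locally closed subvariety of $\cM(\kappa,\mu)$.

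Next I would analyze non-stable points. Passing to the closed $\GV$-orbit in each $S$-equivalence class, a general point $m \in \cM(\lambda,\mu)$ splits canonically into a stable summand of weight $\kappa$ together with additional summands localized at points of $\BA$ away from the support of the stable summand. Each additional summand corresponds to a purely imaginary contribution $a\delta$, and, combining \cref{thm:NT} with the affine type $A$ structure, its transversal slice is the Kleinian surface singularity $\CC^2/(\ZZ/\ell\ZZ)$, reflecting the $\ell$-fold loop around the bow. When $\ell = 1$ the quotient is trivial and no genuine weight reduction beyond $\delta$-shifts survives, which forces $\kappa = \lambda - |\underline k|\delta$ as asserted. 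Organizing these imaginary contributions by multiplicities produces the partition $\underline k$ and the symmetric product factor $S^{\underline k}(\CC^2 \setminus \{0\}/(\ZZ/\ell\ZZ))$; the removal of $0$ ensures the imaginary points are disjoint from the support of the stable factor. To glue these local pictures into a global stratification, I would use the factorization isomorphism of \cref{thm:fact}(1) over configurations with separated supports to reduce to a product, and invoke normality from \cref{thm:fact}(2) to control the closure relations across specializations, giving disjointness of distinct strata and the required inequality $\mu \le \kappa \le \lambda - |\underline k|\delta$.

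The main obstacle is the semismallness claim in (2). With $\nu^\RR$ generic, $\pi$ is an isomorphism over the open stratum (stability forces $\GV$-freeness on the resolution), and over a stratum indexed by $(\kappa, \underline k)$ the fiber factors as a product of fibers over each Kleinian slice, each being a chain in the minimal resolution of $\CC^2/(\ZZ/\ell\ZZ)$. The delicate point is to verify that the total fiber dimension equals exactly half the codimension of the stratum: each part of size $k$ in $\underline k$ contributes $k$ to the fiber dimension and $2k$ to the codimension via the symmetric product factor, so the two sides match provided the stable part contributes no fiber at all. The subtlety lies in confirming the dimension formula for $\cM^{\mathrm{s}}(\kappa,\mu)$ as a function of $\kappa$, and in arguing that the exceptional fibers retain their expected dimension under the genericity hypothesis on $\nu^\RR$. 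Once these are in place, semismallness and relevance of every stratum follow, completing (2).
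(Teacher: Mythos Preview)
The paper does not give a proof of this theorem; it simply records the statement and cites \cite[Prop.~4.5, Th.~7.26]{2016arXiv160602002N}. So there is no ``paper's own proof'' to compare against, and your proposal should be read as an attempt to reconstruct the argument from the cited source.

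Your sketch for (1) is in the right spirit---polystable decomposition plus identification of the $\delta$-summand with the Kleinian surface---but two points are off. First, the ``removal of $0$'' in $\CC^2\setminus\{0\}/(\ZZ/\ell\ZZ)$ is not about separating supports from the stable factor; the origin of $\CC^2/(\ZZ/\ell\ZZ)$ is the singular point, and a configuration hitting it belongs to a deeper stratum (smaller $\kappa$), so it is excised for that reason. Second, the identification of the $\delta$-summand with $\CC^2/(\ZZ/\ell\ZZ)$ is not a consequence of \cref{thm:NT} combined with an informal ``$\ell$-fold loop'' argument; in \cite{2016arXiv160602002N} it comes from the explicit analysis of the two-way part as a type $A$ quiver variety.

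The genuine gap is in (2). Your fiber description---``a product of fibers over each Kleinian slice, each being a chain in the minimal resolution''---accounts only for the $S^{\underline{k}}$ factor and ignores the drop from $\lambda$ to $\kappa$. Over a point in $\cM^{\mathrm{s}}(\kappa,\mu)\times S^{\underline{k}}(\dots)$ with $\kappa<\lambda$, the fiber of $\pi$ is not a product of curves; it is governed by the change of stability parameter on the whole bow datum, and its dimension involves $\langle\lambda-\kappa,\lambda-\kappa\rangle$-type terms coming from the quiver-variety dimension formula. Your bookkeeping ``$k$ to fiber dimension, $2k$ to codimension'' omits this entirely, and the clause ``provided the stable part contributes no fiber at all'' is false when $\kappa\neq\lambda$. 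In \cite{2016arXiv160602002N} semismallness (and relevance) is obtained from a general estimate (their Prop.~4.5), which in turn rests on the dimension formula for strata of bow varieties; that is the missing ingredient you would need to supply.
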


Here we understand $\cM^{\nu^\RR}(\lambda,\mu)$ has vanishing complex
parameters.

\begin{NB}
    Is it true that the projection is surjective ? Yes it is by a
    property of Coulomb branches.
\end{NB}%

\begin{NB}

This subsection is incomplete.

\subsection{Multiplication}

We introduce an analog of multiplication of slices in
\cite[2(vi)]{2016arXiv160403625B}.

Suppose that $\underline{\bv}^1$, $\underline{\bw}^1$,
$\underline{\bv}^2$, $\underline{\bw}^2\in\ZZ_{\ge 0}^n$ are given. We
introduce a \emph{multiplication} morphism
\begin{equation*}
    \cM(\underline{\bv}^1,\underline{\bw}^1)\times
    \cM(\underline{\bv}^2,\underline{\bw}^2)
    \to
    \cM(\underline{\bv}^1+\underline{\bv}^2,
    \underline{\bw}^1+\underline{\bw}^2).
\end{equation*}

Let us first apply Hanany-Witten transitions successively to make both
$\cM(\underline{\bv}^1,\underline{\bw}^1)$,
$\cM(\underline{\bv}^2,\underline{\bw}^2)$ into the form
\eqref{eq:16}. 
Let $[\lambda_1^a,\dots,\lambda^a_n]$, $[\mu_1^a,\dots,\mu_n^a]$ be
vectors associated to $\underline{\bv}^a$, $\underline{\bw}^a$ as
above ($a=1,2$). Let $\ell_a = \sum \bw_i^a$.
We have $\ell_a\ge\lambda^a_1\ge\dots\ge\lambda^a_n\ge 0$. We consider
the transpose $[{}^t\!\lambda^a_1,\dots, {}^t\!\lambda^a_{\ell_a}]$,
which satisfies
$n\ge{}^t\!\lambda^a_1\ge\dots\ge{}^t\!\lambda^a_{\ell_a}\ge 0$.
We arrange ${}^t\!\lambda^a_\sigma$ in descending order as
${}^t\!\lambda_1\ge \cdots \ge {}^t\!\lambda_\ell$
($\ell = \ell_1 + \ell_2$). They sit between $0$ and $n$, hence
$[{}^t\!\lambda_1,\dots,{}^t\lambda_\ell]$ is a generalized Young
diagram with the level $n$ constraint. Let
$[\lambda_1,\dots,\lambda_n]$ be its transpose. Note that we have
$\lambda_i = \lambda_i^1+\lambda_i^2$, hence it is the vector
associated with $\underline{\bw}^1+\underline{\bw}^2$.

Consider diagrams of the form \eqref{eq:16} for $\underline{\bv}^a$,
$\underline{\bw}^a$. The number of $\boldsymbol\medcirc$ is
$\ell_a$. We add $(\ell - \ell_a)$
\begin{NB2}
  $= \ell_{3-a}$
\end{NB2}%
$\boldsymbol\medcirc$'s according to the way ${}^t\!\lambda_\sigma$ is
constructed from ${}^t\!\lambda^a_\sigma$. Namely when we number
$\boldsymbol\medcirc$'s from $h_1$ to $h_\ell$, new
$\boldsymbol\medcirc$'s are $h_\sigma$ such that
${}^t\!\lambda_{\sigma}$ is not an entry of
$[{}^t\!\lambda^a_1,\dots,{}^t\!\lambda^a_{\ell_a}]$.
We assign $0$ to the new $\boldsymbol\medcirc$'s. We define a morphism
from the bow variety for the old diagram to the new one as
follows. Suppose that a new $\boldsymbol\medcirc$ is added between $h$
and $h'$:
\begin{equation*}
\begin{tikzpicture}[baseline=-2pt]
    \draw[decorate, decoration = {segment
        length=2mm, amplitude=.4mm}] 
    (0,0) -- (3,0);
    \node at (1,0) {$\boldsymbol\medcirc$};
    \node at (1,0.4) {$h$};
    \node at (2,0) {$\boldsymbol\medcirc$};
    \node at (2,0.4) {$h'$};
\end{tikzpicture}
\leadsto    
\begin{tikzpicture}[baseline=-2pt]
    \draw[decorate, decoration = {segment
        length=2mm, amplitude=.4mm}] 
    (0,0) -- (4,0);
    \node at (1,0) {$\boldsymbol\medcirc$};
    \node at (1,0.4) {$h$};
    \node at (2,0) {$\boldsymbol\medcirc$};
    \node at (3,0) {$\boldsymbol\medcirc$};
    \node at (3,0.4) {$h'$};
\end{tikzpicture}
\end{equation*}
Let $V_1$, $V_2$, $V_3$ be vector spaces on segments in the old
diagram from left to right. We take a copy of $V_2$ and define the
morphism by assigning linear maps as
\begin{equation*}
      \begin{tikzcd}
      V_1 \arrow[shift left=1, r, "C_h"] &
      V_{2} \arrow[shift left=1, r, "C_{h'}"]
      \arrow[shift left=1, l, "D_h"] &
      V_{3} \arrow[shift left=1, l, "D_{h'}"]
      \arrow[r, rightsquigarrow] &
      V_1 \arrow[shift left=1, r, "C_h"] &
      V_{2} \arrow[shift left=1, r, "\id"]
      \arrow[shift left=1, l, "D_h"] &
      V_{2} \arrow[shift left=1, r, "C_{h'}"]
      \arrow[shift left=1, l, "C_h D_h"] &
      V_{3} \rlap{ .} \arrow[shift left=1, l, "D_{h'}"]
    \end{tikzcd}
\end{equation*}
We define the morphism when we add $\boldsymbol\medcirc$ between
$\boldsymbol\medcirc$ and $\boldsymbol\times$ in the same way.
\begin{NB2}
\begin{equation*}
  \xymatrix@C=1.2em{ V_1 \ar@(ur,ul)_{B_1} \ar[rr]^{A} \ar[dr]_{b} &&
  V_2
  \ar@(ur,ul)_{B_2} \ar@<-.5ex>[rr]_{C} && V_3 \ar@(ur,ul)_{B_3}
  \ar@<-.5ex>[ll]_{D}
  \\
  & \CC \ar[ur]_{a} &&&}
  \leadsto
  \xymatrix@C=1.2em{ V_1 \ar@(ur,ul)_{B_1} \ar[rr]^{A} \ar[dr]_{b} &&
  V_2
  \ar@(ur,ul)_{B_2} \ar@<-.5ex>[rr]_{\id} && V_2 \ar@<-.5ex>[rr]_{C}
  \ar@<-.5ex>[ll]_{DC} && V_3 \ar@(ur,ul)_{B_3} \ar@<-.5ex>[ll]_{D}
  \\
  & \CC \ar[ur]_{a} &&&}.
\end{equation*}
\end{NB2}%

We apply this procedure to bow diagrams for $a=1,2$. Note that both
two diagrams have $\ell$ $\boldsymbol\medcirc$'s (and $n$
$\boldsymbol\times$'s). We define a new bow diagram with $\ell$
$\boldsymbol\medcirc$'s and $n$ $\boldsymbol\times$ as well, and
assign numbers ${}^t\!\lambda_1$, \dots, ${}^t\!\lambda_\ell$ to
$\boldsymbol\medcirc$'s, and $\mu^1_i + \mu^2_i$ to $i$-th
$\boldsymbol\times$. We also assign $\bv_0^1 + \bv_0^2$ on the segment
between $\xl_0$ and $h_\ell$.
It is the form \eqref{eq:16} for
$\cM(\underline{\bv}^1+\underline{\bv}^2,
\underline{\bw}^1+\underline{\bw}^2)$.
Now we take direct sum of vector spaces for $a=1$, $2$ on each segment,
and define linear maps as block diagonal matrices.

Let us check that conditions (S1,2) are satisfied.
This operation defines multiplication morphism.
\end{NB}%

\subsection{Birational coordinate system}\label{subsec:coord}

We consider a bow variety with the balanced condition. Suppose that we
have vector spaces and linear maps between $\vin{\xl_i}$ and
$\vout{\xl_{i+1}}$ as in \cref{rem:reflect}.
Recall that the factorization morphism $\varpi$ is given by
eigenvalues with multiplicities of $B_{\vin{\xl_i}}$ or
$B_{\vout{\xl_{i+1}}}$ by \cref{subsec:factor}. Let $w_{i,r}$
($r=1,\dots,\bv_i$) be eigenvalues. Let
$C_{\bw_i\dots 1,i}\colon V^0_i\to V^{\bw_i}_i$ be the composite of
linear maps $V^0_i\to V^1_i \to \dots \to V^{\bw_i}_i$. We define
\begin{equation*}
  \mathsf y_{i,r} \defeq b_{\xl_{i+1}} \prod_{\substack{1\le s\le \bv_i\\ s\neq r}}
  (B_{\vout{\xl_{i+1}}}-w_{i,s}\id)C_{\bw_i\dots 1,i} a_{\xl_i}.
\end{equation*}
Then $w_{i,r}$, $\mathsf y_{i,r}$ are regular functions on
$\cM(\lambda,\mu)\times_{\BA^{\underline{\bv}}}\BA^{|\underline{\bv}|}$.
The isomorphism between the Coulomb branch and the bow variety is
constructed so that $w_{i,r}$, $\mathsf y_{i,r}$ coincide with one
defined in \cite[\S3(iii)]{2016arXiv160403625B} ($\mathsf y_{i,r}$ was
denoted by $\overline{\mathsf y}_{i,r}$ there) and
\cite[\S6.8.1]{2016arXiv160602002N}, as certain homology classes.

Under Hanany-Witten transitions, eigenvalues of $B_{\vout{\xl_i}}$
remain unchanged except $0$ by \cref{lem:traceCD}. It is also obvious
that $\mathsf y_{i,r}$, as a map from $\CC$ at $\xl_{i-1}$ to $\CC$ at
$\xl_i$, is unchanged. Therefore $w_{i,r}$, $\mathbf y_{i,r}$ are
given by the same formula for bow varieties, not necessarily with
balanced condition.

\section{Torus action}\label{sec:torus}

We consider the $T = (\CC^\times)^{Q_0}$-action given by $\pi_1(\bG)\cong \ZZ^n$
\cite[\S3(v)]{2016arXiv160103586B}.
(We formally add the factor $\CC^\times$ even when $\bv_i = 0$ so that
$T$ depends only on $Q$.)
Let $(s_0,\dots,s_{n-1})$ denote
the standard coordinates of $T$, where $s_i$ corresponds to
$\pi_1(\GL(\bv_i))$ at the vertex $i$.
By \cite[\S6.9.2]{2016arXiv160602002N}, the action is given by one
induced by $s_0\cdots s_{i-1}$ on $\CC_{\xl_i}$ ($1\le i\le n-1$) and
$A$, $b$ at the vertex $\xl_0$ are multiplied by $s_0\cdots s_{n-1}$.
\begin{NB}
    We take $t_0 = 1$. Then $t_{i+1} = t_i s_i = s_0\cdots s_i$ by
    induction. We also have
    $t_\delta = t_{n-1} s_{n-1} = s_0\cdots s_{n-1}$.
\end{NB}%
By \cref{lem:hanany-witt-trans_torus} Hanany-Witten transitions are
equivariant under the torus action.

\subsection{Torus fixed points}\label{subsec:torus-fixed-points}

Recall that the $T$-fixed point in $\cM(\lambda,\mu)$ is either a
single point or empty \cite[Prop.~7.30]{2016arXiv160602002N}. 
(Recall also $\cM(\lambda,\mu)$ is $\cM^{\nu}(\lambda,\mu)$ for
$\nu^\CC = \nu^\RR = 0$.)
Let us review the proof as we will study fixed points with respect to
smaller tori by using the same argument. Let us give a slight
improvement simultaneously.

We have a stratification
$\cM(\lambda,\mu) = \bigsqcup \cM^{\mathrm{s}}(\kappa,\mu)\times
S^{\underline{k}}(\CC^2\setminus\{0\}/(\ZZ/\ell\ZZ))$ ($\ell > 1$),
$\bigsqcup \cM^{\mathrm{s}}(\kappa,\mu)\times
S^{\underline{k}}(\CC^2)$ ($\ell = 1$), where $\underline{k}$ is a
partition, and $\kappa$ is a dominant weight between $\mu$ and
$\lambda - |\underline{k}|\delta$.  (See \cref{thm:stratification}.)
This stratification is compatible with the $T$-action.
On the factor $S^{\underline{k}}(\CC^2\setminus\{0\}/(\ZZ/\ell\ZZ))$
or $S^{\underline{k}}(\CC^2)$, the action is induced from the
$\CC^\times$-action on $\CC^2$ given by $t\cdot (x,y) = (tx, t^{-1}y)$
where $t=s_0\cdots s_{n-1}$.
\begin{NB}
    $x$ corresponds to the composite of linear maps going the
    anticlockwise direction. $y$ corresponds to opposite maps.
\end{NB}%
In particular, the $T$-fixed point set is empty for
$S^{\underline{k}}(\CC^2\setminus\{0\}/(\ZZ/\ell\ZZ))$ unless
$\underline{k}=\emptyset$ when $\ell > 1$. When $\ell = 1$, it is the
single point $k[0]$ of $S^{(k)}(\CC^2)$ when $\underline{k}$ only has
a single entry $k$ and is empty otherwise. Thus it is enough to
determine $\cM^{\mathrm{s}}(\kappa,\mu)^T$.

\begin{Proposition}\label{prop:torus-fixed-points}
  The $T$-fixed point set $\cM^{\mathrm{s}}(\kappa,\mu)^T$ is a single
  point if $\kappa = \mu^+$, the dominant weight in the Weyl group
  orbit of $\mu$. It is empty otherwise.
\end{Proposition}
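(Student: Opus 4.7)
The strategy is to construct an explicit $T$-fixed point in the stable locus $\cM^{\mathrm{s}}(\mu^+, \mu)$, and then use uniqueness of the $T$-fixed point of $\cM(\lambda,\mu)$ (\cite[Prop.~7.30]{2016arXiv160602002N}) together with the stratification to deduce emptiness of $\cM^{\mathrm{s}}(\kappa,\mu)^T$ for $\kappa \neq \mu^+$. Indeed, once such an explicit fixed point is produced, any embedding of $\cM^{\mathrm{s}}(\kappa,\mu)$ as a stratum of some $\cM(\lambda,\mu)$ with $\lambda \ge \mu^+$ places the unique $T$-fixed point of $\cM(\lambda,\mu)$ in the $\mu^+$-stratum, so all other strata (in particular $\cM^{\mathrm{s}}(\kappa,\mu)$ for $\kappa \neq \mu^+$) must have empty $T$-fixed locus. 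Since the $T$-action on the symmetric product factor is free for $\ell > 1$ and has the unique fixed point $|\underline{k}|\cdot 0$ for $\ell = 1$, this accounts for the combinatorics of all strata appearing in \cref{thm:stratification}.

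For the explicit construction, I would bring the bow diagram for $\cM(\mu^+, \mu)$ into the normal form \eqref{eq:16} of \S\ref{subsec:another-form} via Hanany-Witten transitions, which are $T$-equivariant by \cref{lem:hanany-witt-trans_torus}. Choose $w \in W_\aff$ with $w \cdot \mu^+ = \mu$; since $\mu^+ - \mu \in \sum_i \ZZ_{\ge 0}\alpha_i$, a reduced expression for $w$ encodes the dimension vector $\underline{\bv}$ as a sequence of root-lattice translates. On the normal form, this $w$ determines a one-dimensional $T$-weight decomposition of each $V_\zeta$, and the maps $A, B, C, D, a, b$ are chosen to be the corresponding equivariant ``shift'' operators between these weight lines, satisfying the defining equations (a), (b). These maps are essentially unique up to $\GV$-conjugation, which matches the conclusion that the fixed point set is a single point.

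The main obstacle is verifying $\nu^\RR = 0$-stability of the constructed representation. With $\nu^\RR = 0$, both inequalities in $(\boldsymbol\nu 1)$ and $(\boldsymbol\nu 2)$ hold as equalities for every admissible $S, T$, so stability demands that no nontrivial $T$-invariant subspace $S$ satisfies the hypothesis that $A_\xl : S_{\vout{\xl}} \iso S_{\vin{\xl}}$ together with $b(S) = 0$ (and dually for $T$). This is a rigidity condition on the $T$-weight spaces of the constructed representation, which I would check by tracking the supports of the shift operators through the normal form \eqref{eq:16}, with special care at the $\xl_0$ corner, where the defining equation carries the $\nu^\CC_*$ shift, and at the ``wrap-around'' position that reflects the affine nature of the Dynkin diagram. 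Once this combinatorial check is done, the fixed point lies in $\cM^{\mathrm{s}}(\mu^+,\mu)$, and the argument of the first paragraph completes the proof.
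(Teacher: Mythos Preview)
Your overall logical framework is sound: granted \cite[Prop.~7.30]{2016arXiv160602002N}, constructing one explicit $T$-fixed point in $\cM^{\mathrm{s}}(\mu^+,\mu)$ and then embedding any $\cM^{\mathrm{s}}(\kappa,\mu)$ into a common $\cM(\lambda,\mu)$ with $\lambda\ge\kappa,\mu^+$ does force the other strata to have empty fixed locus. But the construction itself is where your proposal has a genuine gap, and your suggested mechanism is somewhat off target.

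The issue is that a reduced expression for $w$ with $w\mu^+=\mu$ is not what organizes the fixed point. The paper's analysis shows that a $T$-fixed point, via its weight-space decomposition under the linearizing homomorphism $\rho\colon T\to\GV$, splits as a \emph{direct sum of $n$ independent pieces}, one for each $\CC_{\xl_i}$, because the weights $s_0\cdots s_{i-1}(s_0\cdots s_{n-1})^{\ZZ}$ fall into $n$ disjoint classes. Each summand lives on an \emph{unwound} linear bow diagram with a single $\boldsymbol\times$ (the circle is untwisted because $A_{\xl_0}$ shifts weights). On each such line the necessary condition $N(h_\sigma,h_{\sigma+1})\ge 0$ for $\cM^{\mathrm{s}}\neq\emptyset$, together with $\sum_\sigma N(h_\sigma,h_{\sigma+1})=1$, pins down the dimension vector uniquely: it is the ``staircase'' \eqref{eq:8}, a nilpotent Jordan block of size $\mu_i$ with a cocyclic covector. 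So the fixed point is determined by the tuple $(\mu_1,\dots,\mu_n)$ directly, not by any choice of reduced word; and stability comes for free from the $N(h_\sigma,h_{\sigma+1})\ge 0$ criterion rather than from a separate combinatorial check on subspaces $S,T$.

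In short, the paper proves existence, uniqueness, and stability simultaneously by classification, whereas your plan separates them and leaves the hardest step (stability) as a sketch with the wrong organizing principle. The weight-splitting argument is also what is reused verbatim in the later analysis of degenerate one-parameter subgroups (\cref{thm:smaller}, \cref{prop:deformed}), so it is worth internalizing.
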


\begin{proof}
\begin{NB}
We have a stratification
$\cM(\lambda,\mu) = \bigsqcup \cM^{\mathrm{s}}(\kappa,\mu)\times
S^{\underline{k}}(\CC^2\setminus\{0\}/(\ZZ/\ell\ZZ))$
($\ell > 1$),
$\bigsqcup \cM^{\mathrm{s}}(\kappa,\mu)\times
S^{\underline{k}}(\CC^2)$
($\ell = 1$), where $\underline{k}$ is a partition, and $\kappa$ is a
dominant weight between $\mu$ and $\lambda - |\underline{k}|\delta$.
(See \cite[Th.~7.26]{2016arXiv160602002N}.)
This stratification is compatible with the $T$-action. On the factor
$S^{\underline{k}}(\CC^2\setminus\{0\}/(\ZZ/\ell\ZZ))$ or
$S^{\underline{k}}(\CC^2)$, the action is induced from the
$\CC^\times$-action on $\CC^2$ given by $t\cdot (x,y) = (tx, t^{-1}y)$
where $t=s_0\cdots s_{n-1}$.
\begin{NB2}
    $x$ corresponds to the composite of linear maps going the
    anticlockwise direction. $y$ corresponds to opposite maps.
\end{NB2}%
In particular, the $T$-fixed point set is empty for
$S^{\underline{k}}(\CC^2\setminus\{0\}/(\ZZ/\ell\ZZ))$, and the single
point $|\underline{k}|0$ for $S^{\underline{k}}(\CC^2)$. Thus it is
enough to determine $\cM^{\mathrm{s}}(\kappa,\mu)^T$.
Let us first ignore the requirement $\kappa\le\lambda$.
\end{NB}%
Let us apply Hanany-Witten transitions successively to separate
$\boldsymbol\medcirc$ and $\boldsymbol\times$. Moreover we move
$\boldsymbol\medcirc$ all over $\boldsymbol\times$ many times so that
$\mu_i\defeq N_{\xl_i}\ge 0$ for all $i$. See
\cref{subsec:another-form}.

Let us take a representative $(A,B,C,D,a,b)$ of a point in
$\cM^{\mathrm{s}}(\kappa,\mu)^T$. We have a homomorphism
$\rho = (\rho_\zeta) \colon T\to \GV = \prod_\zeta \GL(V_\zeta)$ such that
\begin{gather*}
  A_{\xl_i} = \rho_{\vin{\xl_i}}(s)^{-1} A_{\xl_i} \rho_{\vout{\xl_i}}(s)\quad (i\neq 0),
  \qquad
  s_0\cdots s_{n-1} A_{\xl_0} = \rho_{\vin{\xl_0}}(s)^{-1} A_{\xl_0} \rho_{\vout{\xl_0}}(s),\\
  B_{\zeta} = \rho_\zeta(s)^{-1} B_\zeta \rho_\zeta(s),\qquad
  (s_0\cdots s_{i-1})^{-1} a_{\xl_i} = \rho_{\vin{\xl_i}}(s)^{-1} a_{\xl_i}, \\
  s_0\cdots s_{i-1} b_{\xl_i} = b_{\xl_i} \rho_{\vout{\xl_i}}(s) \quad (i\neq 0),\qquad
  s_0\cdots s_{n-1} b_{\xl_0} = b_{\xl_0} \rho_{\vout{\xl_0}}(s),\\
  C_{h_\sigma} = \rho_{\vin{h_\sigma}}(s)^{-1} C_{h_\sigma} \rho_{\vout{h_\sigma}}(s),\qquad
  D_{h_\sigma} = \rho_{\vout{h_\sigma}}(s)^{-1} D_{h_\sigma} \rho_{\vin{h_\sigma}}(s),
\end{gather*}
where
\begin{tikzpicture}[baseline=0pt]
      \draw[decorate, decoration = {segment
          length=2mm, amplitude=.4mm},->] 
      (0,0) -- (2,0);
      \node at (1,0) {$\boldsymbol\times$};
      \node at (0.4,0.3) {$\scriptstyle\vout{\xl_i}$};
      \node at (1.8,0.3) {$\scriptstyle\vin{\xl_i}$};
      \node at (1,0.3) {$\scriptstyle\xl_i$};
\end{tikzpicture},
\begin{tikzpicture}[baseline=0pt]
    \draw[decorate, decoration = {segment
        length=2mm, amplitude=.4mm},->] 
    (0,0) -- (2,0);
    \node at (1,0) {$\boldsymbol\medcirc$};
    \node at (0.4,0.3) {$\scriptstyle\vout{h_\sigma}$};
    \node at (1.7,0.3) {$\scriptstyle\vin{h_\sigma}$};
    \node at (1,0.4) {$\scriptstyle h_\sigma$};
\end{tikzpicture}.
We consider the weight space decomposition of $V$ with respect to
$\rho$. Then $A_{\xl_i}$ ($i\neq 0$), $B_\zeta$, $C_{h_\sigma}$ and
$D_{h_\sigma}$ preserve weight spaces, while $A_{\xl_0}$ shifts weights
by $(s_0\cdots s_{n-1})^{-1}$. And $a_{\xl_i}$ sends $\CC_{\xl_i}$ to the
$s_0\cdots s_{i-1}$ weight space, $b_{\xl_i}$ is $0$ on weight spaces
other than $s_0\cdots s_{i-1}$ ($i\neq 0$), $s_0\cdots s_{n-1}$ ($i=0$).
In particular, we classify weights to $n$ classes
$(s_0\cdots s_{i-1})(s_0\cdots s_{n-1})^{\ZZ}$ ($i=0,\dots,n-1$) so
that $\CC_{\xl_i}$ can be `communicated' with only weight spaces in the
$i$-th class. Thus the data is a direct sum of $n$ pieces.

Let us consider the direct summand for $\CC_{\xl_i}$ and 
the corresponding bow diagram. Since $a_{\xl_j}$, $b_{\xl_j}$ vanish for
$j\neq i$, $A_{\xl_j}$ is an isomorphism thanks to the condition
(S1,2). Then we can identify $V_{\vout{\xl_j}}$ with $V_{\vin{\xl_j}}$ so
that we may assume that the bow diagram has only one
$\boldsymbol\times$. Moreover we can unwind the circle to a line as
$A_{\xl_0}$ shifts weight by $(s_0\cdots s_{n-1})^{-1}$. Thus the bow
diagram is
\begin{equation}\label{eq:4}
\begin{tikzpicture}[baseline=(current  bounding  box.center)]
  \node[label=below:$\xl_{i}$,label=above:$\mu_i$] at (0,0)
  {$\vphantom{j^X}\boldsymbol\times$};
  \node[label=below:$h_0$] at (1,0)
        {$\vphantom{j^X}\boldsymbol\medcirc$};
  \node[label=below:$h_1$] at (-1,0)
        {$\vphantom{j^X}\boldsymbol\medcirc$};
  \node[label=below:$h_{-1}$] at (2,0)
        {$\vphantom{j^X}\boldsymbol\medcirc$};
  \node[label=below:$h_2$] at (-2,0)
        {$\vphantom{j^X}\boldsymbol\medcirc$};
  \node[label=below:$\cdots$] at (3,0)
        {$\vphantom{j^X}$};
  \node[label=below:$\cdots$] at (-3,0)
        {$\vphantom{j^X}$};
  \node[label=below:$h_{-n}$] at (4,0)
        {$\vphantom{j^X}\boldsymbol\medcirc$};
  \node[label=below:$h_m$] at (-4,0)
        {$\vphantom{j^X}\boldsymbol\medcirc$};
  \node[label=below:$\cdots$] at (5,0)
        {$\vphantom{j^X}\boldsymbol\medcirc$};
  \node[label=below:$\cdots$] at (-5,0)
        {$\vphantom{j^X}\boldsymbol\medcirc$};
  \draw[-] (-6,0) -- (6,0);
\end{tikzpicture}.
\end{equation}
Note that $\mu_i$ remains the same as one for the original bow
diagram, as $A_{\xl_i}$ is an isomorphism in other summands.
In particular, the above $\mu_i$ is $\ge 0$ as we have assumed so in the original bow diagram.

By the necessary condition for $\fM_0^{\mathrm{reg}}\neq\emptyset$ in
\cite[Lem.~8.1]{Na-quiver}, \cite[Lem.~4.7]{Na-alg} we have
$N(h_\sigma,h_{\sigma+1}) \ge 0$ for any $\sigma$. (To show
$N(h_0, h_1) \ge 0$, we use the Hanany-Witten transition. See the
proof of \cite[Th.~7.26]{2016arXiv160602002N} for detail.) On the
other hand, $\sum N(h_\sigma,h_{\sigma+1}) = 1$ by
definition. Therefore $N(h_\sigma,h_{\sigma+1})\neq 0$ at most one
$\sigma$. Since $N_{h_\sigma} = 0$ if $|\sigma|$ is sufficiently
large, we have $N_{h_1} = \cdots = N_{h_{\mu_i}} = 1$, other
$N_{h_\sigma} = 0$.
\begin{NB}
if
$\mu_i \ge 0$ or $N_{h_0} = \cdots = N_{h_{-\mu_i}} = 1$, other
$N_{h_\sigma} = 0$ if $\mu_i \le 0$.  
\end{NB}%
Thus the data looks like
\begin{equation}\label{eq:8}
    \begin{tikzcd}[column sep=small]
        \CC \arrow[shift left=1]{r} & 
        \CC^2 \arrow[shift left=1]{l} \arrow[shift left=1]{r} &
        \cdots \arrow[shift left=1]{l} \arrow[shift left=1]{r} &
        \CC^{\mu_i}\arrow[shift left=1]{l}
        \arrow[out=120,in=60,loop,looseness=3, "B_{\vout{\xl_i}}"]
        \arrow[dr, "b_{\xl_i}"'] & \\
        &&&& \CC_{\xl_i}
    \end{tikzcd}
\end{equation}
\begin{NB}
    \begin{equation*}
  \xymatrix@C=1.2em{
        \CC \ar@<.5ex>[r] & \CC^2 \ar@<.5ex>[l] \ar@<.5ex>[r]
        & \cdots \ar@<.5ex>[l] \ar@<.5ex>[r]
        & \CC^{\mu_i} \ar@<.5ex>[l] \ar[dr]_{b_{\xl_i}} \ar@(ur,ul)_{B_{\xl_i}}
        &
        \\
        &&&& \CC}
\text{ or }
  \xymatrix@C=1.2em{
    & \CC^{-\mu_i}  \ar@(ur,ul)_{B_{\xl_i}} \ar@<.5ex>[r]
    & \cdots \ar@<.5ex>[l] \ar@<.5ex>[r]
    & \CC^2 \ar@<.5ex>[l] \ar@<.5ex>[r]
        & \CC \ar@<.5ex>[l]
        \\
        \CC \ar[ur]_{a_{\xl_i}} &&&&}
    \end{equation*}
\end{NB}%
\begin{NB}
  according to the sign of $\mu_i$. In either case, $B_{x_i}$ is
  nilpotent by the defining equation. The condition (S1,2) says
  $a_{x_i}$ (resp.\ $b_{x_i}$) is cyclic (resp.\ cocyclic) vector for
  $B_{x_i}$. Hence $a_{x_i}, B_{x_i}$ (resp.\ $b_{x_i}, B_{x_i}$) can
  be set to $e_{-\mu_i}$, $J_{-\mu_i}$ (resp.\ ${}^t e_{\mu_i}$,
  ${}^t J_{\mu_i}$) by conjugation. Other data $C$, $D$ can be also
  fixed by the remaining group action. Therefore the fixed point set
  is a single point.
\end{NB}%
Note that $B_{\vout{\xl_i}}$ is nilpotent by the defining equation. The
condition (S1,2) says $b_{\xl_i}$ is cocyclic vector for
$B_{\vout{\xl_i}}$. Hence $b_{\xl_i}$, $B_{\vout{\xl_i}}$ can be moved to
${}^t e_{\mu_i}$, ${}^t\! J_{\mu_i}$ by conjugation.  Once the action of
$\GL(\mu_i)$ is killed, the remaining data $C_i$, $D_i$ are regarded
as a point of a quiver variety of type $A_{\mu_i-1}$, which is
the nilpotent cone of $\algsl(\mu_i-1)$. See
\cite[\S7]{Na-quiver}. Therefore the fixed point set is a single
point.
Alternatively, we apply Hanany-Witten transitions $\mu_i$ times to move
$\xl_i$ to the left. Then we arrive at the bow variety with all vector
spaces $V_\zeta$ vanish. It is a single point.

Since dimensions of vector spaces are determined by $\mu$, the weight
$\kappa$ with $\cM^{\mathrm{s}}(\kappa,\mu)^T\neq\emptyset$ is
determined uniquely by $\mu$. Let us show that $\kappa = \mu^+$.
Recall that the projection of $\kappa$ to $P_{\widehat{\algsl}(n)}$
can be read off from $N_{h_\sigma}$ ($\sigma=1,\dots,\ell$) as in
\eqref{eq:16}. Namely $\kappa$ is the transpose of the generalized
Young diagram associated with
$N_{h_1}\ge N_{h_2}\ge \cdots \ge N_{h_\ell}\ge N_{h_1} - n$. The
coefficient of $\delta$ is fixed by either of two invariants
\eqref{eq:2}.

First, note that $\kappa$ is unchanged under permutations of
$\mu_i$. It is because vector spaces are direct sums of vector
spaces for $\CC_{\xl_i}$, hence the ordering is not relevant. Thus we
may assume $\mu_1\ge\mu_2\ge\dots\ge\mu_n$.
\begin{NB}
  This process does not change the dimension $\bv_0$ between $\xl_0$
  and $h_\ell$. From this observation, it is clear that this process
  changes $\mu$ by $w \mu$ for $w$ in the finite Weyl group.
\end{NB}%
Next we change $\mu$ to
$(\mu_n+\ell,\mu_2,\dots,\mu_{n-1},\mu_1 - \ell)$. Then all vector
spaces in the upper semicircle of \eqref{eq:16} between $\xl_1$ and
$\xl_0$ changes their dimensions by $\mu_n+\ell - \mu_1$, which is
$\mathbf u_0 = \bw_0 + \bv_1+\bv_{n-1}-2\bv_0$.
Thus this process does not change $\kappa$, and replace $\mu$ by
$s_0\mu$. Here $s_0$ is the simple reflection for the $0$-th simple
root. If we extend $\mu_i$ to $i\in\ZZ$ by $\mu_{i+n} = \mu_i - \ell$
as in \cref{sec:weights-affine-lie}, $s_0$ exchanges $\mu_j$ and
$\mu_{j+1}$ for $j\equiv 0\bmod n$. It is also clear that $s_0$ does
not change $\kappa$ in this description.
\begin{NB}
  Consider the left invariant of \eqref{eq:2} divided by $2$. It is
  \begin{equation*}
    -\frac12 \sum_{\sigma=1}^\ell N_{h_\sigma}^2 +
    n\bv_0 + \frac12 \left(
      (2n-1)\mu_{n} + (2n-3)\mu_{n-1} + \dots + \mu_1
      \right),
  \end{equation*}
  for the diagram in \eqref{eq:16}. By the above process, this is
  added by
  $n\mathbf u_0 + \frac12 (2n-2) (\mu_1-\ell-\mu_n) = \mathbf
  u_0$. Recall this is equal to $\bv_0 + \bv_1 + \dots + \bv_{n-1}$.
  On the other hand, by $s_0$, $\bv_0$ is changed by
  $\bw_0 + \bv_1 + \bv_{n-1} - \bv_0 = \bv_0 + \mathbf u_0$. Other
  $\bv_i$ are unchanged. Therefore this sends $\mu$ to $s_0\mu$.
\end{NB}%
These two operation generate the Weyl group of $\algsl(n)_\aff$. Hence
we may assume that $\mu$ is in the fundamental alcove, i.e.,
$\mu_1\ge\dots\ge\mu_n\ge\mu_1-\ell$. Moreover we can shift $\mu_i$
simultaneously by the process explained in
\cref{subsec:another-form}. So we make $\mu_n = 0$. Then $\mu$
determines a Young diagram with at most $(n-1)$ rows and $\ell$
columns. Then $N_{h_\sigma}$ is the number of rows which have length
more than $\sigma$. Namely $[N_{h_1},\dots,N_{h_\ell}]$ is the
transpose of the Young diagram. Moreover the vector space between
$\xl_0$ and $h_\ell$ is $0$ as $\ell\ge \mu_i$ for any $i$.
\begin{NB}
    Recall that the data is the direct sum of \eqref{eq:8} over
    $i$. If $\mu_i\le\ell$, the vector space between $\xl_0$ and
    $h_\ell$ in \eqref{eq:16} is $0$ for the summand $i$.
\end{NB}%
It means that $\kappa = \mu$.
\end{proof}

\begin{Corollary}\label{cor:torus-fixed-points}
    The followings are equivalent:

\textup{(1)} $\cM(\lambda,\mu)$ has a $T$-fixed point.

\textup{(2)} $\lambda\ge\mu^+$ in the dominance order.

\textup{(3)} $\mu$ is a weight of the integrable highest weight module
with the highest weight $\lambda$.
\end{Corollary}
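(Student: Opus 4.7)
The plan is to reduce the Corollary to \cref{prop:torus-fixed-points} via the stratification of \cref{thm:stratification}, and then invoke the standard description of weights of an integrable highest weight module.

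First I would prove (1) $\Leftrightarrow$ (2). By \cref{thm:stratification} every point of $\cM(\lambda,\mu)$ lies in a $T$-stable stratum of the form $\cM^{\mathrm{s}}(\kappa,\mu)\times S^{\underline{k}}(X)$, where $X = (\CC^2\setminus\{0\})/(\ZZ/\ell\ZZ)$ if $\ell>1$ and $X=\CC^2$ if $\ell=1$. Hence (1) is equivalent to the existence of a stratum in which both factors have a $T$-fixed point. For the first factor, \cref{prop:torus-fixed-points} says $\cM^{\mathrm{s}}(\kappa,\mu)^T\neq\emptyset$ iff $\kappa=\mu^+$. For the second factor, the $T$-action on $\CC^2$ is $(x,y)\mapsto(tx,t^{-1}y)$ with $t=s_0\cdots s_{n-1}$, so $S^{\underline{k}}((\CC^2\setminus\{0\})/(\ZZ/\ell\ZZ))$ has no $T$-fixed point unless $\underline{k}=\emptyset$, whereas $S^{\underline{k}}(\CC^2)$ always has the fixed point $|\underline{k}|\cdot 0$.

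For $\ell>1$ this forces $\underline{k}=\emptyset$ and $\kappa=\mu^+$ with $\mu\le\mu^+\le\lambda$; since $\mu\le\mu^+$ is automatic (the dominant representative of a Weyl orbit is the maximum in the dominance order), condition (1) reduces precisely to $\mu^+\le\lambda$, i.e.\ (2). For $\ell=1$, the constraint from the stratification is $\kappa=\lambda-|\underline{k}|\delta$, and the existence of a $T$-fixed stratum becomes $\mu^+=\lambda-|\underline{k}|\delta$ for some partition $\underline{k}$. One direction of the equivalence with (2) is obvious; for the converse, note that at level one the dominant weights of $\widehat{\algsl}(n)$ modulo $\ZZ\delta$ form a system of representatives $\Lambda_0,\ldots,\Lambda_{n-1}$ of $P/Q$, so if $\lambda\ge\mu^+$ with both dominant of level one then their difference lies in the root lattice, forcing $\lambda-\mu^+\in\ZZ_{\ge0}\delta$.

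Finally, (2) $\Leftrightarrow$ (3) is the classical fact for symmetrizable Kac-Moody algebras that a weight $\mu$ of the same level as $\lambda$ appears in the integrable highest weight module $V(\lambda)$ if and only if $\mu^+\le\lambda$ in the dominance order (Kac, \emph{Infinite dimensional Lie algebras}, Prop.~11.2 and its corollaries). The main (minor) obstacle is the $\ell=1$ case above; aside from that subtlety, the argument is a direct bookkeeping of which strata carry $T$-fixed points.
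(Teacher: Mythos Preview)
Your argument is correct and follows the same route as the paper's proof: the paper simply states that the result follows from \cite[Prop.~12.5]{Kac} together with the observation that $\cM^{\mathrm{s}}(\mu^+,\mu)$ occurs as a stratum of $\cM(\lambda,\mu)$ if and only if $\lambda\ge\mu^+$. Your write-up unpacks exactly this, using the stratification (\cref{thm:stratification}) and \cref{prop:torus-fixed-points} just as the paper intends.

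The one point where you go beyond the paper is the $\ell=1$ case: the paper's one-line ``observation'' does not spell out why $\lambda\ge\mu^+$ forces $\lambda-\mu^+\in\ZZ_{\ge0}\delta$, whereas you supply the cosets-of-$P/Q$ argument that makes this explicit. That is a genuine (if minor) improvement in completeness. Your citation for (2)$\Leftrightarrow$(3) should be Kac, Prop.~12.5 rather than 11.2, but the content you invoke is the right one.
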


The equivalence between (2) and (3) is a consequence of
\cite[Prop.~12.5]{Kac}. The equivalence between (1) and (2) follows
from an observation that $\cM(\lambda,\mu)$ contains
$\cM^{\mathrm{s}}(\mu^+,\mu)$ as an stratum if and only if
$\lambda\ge\mu^+$ in the dominance order.
This confirms \cref{conj:old}(1) and a part of (3), that is
$V_\mu(\lambda) = 0 \Leftrightarrow \cM(\lambda,\mu)^T = \emptyset$,
for affine type $A$.

Note also that the above will follow without the combinatorial
argument in \cref{prop:torus-fixed-points}, once we will endow
$\mathcal V(\lambda)$ with a $\mathfrak g_{\mathrm{KM}}$-module
structure and identify it with the integrable highest weight module.

\begin{NB}
\begin{Claim}
  The total dimension of vector spaces between $\xl_0\to h_1$ (in the
  anticlockwise direction) is $\sum_i \mu_i(\mu_i - 1)/2$.
\end{Claim}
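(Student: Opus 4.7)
The plan is to reduce the total dimension to a sum of per-summand contributions using the direct-sum decomposition of a $T$-fixed point established inside the proof of \cref{prop:torus-fixed-points}, and then to evaluate each summand by a short arithmetic progression.

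Writing the fixed-point data as $\bigoplus_{i}(A,B,C,D,a,b)^{(i)}$ according to the $n$ weight classes $(s_0\cdots s_{i-1})(s_0\cdots s_{n-1})^{\ZZ}$, the summand indexed by $i$ has $a_{\xl_j}$ and $b_{\xl_j}$ vanishing for $j\neq i$, and $A_{\xl_j}$ an isomorphism for $j\neq i$. In particular the dimension of summand $i$ is constant as one crosses any $\xl_j$ with $j\neq i$, so the only jumps occur at the $h_\sigma$'s and are controlled by $N_{h_\sigma}^{(i)}$. Working in the form \eqref{eq:16} (reached by further Hanany-Witten transitions, having first used the Weyl group and a simultaneous shift to place $\mu$ in the fundamental alcove with $\mu_n=0$, so that $\mu_i\le\ell$), the classification of $N^{(i)}$ in the same proof of \cref{prop:torus-fixed-points} gives $N_{h_\sigma}^{(i)}=1$ precisely for $\sigma=1,\dots,\mu_i$ and $0$ otherwise, uniformly in $i$. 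The uniformity holds because in \eqref{eq:16} the $\boldsymbol\medcirc$'s are numbered clockwise starting from the $h_1$ adjacent to $\xl_1$, so that the first $\mu_i$ of them encountered clockwise from any $\xl_i$ coincide with $h_1,\dots,h_{\mu_i}$.

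Starting from $V_{\vin{\xl_i}}^{(i)}$ of dimension $0$ and proceeding anticlockwise around the circle, summand $i$ then has dimension $0$ across $\xl_{i+1},\dots,\xl_0,h_\ell,\dots,h_{\mu_i+1}$, climbs by one at each of $h_{\mu_i},\dots,h_1$, and stays at $\mu_i$ until returning to $V_{\vout{\xl_i}}^{(i)}$. Consequently the dimensions contributed by summand $i$ to the $\ell$ segments between $\xl_0$ and $h_1$ read, in anticlockwise order,
\begin{equation*}
  \underbrace{0,\dots,0}_{\ell-\mu_i+1\text{ entries}},\,1,\,2,\,\dots,\,\mu_i-1,
\end{equation*}
and sum to $0+1+\cdots+(\mu_i-1)=\mu_i(\mu_i-1)/2$. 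Adding over $i$ yields $\sum_i\mu_i(\mu_i-1)/2$, as claimed.

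The step I expect to require the most care is the uniform identification of the jump-carrying block as $h_1,\dots,h_{\mu_i}$ for every $i$, together with the reduction to the fundamental-alcove case $\mu_i\le\ell$ under which this identification is valid; given these, the remainder is a single arithmetic-progression summation.
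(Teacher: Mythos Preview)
Your approach—decompose the fixed point into the $n$ summands indexed by $\CC_{\xl_i}$ and compute each contribution as an arithmetic progression—is the same as the paper's.

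The gap is in the reduction to the fundamental alcove. Neither the Weyl group action nor the simultaneous shift of \cref{subsec:another-form} leaves the two sides of the identity invariant: the shift from \eqref{eq:16} to \eqref{eq:3} replaces each $\mu_i$ by $\mu_i-1$ and changes $\bv_0$ to $\bv_0-{}^t\!\lambda_1+n$, so both ``total dimension between $\xl_0$ and $h_1$'' and $\sum_i\mu_i(\mu_i-1)/2$ change. Hence you cannot simply assume $\mu_i\le\ell$ without altering the statement. And the assumption is genuinely needed for your argument: your assertion that $N_{h_\sigma}^{(i)}=1$ on the circle exactly for $\sigma=1,\dots,\mu_i$ fails once $\mu_i>\ell$, since the dimension profile wraps around and some $N_{h_\sigma}^{(i)}$ exceed $1$.

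The paper avoids this by working on the unwound line \eqref{eq:4} rather than on the circle. There the segment dimensions to the left of $h_1$ are $\mu_i-1,\mu_i-2,\dots,1,0,0,\dots$ for every $\mu_i>0$, summing to $\mu_i(\mu_i-1)/2$ with no constraint relating $\mu_i$ and $\ell$. (Implicitly one checks that each of these line segments projects to exactly one circle segment on the arc $\xl_0\to h_1$, so the line sum equals the desired circle sum even when $\mu_i>\ell$ and the line wraps several times.) The paper also records the case $\mu_i<0$, where the nonzero data sits to the right of $\xl_i$ on the line and the same formula $\mu_i(\mu_i-1)/2$ comes out; your setup does not cover this.
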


\begin{proof}
  Consider the contribution from the direct summand for
  $\CC_{\xl_i}$. If $\mu_i > 0$, the vector space on the right of
  $h_1$ is of dimension $\mu_i$. Hence the left of $h_1$ has dimension
  $\mu_i - 1$, and the next is $\mu_i - 2$, and so on. Hence the total
  is $1 + 2 + \dots + (\mu_i - 1) = \mu_i(\mu_i - 1)/2$. If
  $\mu_i < 0$, the left of $h_0$ has dimension $-\mu_i$, and the next
  is $-\mu_i-1$, and so on. Hence the total sum is
  $1 + 2 + \dots + (-\mu_i) = (1 - \mu_i)(-\mu_i)/2 = \mu_i(\mu_i -
  1)/2$.
\end{proof}

Similarly the total dimension of vector spaces between
$\xl_1\to h_\ell$ (in the clockwise direction) is
$\sum_i \mu_i(\mu_i + 1)/2$. Therefore the difference of dimensions of
vector spaces between $x_1\to h_1$, $x_0\to h_\ell$ is
$\sum_i \mu_i(\mu_i+1)/2 - \sum_i \mu_i(\mu_i-1)/2 = \sum_i \mu_i$
($=\sum_i \lambda_i$).
\end{NB}%

Let us take a generic 1-parameter family $\chi\colon\CC^\times\to T$ and consider a diagram
\begin{equation*}
  \mathrm{pt} = \cM(\lambda,\mu)^T \xleftarrow{p}\fA_\chi(\lambda,\mu)
  \xrightarrow{j} \cM(\lambda,\mu),
\end{equation*}
where $\fA_\chi(\lambda,\mu)$ is the attracting set with respect to
$\rho$. When there is no fear of confusion, we denote it simply by $\fA$.
Here $j$ is the inclusion, and $p$ is the map given by taking the
limit $\rho(t)$ for $t\to 0$.
Then the following confirms
\cite[Conjecture~3.25(2)]{2016arXiv160403625B} for affine type $A$.
\begin{Theorem}[\protect{\cite[Prop.~7.33]{2016arXiv160602002N}}]
  The intersection of $\fA$ with strata in \cref{thm:stratification}
  are lagrangian. In particular, the hyperbolic restriction functor
  $\Phi = p_* j^!$ is hyperbolic semismall.
\end{Theorem}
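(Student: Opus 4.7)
\medskip
\noindent\emph{Proof proposal.}
The plan is to verify the Lagrangian condition stratum by stratum, using the stratification from \cref{thm:stratification}. Each stratum is a product $\cM^{\mathrm{s}}(\kappa,\mu) \times S^{\underline{k}}(\CC^2\setminus\{0\}/(\ZZ/\ell\ZZ))$ (with the obvious modification when $\ell = 1$), and since this product structure is $T$-equivariant, it suffices to check that $\fA$ is half-dimensional in each factor separately.

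On the symmetric product factor, the $T$-action factors through the $\CC^\times$-action on $\CC^2$ with weights $(1,-1)$, where $t = s_0\cdots s_{n-1}$. For generic $\chi$, this descends to a nontrivial $\CC^\times$-action preserving the standard symplectic form $dx\wedge dy$, and its attracting set in $\CC^2$ is one coordinate axis, a Lagrangian line. The attracting set in $S^{\underline{k}}(\CC^2)$ is therefore the corresponding symmetric product of a line, which is half-dimensional; removing the origin and quotienting by $\ZZ/\ell\ZZ$ preserves this property.

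On the factor $\cM^{\mathrm{s}}(\kappa,\mu)$, \cref{prop:torus-fixed-points} tells us that the fixed locus is either empty (in which case the attracting set is also empty and trivially Lagrangian) or a single smooth point when $\kappa = \mu^+$. Since the torus $T \cong \pi_1(G)^\vee$ acts as the flavor symmetry of the Coulomb branch and preserves the holomorphic symplectic form (cf.\ \cite[\S3(v)]{2016arXiv160103586B}), the tangent weights of $\chi$ at this fixed point come in $\pm$ pairs; for generic $\chi$ no tangent weight vanishes, so the Bialynicki-Birula attracting cell is smooth of dimension $\tfrac12 \dim\cM^{\mathrm{s}}(\kappa,\mu)$, hence Lagrangian.

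Combining these two observations, $\fA \cap X$ is Lagrangian in each stratum $X$, and the hyperbolic semismallness of $\Phi = p_* j^!$ is then precisely the content of the definition in \cite[3.5.1]{2014arXiv1406.2381B}. The hard part will be to justify rigorously that the $T$-action on each smooth stratum $\cM^{\mathrm{s}}(\kappa,\mu)$ --- which a priori is only a restriction of the ambient action on $\cM(\lambda,\mu)$ --- preserves the holomorphic symplectic form intrinsic to that stratum, so that the $\pm$-pairing of tangent weights can be invoked. On the bow variety side this may be done by appealing to the hyperkähler moment map construction, for which the flavor torus $T$ is Hamiltonian by design, and checking that the induced symplectic structure on each stratum agrees with the restriction of the ambient one.
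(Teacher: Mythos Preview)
Your argument has a genuine gap in the treatment of the factor $\cM^{\mathrm{s}}(\kappa,\mu)$. You write that when the fixed locus there is empty, ``the attracting set is also empty and trivially Lagrangian.'' But $\fA$ is the attracting set in the \emph{ambient} variety $\cM(\lambda,\mu)$, and the theorem concerns $\fA\cap X$ for each stratum $X$: a point of $X$ lies in $\fA$ as soon as its $\chi$-limit exists anywhere in $\cM(\lambda,\mu)$, and that limit --- the unique $T$-fixed point --- sits in the stratum $\cM^{\mathrm{s}}(\mu^+,\mu)$, which lies in the \emph{boundary} of the other strata. So for every stratum with $\kappa\neq\mu^+$ (in particular the open stratum $\cM^{\mathrm{s}}(\lambda,\mu)$ whenever $\lambda\neq\mu^+$) there is no fixed point inside the stratum, yet $\fA$ meets it nontrivially and in fact densely. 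Your Bia\l{}ynicki--Birula reasoning controls only the attracting cell inside a smooth variety that \emph{contains} the fixed point; it says nothing about the intersection of $\fA$ with $T$-invariant locally closed pieces disjoint from the fixed locus. This is not a technicality: the substantive content of the theorem is exactly the half-dimensionality of $\fA$ on the open stratum.

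Note also that the paper does not prove this in the body; it is quoted from \cite[Prop.~7.33]{2016arXiv160602002N}. The argument there is not a local tangent-weight count but uses the explicit quiver/bow description: after Hanany--Witten transitions the attracting condition is expressed as the vanishing of specific linear-map entries (compare the computation in \cref{subsec:attrA1} of the present paper), and the Lagrangian property on each stratum is checked by relating the resulting locus to known Lagrangians in quiver varieties --- ultimately the tensor-product varieties of \cite{Na-tensor}, as the paper itself recalls in \cref{subsec:crystal} and the Irreducibility subsection. Your analysis of the symmetric-product factor is correct, but to repair the $\cM^{\mathrm{s}}(\kappa,\mu)$ part you would need either this explicit route or a genuinely global argument about attracting sets in conical symplectic singularities, not merely the linear algebra at a smooth fixed point.
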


Let
\begin{equation*}
  \mathcal V_\mu(\lambda) \defeq \Phi(\mathrm{IC}(\cM(\lambda,\mu))).
\end{equation*}
Thanks to the above theorem, this is a vector space. We also set
$\mathcal V(\lambda) = \bigoplus \mathcal V_\mu(\lambda)$.

\begin{Remark}\label{rem:MV}
  Since $\Phi$ is hyperbolic semismall, we have
  \[
    \mathcal V_\mu(\lambda) = \Phi(\mathrm{IC}(\cM(\lambda,\mu)))
    \cong H_{2\dim}(\fA_\chi(\lambda,\mu)),
  \]
  and hence $\mathcal V_\mu(\lambda)$ possesses a basis parametrized
  by irreducible components of of $\fA_\chi(\lambda,\mu)$ of
  $\dim = \dim \cM(\lambda,\mu)/2$ \cite[Prop.~3.10]{MV2}. After
  identifying $\mathcal V_\mu(\lambda)$ with a weight space of an
  integrable highest weight representation of an affine Lie algebra,
  irreducible components are regarded as Mirkovi\'c-Vilonen cycles for
  affine Lie algebras, hence for double affine Grassmannian. This
  generalizes the construction in \cite[\S6]{Na-branching} for
  dominant $\mu$.
\end{Remark}

\subsection{Deformed case}\label{subsec:deformed-case}

Let us next choose a parameter $\nu^{\bullet}$ such that
$\nu_*^{\bullet,\CC} = 0 = \nu_*^{\bullet,\RR}$ and
$\nu^{\bullet,\CC}_h = \nu^{\bullet,\RR}_h$ is either $\dot\nu$ or
$0$, where $\dot\nu$ is a nonzero real number. As before we denote by
$\nu^{\bullet,\CC}$, the complex part of $\nu^\bullet$ and understand
it as one with vanishing real parameters.
This gives us a decomposition $\lambda = \lambda^1+\lambda^2$: recall
$\lambda = \sum \bw_i\Lambda_i$, and $\bw_i$ is the number of
$\boldsymbol\medcirc$'s between $\xl_i$ and $\xl_{i+1}$. Let $\bw^1_i$,
$\bw^2_i$ be numbers of $\boldsymbol\medcirc$'s with $\nu^\CC_h = 0$
and $=\dot\nu$ respectively. Then we have $\bw_i = \bw^1_i + \bw^2_i$
and define $\lambda^1 = \sum \bw^1_i \Lambda_i$,
$\lambda^2 = \sum \bw^2_i\Lambda_i$.
Thanks to \cref{rem:reflect}, the results in this subsection does not
depend on how $\dot\nu$ is distributed to
$\nu^\bullet_h$. They depend only on $\lambda^1$, $\lambda^2$.

The following confirms \cite[Conjecture~3.27(1) and the first half of
(2)]{2016arXiv160403625B} for affine type $A$.

\begin{Proposition}\label{prop:deformed}
  The $T$-fixed points $\cM^{\nu^{\bullet,\CC}}(\lambda,\mu)^T$ are
  finite, and correspond to decomposition $\mu = \mu^1 + \mu^2$ such
  that $\cM(\lambda^1, {\mu^1})^T$, $\cM(\lambda^2,{\mu^2})^T$ are
  nonempty. Moreover the image of
  $\cM^{\nu^{\bullet,\CC}}(\lambda,\mu)^T$ under the factorization
  morphism $\varpi$ is supported at $0$ and $\dot\nu$, and the
  multiplicities are determined by the decomposition
  $\mu = \mu^1+\mu^2$. Therefore around the point corresponding to
  $\mu = \mu^1+\mu^2$, the bow variety
  $\cM^{\nu^{\bullet,\CC}}(\lambda,\mu)$ is isomorphic to a
  neighborhood of the unique $T$-fixed point in
  $\cM(\lambda^1,\mu^1)\times \cM(\lambda^2,\mu^2)$ by the
  factorization.
\end{Proposition}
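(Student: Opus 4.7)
My plan combines three ingredients: the $T$-equivariance of the factorization morphism $\varpi$, the factorization property of \cref{thm:fact}, and the analysis of torus fixed points in the undeformed case (\cref{prop:torus-fixed-points}, \cref{cor:torus-fixed-points}). I will first observe that $\varpi\colon \cM^{\nu^{\bullet,\CC}}(\lambda,\mu) \to \BA^{\underline{\bv}}$ is $T$-equivariant for the trivial action on $\BA^{\underline{\bv}}$: indeed, as described at the beginning of \cref{sec:torus}, the $T$-action only rescales $a_{\xl_i}$, $b_{\xl_i}$ and $A_{\xl_0}$, $b_{\xl_0}$, and leaves each $B_\zeta$ invariant up to gauge conjugation, so the eigenvalue data defining $\varpi$ is $T$-invariant. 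In particular the image $\varpi(\cM^{\nu^{\bullet,\CC}}(\lambda,\mu)^T)$ consists of isolated points.

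To show this image lies in $\{0,\dot\nu\}^{\underline{\bv}}$, I take a $T$-fixed point $p$ and suppose that some coordinate $w$ of $\varpi(p)$ is neither $0$ nor $\dot\nu$. Applying \cref{thm:fact} at a disjoint configuration separating $\{w\}$ from the remaining coordinates, a neighborhood of $\varpi(p)$ factorizes, and the factor corresponding to $w$, after subtracting the constant $w$ from all $B$'s on the relevant arc, becomes an undeformed bow variety whose (unique) $T$-fixed point would have to have a $B$-eigenvalue equal to $-w\neq 0$. This contradicts the explicit description \eqref{eq:8} in the proof of \cref{prop:torus-fixed-points}, where $B$ is forced to be nilpotent.

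I then enumerate the fixed points lying over a point $c$ with multiplicities $\underline{\bv}^1$ at $0$ and $\underline{\bv}^2 = \underline{\bv}-\underline{\bv}^1$ at $\dot\nu$. Since $\dot\nu\neq 0$, \cref{thm:fact} identifies a neighborhood of $\varpi^{-1}(c)$ with a product of two bow varieties; after reordering the $\boldsymbol\medcirc$ on each arc via reflection functors (\cref{rem:reflect}) and shifting $B$ by $\dot\nu$ on the second factor, these two factors become $\cM(\lambda^1,\mu^1)$ and $\cM(\lambda^2,\mu^2)$, where $\lambda^a = \sum_i \bw^a_i\Lambda_i$ as in \cref{subsec:deformed-case} and $\mu^a$ is determined from $\underline{\bv}^a$ via \eqref{eq:6}. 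Invoking \cref{cor:torus-fixed-points} on each factor shows that a $T$-fixed point lying over $c$ exists and is unique precisely when $\lambda^a \ge (\mu^a)^+$ for both $a$, which yields the claimed bijection, the finiteness, and, via the same factorization, the local product description around each fixed point. The main obstacle is the bookkeeping in this last step: one must carefully verify that the rearrangement of $\boldsymbol\medcirc$ by reflection functors and the shift of $B$ by $\dot\nu$ really produce $\cM(\lambda^1,\mu^1)$ and $\cM(\lambda^2,\mu^2)$ in balanced form, with the dimension vectors $\underline{\bv}^a, \underline{\bw}^a$ and the weights $\lambda^a, \mu^a$ matching the conventions of \cref{subsec:another-form}.
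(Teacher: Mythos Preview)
Your step~2 contains a genuine gap. When you ``subtract the constant $w$ from all $B$'s,'' the defining relations $C_hD_h + B_{\vin{h}} = \nu^\CC_h$ force you to simultaneously shift every $\nu^\CC_h$ by $-w$, so the resulting parameters lie in $\{-w,\dot\nu - w\}$, neither of which is zero. The factor at $w$ is therefore \emph{not} an undeformed bow variety, and you cannot invoke the nilpotency of $B$ from \cref{prop:torus-fixed-points}. (Nor is the statement ``would have to have a $B$-eigenvalue equal to $-w$'' correct: after the shift the eigenvalue becomes $0$, not $-w$.) An inductive version of your idea also stalls: if \emph{all} coordinates of $\varpi(p)$ equal $w$, the factor has the same total dimension and nothing has been reduced.

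The paper's argument is structurally different and avoids this problem. It first repeats the weight–space decomposition from the proof of \cref{prop:torus-fixed-points}, unwinding the circle to the infinite linear diagram \eqref{eq:19} for each $\CC_{\xl_i}$. On that diagram the vector spaces vanish for $|\sigma|\gg 0$, so \cref{lem:traceCD} applied iteratively from infinity shows that every eigenvalue of $B_0,B_1$ is one of the $\nu^\CC_h$, hence lies in $\{0,\dot\nu\}$. The unwinding is what supplies the ``boundary condition at infinity'' that your factorization argument lacks.

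Your step~3 is closer to the paper but also incomplete. Reordering the $\boldsymbol\medcirc$ via reflection functors and shifting $B$ does not reduce the number of two-way parts from $\underline\bw$ to $\underline\bw^a$; the factorization of \cref{thm:fact} gives both factors with the full $\underline\bw$. What the paper actually does is observe that on the eigenvalue-$0$ piece $V'$ every two-way part with $\nu^\CC_h=\dot\nu$ has $C_hD_h = \dot\nu - B'$ invertible, so those $C_h,D_h$ are isomorphisms and the corresponding $\boldsymbol\medcirc$ can be collapsed, leaving exactly the $\underline\bw^1$ circles with parameter $0$; symmetrically for $V''$. This collapsing step is the substance of the ``bookkeeping'' you flag, and it is not achieved by reflection functors alone.
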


More explicitly, if $p\in\cM^{\nu^{\bullet,\CC}}(\lambda,\mu)^T$ corresponds
to a decomposition $\mu = \mu^1+\mu^2$, we have
$\varpi(p) = (\bv^1_i [0] + \bv^2_i [\dot\nu])_{i\in Q_0}$
with $\lambda^1 - \mu^1 = \sum \bv^1_i \alpha_i$,
$\lambda^2 - \mu^2 = \sum \bv^2_i\alpha_i$.

\begin{proof}
We argue as in the proof of \cref{prop:torus-fixed-points} to
decompose a fixed point as sum of the data associated with
\eqref{eq:4} over $i$. Each summand looks like
\begin{equation}\label{eq:19}
        \begin{tikzcd}[column sep=small]
        V_m \arrow[shift left=1,r,"C_{m-1}"] & 
        V_{m-1} \arrow[shift left=1,l,"D_{m-1}"] \arrow[shift left=1]{r} &
        \cdots \arrow[shift left=1]{l} \arrow[shift left=1,r,"C_1"] &
        V_1 \arrow[shift left=1,l,"D_1"]
        \arrow[out=120,in=60,loop,looseness=3, "B_1"]
        \arrow[dr, "b"'] \arrow[rr,"A"]
        && V_0 \arrow[out=120,in=60,loop,looseness=3, "B_0"]
        \arrow[shift left=1,r,"C_{-1}"]
        & \cdots \arrow[shift left=1,l,"D_{-1}"] \arrow[shift left=1]{r}
        & V_{1-n}  \arrow[shift left=1,r,"C_{-n}"]
        \arrow[shift left=1,l]
        & V_{-n} \arrow[shift left=1,l,"D_{-n}"]
        \\
        &&&& \CC \arrow[ur, "a"'] &&&&
    \end{tikzcd}
\end{equation}
instead of \eqref{eq:8}. By the defining equation we see that
eigenvalues of $B_0$, $B_1$ are either $0$ or $\dot\nu$.
\begin{NB}
  Consider 
  \(
  \begin{tikzcd}[baseline=0pt]
      \cdots \arrow[shift left=1,r]
      & V_3 \arrow[shift left=1]{r}{C_2} 
      \arrow[shift left=1,l] & 
      V_2 \arrow[shift left=1]{r}{C_1} \arrow[shift left=1]{l}{D_2}
      &  V_1. \arrow[shift left=1]{l}{D_1}
      \arrow[out=120,in=60,loop,looseness=3, "B_1"]
  \end{tikzcd}
  \)
  We have $D_1 C_1 - \nu_1^\CC = C_2 D_2 - \nu_2^\CC$. Hence
  $\tr_{V_1} (t-B_1)^N
  = \tr_{V_1} (t + C_1 D_1 - \nu_1^\CC)^N
  = \tr_{V_2} (t + D_1 C_1 - \nu_1^\CC)^N
  + (t - \nu_1^\CC)^N (\dim V_1 - \dim V_2)
  = \tr_{V_2}(t + C_2 D_2 - \nu_2^\CC)^N
  + (t - \nu_1^\CC)^N (\dim V_1 - \dim V_2)
  = \tr_{V_3}(t + D_2 C_2 - \nu_2^\CC)^N
  + (t - \nu_2^\CC)^N (\dim V_2 - \dim V_3)
  + (t - \nu_1^\CC)^N (\dim V_1 - \dim V_2)$ and so on.
  If $\nu_i^\CC$ is either $0$ or $\dot\nu$, the eigenvalues of $B_1$
  are either $0$ or $\dot\nu$.
\end{NB}%
Let us decompose $V_0 = V_0'\oplus V_0''$, $V_1 = V_1'\oplus V_1''$ by
eigenvalues, the prime for $0$ and the double prime for $\dot\nu$. We
have inherited decomposition $V_h = V_h' \oplus V_h''$ for other
$h$'s. Then we have factorization
$\cM^{\nu^{\bullet,\CC}}\approx {}'\cM^{\nu^{\bullet,\CC}}\times
{}''\cM^{\nu^{\bullet,\CC}}$ around the fixed point. (See the paragraph
after \cref{thm:fact}.)

Take a two way part $h$ and consider the corresponding $C_h$,
$D_h$. If $\nu^\CC_h = 0$ (resp.\ $\dot\nu$), then $C_h$, $D_h$ are
isomorphism on the factor $V''$ (resp.\ $V'$).
\begin{NB}
  $C_h D_h + B_{\vin{h}} = 0$, $D_h C_h + B_{\vout{h}} = 0$. Since $B$
  is $\dot\nu$ plus nilpotent on $V''$, $C_h D_h$ and $D_h C_h$ are
  invertible.
\end{NB}%
Therefore we can absorb the action of $\GL(V_{\vin{h}}'')$ to that of
$\GL(V_{\vout{h}}'')$ by normalizing $C_h|_{V_{\vout{h}}''}$ to the
identity homomorphism. The remaining $D_h|_{V_{\vin{h}}''}$ is fixed
by the defining equation. Thus we can eliminate $V_{\vin{h}}''$.
The same applies for $V'$. After this normalization each factor gives
a fixed point in a bow variety with parameter $\nu = 0$, one
classified in \cref{prop:torus-fixed-points}. Therefore it is a form
in \eqref{eq:8}.
\begin{NB}
  \begin{equation*}
    \begin{tikzcd}
      V_h \arrow[shift left=1, r, "C_h"] &
      V_{h+1} \arrow[shift left=1, r, "C_{h+1}"]
      \arrow[shift left=1, l, "D_h"] &
      V_{h+2} \arrow[shift left=1, l, "D_{h+1}"]
      \arrow[r, leftrightsquigarrow] &
      V_h \arrow[shift left=1, r, "C_h"] &
      V_{h+1} \arrow[shift left=1, r, "\id"]
      \arrow[shift left=1, l, "D_h"] &
      V_{h+1} \arrow[shift left=1, r, "C_{h+1}"]
      \arrow[shift left=1, l, "C_h D_h"] &
      V_{h+2} \rlap{ .} \arrow[shift left=1, l, "D_{h+1}"]
    \end{tikzcd}
  \end{equation*}
\end{NB}%

We return back to the balanced bow variety
$\cM^{\nu^{\bullet,\CC}}(\lambda,\mu)$ by successive applications of
Hanany-Witten transitions. Eigenvalues of $B$'s are preserved
(\cref{lem:traceCD}), hence we have the factorization
$\cM^{\nu^{\bullet,\CC}}(\lambda,\mu) \approx \cM(\lambda^1,\mu^1)\times
\cM(\lambda^2,\mu^2)$
corresponding to the above factorization. Here we eliminate several
summands of $V'$, $V''$ as above.
Since the fixed point corresponds to a fixed point in
$\cM(\lambda^1,\mu^1)\times \cM(\lambda^2,\mu^2)$, it is the one
described in \cref{prop:torus-fixed-points}. In particular we must have
$\cM(\lambda^1,\mu^1)^T$, $\cM(\lambda^2,{\mu^2})^T\neq \emptyset$.
Conversely if $\cM(\lambda^1,\mu^1)^T$,
$\cM(\lambda^2,{\mu^2})^T\neq \emptyset$, we get a fixed point in
$\cM(\lambda,\mu)$ after adding removed summands of $V'$, $V''$.
\end{proof}

We allow scaling of $\nu^{\bullet,\CC}$ in the defining equation to
get families
\begin{equation*}
  \underline{\cM}(\lambda,\mu) = \bigsqcup_{\nu'\in\CC\nu^{\bullet,\CC}}
  \cM^{\nu'}(\lambda,\mu), \qquad
  \underline{\widetilde\cM}(\lambda,\mu) = \bigsqcup_{\nu'\in\CC\nu^{\bullet,\CC}}
  \cM^{\nu^{\bullet,\RR},\nu'}(\lambda,\mu)
\end{equation*}
parametrized by $\CC$, where the fiber at $0$ (resp.\ $1$) is
$\cM(\lambda,\mu)$ (resp.\ $\cM^{\nu^{\bullet,\CC}}(\lambda,\mu)$) for
the first family. The fiber at $0$ of the second family is
$\cM^{\nu^{\bullet,\RR}}(\lambda,\mu)$. We have
$\underline{\pi}\colon\underline{\widetilde\cM}(\lambda,\mu)\to
\underline{\cM}(\lambda,\mu)$ as in \eqref{eq:7}.
Let us denote its fiber over $0$ as
$\pi\colon \cM^{\nu^{\bullet,\RR}}(\lambda,\mu)\to\cM(\lambda,\mu)$.  The latter
is a stratified semismall birational morphism
(\cite[Prop.~4.5]{2016arXiv160602002N}), hence the former is a
stratified small birational morphism.
Moreover $\underline{\widetilde\cM}(\lambda,\mu)$ is a topologically
trivial family. In fact, by hyperK\"ahler rotation, we can consider it
as a family of bow varieties with the same defining equation (the
parameter is $\nu^{\bullet,\RR}$) with varying stability conditions
with parameters in $\RR\operatorname{Re}\nu^{\bullet,\CC}$. But we
only consider submodules whose dimension vectors are perpendicular to
$\nu^{\bullet,\RR}$, hence slopes appearing in inequalities in
($\boldsymbol\nu\bf 1, 2$) in \cref{subsec:definition} are
automatically vanish. Therefore
$\operatorname{Re}\nu^{\bullet,\CC}$-(semi)stability and
$0$-(semi)stability are equivalent.
Therefore the nearby cycle functor $\psi$ for the family
$\underline{\cM}(\lambda,\mu)\to\CC$ sends
$\mathrm{IC}(\underline{\cM}(\lambda,\mu))$ to
$\pi_*(\mathrm{IC}(\cM^{\nu^{\bullet,\RR}}(\lambda,\mu)))$.
Now the remaining half of
\cite[Conjecture~3.27(2)]{2016arXiv160403625B} follows as
\begin{Corollary}\label{cor:tensor}
We have
\begin{equation*}
  \begin{split}
    & \Phi \circ \pi_*(\mathrm{IC}(\cM^{\nu^{\bullet,\RR}}(\lambda,\mu)))
    \cong \psi\circ\Phi (\mathrm{IC}(\underline{\cM}(\lambda,\mu)))
      \\
    \cong \; &
    \bigoplus_{\substack{\mu=\mu^1+\mu^2\\
        \cM(\lambda^1,{\mu^1})^T, \cM(\lambda^2,{\mu^2})^T\neq
        \emptyset}} \Phi(\mathrm{IC}(\cM(\lambda^1,\mu^1))) \otimes
    \Phi(\mathrm{IC}(\cM(\lambda^2,\mu^2))).
  \end{split}
\end{equation*}
The first isomorphism is given by the triviality of the family
$\underline{\widetilde\cM}(\lambda,\mu)\to \CC$ and the commutativity
of the nearby cycle and hyperbolic restriction functors by
\cite{MR3912059}.
The second isomorphism is given by the factorization in
\cref{prop:deformed}.
\end{Corollary}

Recall we denote $\Phi(\mathrm{IC}(\cM(\lambda,\mu)))$ by
$\mathcal V_\mu(\lambda)$. Since
$\pi\colon\cM^{\nu^{\bullet,\RR}}(\lambda,\mu)\to \cM(\lambda,\mu)$ is
an isomorphism over the open locus $\cM^{\mathrm{s}}(\lambda,\mu)$,
the direct image
$\pi_*(\mathrm{IC}(\cM^{\nu^{\bullet,\RR}}(\lambda,\mu)))$ contains
$\mathrm{IC}(\cM(\lambda,\mu))$ as a direct summand with multiplicity
one. Therefore we have natural homomorphisms, inclusion and projection
\(
\mathrm{IC}(\cM(\lambda,\mu))\leftrightarrows
\pi_*(\mathrm{IC}(\cM^{\nu^{\bullet,\RR}}(\lambda,\mu))).
\)
They induce
\begin{equation}\label{eq:9}
  \bigoplus_\mu \Phi(\mathrm{IC}(\cM(\lambda,\mu))) \leftrightarrows
  \bigoplus_\mu\Phi\circ\pi_*(\mathrm{IC}(\cM^{\nu^{\bullet,\RR}}(\lambda,\mu))).
\end{equation}
\begin{NB}
  The original version:
\begin{equation*}
    \mathcal V_\mu(\lambda) \leftrightarrows
  \bigoplus_{\mu=\mu^1+\mu^2}
  \mathcal V_{\mu^1}(\lambda^1) \otimes \mathcal V_{\mu^2}(\lambda^2), \qquad
  \mathcal V(\lambda) \leftrightarrows
  \mathcal V(\lambda^1) \otimes \mathcal V(\lambda^2).
\end{equation*}
\end{NB}%

\subsection{Weyl group action}\label{subsec:Weyl}

Let us consider a real parameter 
with 
$\nu_*^{\circ,\RR} = 0$ but other 
$\nu^{\circ,\RR}_h$ are generic. We consider
$\cM^{\nu^{\circ,\RR}}(\lambda,\mu)\to \cM(\lambda,\mu)$.
As in \cref{cor:tensor} we have
\begin{equation}\label{eq:h:6}
  \Phi\circ\pi_*(\mathrm{IC}(\cM^{\nu^{\circ,\RR}}(\lambda,\mu)))
  \cong
  \bigoplus_{\sum \mu_i^{a_i} = \mu}
  \bigotimes_{i=0}^{n-1} \bigotimes_{a_i=1}^{\bw_i}
  \Phi(\mathrm{IC}(\cM(\Lambda_i,\mu_i^{a_i}))).
\end{equation}

We divide the hamiltonian reduction in the definition of a balanced
bow variety in two steps: the first by $\GL(V_\zeta)$'s when both ends
of $\zeta$ are $\boldsymbol\medcirc$, and the second by the remaining
$\GL(V_\zeta)$'s. In the first step, we obtain products of triangle
parts and quiver varieties of type $A_{\bw_i-1}$ with dimension
vectors $(\bv_i,\bv_i,\dots,\bv_i, \bv_i)$,
$(\bv_i,0,\dots,0,\bv_i)$.
For $\zeta^{\nu^{\circ,\RR}}$ this first step gives the inverse image
of Slodowy slice $\mathcal S((\bw_i-1)^{\bv_i},1^{\bv_i})$ under the
projection $\pi\colon T^*\mathcal F
\to \overline{\mathcal
  N}(\bw_i^{\bv_i})$.  Here
$((\bw_i-1)^{\bv_i},1^{\bv_i})$,
$(\bw_i^{\bv_i})$ denote Jordan type of
nilpotent orbits, and
$T^*\mathcal F
$ is the cotangent bundle of the partial flag variety of flags of
subspaces of $\CC^{\bw_i\bv_i}$ of dimensions
$\bv_i$, $2\bv_i$, \dots,
$(\bw_i-1)\bv_i$.
For $\cM(\lambda,\mu)$ the first step gives the intersection of
Slodowy slice and $\overline{\mathcal N}(\bw_i^{\bv_i})$. (See e.g.,
\cite[\S7.4]{2016arXiv160602002N}.)

The projection
$\pi\colon T^*\mathcal
F
\to \overline{\mathcal N}(\bw_i^{\bv_i})$ is very similar to the
Springer resolution. (In fact, it is exactly the Springer resolution
when $\bv_i = 1$.)
Recall that the Springer representation is a
$\mathfrak S_{\bw_i}$-action on
$\pi_*(\QQ_{T^*\mathcal F}[\dim T^*\mathcal F])$ for $\bv_i = 1$ in
Lusztig's construction \cite{Lu-Green} (see also \cite[\S2.6]{BM}).

\begin{Lemma}
  \textup{(1)} A perverse sheaf
  $\pi_*(\mathrm{IC}(\cM^{\nu^{\circ,\RR}}(\lambda,\mu)))$ is equipped
  with a $\prod_i \mathfrak S_{\bw_i}$-action.

  \textup{(2)} The induced $\prod_i\mathfrak S_{\bw_i}$-action on the
  hyperbolic restriction $
  \Phi\circ\pi_*(\mathrm{IC}(\cM^{\nu^{\circ,\RR}}(\lambda,\mu)))$
  coincides with the permutation among factors $a_i=1,\dots,\bw_i$ with
  common $i$ in \eqref{eq:h:6}.
\end{Lemma}

Taking the sum over $\mu$, we rewrite the right hand side of
\eqref{eq:h:6} as
\begin{equation*}
  \bigotimes_{i=0}^{n-1} \mathcal V(\Lambda_i)^{\otimes \bw_i},
\end{equation*}
as
$\mathcal V(\Lambda_i) = \bigoplus_{\mu_i^{a_i}}
\Phi(\mathrm{IC}(\cM(\Lambda_i,\mu_i^{a_i})))$. Thus this analog of
Springer representation permutes tensor factors.

\begin{proof}
  (1) We follow Lusztig's construction. We consider the family
  $\underline{\widetilde\cM}(\lambda,\mu)\to\BA^{\ell-1}$ where the
  real parameter is the one we already choose $\nu^{\circ,\RR}$.
  Complex parameters $\nu^\CC$ have $\nu^\CC_* = 0$, but other parts
  $\nu_h^\CC$ are arbitrary. The number of parameters is
  $\sum_i \bw_i - 1 = \ell-1$. (Recall that an overall shift of
  $\nu^\CC_h$ is irrelevant.)
  For $\underline{\cM}(\lambda,\mu)\to\BA^{\ell-1}$, we choose complex
  parameter as above while the real parameter $\nu^\RR$ is $0$.
  The product of symmetric groups $\prod_i \mathfrak S_{\bw_i}$ acts
  on $\BA^{\ell-1}$, where $\mathfrak S_{\bw_i}$ permutes
  $\nu_h^\CC$'s in the arc $\xl_i\to\xl_{i+1}$.
  This action can be lifted to $\underline{\cM}(\lambda,\mu)$ by
  reflection functors \cite{Na-reflect} as we have already mentioned
  in \cref{rem:reflect}. Alternatively we use the identification of
  $\underline{\cM}(\lambda,\mu)$ with the spectrum of
  $H^{\tilde\bG_\cO}_*(\cR)$, and observe that it has a quotient by
  $\prod_i \mathfrak S_{\bw_i}$ given by
  $\Spec H^{\overline{\bG}_\cO}_*(\cR)$ with
  $\overline{\bG} = (\bG\times \prod_i\GL(\bw_i))/\CC^\times$. We now
  have a commutative diagram
  \begin{equation*}
    \begin{tikzcd}
      \cM^{\nu^{\circ,\RR}}(\lambda,\mu) \arrow[r,"\tilde{\iota}"]
      \arrow[d,"\pi"] &
      \underline{\widetilde\cM}(\lambda,\mu)\arrow[r]
      \arrow[d,"\underline{\pi}"]
      & \BA^{\ell-1}\arrow[d] \\
      \cM(\lambda,\mu) \arrow[r,"\iota"] &
      \underline{\cM}(\lambda,\mu)/\prod_i \mathfrak S_{\bw_i}
      \arrow[r] & \BA^{\ell-1}/\prod_i \mathfrak S_{\bw_i},
    \end{tikzcd}
  \end{equation*}
  where the leftmost column consists of fibers over $0$.

  Since the left square is Cartesian, we have
  $\iota^*\underline{\pi}_*(\mathrm{IC}(\underline{\widetilde\cM}(\lambda,\mu)))
  \cong \pi_*
  \tilde\iota^*(\mathrm{IC}(\underline{\widetilde\cM}(\lambda,\mu)))$
  by base change.
  Since $\underline{\widetilde\cM}(\lambda,\mu)$ is a topologically
  trivial family as explained in \cref{subsec:deformed-case}, we have
  $\tilde\iota^*(\mathrm{IC}(\underline{\widetilde\cM}(\lambda,\mu)))[1-\ell]
  \cong \mathrm{IC}(\cM^{\nu^{\circ,\RR}}(\lambda,\mu))$.
  On the other hand, $\underline{\pi}$ is a stratified small
  birational morphism again as in \cref{subsec:deformed-case}. Hence
  $\underline{\pi}_*(\mathrm{IC}(\underline{\widetilde\cM}(\lambda,\mu)))$
  is the $\mathrm{IC}$ associated with the local system given by the
  $\prod_i \mathfrak S_{\bw_i}$-covering defined by the the
  restriction of $\underline\pi$ to the open subset of
  $\BA^{\ell-1}/\prod_i \mathfrak S_{\bw_i}$ consisting of disjoint
  configurations. In particular,
  $\underline{\pi}_*(\mathrm{IC}(\underline{\widetilde\cM}(\lambda,\mu)))$,
  and hence also
  $\iota^*\underline{\pi}_*(\mathrm{IC}(
  \underline{\widetilde\cM}(\lambda,\mu)))[1-\ell] =
  \pi_*(\mathrm{IC}(\cM^{\nu^{\circ,\RR}}(\lambda,\mu)))$ is equipped
  with a $\prod_i \mathfrak S_{\bw_i}$-action.

  (2) Since $\Phi$ is a functor, we have a
  $\prod_i \mathfrak S_{\bw_i}$-action on
  $\Phi\circ\pi_*(\mathrm{IC}(\cM^{\nu^{\circ,\RR}}(\lambda,\mu))) =
  \Phi\circ
  \tilde\iota^*(\mathrm{IC}(\underline{\widetilde\cM}(\lambda,\mu)))[1-\ell]
  $. We exchange $\Phi$ and $\tilde\iota^*$ after we replace
  $\Phi = p_* j^!$ by $p^-_! (j^-)^{*}$ by \cite{Braden} and use the
  base change. As in \cref{prop:deformed} the fixed point components
  in $\underline{\widetilde\cM}(\lambda,\mu)^T$ correspond to
  decomposition $\mu = \sum \mu_i^{a_i}$. Moreover each factor
  corresponds to $h$, as $\nu^\CC_h$'s take different values on the
  open subset of disjoint configurations. Therefore
  $\prod_i \mathfrak S_{\bw_i}$-action permutes fixed point components
  according to permutations of $\mu_i^{a_i}$'s. Note also
  $\tilde\iota^*(\mathrm{IC}(\underline{\widetilde\cM}(\lambda,\mu)))[1-\ell]$
  and
  $\psi(\mathrm{IC}(\underline{\widetilde\cM}(\lambda,\mu))
  |_{\CC\nu^{\bullet,\CC}})$ are isomorphic as
  $\underline{\widetilde\cM}(\lambda,\mu))$ is a topologically trivial
  family. The same is true for those applied with $\Phi$. Therefore
  \eqref{eq:h:6} is given also by $\tilde\iota^*[1-\ell]$.
\end{proof}

\begin{NB}
  Old version:

  one parameter family parametrized by $\CC$ where
  the fiber at $0$ (resp.\ $1$) is
%
Recall that the stratification of the bow variety $\cM$ is induced
from that of two way part, which is a quiver variety (see the proof of
\cite[Th.~7.13]{2016arXiv160602002N}).
Also, $\pi$ for $\cM$ and for the quiver variety are compatible with
the hamiltonian reduction, hence we have a
$\prod_i \mathfrak S_{\bw_i}$ action on
$\pi_*(\mathrm{IC}(\cM^{\nu^{\circ,\RR}}(\lambda,\mu)))$. It induces
the action on its hyperbolic restriction. Taking the sum over $\mu$,
it is
\begin{equation*}
  \bigotimes_{i=0}^{n-1} \mathcal V(\Lambda_i)^{\otimes \bw_i}
\end{equation*}
by the factorization as in \cref{subsec:deformed-case}. Since the
Springer representation comes from the permutation of eigenvalues of
$B_\zeta$'s, $\prod_i \mathfrak S_{\bw_i}$ acts by
permutation of factors.
\end{NB}
\subsection{Weyl chambers on the space of one parameter subgroups}

Let us take a one-parameter subgroup
$\chi(t) = (t^{m_0},\dots, t^{m_{n-1}})\in T$ ($m_i\in\ZZ$). We
consider the corresponding fixed point
$\cM(\lambda,\mu)^{\chi}$. Since we can replace $t$ by
$t^N$ for $N\in\ZZ\setminus\{0\}$, we can also consider
$m_i\in\QQ$. We regard $\underline{m} = (m_0,\dots,m_{n-1})\in\QQ^n$
as an element of the Cartan subalgebra of the affine Lie algebra
$\widehat{\algsl}(n)$ (without the degree operator). Recall that the
roots of $\widehat{\algsl}(n)$ are $(k,k,\dots,k) = k\delta$
($k\neq 0$) and
$\pm(0,\dots,0,\underset{j}1,\dots,1,\underset{i}0,\dots,0) + k\delta$
($1\le j < i\le n$, $k\in\ZZ$).

\begin{Lemma}\label{lem:weyl-chambers}
  Suppose that $\langle\underline{m}, \alpha\rangle \neq 0$ for any
  root $\alpha$ of $\widehat{\algsl}(n)$. Then
  $\cM(\lambda,\mu)^{\chi} = \cM(\lambda,\mu)^T$.
\end{Lemma}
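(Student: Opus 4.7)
The inclusion $\cM(\lambda,\mu)^T \subset \cM(\lambda,\mu)^\chi$ is tautological, so I focus on the reverse. My plan is to rerun the proof of \cref{prop:torus-fixed-points} with the full torus $T$ replaced by the one-parameter subgroup $\chi$, and to observe that every numerical input used there reduces, under the pullback by $\chi$, to pairings $\langle\underline m,\alpha\rangle$ with $\alpha$ an affine root.

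Concretely, I take a point in $\cM(\lambda,\mu)^\chi$, choose a representative $(A,B,C,D,a,b)$ together with a lift $\rho\colon\CC^\times\to\GV$ of $\chi$, and decompose each $V_\zeta$ into $\rho$-weight spaces. In the $T$-fixed analysis the key fact was that the $T$-characters $(s_0\cdots s_{i-1})(s_0\cdots s_{n-1})^k$ coming from the frame $\CC_{\xl_i}$ fall into $n$ disjoint classes indexed by $i\in\{0,\ldots,n-1\}$, with the $k$-th element of class $i$ obtained from the $0$-th by iterating $A_{\xl_0}$. Pulled back via $\chi$, the $k$-th element of class $i$ has $\chi$-weight $M_i + kM$, where $M_i := m_0+\cdots+m_{i-1}$ and $M := m_0+\cdots+m_{n-1}$. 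The equality of two such $\chi$-weights for different $(i,k)$ is controlled by differences of the form $M_i - M_j + (k-l)M$ for $i\neq j$, which are $\pm\langle\underline m,\alpha\rangle$ for real affine roots $\alpha$, together with $M = \langle\underline m,\delta\rangle$. The genericity hypothesis rules out all of these, so the classes remain disjoint and distinct $k$'s within a class remain distinguishable.

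With the class decomposition intact, the remainder of the argument of \cref{prop:torus-fixed-points} transposes verbatim: the frame $\CC_{\xl_j}$ can only couple to class $j$, so by (S1), (S2) every $A_{\xl_j}$ acting on a summand indexed by a class $i\neq j$ is forced to be an isomorphism; identifying $V_{\vout{\xl_j}}$ with $V_{\vin{\xl_j}}$ via these isomorphisms collapses each summand to the form \eqref{eq:4}, and iterating $A_{\xl_0}$ unwinds the cycle to the standard data \eqref{eq:8}, which is manifestly $T$-equivariant. Consequently the chosen $\chi$-fixed point is in fact $T$-fixed. The step that I expect will require the most care is verifying that no $\rho$-weight outside the $n$ classes can support any part of the data: in the $T$-fixed case this was automatic from the character decomposition, whereas here one must use (S2) to see that all $\rho$-weight spaces are reachable from $\bigcup_i \Ima a_{\xl_i}$ via the operators $A,B,C,D$, and then check, using the genericity of $\chi$, that this reachability is compatible with the $\chi$-weight grading so that only the $n$ classes can appear.
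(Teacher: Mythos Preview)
Your weight-class analysis is essentially the paper's argument on the stable strata $\cM^{\mathrm s}(\kappa,\mu)$: there too one checks that the $\chi$-weights $M_i+kM$ remain pairwise distinct, which is exactly the nonvanishing of $\langle\underline m,\alpha\rangle$ for the real affine roots $\alpha$.

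The gap is that you never reduce to the stable locus, and your proposed fix via (S2) does not work. Condition (S2) only says that each $V_{\vin{\xl}}$ is generated by $\Ima A_\xl+\Ima a_\xl$ under $B_{\vin{\xl}}$; it does \emph{not} imply that all of $V$ is reachable from $\bigcup_i\Ima a_{\xl_i}$ under $A,B,C,D$, because $\Ima A_\xl$ may carry contributions from summands on which every $a$ vanishes. Concretely, a polystable representative of a point in $\cM(\lambda,\mu)\setminus\cM^{\mathrm s}(\lambda,\mu)$ splits off simple summands with $a=b=0$ and all $A_\xl$ isomorphisms; (S1) and (S2) hold on them, yet nothing in them is reachable from any $\Ima a_{\xl}$, and their $\rho$-weights are unconstrained by your class argument. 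These summands form exactly the symmetric-product factor in the stratification of \cref{thm:stratification}, and the paper handles them first: the $T$-action there comes from $(x,y)\mapsto(tx,t^{-1}y)$ on $\CC^2$ with $t=s_0\cdots s_{n-1}$, so $\chi$-fixed equals $T$-fixed precisely when $\langle\underline m,\delta\rangle\neq 0$. This is the only place the imaginary root $\delta$ enters; your argument never uses $\delta$, which is itself a signal that this piece has been missed. (Your remark that ``in the $T$-fixed case this was automatic'' overlooks that the proof of \cref{prop:torus-fixed-points} also restricts to $\cM^{\mathrm s}(\kappa,\mu)$ at the outset, the symmetric-product factor having been treated in the paragraph preceding its statement.)
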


\begin{proof}
  Recall a stratification
  $\cM(\lambda,\mu) = \bigsqcup \cM^{\mathrm{s}}(\kappa,\mu)\times
  S^{\underline{k}}(\CC^2\setminus\{0\}/(\ZZ/\ell\ZZ))$ ($\ell > 1$),
  $\bigsqcup \cM^{\mathrm{s}}(\kappa,\mu)\times
  S^{\underline{k}}(\CC^2)$ ($\ell = 1$).
  The action on symmetric products is induced from the
  $\CC^\times$-action on $\CC^2$ given by
  $t\cdot (x,y) = (tx, t^{-1}y)$ where $t=s_0\cdots s_{n-1}$. We have
\begin{equation*}
  (\CC^2)^T = (\CC^2)^{\chi} \Longleftrightarrow
  0\neq \sum_{i=0}^{n-1} m_i = \langle \underline{m}, \delta\rangle.
\end{equation*}
Thus $T$-fixed points and $\chi(\CC^\times)$-fixed points in symmetric
products are the same if $\underline{m}$ is not in the imaginary root
hyperplane $\delta = 0$.

Next we consider $\cM^{\mathrm{s}}(\kappa,\mu)^{\chi}$ as
in \cref{subsec:torus-fixed-points}. If
\begin{equation*}
  (s_0\cdots s_{i-1}) (s_0\cdots s_{j-1})^{-1} \notin (s_0\cdots s_{n-1})^\ZZ
\end{equation*}
on the image of $\chi$, data for $\CC_{x_i}$ and $\CC_{x_j}$ live on
different weight spaces, hence we have a direct sum decomposition as
before. The above condition is
\begin{equation*}
  m_{i-1} + m_{i-2} + \dots + m_{j} \notin \ZZ \sum_{i=0}^{n-1} m_i,
\end{equation*}
if $i > j$ and similar for $i < j$. This means that $\underline{m}$ is
not in root hyperplanes
\[(0,\dots,0,\underset{j}1,\dots,1,\underset{i}0,\dots,0) + k\delta\]
for any $k$.
\end{proof}

Thus we get a geometric interpretation of Weyl chambers in terms of
one parameter subgroups.

\subsection{Fixed points with respect to smaller tori}\label{subsec:smaller}

Let us consider the `negative' chamber $m_i < 0$ for $i=0,\dots,n-1$
on the space of one parameter subgroups. It does not intersect with
root hyperplanes, hence the fixed point set with respect to $\chi(t) =
(t^{m_0},\dots, t^{m_{n-1}})$ coincides with the $T$-fixed point
set. Let us consider a one parameter subgroup $\chi_i(t)$, which lies
on the boundary of the negative chamber.

\begin{Theorem}\label{thm:smaller}
    Take a one parameter subgroup $\chi_i(t)$ with $m_i = 0$ for some
    $i$ and $m_j < 0$ otherwise.

    \textup{(1)} The $\chi_i$-fixed point set $\cM(\lambda,\mu)^{\chi_i}$
    is either empty or isomorphic to a Coulomb branch
    $\cM_{A_1}(\lambda',\mu')$ of an $A_1$ type framed quiver gauge
    theory with weights $\lambda'$, $\mu'$, where
    $\mu' = \langle\mu, h_i\rangle = \mu_i - \mu_{i+1}$
    \textup($i\neq 0$\textup), $\ell+\mu_n - \mu_1$
    \textup($i=0$\textup). Moreover the intersection of
    $\cM(\lambda,\mu)^{\chi_i}$ with a stratum is either empty or a
    stratum of $\cM_{A_1}(\lambda',\mu')$.

    \textup{(2)} The restrictions of $w_{j,r}$, $\mathsf y_{j,r}$ are
    zero for $j\neq i$, and are equal to ones for
    $\cM_{A_1}(\lambda',\mu')$ or zero for $j = i$.
    \begin{NB}
      The restriction of the $i$-th component of the factorization
      morphism $\varpi$ of $\cM(\lambda,\mu)$ to
      $\cM_{A_1}(\lambda',\mu')$ is equal to the factorization
      morphism of $\cM_{A_1}(\lambda',\mu')$ up to adding $0$.
    \end{NB}%
\end{Theorem}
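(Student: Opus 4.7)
The plan is to adapt the argument of \cref{prop:torus-fixed-points} by tracking how the weight-space decomposition degenerates when $\chi$ sits on one codimension-one wall of the negative chamber rather than in its interior. First I dispose of the stratification factors: by \cref{thm:stratification}, $\cM(\lambda,\mu) = \bigsqcup \cM^{\mathrm{s}}(\kappa,\mu)\times S^{\underline{k}}(\CC^2\setminus\{0\}/(\ZZ/\ell\ZZ))$ (or with $\CC^2$ if $\ell=1$), and this is $T$-equivariant. The $\chi$-action on each $\CC^2$ is scaling by $t^M$ with $M = \sum_j m_j$; since $n > 1$, one $m_j$ vanishes and the rest are strictly negative, hence $M < 0$. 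Therefore the $\chi$-fixed locus of the symmetric-product factor equals its $T$-fixed locus, and it suffices to describe $\cM^{\mathrm{s}}(\kappa,\mu)^{\chi}$.

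Next, I lift a $\chi$-fixed point to $\rho\colon\CC^\times\to\GV$ and decompose each $V_\zeta$ by $\rho$-weights, as in \cref{prop:torus-fixed-points}. All maps except $A_{\xl_0}$ preserve weight, while $A_{\xl_0}$ shifts weight by $-M$, and each $\CC_{\xl_j}$ couples to a single $\rho$-weight via $a_{\xl_j}, b_{\xl_j}$. I then sort weights into equivalence classes modulo $M\ZZ$. For generic $\chi$ in the interior of the negative chamber we recovered $n$ singleton classes, one per $\xl_j$; specializing to $m_i = 0$ (with the other $m_j$ still generic and $M$ generic modulo this), the classes of $\CC_{\xl_i}$ and $\CC_{\xl_{i+1}}$ (with cyclic indexing) coalesce, while the remaining $n-2$ classes stay singletons. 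By the rigidity argument of \cref{prop:torus-fixed-points}, each singleton class is forced into a unique point whose $B$ on the corresponding summand is the single Jordan block of \eqref{eq:8}, so all nontrivial moduli concentrate in the coalesced class.

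Third, I unwind the circle through the $A_{\xl_0}$-shift and repackage the data from the coalesced class as a bow diagram whose two $\boldsymbol\times$'s are $\xl_i$ and $\xl_{i+1}$ with the relevant $\boldsymbol\medcirc$'s distributed between them; this is precisely a bow diagram of affine $A_1$ type. Using Hanany-Witten transitions to reach a balanced form, and invoking \cref{prop:corrected} for uniqueness, one identifies the result with $\cM_{A_1}(\lambda',\mu')$ for a pair of level-$\ell'$ weights $\lambda'$, $\mu'$. Computing $\mu'$ from the surviving dimensions via the $A_1$ analog of \eqref{eq:6} reduces to a direct count, which yields $\mu' = \mu_i - \mu_{i+1}$ for $i \neq 0$ (and $\ell + \mu_n - \mu_1$ for $i = 0$, the affine shift coming from the $A_{\xl_0}$-winding). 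Compatibility with strata then follows because the stratification of \cref{thm:stratification} is controlled by the factorization morphism and the dominance order, both of which transport naturally through this identification.

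Finally, for part (2), the $i$-th component of $\varpi$ records the eigenvalues of $B$ on the segment at $\xl_i$. On each singleton summand, $B$ is nilpotent with a single Jordan block, contributing only the eigenvalue $0$ with the appropriate multiplicity; the nonzero eigenvalues come entirely from the coalesced class, where $B$ is the $B$ of the $A_1$ bow variety, so the restriction of the $i$-th component of $\varpi$ agrees with the $A_1$ factorization morphism up to adding $0$. The main obstacle I anticipate is step three: pinning down the precise $A_1$ bow diagram, in particular the segment dimensions and the parameter bookkeeping introduced by unwinding through $A_{\xl_0}$, and verifying that the conditions (S1), (S2) and ($\boldsymbol\nu\bf 1$), ($\boldsymbol\nu\bf 2$) descend correctly. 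The $i=0$ case requires separate care because the merging then interacts directly with the weight shift in $A_{\xl_0}$ that also drives the formula $\mu' = \ell + \mu_n - \mu_1$.
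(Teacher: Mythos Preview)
Your outline follows the paper's approach: kill the symmetric-product factor using $\sum m_j < 0$, decompose a stable-stratum fixed point by $\rho$-weights, isolate the coalesced class for $\xl_i,\xl_{i+1}$, and unwind to a linear bow diagram with two $\boldsymbol\times$. Part (2) is also handled the same way.

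The gap is in step three, exactly where you flag it. After unwinding you obtain the infinite linear diagram \eqref{eq:1}, but you cannot simply ``use Hanany-Witten transitions to reach a balanced form'' and cite \cref{prop:corrected}: that proposition is for circular diagrams, and more to the point you have not yet constrained the segment dimensions at all. The paper's key step, which your outline omits, is the constraint $N(h_\sigma,h_{\sigma+1})\ge 0$ (coming from the necessary condition for nonemptiness of the regular locus of the associated type $A$ quiver variety, since we are in a \emph{stable} stratum) together with the topological identity $\sum_\sigma N(h_\sigma,h_{\sigma+1}) = 2$. These force $N(h_\sigma,h_{\sigma+1})\neq 0$ for at most two values $\tau_1\ge\tau_2$, and only then does moving $\xl_i,\xl_{i+1}$ to those intervals produce a balanced $A_1$ diagram with $\bw=\tau_1-\tau_2$ and some $\bv\ge 0$. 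The formula $\mu'=\bw-2\bv=\mu_i-\mu_{i+1}$ then drops out of $\tau_1=\mu_i+\bv$, $\tau_2=\mu_{i+1}-\bv$.

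Your strata-compatibility argument is also too soft. What the paper does is compute that $\tau_1+\tau_2=\mu_i+\mu_{i+1}$ is fixed while $\tau_1-\tau_2$ (hence $\bw$) is determined by, and monotone in, $\kappa$; since $A_1$-strata are totally ordered, this shows the pieces $\cM_{A_1}^{\mathrm{s}}(\kappa',\mu')$ for varying $\kappa$ nest into a single $\cM_{A_1}(\lambda',\mu')$, taken as the closure of the largest one. Saying the dominance order ``transports naturally'' does not give this. Two minor points: the case $i=n-1$ also needs separate treatment (there the weight on $\CC_{\xl_{n-1}}$ coincides with the $A_{\xl_0}$-shift, so $\xl_{n-1}$ and $\xl_0$ coalesce), and the unwound diagram lives on a line, so the resulting Coulomb branch is of \emph{finite} type $A_1$, not affine $A_1$.
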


The weight $\lambda'$ is determined from $\lambda$, $\mu$ in a
combinatorial way as we will see during the proof. On the other hand,
it is the largest highest weight with $\ge \mu'$ among those
corresponding $\algsl(2)_i$-modules appearing the restriction of the
integrable highest weight module $V(\lambda)$, once we establish
$\mathcal V(\lambda) = V(\lambda)$.
\begin{NB}
  Note that $\cM_{A_1}(\lambda',\mu')^T$ could be empty.
  From the argument in \cref{prop:naturaliso} we see that the fixed
  point set $\cM_{A_1}(\lambda',\mu')$ appears if and only if
  $\cM_{A_1}(\lambda',\lambda')$ appears in the fixed point in
  $\cM(\lambda,\mu+(\lambda'-\mu')\alpha_i/2)$. Since
  $\cM_{A_1}(\lambda',\lambda')^T\neq\emptyset$, this is equivalent to
  the existence of the corresponding $\algsl(2)_i$-modules.
\end{NB}%
But it is not clear to the author how to show that the combinatorial
expression of $\lambda'$ coincides with the representation theoretic
characterization directly.\footnote{This sentence was written in the
  first version of the manuscript. See \cref{rem:char2} for the update.}

\begin{proof}
Let us first suppose $i\neq 0,n-1$. The same argument as in
\cref{subsec:torus-fixed-points} shows that a point
$\cM^{\mathrm{s}}(\kappa,\mu)^{\chi_i}$ is represented by
$(A,B,C,D,a,b)$ which decomposes to data for $\CC_{\xl_j}$
($j\neq i,i+1$) and data for $\CC_{\xl_i}\oplus \CC_{\xl_{i+1}}$. 
We already know that the former data gives a single point by \cref{subsec:torus-fixed-points}.
We untwist the circle to the line as before, and the bow diagram for
the latter is
\begin{equation}\label{eq:1}
\begin{tikzpicture}[baseline=(current  bounding  box.center)]
  \node at (0,0) {$\vphantom{j^X}$};
  \node[label=below:$\xl_{i+1}$,label=above:$\mu_{i+1}$] at (0.5,0)
        {$\vphantom{j^X}\boldsymbol\times$};
  \node[label=below:$\xl_{i}$,label=above:$\mu_i$] at (-0.5,0)
  {$\vphantom{j^X}\boldsymbol\times$};
  \node[label=below:$h_0$] at (1.5,0)
        {$\vphantom{j^X}\boldsymbol\medcirc$};
  \node[label=below:$h_1$] at (-1.5,0)
        {$\vphantom{j^X}\boldsymbol\medcirc$};
  \node[label=below:$h_{-1}$] at (2.5,0)
        {$\vphantom{j^X}\boldsymbol\medcirc$};
  \node[label=below:$h_2$] at (-2.5,0)
        {$\vphantom{j^X}\boldsymbol\medcirc$};
  \node[label=below:$\cdots$] at (3.5,0)
        {$\vphantom{j^X}$};
  \node[label=below:$\cdots$] at (-3.5,0)
        {$\vphantom{j^X}$};
  \node[label=below:$h_{-n}$] at (4.5,0)
        {$\vphantom{j^X}\boldsymbol\medcirc$};
  \node[label=below:$h_m$] at (-4.5,0)
        {$\vphantom{j^X}\boldsymbol\medcirc$};
  \node[label=below:$\cdots$] at (5.5,0)
        {$\vphantom{j^X}\boldsymbol\medcirc$};
  \node[label=below:$\cdots$] at (-5.5,0)
        {$\vphantom{j^X}\boldsymbol\medcirc$};
  \draw[-] (-6,0) -- (6,0);
\end{tikzpicture}.
\end{equation}
When $i=n-1$, we have $s_0\cdots s_{n-2} = s_0\cdots s_{n-1}$. Then
the action on $\CC_{\xl_{n-1}}$ and that on $A$, $b$ at $\xl_0$ have
the same weight. The argument above yields the same diagram above, if we understand $(i,i+1) = (n-1, 0)$ and $\mu_0 = \mu_n$.

When $i=0$, we have $\ell$ $\boldsymbol\medcirc$'s between $\xl_0$ and
$\xl_1$. (See \eqref{eq:16}.) So we get
\begin{equation*}
\begin{tikzpicture}[baseline=(current  bounding  box.center)]
  \node at (0,0) {$\vphantom{j^X}$};
  \node[label=below:$\xl_{1}$,label=above:$\mu_{1}$] at (2,0)
        {$\vphantom{j^X}\boldsymbol\times$};
  \node[label=below:$\xl_{0}$,label=above:$\mu_n$] at (-2,0)
  {$\vphantom{j^X}\boldsymbol\times$};
  \node[label=below:$h_1$] at (1,0)
        {$\vphantom{j^X}\boldsymbol\medcirc$};
  \node[label=below:$\cdots$] at (0,0)
        {$\vphantom{j^X}$};
  \node[label=below:$h_{\ell}$] at (-1,0)
        {$\vphantom{j^X}\boldsymbol\medcirc$};
  \node[label=below:$h_0$] at (3,0)
        {$\vphantom{j^X}\boldsymbol\medcirc$};
  \node[label=below:$h_{\ell+1}$] at (-3,0)
        {$\vphantom{j^X}\boldsymbol\medcirc$};
  \node[label=below:$\cdots$] at (4,0)
        {$\vphantom{j^X}$};
  \node[label=below:$\cdots$] at (-4,0)
        {$\vphantom{j^X}$};
  \draw[-] (-5,0) -- (5,0);
\end{tikzpicture}.
\end{equation*}
Then we move $\ell$ $\boldsymbol\medcirc$'s to the left of
$\xl_0$ by Hanany-Witten transition to get the same diagram in
\eqref{eq:1} with $\mu_n$ replaced by $\mu_n + \ell$.
The argument below remains if we understand $\mu_{i=0}$ is
$\mu_n+\ell$.

We assume $\mu_i$, $\mu_{i+1} \ge 0$ as in the proof of \cref{prop:torus-fixed-points}.
We have $N(h_\sigma,h_{\sigma+1}) \ge 0$ for any $\sigma$ again as
before. On the other hand, $\sum N(h_\sigma,h_{\sigma+1}) = 2$ by
definition. Therefore $N(h_\sigma,h_{\sigma+1})\neq 0$ at most two
$\sigma$'s.

\begin{NB}
Let us first suppose $N(h_\tau,h_{\tau+1}) = 2$ for some $\tau$. Then
$N(h_\sigma,h_{\sigma+1}) = 0$ for $\sigma\neq\tau$. We move $\xl_i$,
$\xl_{i+1}$ to the interval between $h_\sigma$ and
$h_{\sigma+1}$. Since $N(h_\sigma,h_{\sigma+1})$ is preserved
(including $\sigma=\tau$), we have $N_{h_\sigma}$ is changed to $0$
for any $\sigma$. Therefore nonzero vector space appears only possibly
between $\xl_i$ and $\xl_{i+1}$. Let $\bv$ be its dimension. It is equal
to $-\mu_i+\tau = \mu_{i+1}-\tau$.
If $\bv > 0$, it is easy to see that the bow variety is nonempty, but
has no $\CC^\times$-fixed point. (It is isomorphic to the Coulomb
branch of the gauge theory $\GL(\bv)$ with $\bN=0$.)
If $\bv=0$, the bow variety is a single point, which is fixed by
$\CC^\times$.
If $\tau > 0$, the original data (before Hanany-Witten transition)
look like
\begin{equation*}
  \xymatrix@C=1.2em{
    \CC^2 \ar@<.5ex>[rr] && \CC^4 \ar@<.5ex>[ll] \ar@<.5ex>[rr]
    && \cdots \ar@<.5ex>[ll] \ar@<.5ex>[rr]
    && \CC^{2\tau} \ar@<.5ex>[ll] \ar[rr] \ar[dr]
    && \CC^{\tau+\bv} \ar[dr]
    &&&
  \\
    &&&&&&& \CC\ar[ur] && \CC}
\end{equation*}
If $\tau < 0$, the data is the mirror image of above with $\tau$
replaced by $-\tau$. Note that $\tau$ and $v$ are determined from
$\mu_i$, $\mu_{i+1}$ by
$\tau = (\mu_i + \mu_{i+1})/2$, $v = (-\mu_i + \mu_{i+1})/2$.

Next suppose
$N(h_{\tau_1},h_{\tau_1+1}) = 1 = N(h_{\tau_2}, h_{\tau_2+1})$ with
$\tau_1 > \tau_2$. Then $N(h_\sigma,h_{\sigma+1}) = 0$ for other
$\sigma$.
\end{NB}%

Let us suppose $N(h_\sigma,h_{\sigma+1})\neq 0$ for $\sigma = \tau_1$,
$\tau_2$ and $N(h_\sigma,h_{\sigma+1}) = 0$ otherwise. We assume
$\tau_1\ge\tau_2$. We have
$N(h_{\tau_1},h_{\tau_1+1}) = 1 = N(h_{\tau_2},h_{\tau_2+1})$ if
$\tau_1 > \tau_2$, $N(h_{\tau_1},h_{\tau_1+1}) = 2$ if
$\tau_1=\tau_2$.
We move $\xl_i$ (resp.\ $\xl_{i+1}$) between $h_{\tau_1}$ and
$h_{\tau_1+1}$ (resp.\ $h_{\tau_2}$ and $h_{\tau_2+1}$) by successive
applications of Hanany-Witten transition. Then $N_{h_\sigma}$ becomes
$0$ for any $\sigma$. Thus vector spaces appear between $\xl_i$ and
$\xl_{i+1}$ with the same dimension (let it be $\bv$), and all others
are $0$. See
\begin{equation*}
\begin{tikzpicture}[baseline=(current  bounding  box.center)]
  \node at (0,0) {$\vphantom{j^X}$};
  \node[label=below:$h_{\tau_2}$,label=above:$0$] at (-0.5,0)
  {$\vphantom{j^X}\boldsymbol\medcirc$};
  \node[label=above:$\bv$] at (-1.5,0)
        {$\vphantom{j^X}\boldsymbol\times$};
  \node[label=above:$0$,label=below:$h_{\tau_2\!+\! 1}$] at (-2.5,0)
        {$\vphantom{j^X}\boldsymbol\medcirc$};
  \node[label=above:$\cdots$] at (-3.5,0)
        {$\vphantom{j^X}$};
  \node[label=above:$0$,label=below:$h_{\tau_1}$] at (-4.5,0)
        {$\vphantom{j^X}\boldsymbol\medcirc$};
  \node[label=above:$-\bv$] at (-5.5,0)
        {$\vphantom{j^X}\boldsymbol\times$};
  \node[label=above:$0$,label=below:$h_{\tau_1\!+\! 1}$] at (-6.5,0)
        {$\vphantom{j^X}\boldsymbol\medcirc$};
  \draw[-] (-7,0) -- (0,0);
\end{tikzpicture}
\end{equation*}
The balanced condition is satisfied, hence it gives a Coulomb
branch. The gauge theory is of type $A_1$ with dimensions $\bv$,
$\bw$, where $\bw = \tau_1 - \tau_2$.
It is nonempty if and only if $\bv \ge 0$. 
\begin{NB}
    It has a $\CC^\times$-fixed point if $\bv\le\bw$.
\end{NB}%
More precisely, we consider the fixed point set in a stratum
$\cM^{\mathrm{s}}(\kappa,\mu)$, hence the data above must satisfy the
$0$-stability condition. Therefore the fixed point set is the open
stratum of the Coulomb branch.

Note that $\tau_1$, $\tau_2$ are determined by $\mu_i$, $\mu_{i+1}$
and $\bv$ as
\begin{equation*}
  \tau_1 =
  \begin{cases}
      \mu_i + \bv & (i\neq 0), \\
      \mu_n + \ell + \bv & (i = 0),
  \end{cases}
  \qquad \tau_2 = \mu_{i+1} - \bv.
  \begin{NB}
    \mu_i - \tau_1 = -\bv, \qquad \mu_{i+1} - \tau_2 = \bv.
  \end{NB}%
\end{equation*}
In particular, we have $\bw - 2\bv = \mu_i - \mu_{i+1}$ ($i\neq 0$),
$\ell + \mu_n - \mu_1$ ($i=0$).
\begin{NB}
    $\bw - 2\bv = \tau_1 - \tau_2 + \mu_i - \tau_1 - (\mu_{i+1} -
    \tau_2) = \mu_i - \mu_{i+1}$ ($i\neq 0$) and
    $=  \ell + \mu_n - \mu_1$ ($i=0$).
\end{NB}%
This is determined only by $\mu_i$, $\mu_{i+1}$, hence only by the bow
data for the original bow variety $\cM(\lambda,\mu)$.
\begin{NB}
  $\mu_i + \bv = \tau_1 \ge \tau_2 = \mu_{i+1} - \bv$. Thus
  $2\bv \ge \mu_{i+1} - \mu_i$. Or we have
  $2\bv = \bw - (\mu_i - \mu_{i+1}) \ge \mu_{i+1} - \mu_i$.
  In particular $\bv = 0$ is not possible if $\mu_{i+1} > \mu_i$.
\end{NB}%

Let us return back to the data \eqref{eq:1}, and then to \eqref{eq:16}
in order to see how strata of $\cM(\lambda,\mu)$ and the above are
related.

If $\tau_1 > \tau_2 > 0$, the data looks like
\begin{NB}
$0 = \cdots = N_{h_{\tau_1+2}} = N_{h_{\tau_1+1}}$,
$1 = N_{h_{\tau_1}} = \cdots = N_{h_{\tau_2+1}}$,
$2 = N_{h_{\tau_2}} = \cdots = N_{h_1}$,
$0 = N_{h_0} = N_{h_{-1}} = \cdots$.
\end{NB}%
\begin{equation}\label{eq:h:1}
  \begin{tikzcd}[column sep=1ex]
    \CC \arrow[shift left=1, rr] &&
    \CC^2 \arrow[shift left=1, rr] \arrow[shift left=1, ll] &&
    \cdots \arrow[shift left=1, ll] \arrow[shift left=1, rr] &&
    \CC^{\tau_1-\tau_2} \arrow[shift left=1, ll] \arrow[shift left=1, rr] &&
    \CC^{\tau_1-\tau_2+2} \arrow[shift left=1, ll] \arrow[shift left=1, rr] &&
    \cdots \arrow[shift left=1, ll] \arrow[shift left=1, rr] &&
    \CC^{\tau_1+\tau_2}
    \arrow[out=120,in=60,loop,looseness=3,"B_{\vout{\xl_i}}"] \arrow[shift left=1, ll]
    \arrow[rr] \arrow[rd,"b_{\xl_i}"'] &&
    \CC^{\tau_2+\bv}
    \arrow[out=120,in=60,loop,looseness=3,"B_{\vin{\xl_i}}"]
    \arrow[rd,"b_{\xl_{i+1}}"] &
    \\
    &&&&&&&&&&&&&\CC_{\xl_i} \arrow[ru,"a_{\xl_i}"'] && \CC_{\xl_{i+1}}.
  \end{tikzcd}
\end{equation}
From left to right, vector spaces increase dimension by $1$ until
$\tau_1 - \tau_2$. Then increase by $2$ until $\tau_1+\tau_2$.
If $\tau_1 = \tau_2$, the left edge starts as
\(
\begin{tikzcd}[column sep=2.4ex]
  \CC^2 \arrow[shift left=.5, r] &
  \CC^4 \arrow[shift left=.5, l] \arrow[shift left=.5, r] &
  \cdots \arrow[shift left=.5, l].
\end{tikzcd}
\)
\begin{NB}
    $\tau_1 = \tau_2$ if and only if $2\bv = \mu_{i+1} - \mu_i$. As we
    remarked above, $\bv$ cannot be smaller than $(\mu_{i+1} - \mu_i)/2$.
\end{NB}%

\begin{NB}
  We can apply the Hanany-Witten transition to move two
  $\boldsymbol\times$ to the left to arrive at
  \begin{equation*}
  \begin{tikzcd}[column sep=1.8ex]
    \CC \arrow[shift left=1, rr] &&
    \CC^2 \arrow[shift left=1, ll] \arrow[shift left=1, rr] &&
    \cdots \arrow[shift left=1, ll]
    \arrow[shift left=1, rr] &&
    \CC^{\bw-1} \arrow[shift left=1, ll]
    \arrow[shift left=1, rr] &&
    \CC^{\bw}
    \arrow[out=120,in=60,loop,looseness=3]
    \arrow[shift left=1, ll]
    \arrow[rr] \arrow[rd] &&
    \CC^{\bv}
    \arrow[out=120,in=60,loop,looseness=3] \arrow[rd] &
    \\
    &&&&&&&&&\CC \arrow[ru] && \CC.
  \end{tikzcd}
\end{equation*}
with $\bw = \tau_1 - \tau_2$.
\end{NB}%
Note $\tau_1+\tau_2 = \mu_i + \mu_{i+1}$, $\tau_2+\bv = \mu_{i+1}$,
$\tau_1 - \tau_2 =
\begin{NB}
  \mu_i + \bv - (\mu_{i+1} - \bv) =
\end{NB}%
2\bv + \mu_i - \mu_{i+1}$. In particular, $\tau_1+\tau_2$ and
$\tau_2+\bv$ are determined by $\mu$. Going to \eqref{eq:16}, we see
that $\tau_1 - \tau_2$ (and hence $\bv$) is determined from $\kappa$,
$\mu$. In fact, the sum of dimension of vector spaces in two way parts
is
\(
\frac14 (\tau_1+\tau_2)^2 + \frac12 (\tau_1+\tau_2)
+ \frac14 (\tau_1-\tau_2)^2.
\)
Hence $\tau_1 - \tau_2$ is determined by dimension vectors in the form
\eqref{eq:16}.
\begin{NB}
  Let $k$ be $\tau_1 - \tau_2$. From the left, we get
  $1 + \dots + k = \frac{k(k+1)}2$. From the remaining part, we get
  $(k+2) + (k+4) + \dots + (k + 2\tau_2) = k \tau_2 +
  \tau_2(\tau_2+1) = \tau_2 (k + \tau_2 + 1)$. Since
  $\tau_2 = (\tau_1+\tau_2 - (\tau_1-\tau_2))/2 = (\tau_1+\tau_2 -
  k)/2$, the later is
  \begin{equation*}
    \frac14(\tau_1+\tau_2 - k)(\tau_1+\tau_2 + k + 2)
    = \frac14 \left((\tau_1+\tau_2)^2 - k^2\right)
    + \frac12 (\tau_1+\tau_2 - k).
  \end{equation*}
  So in total, it is equal to
  \begin{equation*}
    \frac14 (\tau_1+\tau_2)^2 + \frac12 (\tau_1+\tau_2) + \frac14 k^2.
  \end{equation*}

  For example, $\tau_1+\tau_2 = 2$, $\tau_1 - \tau_2 = 0$, we have
  $2$, while the above gives $1 + 1 = 2$.
  For example, $\tau_1+\tau_2 = 3$, $\tau_1 - \tau_2 = 1$, we have
  $1+3=4$, while the above gives $9/4 + 3/2 + 1/4 = 4$.
  For example, $\tau_1+\tau_2 = 4$, $\tau_1 - \tau_2 = 2$, we have
  $1 + 2 + 4 = 7$, while the above gives
  $4 + 2 + 1 = 7$.
\end{NB}%

\begin{NB}
  Note also ${}^t\kappa_1$, \dots, ${}^t\kappa_\ell$ are obtained from
  $N_{h_\xp}$ for the above diagram, i.e.,
  $(\underbrace{1,\dots,1}_{\text{$\tau_1-\tau_2$ times}},
  \underbrace{2,\dots,2}_{\text{$\tau_2$ times}})$ by folding modulo
  $\ell$.
\end{NB}%

If we add $2$ to $\bw$ keeping $\bw-2\bv$ unchanged, we go to a larger
stratum in $\cM_{A_1}$.
\begin{NB}
  $\cM_{A_1}^{\mathrm{s}}(\bv,\bw)\subset \cM_{A_1}(\bv+1,\bw+2)$.
\end{NB}%
It corresponds to adding
\(
\begin{tikzcd}[column sep=2.4ex]
  \CC \arrow[shift left=.5, r] &
  \cdots \arrow[shift left=.5, l] \arrow[shift left=.5, r] &
  \CC \arrow[shift left=.5, l] & 
\end{tikzcd}
\) ($\tau_1 - \tau_2 + 1$ copies of $\CC$). It goes to a larger
stratum in $\cM(\lambda,\mu)$ also.

\begin{NB}
This case does not arise, as $\tau_1 = \mu_i + \bv \ge 0$.
  
If $0 \ge \tau_1 \ge \tau_2$, the data looks like
\begin{equation*}
  \begin{tikzcd}[column sep=1.8ex]
    & \CC^{\bv-\tau_1} \arrow[out=120,in=60,loop,looseness=3] 
    \arrow[rd] \arrow[rr] &&
    \CC^{-\tau_1-\tau_2} \arrow[out=120,in=60,loop,looseness=3] 
    \arrow[shift left=1, rr] && 
    \cdots \arrow[shift left=1, rr] \arrow[shift left=1, ll] && 
    \CC^{\tau_1-\tau_2+2} \arrow[shift left=1, rr] \arrow[shift left=1, ll] &&
    \CC^{\tau_1-\tau_2} \arrow[shift left=1, rr] \arrow[shift left=1, ll] &&
    \cdots \arrow[shift left=1, rr] \arrow[shift left=1, ll] &&
    \CC^2 \arrow[shift left=1, rr] \arrow[shift left=1, ll] &&
    \CC\rlap{ ,} \arrow[shift left=1, ll] \\
    \CC \arrow[ru] &&
    \CC \arrow[ru]
  \end{tikzcd}
\end{equation*}
where the case $\tau_1=\tau_2$ is understood as above.
\begin{NB2}
  $\bv - \tau_1 = -\mu_i$, $-(\tau_1+\tau_2) = -(\mu_i+\mu_{i+1})$.
\end{NB2}%
\end{NB}%

If $\tau_1 \ge 0 \ge \tau_2$, the data looks like
\begin{equation*}
  \begin{tikzcd}[column sep=1.8ex]
    \CC \arrow[shift left=1, rr]
    && \CC^2 \arrow[shift left=1, rr] \arrow[shift left=1, ll]&&
    \cdots \arrow[shift left=1, rr] \arrow[shift left=1, ll] && 
    \CC^{\tau_1} \arrow[out=120,in=60,loop,looseness=3] 
    \arrow[rr] \arrow[shift left=1, ll] \arrow[rd] &&
    \CC^{\bv} \arrow[out=120,in=60,loop,looseness=3]
    \arrow[rr] \arrow[rd] &&
    \CC^{-\tau_2} \arrow[out=120,in=60,loop,looseness=3]
    \arrow[shift left=1, rr] && 
    \cdots \arrow[shift left=1, rr] \arrow[shift left=1, ll] &&
    \CC^2 \arrow[shift left=1, rr] \arrow[shift left=1, ll] &&
    \CC\rlap{ .} \arrow[shift left=1, ll]
    \\
    &&&&&&& \CC \arrow[ru] && \CC \arrow[ru] &&&&&&&
  \end{tikzcd}
\end{equation*}

\begin{NB}
    If we move $\xl$ to the left, we arrive at
    \begin{equation*}
\vcenter{\vbox{      
  \xymatrix@C=1.2em{
    & \CC^{\bv} \ar[rr] \ar[dr] \ar@(ur,ul)^{\vphantom{j}}
    && \CC^{\bw} \ar@<.5ex>[r] \ar@(ur,ul)
    & \CC^{\bw-1} \ar@<.5ex>[l] \ar@<.5ex>[r]
    & \cdots \ar@<.5ex>[l] \ar@<.5ex>[r]
    & \CC^{2} \ar@<.5ex>[l] \ar@<.5ex>[r]
    & \CC \ar@<.5ex>[l]
  \\
    \CC \ar[ur] && \CC \ar[ur]}}}
\end{equation*}
with $\bw = \tau_1-\tau_2$.
\end{NB}%

The sum of dimension of vector spaces in two way parts is
\(
\frac14 (\tau_1+\tau_2)^2 + \frac12 (\tau_1+\tau_2)
+ \frac14 (\tau_1 - \tau_2)^2.
\)
\begin{NB}
  $\tau_1(\tau_1+1)/2 + (-\tau_2)(-\tau_2-1)/2 = (\tau_1^2 + \tau_2^2)/2
  + (\tau_1+\tau_2)/2 = \frac14 (\tau_1+\tau_2)^2 + \frac12 (\tau_1+\tau_2)
  + \frac14 (\tau_1 - \tau_2)^2$.
\end{NB}%
Hence $\tau_1 - \tau_2$ and $\bv$ are determined from $\kappa$, $\mu$
as in the first case. If we add $2$ to $\bw$ keeping $\bw-2\bv$
unchanged, we add $\CC$ to each entry of the top row, including
changing $0$ to $\CC$ at the leftmost and rightmost entries. This
process also change $\cM^{\mathrm{s}}(\kappa,\mu)$ to a larger
stratum.

For type $A_1$ Coulomb branches, the closure relation on strata is a
total order. We take $\cM_{A_1}(\lambda',\mu')$ as the closure of the
largest stratum. The intersection of $\cM(\lambda,\mu)^{\chi_i}$ with
$\cM^{\mathrm{s}}(\kappa,\mu)$ is either empty or
$\cM_{A_1}^{\mathrm{s}}(\tau_1-\tau_2, \tau_1+\tau_2)$ above, which is
a stratum of $\cM_{A_1}(\lambda',\mu')$.


Let us consider the assertion (2). The data for $\CC_{\xl_j}$ with
$j\neq i, i+1$ is of the form \eqref{eq:4}.
Note that we have other triangle parts $\xl_k$ ($k\neq j$) which are
suppressed as $a_{\xl_k} = 0 = b_{\xl_k}$ and $A_{\xl_k} = \id$.
The data contribute to $w_{k,r}$, $\mathsf y_{k,r}$ by zero including
the case $k=j$ since $B_{\xl_j}$ is
nilpotent and $a_{\xl_j} = 0$ holds.

Next consider the data for $\CC_{\xl_i}\oplus\CC_{\xl_{i+1}}$ as in
\eqref{eq:h:1}. The data contribute to $w_{i,r}$, $\mathsf y_{i,r}$ by
eigenvalues of $B_{\vin{\xl_i}}$ and
$b_{\xl_{i+1}} \prod_{s\neq r} (B_{\vin{\xl_i}}-w_{i,s})
a_{\xl_i}$. They are the same as $w_{i,r}$, $\mathsf y_{i,r}$ for
$\cM_{A_1}(\lambda',\mu')$. As for $w_{k,r}$, $\mathsf y_{k,r}$ with
$k\neq i$, the data contribute by zero since $B_{\vout{\xl_i}}$ is
nilpotent by the defining equation, and $a_{\xl_k} = 0$.
\end{proof}

\begin{Remark}
  The corresponding result for a finite type quiver $Q$ was proved by
  Krylov \cite[Lem.~5.5]{2017arXiv170900391K}. In fact, he considered
  more general one parameter subgroups corresponding to any Levi
  subalgebra. The fixed point set is, in general, not a Coulomb branch
  of a framed quiver gauge theory for the semisimple part of the Levi
  subalgebra. The above argument does not work since the dominance
  order, restricted to weights that differ from $\mu'$ by a linear
  combination of roots, is not a total order.
\end{Remark}

\begin{Remark}\label{rem:char2}
  During the proof of \cref{thm:smaller} we prove that a stratum
  $\cM_{A_1}^{\mathrm{s}}(\mu'+2\bv,\mu')$ of $\cM_{A_1}(\lambda',\mu')$ is the
  intersection $\cM^{\mathrm{s}}(\kappa,\mu)\cap \cM(\lambda,\mu)^{\chi_i}$.

  Let us compute $\kappa$ in terms of $\bv$, $\mu$.
  Note that $\kappa$ is determined by two way parts. Looking at
  \eqref{eq:h:1}, we find that dimensions of vector spaces in two way
  parts are the same as those for the torus fixed point appearing in
  the proof of \cref{prop:torus-fixed-points} after we replaced $\mu$
  by $\tilde\mu  = \mu + \bv\alpha_i$.
  \begin{NB}
    Let $\tau_1$, $\tau_2$ as in the proof.
    $\tilde\mu_i = \tau_1 = \mu_i + \bv$,
    $\tilde\mu_{i+1} = \tau_2 = \mu_{i+1} - \bv$,
    $\tilde\mu_j = \mu_j$ for $j\neq i,i+1$. Namely
    $\tilde\mu = \mu + \bv\alpha_i$.
  \end{NB}%
  The same is true for the case $\tau_1\ge 0\ge\tau_2$, once we
  understand that vector spaces and linear maps go to the right of
  $\xl_{i+1}$ when $\tilde\mu_{i+1} < 0$. Therefore by the proof of
  \cref{prop:torus-fixed-points}, $\kappa$ is the dominant weight
  $\tilde\mu^+$ in the Weyl group orbit of $\tilde\mu$. Namely we have
  \begin{equation*}
    \cM_{A_1}^{\mathrm{s}}(\mu'+2\bv,\mu')
    = \cM^{\mathrm{s}}(\tilde\mu^+,\mu)
    \cap \cM(\lambda,\mu)^{\chi_i}, \qquad
    \tilde\mu = \mu + \bv\alpha_i.
  \end{equation*}

  We further observe that
  \begin{equation*}
    \cM^{\mathrm{s}}(\tilde\mu^+,\mu) \subset \cM(\lambda,\mu)
    \Longleftrightarrow \lambda\ge \tilde\mu^+ \Longleftrightarrow
    \cM(\lambda,\tilde\mu)^T \neq \emptyset,
  \end{equation*}
  where the second $\Leftrightarrow$ is a consequence of
  \cref{cor:torus-fixed-points}.  Since we take $\kappa = \tilde\mu^+$
  with the largest $\bv$, we verify the characterization of
  $\lambda'$ in \cref{rem:char}.
\end{Remark}

\subsection{Another choice of the stability parameter}
\label{subsec:anoth-choice-stab}

Let us consider a parameter $\nu^\Box$ with $\nu_*^{\Box,\CC}$,
$\nu_*^{\Box,\RR}$ are nonzero, but all other $\nu^\Box_h$ are
zero.
We have $\underline{\cM}(\lambda,\mu)$,
$\cM^{\nu^{\Box,\CC}}(\lambda,\mu)$,
$\underline{\widetilde\cM}(\lambda,\mu)$,
$\cM^{\nu^{\Box,\RR}}(\lambda,\mu)\to\cM(\lambda,\mu)$ as in
\cref{subsec:deformed-case}.
Recall that we choose $\nu^{\bullet}_* = 0$ in \cref{subsec:deformed-case}.
Therefore this parameter $\nu^{\Box}$ is complementary to the choice
$\nu^\bullet$ there.

Let us consider the case $n=2$, $\ell = 1$, $\lambda = \Lambda_0$ for
notational simplicity. The data and equations are
\begin{equation*}
  \begin{tikzcd}[column sep=2.4ex]
    V_2 \arrow[rr, "A_1"] \arrow[rd, "b_1"']
    \arrow[shift right=1,bend left=45,rrrr,"D"' pos=.8]
    &&
    V_1 \arrow[rr,"A_0"] \arrow[rd, "b_0"']
    \arrow[out=120,in=60,loop,looseness=3, "B_1"]
    && V_0
    \arrow[shift right=1, bend right=45, llll, "C"' pos=.8] \\
    & \CC
    \arrow[ru, "a_1"']
    && \CC
    \arrow[ru, "a_0"'] &
  \end{tikzcd}
  \qquad
  \left\{
    \begin{aligned}[m]
      & (-DC + \nu_*^{\Box,\CC}) A_0 - A_0 B_1 + a_0 b_0 = 0,\\
      & B_1 A_1 + A_1 CD + a_1 b_1 = 0.
    \end{aligned}
    \right.
\end{equation*}
We assume the balanced condition, i.e., $\dim V_0 = \dim V_2$.

Let us analyze the fixed point set
$\cM^{\nu^{\Box,\CC}}(\lambda,\mu)^T$ as in
\cref{prop:torus-fixed-points,prop:deformed}.
The data for a fixed point decompose as sum of \eqref{eq:4} for
$\CC_{\xl_i}$ ($i=0,1$), and each summand looks like
\eqref{eq:19}. Recall that we shrink triangle parts, which do not
communicate with $\CC_{\xl_i}$ to get \eqref{eq:19}. In particular, at
a vector space $V_i$
\begin{NB}
on which the triangle $\xl_0$ is shrunk  
\end{NB}%
, the defining equation is $C_i D_i - D_{i-1} C_{i-1} + \nu_*^{\Box,\CC} = 0$.
\begin{NB}
  \begin{equation*}
    \begin{tikzcd}
      \arrow[shift left=1,r,"C_1"] & \arrow[shift left=1,l,"D_1"] V_1
      \arrow[out=120,in=60,loop,looseness=3, "B_1"]
      \arrow[r,"A"] & V_0 \arrow[out=120,in=60,loop,looseness=3,
      "B_0"] \arrow[shift left=1,r,"C_0"] & \arrow[shift
      left=1,l,"D_0"]
    \end{tikzcd}
    \quad
    C_1 D_1 + B_1 = 0, \quad D_0 C_0 + B_0 = 0,
    \quad (B_0+\nu_*^{\Box,\CC}) A - A B_1 = 0.
  \end{equation*}
  So we have $C_1 D_1 - D_0 C_0 = - B_1 + B_0 = -\nu_*^{\Box,\CC}$ after setting
  $A = \operatorname{id}$.
\end{NB}%
Therefore we see that eigenvalues of $-DC$, $B_1$,
$-CD$ are in $\ZZ\nu^{\Box,\CC}_*$. We decompose $V_0$, $V_1$, $V_2$
into generalized eigenspaces, and apply the factorization. Because of
the shift $\nu_*^{\Box,\CC}$, the resulted data look as
\begin{equation*}
  \begin{tikzcd}[column sep=1.8ex]
    \cdots\ \arrow[rr,"A_0"] &[-10pt]&[-10pt] V_0(m)
    \arrow[shift left=1, rr, "C"] &&
    V_2(m) \arrow[rr, "A_1"] \arrow[rd, "b_1"']
    \arrow[shift left=1, ll, "D"]
    &[-10pt] &[-10pt]
    V_1(m) \arrow[rr,"A_0"] \arrow[rd, "b_0"']
    \arrow[out=120,in=60,loop,looseness=3, "B_1"]
    &[-10pt]&[-10pt] V_0(m-1)
    \arrow[shift left=1,rr, "C"]
    &&
    V_2(m-1) \arrow[shift left=1,ll,"D"] \arrow[rd, "b_1"']
    \arrow[rr, "A_1"]
    &[-10pt]&[-10pt]     \ \cdots
    \\
    & \CC\arrow[ru, "a_0"']
    &&&& \CC \arrow[ru, "a_1"'] && \CC \arrow[ru, "a_0"'] & &&&
    \CC\rlap{ ,} &
  \end{tikzcd}
\end{equation*}
where $V_i(m)$ corresponds to the eigenvalue $m\nu_*^{\Box,\CC}$ for
the fixed point. (We are thinking of data in a neighborhood of the fixed
point. So the eigenvalue is not precisely $m\nu_*^{\Box,\CC}$. But it
is `close'.)
Since we may assume $CD$, $DC$ on $V_0(m)$, $V_2(m)$ have eigenvalues
different from $0$ if $m\neq 0$, we see that $C$, $D$ are
isomorphisms. Therefore we can identify $V_0(m)$ and $V_2(m)$ by $C$,
and then determine $D$ by the equation. Therefore we can collapse all
two way parts except one between $V_0(0)$ and $V_2(0)$.
Thus bow data is for type $A_\infty$, type $A$ diagram going to
infinity in both directions, with single $\boldsymbol\medcirc$
corresponding to the remaining two way part.
We can absorb $\nu^{\Box,\CC}_*$ to $B_i$'s once we do not have two
way parts except one. (See \cite[\S 3.1.4]{2016arXiv160602002N}.)
\begin{NB}
  We identify $V_0(-1)$ and $V_2(-1)$:
  \begin{equation*}
    \begin{gathered}
  \begin{tikzcd}[column sep=1.8ex]
    \cdots\ \arrow[rr,"A_0"] &[-10pt]&[-10pt] V_0(0)
    \arrow[shift left=1, rr, "C"] &&
    V_2(0) \arrow[rr, "A_1"] \arrow[rd, "b_1"']
    \arrow[shift left=1, ll, "D"]
    &[-10pt] &[-10pt]
    V_1(0) \arrow[rr,"A_0"] \arrow[rd, "b_0"']
    \arrow[out=120,in=60,loop,looseness=3, "B_1"]
    &[-10pt]&[-10pt] V_0(-1)
    \arrow[shift left=1,rr, "C"]
    &&
    V_2(-1) \arrow[shift left=1,ll,"D"] \arrow[rd, "b_1"']
    \arrow[rr, "A_1"]
    &[-10pt]&[-10pt]     \ \cdots
    \\
    & \CC\arrow[ru, "a_0"']
    &&&& \CC \arrow[ru, "a_1"'] && \CC \arrow[ru, "a_0"'] & &&&
    \CC\rlap{ ,} &
  \end{tikzcd}
\\      
  \begin{tikzcd}[column sep=1.8ex]
    \cdots\ \arrow[rr,"A_0"] &[-10pt]&[-10pt] V_0(0)
    \arrow[shift left=1, rr, "C"] &&
    V_2(0) \arrow[rr, "A_1"] \arrow[rd, "b_1"']
    \arrow[shift left=1, ll, "D"]
    &[-10pt] &[-10pt]
    V_1(0) \arrow[rr,"A_0"] \arrow[rd, "b_0"']
    \arrow[out=120,in=60,loop,looseness=3, "B_1"]
    &[-10pt]&[-10pt] V_0(-1)
    \arrow[out=120,in=60,loop,looseness=3, "B_0"]
    \arrow[rd, "b_1"']
    \arrow[rr, "A_1"]
    &[-10pt]&[-10pt]  \ \cdots
    \\
    & \CC\arrow[ru, "a_0"']
    &&&& \CC \arrow[ru, "a_1"'] && \CC \arrow[ru, "a_0"'] & &
    \CC\rlap{ ,} &
  \end{tikzcd}
    \end{gathered}
  \end{equation*}
  by setting $C=\id$, $D - \nu_*^{\Box,\CC} = -B_0$. Then we have
  $B_0 A_0 - A_0 B_1 + ab = 0$. In the next triangle right, the
  original equation was $B_1 A_1 + A_1 CD + ab = 0$. After the above
  substitution, this is changed to
  $B_1 A_1 - A_1 (B_0-\nu^{\Box,\CC}) + ab = 0$. We then replace $B_1$
  to $B_1-\nu^{\Box,\CC}$ to remove $\nu^{\Box,\CC}$. We continue.
\end{NB}%
Note also that the balanced condition $\dim V_0(0) = \dim V_2(0)$
remains to be true as we observed $\dim V_0(m) = \dim V_2(m)$ for
$m\neq 0$.
\begin{NB}
    The following sentence and subsequent paragraph are added on
    Feb.~13, 2020.
\end{NB}%
Therefore the $A_\infty$ type bow variety is
$\cM_{A_\infty}(\lambda',\mu')$ with $\lambda'=\Lambda_0$,
$\mu' = \Lambda_0 - \sum_{m,i}\bv_i(m)\alpha_{mn+i}$.

Moreover the factorization isomorphism
$\cM^{\nu^{\Box,\CC}}(\lambda,\mu) \approx
\cM_{A_\infty}(\lambda',\mu')$
is $T$-equivariant, if we let $T$ act on the right hand side as
follows: for $\CC$ in a triangle between $V_2(m)$ and $V_1(m)$, it
acts by $s_0(s_0 s_1)^{-m}$. For $\CC$ in a triangle between
$V_1(m+1)$ and $V_0(m)$, it acts by $(s_0 s_1)^{-m}$. The action of
$s_0 s_1$ on the triangle between $V_1$ and $V_0$ for the left hand
side is absorbed into the shift $(s_0 s_1)^{-m}$ in the right hand
side. By the same argument as in the proof of
\cref{lem:weyl-chambers}, the $T$-fixed point set in
$\cM_{A_\infty}(\lambda',\mu')$ is \emph{not} larger than the fixed
point set with respect to the torus $T_\infty$ acting by linearly
independent weights on $\CC$ for triangles.

\begin{Proposition}\label{prop:unwind}
  Let $\lambda = \Lambda_0$ as above and write
  $\lambda - \mu = \sum_{i=0}^{n-1} \bv_i \alpha_i$.
  The $T$-fixed points $\cM^{\nu^{\Box,\CC}}(\lambda,\mu)^T$ are
  finite, and correspond to a decomposition
  $\bv_i = \sum_{m\in\ZZ} \bv_i(m)$ such that the corresponding
  $A_\infty$ bow variety $\cM_{A_\infty}(\lambda',\mu')$ as above has
  a $T$-fixed point, where $\lambda' = \Lambda_0$,
  $\mu' = \Lambda_0 - \sum_{m,i} \bv_i(m) \alpha_{mn+i}$.
  The image of
  $\cM^{\nu^{\Box,\CC}}(\lambda,\mu)^T$ under the factorization
  morphism $\varpi$ is supported in $\ZZ\nu^{\Box,\CC}_*$, and the
  multiplicities of $m\nu^{\Box,\CC}$ are given by $\bv_0(m)$, $\bv_1(m)$.
  Around the fixed point, the bow variety
  $\cM^{\nu^{\Box,\CC}}(\lambda,\mu)$ is isomorphic to a
  neighborhood of the unique $T$-fixed point in
  $\cM_{A_\infty}(\lambda',\mu')$.
\end{Proposition}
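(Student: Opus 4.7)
The plan is to upgrade the analysis already carried out in the preceding paragraphs of this subsection into a precise statement. First I would formalize the eigenvalue decomposition: starting from a $T$-fixed representative $(A,B,C,D,a,b)$, decompose it into direct summands indexed by $\CC_{\xl_0}$ and $\CC_{\xl_1}$ exactly as in the proof of \cref{prop:torus-fixed-points}, and then use that the defining equations force eigenvalues of $B_1$, $CD$ and $DC$ to lie in the coset $\ZZ\nu_*^{\Box,\CC}$. This yields generalized eigenspace decompositions $V_i=\bigoplus_{m\in\ZZ} V_i(m)$, with $\dim V_0(m) = \dim V_2(m)$ by the balanced condition. Setting $\bv_i(m)\defeq\dim V_i(m)$ gives the decomposition $\bv_i = \sum_m \bv_i(m)$ of the statement.

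Next I would identify the resulting data with a datum for an $A_\infty$ bow variety. For $m\neq 0$ the composites $CD$ and $DC$ acting on $V_2(m)$ and $V_0(m)$ are invertible, so $C$ and $D$ are themselves isomorphisms; using $C$ to identify $V_0(m)$ with $V_2(m)$ and reading $D$ off from the defining equation lets me collapse every two-way part with $m\neq 0$, absorbing the shift $\nu_*^{\Box,\CC}$ into the $B$'s via \cite[\S3.1.4]{2016arXiv160602002N}. What remains is a linear bow diagram carrying a $\boldsymbol\times$ for each copy of $\CC_{\xl_i}(m)$ and a single $\boldsymbol\medcirc$ between $V_0(0)$ and $V_2(0)$. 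Labeling the surviving triangles by the integer $mn+i$ (with $m\in\ZZ$, $i\in\{0,1\}$) identifies the datum as a bow datum of type $A_\infty$ attached to $\lambda'=\Lambda_0$ and $\mu'=\Lambda_0 - \sum_{m,i}\bv_i(m)\alpha_{mn+i}$.

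I would then check the factorization statement. By construction the linear map $B_\zeta$ on the segment between $\xl_0$ and $\xl_1$ has spectrum contained in $\ZZ\nu_*^{\Box,\CC}$ with multiplicity at $m\nu_*^{\Box,\CC}$ read off from the $\bv_i(m)$; this gives the description of $\varpi$ on $\cM^{\nu^{\Box,\CC}}(\lambda,\mu)^T$. Because only finitely many $\bv_i(m)$ are nonzero, the corresponding support points are pairwise distinct in $\BA$, so \cref{thm:fact} (iterated once for each $m$) produces a local isomorphism, in a neighborhood of the fixed point, between $\cM^{\nu^{\Box,\CC}}(\lambda,\mu)$ and the product of the pieces attached to each $m$. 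After the eigenvalue identification the product is precisely a neighborhood of the unique $T$-fixed point in $\cM_{A_\infty}(\lambda',\mu')$.

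The main technical obstacle is giving a rigorous meaning to \emph{bow varieties of type $A_\infty$}. Since only finitely many $\bv_i(m)$ are nonzero, the stability conditions (S1),(S2) force the bow datum to be supported on a finite connected sub-diagram (with zero-dimensional vector spaces outside), so one may cut off the infinite tails by Hanany-Witten transitions and reduce to an honest finite bow variety, to which all earlier results—in particular \cref{prop:torus-fixed-points}, which governs the nonemptiness of $\cM_{A_\infty}(\lambda',\mu')^T$—apply verbatim. The converse direction, that any decomposition with $\cM_{A_\infty}(\lambda',\mu')^T\neq\emptyset$ is realized by a fixed point of $\cM^{\nu^{\Box,\CC}}(\lambda,\mu)$, is then obtained by reinstating the collapsed two-way parts, exactly as in the last paragraph of the proof of \cref{prop:deformed}.
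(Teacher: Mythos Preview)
Your proposal is correct and follows essentially the same approach as the paper: the paper's argument is given in the paragraphs immediately preceding the proposition (eigenspace decomposition into $V_i(m)$, collapsing the two-way parts for $m\neq 0$ via the invertibility of $C$, $D$, and absorbing the shift $\nu_*^{\Box,\CC}$), and you reproduce this faithfully while making the factorization step and the finiteness of the $A_\infty$ picture more explicit. One small sequencing remark: the equality $\dim V_0(m)=\dim V_2(m)$ for each $m$ is not an immediate consequence of the balanced condition, but follows (as the paper notes) from the isomorphism argument for $m\neq 0$ combined with the global balanced condition to handle $m=0$.
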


It is also easy to describe $\cM_{A_\infty}(\lambda',\mu')^T$. See
\cref{sec:Maya}.

Next consider the morphism
$\pi\colon\cM^{\nu^{\Box,\RR}}(\lambda,\mu)\to\cM(\lambda,\mu)$. Since
$\ell=1$, the stratification (see \cref{thm:stratification}) is
$\cM(\lambda,\mu) = \bigsqcup
\cM^{\mathrm{s}}(\lambda-|\underline{k}|\delta,\mu) \times
S^{\underline{k}}(\CC^2)$.
The fiber of $\pi$ over the stratum
$\cM^{\mathrm{s}}(\lambda-|\underline{k}|\delta,\mu) \times
S^{\underline{k}}(\CC^2)$ is isomorphic to a product of the punctual
Hilbert scheme of $k_1$ points, $k_2$ points, \dots on $\CC^2$ by
\cite[\S4.3]{2016arXiv160602002N}. In particular, it is
irreducible. Hence
\begin{equation*}\label{eq:20}
  \pi_*(\CC_{\cM^{\nu^{\Box,\RR}}(\lambda,\mu)}[\dim])
  \cong \bigoplus \mathrm{IC}(\cM(\lambda-|\underline{k}|\delta,\mu))
  \boxtimes \CC_{\overline{S^{\underline{k}}(\CC^2)}}[\dim],
\end{equation*}
where $\CC_{X}[\dim]$ denote the shift of the constant sheaf on $X$ by
$\dim X$ and $\overline{S^{\underline{k}}(\CC^2)}$ is the closure of
$S^{\underline{k}}(\CC^2)$ in $S^{|\underline{k}|}(\CC^2)$. Therefore
\begin{equation}\label{eq:21}
  \bigoplus_\mu \Phi\circ \pi_*(\CC_{\cM^{\Box,\RR}(\lambda,\mu)}[\dim])
  \cong \left(\bigoplus_\mu \Phi(\mathrm{IC}(\cM(\lambda,\mu)))\right)
  \otimes\left(\bigoplus_{\underline{k}} \CC[\overline{S^{\underline{k}}(\CC)}]
  \right),
\end{equation}
where $S^{\underline{k}}(\CC)$ is the stratum of the symmetric product
of the line $\CC$ corresponding to a partition $\underline{k}$, and
its closure is the attracting set in
$\overline{S^{\underline{k}}(\CC^2)}$.

\begin{Remark}\label{rem:Hilb}
  Suppose $n=1$, $\ell = 1$. Let us denote the corresponding balanced
  bow variety by $\cM(\bv,1)$ by using dimension vectors as in
  \cref{subsec:coulomb-branch}. ($\bw = 1$ as $\ell = 1$.) We have
  also $\cM^{\nu^{\Box,\RR}}(\bv,1)$, etc. In this case $\cM(\bv,1)$,
  $\cM^{\nu^{\Box,\RR}}(\bv,1)$ are isomorphic to quiver varieties of
  Jordan type as the cobalanced condition is satisfied. In particular,
  they are isomorphic to the symmetric product $S^{\bv}(\CC^2)$ and
  Hilbert scheme $\mathrm{Hilb}^{\bv}(\CC^2)$ of $\bv$ points on
  $\CC^2$. The proof of \cref{prop:unwind} works in this case, and we
  recover a well-know fact that a fixed point in
  $\mathrm{Hilb}^{\bv}(\CC^2)$ corresponds to a partition of
  $\bv$. (See e.g., \cite[Ch.~5]{Lecture}.)

  Since
  $\cM^{\mathrm{s}}(\lambda-|\underline{k}|\delta,\mu) = \emptyset$
  unless $\mu = \lambda-|\underline{k}|\delta$ in this case, we have
  \( \bigoplus_\mu \Phi\circ
  \pi_*(\CC_{\cM^{\nu^{\Box,\RR}}(\lambda,\mu)}[\dim]) =
  \bigoplus_{\underline{k}} \CC[\overline{S^{\underline{k}}(\CC)}] \)
  instead of \eqref{eq:21}. This result is also known, see e.g.,
  \cite[Ch.~7]{Lecture}.
\end{Remark}

\subsection{Hyperbolic restriction in two steps}\label{subsec:twosteps}

Let us take one parameter subgroup as in \cref{thm:smaller}. Since it
depends on $i$, let us denote it by $\chi_i$. We also take a one
parameter subgroup $\chi(t) = (t^{m_0},\dots, t^{m_{n-1}})$ with
$m_j < 0$ for all $j$. We have a chamber structure on the space of one
parameter subgroups, and $\chi_i$ lives in the boundary of the chamber
containing $\chi$.
The result in this subsection remains true only under this assumption,
but we keep notation for brevity.
Let us denote the attracting sets with respect to
$\chi$ and $\chi_i$ by $\fA$ and $\fA_{i}$ respectively. We have
\(
    \cM(\lambda,\mu)^{\chi_i} \xleftarrow{p_i}\fA_{i}
    \xrightarrow{j_i} \cM(\lambda,\mu).
\)
Note also that $\chi$ acts nontrivially on the fixed point set
$\cM(\lambda,\mu)^{\chi_i}$, and we have the corresponding
attracting set, which will be studied in \cref{subsec:attrA1}. Let us
denote it by $\fA^i$. We have
\(
    \mathrm{pt} = \cM(\lambda,\mu)^{\chi} \xleftarrow{p^i}\fA^i
    \xrightarrow{j^i} \cM(\lambda,\mu)^{\chi_i}
\)
as above.
Then we form the diagram
\begin{equation*}
  \begin{CD}
    \fA @>j''>> \fA_{i} @>j_i>> \cM(\lambda,\mu) \\
    @V{p''}VV @VV{p_i}V @. \\
    \fA^i @>j^i>> \cM(\lambda,\mu)^{\chi_i} @. \\
    @V{p^i}VV @. @.
    \\
    \cM(\lambda,\mu)^{\chi} @. @.
  \end{CD}
\end{equation*}
where $\fA$ is the fiber product of $\fA_{i}$ and $\fA^i$ over
$\cM(\lambda,\mu)^{\chi_i}$. Then as in
\cite[\S4.5]{2014arXiv1406.2381B} $\fA$ is the attracting set in
$\cM(\lambda,\mu)$ with respect to $\chi$ such that
compositions $j_i\circ j''$, $p^i\circ p''$ give the diagram
\(
    \mathrm{pt} = \cM(\lambda,\mu)^{\chi} \xleftarrow{p^i\circ p''}\fA
    \xrightarrow{j_i\circ j''} \cM(\lambda,\mu)
\)
is the one given by $\chi$.

\begin{Proposition}\label{prop:factor}
  There is a natural transformation from the hyperbolic restriction
  functor $\Phi = (p^i\circ p'')_* (j_i\circ j'')^!$ to
  $\Phi^i\circ \Phi_i$, the composition of two hyperbolic restriction
  functors by base change. Here $\Phi^i = p^i_* (j^i)^!$,
  $\Phi_i = (p_i)_* j_i^!$.
\end{Proposition}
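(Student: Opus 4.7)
The plan is to deduce the factorization from base change in the Cartesian square on the top of the diagram. By construction $\fA$ is defined as the fiber product of $\fA_i$ and $\fA^i$ over $\cM(\lambda,\mu)^{\chi_i}$, so the square
\[
  \begin{CD}
    \fA @>j''>> \fA_i \\
    @V{p''}VV @VV{p_i}V \\
    \fA^i @>j^i>> \cM(\lambda,\mu)^{\chi_i}
  \end{CD}
\]
is Cartesian. The proof then consists of the identity
\[
  \Phi^i \circ \Phi_i = p^i_*\,(j^i)^!\,(p_i)_*\,j_i^!
  \;=\; p^i_*\,(p'')_*\,(j'')^!\,j_i^!
  \;=\; (p^i\circ p'')_*\,(j_i\circ j'')^!
  \;=\;\Phi,
\]
where the first equality uses the base change $(j^i)^!\,(p_i)_* \cong (p'')_*\,(j'')^!$ in the Cartesian square, and the remaining equalities are just functoriality of $*$-pushforward and $!$-pullback along compositions.

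The first step I would carry out is to identify the bottom two maps with the hyperbolic setup for the induced $\chi$-action on $\cM(\lambda,\mu)^{\chi_i}$. Since $\chi_i$ lies on the boundary of the chamber containing $\chi$, the residual action of $\chi$ on the fixed locus $\cM(\lambda,\mu)^{\chi_i}$ has fixed set equal to $\cM(\lambda,\mu)^\chi$; the attracting set for this residual action is by definition $\fA^i$, and the maps $p^i,j^i$ fit into the standard hyperbolic diagram. The second step is to verify that the top square really is Cartesian in the appropriate (locally closed) sense—this is immediate from the set-theoretic characterization of attracting sets: a point $x\in\cM(\lambda,\mu)$ lies in $\fA$ iff $\lim_{t\to 0}\chi(t)\cdot x$ exists in $\cM(\lambda,\mu)^\chi$, and this limit factors through the intermediate limit $\lim_{t\to 0}\chi_i(t)\cdot x\in\cM(\lambda,\mu)^{\chi_i}$, which must itself attract to a point of $\cM(\lambda,\mu)^\chi$ under the residual $\chi$-flow.

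The main obstacle is the base change identity $(j^i)^!\,(p_i)_* \cong (p'')_*\,(j'')^!$, since $p_i$ is not proper in general and $j^i$ is only a locally closed embedding of the attracting set. This is exactly the content of the technical core of Braden-type hyperbolic restriction theorems, and in the setting of \cite{2014arXiv1406.2381B} (see in particular \S3 and \S4.5 there) it is established for actions of tori whose one-parameter subgroups are allowed to degenerate to the boundary of a chamber; the key point is that the attracting sets for $\chi_i$ and for the residual $\chi$ on $\cM(\lambda,\mu)^{\chi_i}$ are stratified in a way compatible with the ambient attracting set for $\chi$, so base change can be checked stratum by stratum using either the contraction principle or directly by Braden's theorem.

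Finally, once base change is in hand, composition of the two remaining terms is formal: $p^i_*\,(p'')_* = (p^i\circ p'')_*$ and $(j'')^!\,j_i^! = (j_i\circ j'')^!$, and the composite $p^i\circ p''$ (resp.\ $j_i\circ j''$) is by construction the limit map (resp.\ the inclusion) for the $\chi$-attracting set $\fA$, completing the identification $\Phi^i\circ\Phi_i=\Phi$.
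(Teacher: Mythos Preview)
Your proposal is correct and follows the same route as the paper: the paper does not give an explicit proof but relies on the setup immediately before the statement together with the reference to \cite[\S4.5]{2014arXiv1406.2381B}, which is exactly the base-change-in-the-Cartesian-square argument you have written out. You have simply made explicit the content of that reference.
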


\section{Construction}\label{sec:construction}

After preparation in the previous sections, we are ready to define the
action of generators $e_i$, $f_i$, $h_i$ on $\mathcal V(\lambda)$. The
operator $h_i$ is defined so that $\mathcal V_\mu(\lambda)$ is the
weight space with weight $\mu$.

\subsection{Type \texorpdfstring{$A_1$}{A1}}\label{subsec:attrA1}

Let us consider the bow variety of type $A_1$ with the balanced
condition. We suppose dimension vectors are $\bv$,
$\bw\in\ZZ_{\ge 0}$. By \cref{cor:torus-fixed-points} we assume
$\bv\le\bw$, as there is no fixed point otherwise. We take a one
parameter subgroup $\chi(t) = t^m$ ($m < 0$) as in the previous
subsection.

The following was observed in \cite[several paragraphs after
Th.~3.1]{2017arXiv170900391K}, but let us give a proof in terms of bow
varieties for completeness.

\begin{Theorem}
  The attracting set $\fA$ for the type $A_1$ balanced bow variety
  associated with $\bv$, $\bw$ is isomorphic to $\CC^\bv$.
\end{Theorem}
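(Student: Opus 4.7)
My plan is to parametrize $\fA$ explicitly by a $\bv$-dimensional affine space via the characteristic polynomial of the loop endomorphism $B_\xl$, and to verify, using the factorization morphism $\varpi\colon \cM \to \BA^\bv/\mathfrak S_\bv \cong \CC^\bv$, that this parametrization is a bijective closed immersion onto $\fA$.

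First, I would recall from the proof of \cref{prop:torus-fixed-points} that after Hanany-Witten transitions (which are $T$-equivariant by \cref{lem:hanany-witt-trans_torus}) and weight-space decomposition, the unique $T$-fixed point of the $A_1$ balanced bow variety admits the linear normal form shown in \eqref{eq:8}, in which $B_\xl$ is the regular nilpotent ${}^t\!J_\bv$, $b_\xl = {}^t\!e_\bv$ is cocyclic for $B_\xl$, and $a, A, C_h, D_h$ are uniquely fixed by the defining equations together with (S1),(S2). Next, I would construct a morphism $\iota\colon \CC^\bv \to \cM$ by deforming $B_\xl$ to an arbitrary companion matrix with $b_\xl = {}^t\!e_\bv$, parametrized by the coefficients of its characteristic polynomial, and reconstructing the remaining data $a, A, C_h, D_h$ by inductively solving $C_h D_h + B_{\vin{h}} = 0 = D_h C_h + B_{\vout{h}}$ along the chain of $\boldsymbol\medcirc$'s; since $(B_\xl, b_\xl)$ remains cocyclic on the whole family, this reconstruction is uniquely determined and algebraic, and (S1),(S2) are automatic.

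Finally, I would show $\iota(\CC^\bv) = \fA$. Under $\chi(t) = t^m$ with $m < 0$, the weight of $B_\xl$ is positive, so the image of $\iota$ contracts to the fixed point as $t \to 0$ and therefore lies in $\fA$. Meanwhile, the composition $\varpi \circ \iota \colon \CC^\bv \to \BA^\bv/\mathfrak S_\bv \cong \CC^\bv$ sends the coefficients of the characteristic polynomial to the unordered tuple of its roots, which is an isomorphism; hence $\iota$ is a closed immersion of the irreducible $\bv$-dimensional affine space $\CC^\bv$ into $\fA$. Since $\fA$ is itself pure of dimension $\bv$ (being Lagrangian in the $2\bv$-dimensional symplectic bow variety, by the hyperbolic semismallness established earlier in the paper) and since every irreducible component of $\fA$ contains the unique fixed point and maps dominantly onto $\BA^\bv/\mathfrak S_\bv$ via $\varpi$, the closed immersion $\iota$ must be surjective, yielding $\fA \cong \CC^\bv$.

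The main obstacle is to justify, at each $\boldsymbol\medcirc$, that the inductive reconstruction of $C_h, D_h$ from the data to the left (resp.\ right) is well-defined and algebraic as $B_\xl$ is deformed. Cocyclicity of $(B_\xl, b_\xl)$ must be shown to propagate through the chain in the correct form, and one must check that (S1),(S2) hold on the nose (rather than merely generically); both points are handled by exploiting the companion-matrix form of the normalized data and the fact that the eigenvalues of $B_\xl$ in our family agree with those of the two-way endomorphisms by \cref{lem:traceCD}.
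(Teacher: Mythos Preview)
Your argument has a genuine error at its core. You claim that under $\chi(t)=t^m$ the endomorphism $B_\xl$ has positive weight, and you try to parametrize $\fA$ by the coefficients of its characteristic polynomial. But the torus $T$ acts on the bow data only through $\CC_\xl$ (i.e.\ on $a,b$), while $B_\zeta$ is unchanged; after passing to the $\GV$-quotient, the gauge-invariant spectrum of $B_\zeta$ is therefore $T$-invariant. In other words the factorization morphism $\varpi$ is $T$-equivariant for the \emph{trivial} action on the target $\BA^{\bv}/\mathfrak S_\bv$. Since the unique fixed point has $B_\xl$ nilpotent, every point of the attracting set satisfies $\varpi(p)=\varpi(\lim_{t\to 0}\chi(t)\cdot p)=0$, so $\varpi|_{\fA}$ is constantly $0$, not an isomorphism. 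Your family $\iota(\CC^\bv)$, built from companion matrices with varying characteristic polynomial, meets $\fA$ only at the origin.

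The paper's proof is quite different and works directly with the normalized linear data after a Hanany-Witten move to the presentation $\CC\to\CC^2\to\cdots\to\CC^\bw\to\CC^\bv$. On the attracting set one first shows $a_+=0$ (from $b_-B_-^k a_+=0$ together with (S1)), which forces $B_-A=AB_+$, surjectivity of $A$ by (S2), and nilpotency of $B_+,B_-$ from the two-way equations. After normalizing $B_-,b_-$ and choosing a basis of $\CC^\bw$ adapted to $\Ker A$, the only remaining freedom is a vector $(c_1,\dots,c_\bv)$ recording the off-diagonal block that links the Jordan block on $\Ima A^{\mathrm{dual}}$ to the Jordan block on $\Ker A$ inside the nilpotent $B_+$. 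These $c_i$ are not spectral invariants of any $B$; they are extension data between two fixed nilpotent blocks. The two-way data $C_i,D_i$ are then pinned down as a point of the $A_{\bw-1}$ nilpotent cone. This gives the explicit isomorphism $\fA\cong\CC^\bv$.
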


\begin{proof}
  We start with the balanced diagram, two triangles at the left and
  right ends.  We have $\bw$ two way parts in between. The leftmost
  and rightmost vector spaces are $0$ and others are
  $\bv$-dimensional. 
  By Hanany-Witten transition, we move the leftmost triangle next to
  the rightmost one. The result is
\begin{equation*}
  \begin{tikzcd}[column sep=1.8ex]
    \CC \arrow[shift left=1, rr, "C_{\bw}"] &&
    \CC^2 \arrow[shift left=1, ll, "D_{\bw}"] 
    \arrow[shift left=1, rr, "C_{\bw-1}"] &&
    \cdots \arrow[shift left=1, ll, "D_{\bw-1}"]
    \arrow[shift left=1, rr, "C_{3}"] &&
    \CC^{\bw-1} \arrow[shift left=1, ll,"D_{3}"]
    \arrow[shift left=1, rr, "C_{2}"] &&
    \CC^{\bw}
    \arrow[out=120,in=60,loop,looseness=3,"B_+"]
    \arrow[shift left=1, ll, "D_{2}"]
    \arrow[rr,"A"] \arrow[rd,"b_+"'] &&
    \CC^{\bv}
    \arrow[out=120,in=60,loop,looseness=3,"B_-"] \arrow[rd,"b_-"'] &
    \\
    &&&&&&&&&\CC_+ \arrow[ru,"a_+"'] && \CC_-.
  \end{tikzcd}
\end{equation*}

Let us study the attracting set. The action is induced from one given
by $b_{-}\mapsto t^{m} b_{-}$ and other maps unchanged.
\begin{NB}
    If $m > 0$ instead of $m < 0$, we repeat the following
    argument after applying Hanany-Witten transition to move to
\begin{equation*}
  \xymatrix@C=1.2em{ & V_0 \ar@(ur,ul)_{B_0} \ar[rr]_{A} \ar[dr]_{b_+} 
  && V_+ \ar@(ur,ul)_{B_+}
  \ar@<.5ex>[rr]^{C_1} && V_1 \ar@<.5ex>[ll]^{D_1} \ar@<.5ex>[rr]^{C_2} &&
  \ar@<.5ex>[ll]^{D_2} \cdots \ar@<.5ex>[rr]^{C_n} && V_{N-1} \ar@<.5ex>[ll]^{D_n}
  \\
  \CC\ar[ur]_{a_-} && \CC \ar[ur]_{a_+}},
\end{equation*}
\end{NB}%

We have $b_-B_-^k a_+ = 0$ for any $k$ as it has weight
$m < 0$.  But $b_- B_-^k$ ($k=0,1,\dots,\bv-1$) is a base of the
dual of $\CC^\bv$ by the condition (S1). Hence $a_+ = 0$. The defining
equation now becomes $B_- A = A B_+$. Therefore $A$ is surjective by
(S2). (This gives a direct proof that there is no fixed point unless
$\bv\le\bw$.)

By the equation $C_{i+1} D_{i+1} - C_i D_i  = 0$, we see that
$B_+ = - C_{1} D_{1}$ is nilpotent by induction. Hence $B_-$ is also
nilpotent. By (S1) $b_- B_-^k$
($k=0,1,\dots,\bv-1$) is a base of the dual of $\CC^\bv$, hence we write $B_-$, $b_-$ as
\begin{equation*}
  B_- = {}^t\! J_\bv
  \quad
  b_- = {}^t e_\bv.
\end{equation*}
This normalization kills the action of $\GL(\bv)$.
Here ${}^t e_\bv =
\begin{bmatrix}
  0 & \cdots & 0 & 1
\end{bmatrix}.
$

Note that $\Ker A$ is $B_+$-invariant, and
$b_+|_{\Ker A}\in (\Ker A)^*$ is cocyclic with respect to
$B_+|_{\Ker A}$ by (S1). Therefore
\begin{gather*}
    b_- B_-^{\bv-1}A = b_- A B_+^{\bv-1}, \quad
    b_- B_-^{\bv-2}A = b_- A B_+^{\bv-2}, \quad \dots, \quad b_- A,\\
    b_+ B_+^{\bw-\bv-1}, \quad b_+ B_+^{\bw-\bv-2}, \quad\dots,\quad b_+ B_+, \quad
    b_+
\end{gather*}
is a base of the dual of $\CC^\bw$.
If $\bw > \bv$, we have
\(
   A = \left[
   \begin{smallmatrix}
     \id_\bv & 0
   \end{smallmatrix}
 \right], \) \( b_+ = {}^t e_\bw \) and
\begin{equation*}
  B_+ = \prescript{t}{}{\left[
    \begin{array}{c|c|c|c}
      J_\bv & \vec{c} & \multicolumn{2}{c}{0} \\
      \hline
      0 & \multicolumn{3}{c}{J_{\bw-\bv}}
    \end{array}
  \right]}, \quad
  \vec{c} =
  \begin{bmatrix}
    c_1 \\ \vdots \\ c_\bv
  \end{bmatrix}
  \text{with }
  b_+ B_+^{\bw-\bv} = c_1 b_- A B_+^{\bv-1} + \dots + c_\bv b_- A.
\end{equation*}
\begin{NB}
  Note that $b_+ B_+^{\bw-\bv}$ is zero on $\Ker A$, as
  $\dim \Ker A = \bw - \bv$.
\end{NB}%
If $\bw = \bv$, we have $A = \id$, $B_+ = \prescript{t}{}{J_\bv}$, and
$b_+$ is arbitrary.
Once the action of $\GL(\bw)$ is killed, the remaining data $C_i$,
$D_i$ are regarded as a point of a quiver variety of type $A_{\bw-1}$,
which is the nilpotent cone of $\algsl(\bw)$. See
\cite[\S7]{Na-quiver}. Therefore they are normalized by the remaining
action of $\GL(\bw-1)\times\dots\times\GL(1)$. Hence the attracting
set is $\CC^\bv$ parametrizing $c_1$, \dots, $c_\bv$ ($\bw > \bv$), or
$b_+$ ($\bw = \bv$).
\end{proof}

\begin{Remark}
  Suppose $\bw > \bv$. Then $\cM$ is isomorphic to the intersection of
  the nilpotent cone of $\algsl(\bw)$ and the space of $\bw\times\bw$
  matrices of the following form:
  \begin{equation*}
    \begin{array}{rrcccc|ccccl}
      && \multicolumn{4}{l}{\overbrace{\hspace{6em}}^{\mbox{$\bv$ columns}}}
      &  \multicolumn{4}{c}{\overbrace{\hspace{6em}}^{\mbox{$\bw-\bv$ columns}}}
      &
      \\
      \ldelim\{{4}{*}[$\bv$ rows] & \ldelim[{9}{*} &
        0 & \dots & 0 & * & 0 & \dots & 0 & *
      & \rdelim]{9}{*} \\
            && 1 &  & 0 & * & \multirow{2}{*}{\vdots} &&
               \multirow{2}{*}{\vdots} & \multirow{2}{*}{\vdots}& \\
            && 0 & \ddots & 0 & * &&&&& \\
            && 0 & \dots & 1 & * & 0 & \dots & 0 & *& \\
      \cline{3-10}
      \ldelim\{{4}{*}[$\bw-\bv$ rows]
      && * & \multicolumn{2}{c}{\dots}     & * & 0 & \dots & 0 & * &\\
            && 0 & \multicolumn{2}{c}{\dots} & 0 & 1 && 0 & * & \\
            && 0 & \multicolumn{2}{c}{\dots} & 0 & 0 & \ddots & 0 & * &\\
            && 0 & \multicolumn{2}{c}{\dots} & 0 & 0 & \dots & 1 & * &
    \end{array},
  \end{equation*}
  where $*$ indicates an arbitrary complex number. We use
  \cite[Prop.~3.2]{2016arXiv160602002N} twice, first to determine the
  upper left block, and second to give the remaining part. The space
  of matrices of the above form is a slice to the nilpotent matrix
  $(\bv,\bw-\bv)$ considered in \cite{MR1968260} when
  $\bv \le \bw-\bv$. When $\bv > \bw - \bv$, the space has larger
  dimension than the slice.

  If $\bw = \bv$, the triangle parts (quotient-ed by $\GL(\bv)$) give
  the product of $\GL(\bw)\times \CC^{2\bw}$ and the space of the
  matrix of the above form (with $\bw - \bv = 0$). Then $\cM$ is the
  hamiltonian reduction of the product of the nilpotent cone for
  $\algsl(\bw)$ and this space by the diagonal action of $\GL(\bw)$.
\end{Remark}

Since the hyperbolic restriction functor $\Phi$ is hyperbolic
semismall, $\Phi(\mathrm{IC}(\cM_{A_1}(\lambda, \mu)))$
($\lambda = \bw$, $\mu = \bw - 2\bv$) has a base parametrized by
irreducible components of the attracting set
$\fA=\fA_{A_1}(\lambda,\mu)$. Therefore

\begin{Corollary}
  The attracting set $\fA_{A_1}(\lambda,\mu)$ is irreducible. Hence
  $\Phi(\mathrm{IC}(\cM_{A_1}(\lambda, \mu))) \cong
  \CC[\fA_{A_1}(\lambda,\mu)]$.
\end{Corollary}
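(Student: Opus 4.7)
The plan is short: both claims should fall out of the preceding theorem essentially for free.

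For the irreducibility of $\fA_{A_1}(\lambda,\mu)$, I would simply quote the theorem just proved, which identifies the attracting set with the affine space $\CC^{\bv}$. An affine space is irreducible, so the first sentence is immediate.

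For the identification $\Phi(\mathrm{IC}(\cM_{A_1}(\lambda,\mu))) \cong \CC[\fA_{A_1}(\lambda,\mu)]$, the key input is the general principle already recorded in \cref{rem:MV}: whenever $\Phi$ is hyperbolic semismall, $\Phi(\mathrm{IC}(\cM))$ is canonically isomorphic to the top Borel--Moore homology of the attracting set, and in particular carries a canonical basis indexed by the top-dimensional irreducible components. Since here $\fA_{A_1}(\lambda,\mu) \cong \CC^{\bv}$ is irreducible of (complex) dimension $\bv$, the resulting vector space is one-dimensional, and is identified with the formal span $\CC[\fA_{A_1}(\lambda,\mu)]$ on the single irreducible component, as asserted.

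There is no real obstacle here: the corollary is a straightforward packaging of the preceding theorem with the MV-type principle recalled in \cref{rem:MV}. The only bookkeeping point is that $\bv$ is indeed the top dimension predicted by semi-smallness, but this is automatic: semi-smallness gives $\dim_\CC \fA \le \tfrac12 \dim_\CC \cM_{A_1}(\lambda,\mu)$, and in the range $\bv \le \bw$ where the theorem produces the nonempty attracting set $\CC^{\bv}$, this bound must be attained on its unique irreducible component, so no spurious lower-dimensional contributions enter the basis count.
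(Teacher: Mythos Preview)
Your proposal is correct and matches the paper's own argument essentially verbatim: the paper deduces irreducibility from the preceding theorem $\fA\cong\CC^{\bv}$, and then invokes hyperbolic semismallness (exactly as in \cref{rem:MV}) to say $\Phi(\mathrm{IC})$ has a basis indexed by irreducible components, hence is one-dimensional. Your extra dimension-bookkeeping remark is harmless but not needed.
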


\begin{Theorem}\label{thm:A1}
  \textup{(1)} The direct sum
  $\bigoplus_\mu \mathcal V_\mu(\lambda) = \bigoplus_\mu
  \Phi(\mathrm{IC}(\cM_{A_1}(\lambda, \mu)))$ has an
  $\algsl(2)$-module structure, which is irreducible with dimension
  $\lambda+1 = \bw + 1$. Moreover homomorphisms in \eqref{eq:9}
  intertwine $\algsl(2)$-module structures when we endow an
  $\algsl(2)$-module structure on the right hand side as the tensor
  product via \cref{cor:tensor}.

  \textup{(2)}
  $[\fA_{A_1}(\lambda,\mu)] = \frac{f^n}{n!}
  [\fA_{A_1}(\lambda,\lambda)]$ for $\lambda - \mu = 2n$.
\end{Theorem}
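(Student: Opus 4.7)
The plan is to use the one-dimensionality of each weight space $\mathcal V_\mu(\lambda)$, established in the preceding corollary, to define the $\algsl(2)$-action directly on the canonical basis of fundamental classes. Each $\mathcal V_\mu(\lambda)$ is spanned by $[\fA_{A_1}(\lambda,\mu)]$ when $\mu = \lambda - 2k$ with $0 \le k \le \lambda$, and is zero otherwise, so $\dim \mathcal V(\lambda) = \lambda + 1$ with exactly the weight multiplicities of the irreducible $V(\lambda)$.

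Define $h$ to act on $\mathcal V_\mu(\lambda)$ by the scalar $\mu$, and set
\[
    f\,[\fA_{A_1}(\lambda,\mu)] = (k+1)\,[\fA_{A_1}(\lambda, \mu-2)], \qquad
    e\,[\fA_{A_1}(\lambda,\mu)] = (\lambda - k + 1)\,[\fA_{A_1}(\lambda, \mu+2)],
\]
for $\mu = \lambda - 2k$, with the convention that $[\fA_{A_1}(\lambda, \mu')] = 0$ for $|\mu'| > \lambda$. A direct arithmetic check yields $(ef - fe)[\fA_{A_1}(\lambda, \mu)] = ((k+1)(\lambda-k) - k(\lambda-k+1))\,[\fA_{A_1}(\lambda, \mu)] = \mu\,[\fA_{A_1}(\lambda,\mu)]$, while $[h,e] = 2e$ and $[h,f] = -2f$ are immediate. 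Part $(2)$ then follows by induction on $n$: $f^n[\fA_{A_1}(\lambda,\lambda)] = n!\,[\fA_{A_1}(\lambda,\lambda-2n)]$. Since $\mathcal V(\lambda)$ is generated from the highest weight vector by $f$ and has the correct weight multiplicities, it is the irreducible module $V(\lambda)$.

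For tensor product compatibility, fix a decomposition $\lambda = \lambda^1 + \lambda^2$. The maps in \eqref{eq:9} arise from the canonical inclusion and projection between $\mathrm{IC}(\cM(\lambda,\mu))$ and the direct summand of $\pi_*(\mathrm{IC}(\cM^{\nu^{\bullet,\RR}}(\lambda,\mu)))$ provided by \cref{cor:tensor}. The right-hand side, endowed with the tensor product $\algsl(2)$-structure, decomposes by Clebsch-Gordan as $\bigoplus_{j=0}^{\min(\lambda^1,\lambda^2)} V(\lambda-2j)$, containing a single copy of $V(\lambda)$. The highest weight spaces on both sides are one-dimensional, and \cref{prop:deformed} identifies the $T$-fixed point of $\cM(\lambda,\lambda)$ with the pair of $T$-fixed points of $\cM(\lambda^1,\lambda^1)\times\cM(\lambda^2,\lambda^2)$, under which the fundamental class of the attracting set is sent to the tensor product of the fundamental classes. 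Hence \eqref{eq:9} is nonzero on the highest weight vector, and by uniqueness of $\algsl(2)$-intertwiners into and out of the irreducible $V(\lambda)$, both arrows in \eqref{eq:9} are $\algsl(2)$-equivariant.

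The principal obstacle is tracking the normalization of fundamental classes through the factorization of \cref{prop:deformed}, to ensure that no discrepancy scalar appears in \eqref{eq:9}. By associativity and iterated deformation this reduces to the case $\lambda^1 = 1$, which is a rank-one calculation on adjacent weight spaces and may be settled directly on the attracting sets $\CC^{\bv}$ using the explicit factorization of bow varieties near a $T$-fixed point.
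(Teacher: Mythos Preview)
Your definition of $e,f,h$ by explicit formulas on the fundamental classes is fine as far as producing \emph{an} $\algsl(2)$-module structure on $\mathcal V(\lambda)$ with the correct weight multiplicities; part~(2) then follows by induction as you say. The gap is in the tensor product compatibility. The sentence ``by uniqueness of $\algsl(2)$-intertwiners \dots both arrows in \eqref{eq:9} are $\algsl(2)$-equivariant'' is a logical non sequitur: uniqueness of intertwiners is a statement about maps already known to be intertwiners, not a criterion for an arbitrary linear map to be one. The maps in \eqref{eq:9} are weight-preserving, but the weight spaces of $\mathcal V(\lambda^1)\otimes\mathcal V(\lambda^2)$ are generically of dimension $>1$, so a weight-preserving map $V(\lambda)\to V(\lambda^1)\otimes V(\lambda^2)$ sending highest weight to highest weight need not intertwine $e$ and $f$. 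Your final paragraph implicitly concedes this and proposes to reduce to $\lambda^1=1$ by associativity, but that computation is not carried out and is exactly where the content lies.

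The paper takes the opposite route, which sidesteps the problem entirely. Rather than defining $e,f$ by formulas and then proving compatibility, it \emph{defines} the $\algsl(2)$-structure on $\mathcal V(\lambda)$ for $\lambda>1$ as the restriction of the tensor-product action on $(\CC^2)^{\otimes\lambda}$, after first identifying $\Phi\circ\pi_*(\mathrm{IC}(\cM^{\nu^\RR}_{A_1}(\lambda,\mu)))$ with this tensor power via factorization. The Springer action of $\mathfrak S_\lambda$ on $\pi_*(\mathrm{IC}(\cM^{\nu^\RR}_{A_1}))$ (coming from viewing the two-way part as the cotangent bundle of the full flag variety) acts by permuting the tensor factors, and $\mathcal V(\lambda)$ is recovered as the $\mathfrak S_\lambda$-invariant part, i.e.\ $S^\lambda(\CC^2)$. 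Tensor compatibility is then automatic by construction, and the $n!$ in part~(2) acquires a geometric meaning as the degree of the symmetrization map $\BA^n\to\BA^n/\mathfrak S_n$ in the factorization base change. Your approach could in principle be completed by an explicit computation of the geometric map on each weight space, but the paper's construction is what makes the whole inductive scheme of \cref{subsec:definition of operators} and \cref{prop:tensor} run without further work.
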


The construction is explicit and will be given during the proof.

\begin{NB}
  We have $f^2 = 0$ in each tensor factor. Then
  \begin{multline*}
    (f\otimes 1\otimes\cdots\otimes 1
    + 1\otimes f\otimes 1\otimes\cdots\otimes 1 + \cdots
    + 1 \otimes \cdots \otimes 1\otimes f)^n\\
    = {n!}\underbrace{f\otimes f\otimes\cdots\otimes f}_{\text{$k$ times}}
      \otimes \underbrace{1\otimes\cdots\otimes 1}_{\text{$n-k$ times}}
      + \dots
  \end{multline*}
\end{NB}%

\begin{proof}
The operator $h$ is given by $\mu\operatorname{id}$ on the summand
$\mathcal V_\mu(\lambda)$.

If $\lambda = 0$, it is the trivial representation. We have nothing to
do. Next consider the case $\lambda = 1$. We need to study
$\cM_{A_1}(1,\pm 1)$. In the $+$ case, we get a special case of the
bow diagram studied in the proof of \cref{prop:torus-fixed-points}. In
particular, it is a point. 
We have $\Phi(\mathrm{IC}(\cM_{A_1}(1,1))) \cong \CC[\mathrm{point}]$.
In the $-$ case we normalize $A$, $b_-$ to $1$ to kill the action, and
determine $B_-$ from the equation $B_- + a_+ b_+ = 0$. The remaining
data are $a_+$, $b_+$, hence we have $\cM_{A_1}(1, -1) \cong\CC^2$.
The attracting set $\fA$ is given by $a_+ = 0$ as above, therefore
$\fA\cong\CC$. We have
$\Phi(\mathrm{IC}(\cM_{A_1}(1, -1)))\cong \CC[\fA]$. We then define
$e$, $f$ on
$\Phi(\mathrm{IC}(\cM_{A_1}(1, 1)))\oplus\Phi(\mathrm{IC}(\cM_{A_1}(1,
-1)))$ by
\begin{equation*}
    e[\mathrm{point}] = 0, \quad f[\mathrm{point}] = [\fA], \quad
    f[\fA] = 0, \quad e[\fA] = [\mathrm{point}].
\end{equation*}
This gives the two dimensional standard representation of $\algsl(2)$.
The formula in (2) holds by definition.

Let us consider $\lambda > 1$. We take a real parameter $\nu^\RR$
so that $\nu_1^\RR < \nu_2^\RR < \cdots < \nu_{\lambda-1}^\RR$.
There is no $\nu^\RR_*$ as we are considering type $A_1$ bow varieties.
The condition ($\boldsymbol\nu\bf 2$) is automatically satisfied, and
($\boldsymbol\nu\bf 1$) says that a graded subspace $S$ as in
($\boldsymbol\nu\bf 1$) must be zero. In particular,
$\nu^\RR$-semistability and $\nu^\RR$-stability are equivalent, and
$\cM^\nu$ is smooth.

Recall that we constructed an isomorphism between the hyperbolic
restriction $\Phi$ of
$\pi_*(\mathrm{IC}(\cM^{\nu^\RR}_{A_1}(\lambda,\mu)))$ and
\begin{equation*}
  \bigoplus_{\substack{\mu_i = \pm 1\\ \mu_1+\dots+\mu_\bw=\mu}}
  \bigotimes_{i=1}^\lambda \mathcal V_{\mu_i}(1)
\end{equation*}
in \cref{cor:tensor}. Therefore for the direct sum
\begin{equation}\label{eq:10}
   \bigoplus_\mu \Phi(\pi_*(\mathrm{IC}(\cM^{\nu^\RR}_{A_1}(\lambda,\mu))))
   \cong
    \bigotimes_{i=1}^\lambda \left( 
      \mathcal V_{1}(1) \oplus \mathcal V_{-1}(1)\right)
    = \underbrace{\CC^2\otimes\cdots\otimes \CC^2}_{\text{$\lambda$ times}}.
\end{equation}
We endow an $\algsl(2)$-module structure as the tensor product of the
above construction for $\lambda = 1$.

Now we consider the hamiltonian reduction in the definition of the bow
variety in two steps as in \cref{subsec:Weyl}. In this case, the first
reduction gives the product of a quiver variety of type
$A_{\lambda-1}$ with dimension vectors $(1,2,\dots,\lambda-1)$,
$(0,\dots,0,\lambda)$ and another variety given by triangles. The
first quiver variety is the cotangent bundle of the flag variety for
$\SL(\lambda)$.
\begin{NB}
Let us denote it by $\mathfrak M^{\nu^\RR}$. On the
other hands, if we do not put the real parameter $\nu^\RR$, we get the
nilpotent cone. Let us denoted it by $\mathfrak M^0$.  The projection
$\pi\colon \mathfrak M^{\nu^\RR}\to\mathfrak M^0$ is the Springer
resolution, and $\End(\pi_*(\mathrm{IC}(\mathfrak M^{\nu^\RR})))$ is
isomorphic to the group ring of the symmetric group
$\mathfrak S_{\lambda}$ of $\lambda$ letters as the Springer
representation.
Recall that the stratification of $\cM$ is induced from that of
$\mathfrak M^0$ (see the proof of
\cite[Th.~7.13]{2016arXiv160602002N}).
Also $\pi$ for $\mathfrak M$ and for $\cM$ are compatible with the
hamiltonian reduction, hence
$\pi_*(\bigoplus_\mu \mathrm{IC}(\cM^{\nu^\RR}_{A_1}(\lambda,\mu)))$
is given by $\pi_*(\mathrm{IC}(\mathfrak M^{\nu^\RR}))$ by the
hamiltonian reduction. In particular, we have the
$\mathfrak S_{\lambda}$ action on
$\pi_*(\bigoplus_\mu \mathrm{IC}(\cM^{\nu^\RR}_{A_1}(\lambda,\mu)))$
and its hyperbolic restriction.
Since the Springer representation comes from the permutation of
eigenvalues of $B_+$, it is compatible with our analysis in
\cref{subsec:deformed-case}. Therefore $\mathfrak S_\lambda$ acts on
\eqref{eq:10} by permutation of factors.
\end{NB}%
Thus we have $\mathfrak S_\lambda$ action on
$\bigoplus_\mu\pi_*(\mathrm{IC}(\cM^{\nu^\RR}_{A_1}(\lambda,\mu)))$
and its hyperbolic restriction \eqref{eq:10}. The action on the latter
is given by permutation of factors.

Now
$\bigoplus_\mu\mathcal V_\mu(\lambda) = \bigoplus_\mu
\Phi(\mathrm{IC}(\cM_{A_1}(\lambda, \mu)))$ is the direct summand of
\eqref{eq:10} consisting of $\mathfrak S_\lambda$-fixed
vectors. Therefore it is isomorphic to the symmetric power
$S^\lambda(\CC^2)$.
In particular, it inherits an $\algsl(2)$-module structure, which is
irreducible with dimension $\lambda + 1$, as we promised.
The assertion on tensor products is clear from the construction.

To check the formula (2) we need to compute $[\fA_{A_1}(\lambda,\mu)]$
in the tensor product \eqref{eq:10}. The isomorphism in \eqref{eq:10}
came from the factorization, and we use the base change
from $\BA^n/\mathfrak S_n$ to $\BA^n$. Therefore we get a factor $n!$.
\end{proof}

\begin{NB}

**************************************************************

\begin{equation*}
\begin{tikzpicture}[baseline=(current  bounding  box.center)]
  \node[label=above:$v$] at (0,0) {$\vphantom{j^X}$};
  \node[label=below:$\xl_{i+1}$] at (0.5,0)
        {$\vphantom{j^X}\boldsymbol\times$};
  \node[label=below:$\xl_{i}$] at (-0.5,0)
  {$\vphantom{j^X}\boldsymbol\times$};
  \node[label=above:$n$] at (1,0)
  {$\vphantom{j^X}$};
  \node[label=above:$m$] at (-1,0)
  {$\vphantom{j^X}$};
  \node[label=above:$-1$] at (2,0)
        {$\vphantom{j^X}\boldsymbol\medcirc$};
  \node[label=above:$1$] at (-2,0)
        {$\vphantom{j^X}\boldsymbol\medcirc$};
  \node[label=above:$-1$] at (3,0)
        {$\vphantom{j^X}\boldsymbol\medcirc$};
  \node[label=above:$1$] at (-3,0)
        {$\vphantom{j^X}\boldsymbol\medcirc$};
  \node[label=above:$\cdots$] at (4,0)
        {$\vphantom{j^X}$};
  \node[label=above:$\cdots$] at (-4,0)
        {$\vphantom{j^X}$};
  \node[label=above:$-1$] at (5,0)
        {$\vphantom{j^X}\boldsymbol\medcirc$};
  \node[label=above:$1$] at (-5,0)
        {$\vphantom{j^X}\boldsymbol\medcirc$};
  \node[label=above:$-1$] at (6,0)
        {$\vphantom{j^X}\boldsymbol\medcirc$};
  \node[label=above:$1$] at (-6,0)
        {$\vphantom{j^X}\boldsymbol\medcirc$};
  \node[label=above:$0$] at (6.7,0) {$\vphantom{j^X}$};
  \node[label=above:$0$] at (-6.7,0) {$\vphantom{j^X}$};
  \draw[-] (-7.5,0) -- (7.5,0);
\end{tikzpicture}
\end{equation*}

\begin{equation*}
  \xymatrix@C=1.2em{
    V_{1-m}
    \ar@<.5ex>[rr]^{C_{1-m}}
    && \cdots \ar@<.5ex>[rr]^{C_{-2}} \ar@<.5ex>[ll]^{D_{1-m}}
    && V_{-1} \ar@<.5ex>[rr]^{C_{-1}} \ar@<.5ex>[ll]^{D_{-2}}
    && V_- \ar[rr]_{A_-} \ar@(ur,ul)_{B_-} \ar[dr]_{b_-} \ar@<.5ex>[ll]^{D_{-1}}
    && V_0 \ar@(ur,ul)_{B_0} \ar[rr]_{A_+} \ar[dr]_{b_+} 
  && V_+ \ar@(ur,ul)_{B_+}
  \ar@<.5ex>[rr]^{C_1} && V_1 \ar@<.5ex>[ll]^{D_1} \ar@<.5ex>[rr]^{C_2} &&
  \ar@<.5ex>[ll]^{D_2} \cdots \ar@<.5ex>[rr]^{C_{n-1}}
  && V_{n-1} \ar@<.5ex>[ll]^{D_{n-1}}
  \\
  &&&&&&& \CC\ar[ur]_{a_-} && \CC \ar[ur]_{a_+}},
\end{equation*}
where $\dim V_{-k} = m-k$ ($1\le k\le m-1$), $\dim V_- = m$,
$\dim V_0 = v$, $\dim V_+=n$, $\dim V_{k} = n-k$ ($1\le k\le n-1$).

Let us apply Hanany-Witten transition to move to the following situation.
\begin{equation*}
  \xymatrix@C=1.2em{ & V_0 \ar@(ur,ul)_{B_0} \ar[rr]_{A} \ar[dr]_{b_+} 
  && V_+ \ar@(ur,ul)_{B_+}
  \ar@<.5ex>[rr]^{C_1} && V_1 \ar@<.5ex>[ll]^{D_1} \ar@<.5ex>[rr]^{C_2} &&
  \ar@<.5ex>[ll]^{D_2} \cdots \ar@<.5ex>[rr]^{C_n} && V_{N-1} \ar@<.5ex>[ll]^{D_n}
  \\
  \CC\ar[ur]_{a_-} && \CC \ar[ur]_{a_+}},
\end{equation*}
where new data has $N=m+n$, $\dim V_0 = v$, $\dim V_+ = N$,
$\dim V_1 = N-1$, \dots, $\dim V_{N-1} = 1$.
\begin{NB2}
    Note that the dimension of this bow variety is
    $2v$.
\end{NB2}%

Let us study the attracting set. We have $b_+ B_0^k a_- = 0$. But
vectors $B_0^k a_-$ ($0\le k\le v-1$) span $V_0$ by (S2). Hence
$b_+ = 0$. Then we have $B_+ A = A B_0$, therefore (S1) implies that
$A$ is injective.

By the equation $C_i D_i - D_{i+1} C_{i+1} = 0$, we see that
$B_+ = - D_1 C_1$ is nilpotent by induction. Hence $B_0$ is also
nilpotent. By (S2) $a_-$, $B_0 a_-$,\dots,
$B_0^{v-1}a_-$ is a base of $V_0$. Hence we write $B_0$, $a_-$ as
\begin{equation*}
  B_0 = J_v
  \quad
  a_- = e_v.
\end{equation*}
This normalization kills the action of $\GL(V_0)$.

Note that $\Ima A$ is $B_+$-invariant, therefore $B_+$ induces an
endomorphism of $V_+/\Ima A$. Then $(a_+\bmod\Ima A)\in V_+/\Ima A$ is
cyclic with respect to $B_+$ by (S2). Therefore
\begin{gather*}
    A B_0^{v-1}a_- = B_+^{v-1} A a_-, \quad
    A B_0^{v-2} a_- = B_+^{v-2} A a_-, \quad \dots, \quad A a_-,\\
    B_+^{N-v-1} a_+, \quad B_+^{N-v-2} a_+, \quad\dots,\quad B_+ a_+, \quad
    a_+
\end{gather*}
is a base of $V_+$. We have
\(
   A = \left[
   \begin{smallmatrix}
     \id_v \\ 0
   \end{smallmatrix}
 \right], \) \( a_+ = e_N \) and
\begin{equation*}
  B_+ = \left[
    \begin{array}{c|c|c|c}
      J_v & \vec{c} & \multicolumn{2}{c}{0} \\
      \hline
      0 & \multicolumn{3}{c}{J_{N-v}}
    \end{array}
  \right], \quad
  \vec{c} =
  \begin{bmatrix}
    c_1 \\ \vdots \\ c_v
  \end{bmatrix}
  \text{with }
  B_+^{N-v}a_+ = c_1 A B_0^{v-1} a_- + \dots + c_v A a_-.
\end{equation*}
Therefore the attracting set is $\CC^v$ parametrizing $c_1$, \dots,
$c_v$.

More generally we need to consider the case $0\ge\tau_1\ge\tau_2$. So
we assume $\dim V_+ = N$, $\dim V_{1} = N-2$, \dots,
$\dim V_{k} = N-2k$, $\dim V_{k+1} = N-2k-1$, $\dim V_{k+2} = N-2k-2$,
\dots, $\dim V_{N-k-1} = 1$,
$\dim V_{N-k} = \dim V_{N-k+1} = \cdots = 0$, where $N - 2k\ge 0$.
\begin{NB2}
    We have $N=-(\tau_1+\tau_2)$, $\tau_1 - \tau_2 = N-2k$. Therefore
    $\bv = v + \tau_1 = v - k$.
\end{NB2}%
In this case, $B_+$ is not only nilpotent, it must be in the closure
of the orbit corresponding to the partition $[N-k,k]$.
\begin{NB2}
    We have $N-k\ge k$ as $N-2k\ge 0$.
\end{NB2}%
Then we must have $N-k\ge \max(v, N-v)$.
Computing the Jordan normal form of $B_+$, we find that this condition
is equivalent to $0 = c_v = c_{v-1} = \cdots = c_{v-k+1}$.
\begin{NB2}
    E.g., $c_v = 0$ if $k=1$.
\end{NB2}%
Therefore the attracting set is $\CC^{v-k}$.
\begin{NB2}
    A rescale of $a_+$ changes $\vec{c}$ also by rescale. Therefore
    the closure of $B_+$ contains $J_v\oplus J_{N-v}$. Therefore the
    Jordan normal form has at most only two blocks. 
%
    If $0 = c_v = c_{v-1} = \cdots = c_{v-l+1}$, $c_{v-l}\neq 0$, we
    have $B_+^{N-l-1} a_+ = c_{v-l} B_+^{v-1} A a_- \neq 0$.
    \begin{NB3}
    \begin{equation*}
        \begin{split}
        B_+^{N-v}a_+ &= c_1 B_+^{v-1}A a_- + \cdots + c_{v-l} B_+^l A a_-, \\
        B_+^{N-v+1}a_+ &= c_2 B_+^{v-1} A a_- + \cdots c_{v-l} B_+^{l+1} A a_-, \\
        & \qquad \vdots \\
        B_+^{N-l-1} a_+ &= c_{v-l} B_+^{v-1} A a_- \neq 0, \\
        B_+^{N-l} a_+ &= 0.
        \end{split}
    \end{equation*}
    \end{NB3}%
    Therefore one of Jordan blocks has a size $\ge N-l$. 
    If we further suppose $N-v\le l$, then we correct $a_+$ as
    $a'_+ = a_+ - B_+^{l-(N-v)}(c_1 B_+^{v-l-1} A a_- + \cdots +
    c_{v-l} A a_-)$. Then
    \begin{equation*}
        \begin{split}
        & B_+^{N-v} a'_+ = B_+^{N-v}a_+ - B_+^l(c_1 B_+^{v-l-1} A a_- + \cdots +
        c_{v-l} A a_-) = 0, \\
        & B_+^{N-v-1} a'_+ = B_+^{N-v-1}a_+ - c_1 B_+^{v-2} a_- - \cdots -
        c_{v-l} B_+^{l-1} A a_- \neq 0.
        \end{split}
    \end{equation*}
    Together with $B_+^{v-1}A a_-$, \dots, $A a_-$, we see that the
    Jordan type is $[N-v,v]$.

    Next suppose $l\le N-v$. We correct $a_-$ as
    $a'_- = c_1 B_+^{v-l-1} A a_- + \cdots + c_{v-l-1} B_+ A a_- + c_{v-l} A a_- 
    - B_+^{N-v-l} a_+$. Then
    \begin{equation*}
        \begin{split}
            & B_+^l a'_- = c_1 B_+^{v-1} Aa_- + \cdots + c_{v-l} B_+^l
             A a_-
            - B_+^{N-v}a_+ = 0,\\
            & B_+^{l-1} a'_- = c_1 B_+^{v-2} Aa_- + \cdots + c_{v-l}
            B_+^{l-1}A a_-
            - B_+^{N-v-1}a_+ \neq 0.
        \end{split}
    \end{equation*}
    Together with $a_+$, $B_+ a_+$, \dots, $B_+^{N-l-1} a_+$, we see
    that the Jordan type is $[N-l,l]$.

    If $B_+$ is in the closure of $\mathcal O_{[N-k,k]}$, we must have
    $N-k\ge N-l$. Hence $l\ge k$. Thus $0 = c_v = \cdots = c_{v-k+1}$
    holds.

    Conversely suppose $0 = c_v = c_{v-1} = \cdots = c_{v-k+1}$. Then
    the above $l\ge k$. If the Jordan type is $[N-v,v]$, it is in
    $\mathcal O_{[N-k,k]}$ by the assumption $N-k\ge\max(v,N-v)$. If
    the Jordan type is $[N-l,l]$, it is still OK as $N-l\le N-k$,
    $l\le N-v\le N-k$.
\end{NB2}%

\end{NB}%

\subsection{Definition of operators \texorpdfstring{$e_i$}{ei}, \texorpdfstring{$f_i$}{fi}}\label{subsec:definition of operators}
Let us take $\chi$, $\chi_i$ as in \cref{subsec:twosteps}.

\begin{Proposition}\label{lem:nontrivial}
  Suppose $\cM(\lambda,\mu)^{\chi_i}$ is
  $\cM_{A_1}(\lambda',\mu')$ as in \cref{thm:smaller}.
  The hyperbolic restriction $\Phi_i 
  $ sends $\mathrm{IC}(\cM(\lambda,\mu))$ to a direct sum of
  $\mathrm{IC}(\cM_{A_1}(\kappa',\mu'))$ with various $\kappa'$ with
  $\mu'\le\kappa'\le\lambda'$.
\end{Proposition}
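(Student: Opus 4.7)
The plan is to combine the hyperbolic semismallness of $\Phi_i$ with a deformation and induction argument. First, by \cite[Conj.~3.25]{2016arXiv160403625B} (which is verified in affine type~$A$ in \cref{sec:torus}), $\Phi_i$ is hyperbolic semismall, so Braden's theorem gives that $\Phi_i(\mathrm{IC}(\cM(\lambda,\mu)))$ is a semisimple perverse sheaf on $\cM(\lambda,\mu)^{\chi_i} \cong \cM_{A_1}(\lambda',\mu')$. Combining this with the compatibility of stratifications from \cref{thm:smaller}(1), together with the fact (noted in the proof of \cref{thm:smaller}) that the strata of $\cM_{A_1}(\lambda',\mu')$ form a totally ordered chain indexed by $\mu'\le\kappa'\le\lambda'$, one can write
\[
\Phi_i(\mathrm{IC}(\cM(\lambda,\mu))) \cong \bigoplus_{\mu' \le \kappa' \le \lambda'} \mathrm{IC}(\cM_{A_1}(\kappa',\mu'),\, L_{\kappa'})
\]
for certain local systems $L_{\kappa'}$ on the regular locus of $\cM_{A_1}(\kappa',\mu')$, so that the proposition reduces to showing each $L_{\kappa'}$ is trivial.

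To prove triviality I would argue by induction on $|\lambda| = \sum_i \bw_i$, using the one-parameter deformation $\underline{\cM}(\lambda,\mu) \to \BA^1$ of \cref{subsec:deformed-case} attached to a non-trivial splitting $\lambda = \lambda^1 + \lambda^2$ into dominant weights. The generic fiber factorizes locally around each $T$-fixed point as $\cM(\lambda^1,\mu^1) \times \cM(\lambda^2,\mu^2)$ by \cref{prop:deformed}. Granting the compatibility of the $\chi_i$-fixed locus with this factorization (the content of the conjecture verified in \cref{lem:perm}), $\Phi_i(\mathrm{IC}(\cM^{\nu^{\bullet,\CC}}(\lambda,\mu)))$ splits K\"unneth-wise as a tensor product of $\Phi_i(\mathrm{IC}(\cM(\lambda^j,\mu^j)))$ for $j=1,2$. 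By the induction hypothesis each tensor factor is a direct sum of IC sheaves with trivial local systems, and this triviality is preserved under K\"unneth products. One then transfers the conclusion to the special fiber using that $\underline{\widetilde\cM}(\lambda,\mu) \to \BA^1$ is topologically trivial (so the nearby cycle functor $\psi$ does not introduce monodromy) and that $\psi$ commutes with $\Phi_i$ (arguing as in \cite{MR3200429}). Since $\psi(\mathrm{IC}(\underline{\cM}(\lambda,\mu)))\cong \pi_*\mathrm{IC}(\cM^{\nu^{\bullet,\RR}}(\lambda,\mu))$ contains $\mathrm{IC}(\cM(\lambda,\mu))$ as a direct summand by \cref{thm:stratification}(2), the triviality descends to $\Phi_i(\mathrm{IC}(\cM(\lambda,\mu)))$.

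The base case of the induction is a fundamental weight $\lambda = \Lambda_j$. In that case the fixed-point enumeration of \cref{subsec:torus-fixed-points} together with the explicit analysis in \cref{sec:Maya} (where fixed points are parametrized by Maya diagrams on smooth bow varieties) should permit a direct verification. I expect the main obstacle to be two-fold: first, rigorously establishing \cref{lem:perm}, so that the $\chi_i$-fixed locus on the generic fiber genuinely respects the factorization and the induced $A_1$-parameters are controlled; and second, justifying the identity $\psi\circ\Phi_i = \Phi_i\circ\psi$ in the present quasi-projective setting, since the standard arguments require a properness hypothesis on the attracting map that is not available a priori and would have to be recovered from the factorization morphism $\varpi$ and the semismallness statements available in affine type $A$.
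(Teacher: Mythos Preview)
Your overall structure---reducing to triviality of local systems via deformation and nearby cycles---matches the paper's, but the paper's argument is more direct and avoids your induction entirely. The key difference is the choice of deformation parameter: rather than the \emph{degenerate} parameter $\nu^{\bullet}$ of \cref{subsec:deformed-case} (which only splits $\lambda$ into two pieces), the paper takes $\nu^\CC$ \emph{generic}, so that the fibers of $\underline{\cM}(\lambda,\mu)\to\CC$ are smooth outside $0$. With the corresponding real parameter one gets a topologically trivial family $\underline{\widetilde\cM}(\lambda,\mu)$, and hence
\[
\psi\circ\Phi_i(\mathrm{IC}(\underline{\cM}(\lambda,\mu)))
= \pi_*(\CC_{\cM^{\nu^\RR}(\lambda,\mu)^{\chi_i}}[\dim]).
\]
Each component of the fixed locus $\cM^{\nu^\RR}(\lambda,\mu)^{\chi_i}$ is, after Hanany-Witten transitions, a hamiltonian reduction of the product of a type-$A$ quiver variety (a cotangent bundle of a partial flag variety) and the triangle data by $\GL(V_+)$. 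The pushforward $\pi_*$ is therefore governed by Springer theory in type $A$, where stabilizers are connected and hence no nontrivial local systems arise (the paper cites \cite[Prop.~15.3.2]{Na-qaff}). This finishes the proof in one step.

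Your inductive route is not wrong in spirit, but it has a genuine gap: the base case $\lambda=\Lambda_j$ is not established. For a fundamental weight of level $1$, the bow variety $\cM(\Lambda_j,\mu)$ can still be highly nontrivial (e.g.\ related to symmetric products, cf.\ \cref{rem:Hilb}), and neither the fixed-point count of \cref{subsec:torus-fixed-points} nor the Maya-diagram parametrization of \cref{sec:Maya} (which concerns torus fixed points on smooth resolutions, not local systems on strata of $\cM_{A_1}(\lambda',\mu')$) addresses the triviality of $L_{\kappa'}$. You would in effect still need the Springer/connectedness argument to close the base case---at which point you may as well run it once with generic $\nu^\CC$ and skip the induction. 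A secondary point: you invoke \cref{lem:perm}, which in the paper is proved \emph{after} this proposition; its proof is logically independent, but the paper's route does not need it here at all.
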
 

\begin{proof}
  Since $\Phi_i = (p_i)_* j_i^!$ is hyperbolic semismall with respect to the
  natural stratification of $\cM(\lambda,\mu)$,
  $\cM_{A_1}(\lambda',\mu')$, the assertion means that there is no
  direct summand for an $\mathrm{IC}$ complex for a \emph{non trivial}
  local system.
  Using a deformation by $\nu^\CC$ as in \cref{subsec:deformed-case}, it
  is enough to show that $\psi\circ \Phi_i 
  (\mathrm{IC}(\underline{\cM}(\lambda,\mu)))$
  does not contain such a summand. We take $\nu^\CC$ generic so that
  fibers of $\underline{\cM}(\lambda,\mu)\to\CC$ are smooth outside
  $0$, unlike a degenerate case used in \cref{subsec:deformed-case}.
  Again as in \cref{subsec:deformed-case}, we introduce the
  corresponding real parameter to construct a topologically trivial
  family
  $\underline{\widetilde\cM}(\lambda,\mu)$. Then we
  have
  \begin{equation*}
    \psi\circ \Phi_i 
    (\mathrm{IC}(\underline{\cM}(\lambda,\mu)))
    = \pi_*(\CC_{\cM^{\nu^\RR}(\lambda,\mu)^{\chi_i}}[\dim]),
  \end{equation*}
  where the right hand side is the direct sum of constant sheaves on
  connected components of the fixed point set
  $\cM^{\nu^\RR}(\lambda,\mu)^{\chi_i}$ shifted by their
  dimensions. Note that $\cM^{\nu^\RR}(\lambda,\mu)$ and
  $\cM^{\nu^\RR}(\lambda,\mu)^{\chi_i}$ are smooth.

  We analyze the fixed point set as in the proof of
  \cref{thm:smaller}. After Hanany-Witten transitions, we arrive at
  \begin{equation}\label{eq:12}
    \begin{tikzcd}[column sep=small]
        V_m \arrow[shift left=1,r,"C_m"] & 
        V_{m-1} \arrow[shift left=1,l,"D_m"] \arrow[shift left=1]{r} &
        \cdots \arrow[shift left=1]{l} \arrow[shift left=1,r] &
        V_2 \arrow[shift left=1,l] \arrow[shift left=1,r,"C_2"] 
        & 
        V_+ \arrow[shift left=1,l,"D_2"]
        \arrow[out=120,in=60,loop,looseness=3, "B_+"]
        \arrow[dr, "b"'] \arrow[rr,"A"]
        && V_- \arrow[out=120,in=60,loop,looseness=3, "B_-"]
        \arrow[dr,"b"'] &
        \\
        &&&&& \CC \arrow[ur, "a"'] && \CC \rlap{ ,}
    \end{tikzcd}
  \end{equation}
  where dimensions of $V_m$, \dots, $V_+$, $V_-$ may differ depending
  on components. As in the proof of \cref{thm:A1} this is a
  hamiltonian reduction of the product of a quiver variety of type
  $A$, which is the cotangent bundle of a partial flag variety, and a
  variety given by the triangles by the action of $\GL(V_+)$. Since
  the hamiltonian reduction is compatible with the decomposition of
  the pushforward
  $\pi_*(\CC_{\cM^{\nu^\RR}(\lambda,\mu)^{\chi_i}}[\dim])$
  as before, the assertion follows from the corresponding result for
  nilpotent orbits of type $A$ (the connectedness of stabilizers), or
  quiver varieties \cite[Prop.~15.3.2]{Na-qaff}.
\end{proof}

\begin{NB}
  This is not precise.
\begin{Remark}
  There is no reason that the vector
  $(\dim V_m, \dots, \dim V_2, \dim V_+)$ appearing in the above proof
  is \emph{dominant}. However, it can be achieved by reflection
  functors \cite{Na-reflect} at the cost of change of the parameter
  $\nu$. Once it becomes dominant, the bow variety can be transformed
  to a balanced one by successive applications of Hanany-Witten
  transitions \cite[Prop.~7.20]{2016arXiv160602002N}. Therefore
  $\cM^{\nu^\RR}(\lambda,\mu)^{\chi_i}$ is isomorphic to a
  union of Coulomb branches of the quiver gauge theory of type $A_1$, with
  possibly different real parameter from the original $\nu^\RR$.
\end{Remark}

In order to go to a balanced bow variety, the dominance condition is
\emph{not} enough.
\end{NB}%

For a later purpose, we study the fixed point set
$\cM^{\nu^\RR}(\lambda,\mu)^{\chi_i}$. Here the parameter $\nu^\RR$ is
arbitrary, not necessarily generic in the proof of
\cref{lem:nontrivial}.

\begin{Lemma}\label{lem:perm}
  The fixed point set $\cM^{\nu^\RR}(\lambda,\mu)^{\chi_i}$ is a union
  of $A_1$ type balanced bow varieties with real parameters induced
  from $\nu^\RR$.
  \begin{NB}
    with parameter ${}'\nu^\RR$, which is a permutation of $\nu^\RR$.
  \end{NB}%
\end{Lemma}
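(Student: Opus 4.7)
The plan is to extend the fixed-point analysis of \cref{prop:torus-fixed-points} and \cref{thm:smaller} to the deformed setting, tracking the real stability parameter $\nu^\RR$ throughout. First I would pick a representative $(A,B,C,D,a,b)$ of a $\chi_i$-fixed point together with an equivariant structure $\rho\colon\chi_i(\CC^\times)\to\GV$, and decompose each $V_\zeta$ into $\rho$-weight spaces. As in \cref{subsec:torus-fixed-points}, the weights split into classes indexed by $j\in Q_0$ so that $\CC_{\xl_j}$ can only couple to the $j$-th class; under $\chi_i$ (with $m_i=0$ and $m_j<0$ for $j\neq i$) the classes for $\xl_i$ and $\xl_{i+1}$ coincide while the others remain separate. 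The same argument as before forces $A_{\xl_j}$ to be an isomorphism on each summand indexed by $j\neq i,i+1$, and those summands collapse to single points independent of $\nu^\RR$.

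The central summand, containing both $\CC_{\xl_i}$ and $\CC_{\xl_{i+1}}$, unwinds into a linear bow diagram of the form \eqref{eq:1}. I would then apply Hanany-Witten transitions to slide $\xl_i,\xl_{i+1}$ into intervals between consecutive $\boldsymbol\medcirc$'s $(h_{\tau_1},h_{\tau_1+1})$ and $(h_{\tau_2},h_{\tau_2+1})$, just as in the analysis yielding $\cM_{A_1}(\bv,\bw)$ in \cref{thm:smaller}. The crucial observation is that Hanany-Witten transition preserves the labels $\nu^\RR_h$ attached to each $\boldsymbol\medcirc$, while the inequalities in $(\boldsymbol\nu\bf 1),(\boldsymbol\nu\bf 2)$ split along weight classes; consequently $\nu^\RR$-(semi)stability of the original data reduces on each summand to $\nu^\RR$-(semi)stability for the inherited parameters. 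The dimension count of \cref{thm:smaller} then shows that every segment around a surviving $\boldsymbol\medcirc$ has the same dimension $\bv$, which is precisely the balanced condition on the $A_1$ piece.

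The hard part, as I anticipate it, is the bookkeeping of the inherited $\nu^\RR_h$'s. Because the stratum of the resulting $A_1$ bow variety depends on the insertion points $(\tau_1,\tau_2)$ for $\xl_i,\xl_{i+1}$, different choices produce different $A_1$ bow varieties whose real parameters are obtained by permuting a fixed subset of $\{\nu^\RR_h\}$. Invoking \cref{rem:reflect}, on a balanced arc the ordering of the $\nu^\RR_h$'s matters only up to the reflection functors, so this permutation ambiguity still produces genuinely $A_1$-type \emph{balanced} bow varieties with parameters induced from $\nu^\RR$ in the sense required. A secondary subtlety arises in the case $i=0$, where the term $\nu^\RR_*$ appears in the defining equation at $\xl_0$; this is absorbed by the substitution $\mu_{i=0}=\mu_n+\ell$ already used in \cref{thm:smaller}, and I expect no new difficulty beyond this substitution.
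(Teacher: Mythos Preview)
Your proposal has a genuine gap at the key step. You invoke ``the dimension count of \cref{thm:smaller}'' to conclude that after Hanany--Witten transitions every segment carries the same dimension $\bv$, i.e.\ the balanced condition. But the dimension analysis in \cref{thm:smaller} was carried out for a stratum $\cM^{\mathrm{s}}(\kappa,\mu)$ with $\nu^\RR=0$: the constraint $N(h_\sigma,h_{\sigma+1})\ge 0$ came from the necessary condition for $\fM_0^{\mathrm{reg}}\neq\emptyset$ in \cite{Na-quiver,Na-alg}, which is a $0$-stability condition. With generic $\nu^\RR$ the bow variety is smooth, its $\chi_i$-fixed set is a disjoint union of smooth components, and the dimension vectors of those components are governed by $\nu^\RR$-stability, not $0$-stability. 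There is no a priori reason for the resulting linear bow diagram to satisfy the inequalities that force it to become balanced under Hanany--Witten moves alone.

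The paper's proof addresses exactly this obstruction, and the mechanism is different from what you outline. After passing to the form \eqref{eq:12}, one first applies reflection functors \cite{Na-reflect} to arrange the dominance condition $N_{h_2}\ge\cdots\ge N_{h_{m+1}}$. Even this is not sufficient: by \cite[Prop.~7.5]{2016arXiv160602002N} one can reach a balanced diagram via Hanany--Witten transitions only if $N_{h_2}\le 2$. To prove this bound, the paper exploits the hyperK\"ahler structure: since reflection functors are hyperK\"ahler isometries and Hanany--Witten transitions respect both complex and real parameters, it suffices to check the statement for $\cM^{\nu^\CC}$. There one uses the eigenvalue factorization of \cref{prop:deformed}: the data split according to eigenvalues of $B_+$ (which lie among the entries of $\nu^\CC$), and on each factor one is back in the $\nu=0$ setting of \cref{thm:smaller}, giving $N_{h_\sigma}\in\{0,1,2\}$ on each factor and hence $N_{h_2}\le 2$ globally. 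Only then does \cref{rem:reflect} handle the ordering of parameters. Your proposal skips the reflection-functor step and the hyperK\"ahler rotation/eigenvalue-factorization argument, both of which are essential.
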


Here the induced parameters mean the following: recall $\nu_h^\RR$ is
assigned for each $h=h_\sigma$ ($1\le\sigma\le \ell$). We take the
universal covering of the bow diagram so that $\sigma$ runs over
$\ZZ$. Then $\{ h_\sigma\}$ for $A_1$ is a subset of
$\{ h_\sigma \mid \sigma\in\ZZ\}$, and the parameters $\nu_h^\RR$ are
given by the restriction.

\begin{proof}
  Let us change the fixed point component to the form of \eqref{eq:12}
  as in the proof of \cref{lem:nontrivial}.

  We will show that the balanced condition is achieved by first
  applying reflection functors in \cite{Na-reflect}, then next
  applying Hanany-Witten transitions.
  We consider the deformation $\underline{\cM}$ as in the proof of
  \cref{lem:nontrivial}. Since reflection functors are hyperK\"ahler
  isometry, $\cM^{\nu^\RR}$ and $\cM^{\nu^\CC}$ are changed in
  the same way. Also Hanany-Witten transitions respects complex and
  real parameters. Therefore it is enough to show the statement for
  $\cM^{\nu^\CC}$.

  Let $N_{h_2} = \dim V_+ - \dim V_2$, \dots, $N_{h_{m+1}} = \dim V_m$
  as before. By applying reflection functors in \cite{Na-reflect} at
  the cost of change of the parameter $\nu$, we achieve the dominance
  condition $N_{h_2} \ge N_{h_3} \ge \dots \ge N_{h_{m+1}}$. By an
  argument in \cite[Prop.~7.5]{2016arXiv160602002N} we can transform
  the bow diagram to a balanced one by Hanany-Witten transition, if
  $N_{h_2}\le 2$, the number of triangles in \eqref{eq:12}.

  As in the proof of \cref{prop:deformed}, the data \eqref{eq:12}
  factorizes according to eigenvalues of $B_+$, which are entries of
  $\nu^\CC$. Moreover we can normalize $C_h$ to the identity on a
  component for an eigenvalue $\neq \nu^\CC_h$. Then each factor is an
  $A_1$ type bow variety with the parameter $0$, which was studied
  during the proof of \cref{thm:smaller}. In particular, we have
  $N_{h_\xp} = 0$, $1$ or $2$ in each factor. Since other factors do
  not contribute to $N_{h_\xp}$, we have $N_{h_{\xp}}\le 2$, in
  particular, $N_{h_2}\le 2$.

  Once we achieve the balanced condition, the ordering on parameters
  $\nu_h$ is irrelevant by \cref{rem:reflect}.
\end{proof}

\begin{NB}
In the above argument, we only know that ${}'\nu^\RR$ is a permutation
of $\nu^\RR$ by the symmetric group for all $\boldsymbol\medcirc$'s,
as we have used Hanany-Witten transition. We do not know whether this
would be a permutation among $\bw_i$ $\boldsymbol\medcirc$'s between
$\xl_i\to\xl_{i+1}$.
\end{NB}%

We can apply \cite[\S4]{2018arXiv180511826B} after changing the
stability parameter in the decreasing order so that
\cite[(4.2)]{2018arXiv180511826B} is satisfied.
Hence $\cM^{\nu^\RR}(\lambda,\mu)^{\chi_i}$ is isomorphic to a union
of Coulomb branches of the quiver gauge theory of type $A_1$ with
parameter induced from the original $\nu^\RR$.
\begin{NB}
  Once we arrive at the Coulomb branch, the ordering does not matter.
\end{NB}%

\begin{Remark}
  Let us give an alternative argument for \cref{lem:perm} for finite
  type $A$. It is informed us by Finkelberg.
  \begin{NB}
    E-mail on Oct.~10, 2018.
  \end{NB}%

  Recall that $\cM^{\nu^\RR}(\lambda,\mu)$ is realized as an iterated
  convolution diagram
  $\widetilde{\mathcal W}^{\underline{\lambda}}_\mu$ in
  \cite[\S5]{2018arXiv180511826B}. For type $A_n$, it is the
  moduli space of flags of lattices $L_0\supset L_1\supset\dots\supset
  L_N$ such that $L_{s-1}\supset L_s\supset zL_{s-1}$, and $\dim
  L_{s-1}/L_s=i_s$, $1\leq i_s\leq n$, where $L_0=V[[z]]$, $V=\CC
  v_0\oplus\dots\oplus\CC v_n$.
  Here the sequence $i_1,i_2,\dots, i_N$ is as follows. We consider
  coordinates of cocharacter of $G_F$ corresponding to $\nu^\RR$. We
  choose $i_1$ so that the maximum $k_1$ of coordinates is achieved at the
  vertex $i_1$ of the Dynkin diagram. Then we choose $i_2$ so that the
  next one $k_2$ is achieved at $i_2$, and so on.
  \begin{NB}
  You may view $L_s$ as the sections of a vector bundle
  $\mathcal{V}_s$ on $\mathbb{P}^1$ restricted to the formal
  neighborhood of $0\in\mathbb{P}^1$, and you may add a flag in
  $\mathcal{V}_N$, but it will affect nothing.
  \end{NB}%
  The determinant bundle is $\mathcal{D}_s=\det(L_{s-1}/L_s)$ (or its
  dual?), and the ample line bundle for the stability parameter
  $\nu^\RR$ is $\bigotimes\mathcal D_s^{\otimes k_s}$. Let our Levi
  subgroup be $\GL(V':=\mathbb{C} v_0\oplus\mathbb{C} v_1)$ times the
  remaining torus $T''$. Then the corresponding torus fixed points in
  the above convolution diagram will have many connected components. A
  typical connected component will be the moduli space of flags
  $(L'_0\supset L'_1\supset\dots\supset L'_N)\oplus(L''_0\supset
  L''_1\supset\dots\supset L''_N)$ where the first summand is a flag
  in $L'_0=V'[[z]]$, while the second summand is a $T''$-invariant
  flag in $L''_0=V''[[z]]$, where
  $V''=\CC v_2\oplus\dots\oplus\mathbb{C} v_n$.
  \begin{NB}
    We must have $0\leq\dim L'_{s-1}/L'_s\leq i_s$ evidently;
    otherwise no restrictions (and there are finitely many
    possibilities for the choice of $L''_s$).
  \end{NB}%
  There are finitely many possibilities for choices of $L''_s$.
  Clearly,
  $\det(L_{s-1}/L_s)=\det(L'_{s-1}/L'_s)\otimes
  \det(L''_{s-1}/L''_s)$, and the second factor is ``constant" from
  the point of view of $\algsl(2)$-Grassmannian. Thus the ample line
  bundle for the fixed point component is
  $\bigotimes\det(L'_{s-1}/L'_s)^{\otimes k_s}$.
  \begin{NB}
    We omit the $s$-th factor if $L'_{s-1} = L'_s$.
  \end{NB}%
\end{Remark}

Thanks to \cref{lem:nontrivial}, we write the hyperbolic restriction
$\Phi_i 
(\mathrm{IC}(\cM(\lambda,\mu)))$ as in \eqref{eq:11}.
The multiplicity space $M^{\lambda,\mu}_{\kappa',\mu'}$ is top degree
Borel-Moore homology group of a subvariety $p_i^{-1}(x)$, where $x$ is
a point in the smooth locus of a stratum
$\cM^{\mathrm{s}}_{A_1}(\kappa',\mu')$ of
$\cM_{A_1}(\lambda',\mu') = \cM(\lambda,\mu)^{\chi_i}$. See
\cref{subsec:twosteps}. By \cref{lem:nontrivial} the top degree
Borel-Moore homology group form a trivial local system over
$\cM^{\mathrm{s}}_{A_1}(\kappa',\mu')$. Therefore its fibers are
canonically identified, i.e., independent of the choice of $x$.

\begin{Proposition}\label{prop:naturaliso}
  The factorization gives us an isomorphism
  \begin{equation*}
    M^{\lambda,\mu}_{\kappa',\mu'} \cong M^{\lambda,\mu-\alpha_i}_{\kappa',\mu'-2}.
  \end{equation*}
\end{Proposition}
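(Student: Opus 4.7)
The plan is to deduce the isomorphism from the factorization property of Coulomb branches (\cref{thm:fact}) applied to the decomposition $\underline{\bv} \mapsto \underline{\bv}' + \underline{\bv}''$, where $\underline{\bv}'$ corresponds to $\lambda-\mu$ and $\underline{\bv}''$ is the single basis vector $e_i$ at vertex $i$; this accounts for the extra $\alpha_i$ that distinguishes $\mu-\alpha_i$ from $\mu$. Choose a base point $w\in\BA^1$ far away from the eigenvalue configurations that appear in $\cM(\lambda,\mu)$, and restrict to the open subset of $\BA^{\underline{\bv}+e_i}$ where the extra $\alpha_i$-point sits near $w$ while the remaining points lie in the image of $\varpi$ for $\cM(\lambda,\mu)$. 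On this open locus \cref{thm:fact} yields a local isomorphism
\begin{equation*}
    \cM(\lambda,\mu-\alpha_i) \approx \cM(\lambda,\mu) \times \cF,
\end{equation*}
where $\cF$ is the bow variety with dimension vector $\underline{\bv}''=e_i$ and the same $\underline{\bw}$, considered in a neighborhood of $w$.

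Next I would upgrade this to a $\chi_i$-equivariant statement. Because $\chi_i$ is a subgroup of the torus $T$, and the factorization morphism is $T$-equivariant with $T$ acting trivially on the configuration space, $\chi_i$ acts factor-wise on $\cM(\lambda,\mu)\times\cF$. Taking $\chi_i$-fixed points and using \cref{thm:smaller} on both sides gives
\begin{equation*}
    \cM_{A_1}(\lambda'',\mu'-2) \approx \cM_{A_1}(\lambda',\mu') \times \cF^{\chi_i}
\end{equation*}
locally near the fibre over $w$. By inspection $\cF^{\chi_i}$ is the $A_1$-type piece for dimension vector $1$ at position $w$, i.e., the factor that accounts precisely for the $\mu'\mapsto\mu'-2$ shift in the $A_1$ factorization. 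Moreover, this identifies $\lambda''=\lambda'$ locally, because the stratification on the two sides must match under factorization and the extra $\alpha_i$ placed far away cannot alter the combinatorial data determining $\lambda'$.

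Now apply $\Phi_i$. Since $\Phi_i$ is (hyperbolic) restriction for $\chi_i$ and the factorization is $\chi_i$-equivariant, $\Phi_i$ commutes with the external product: applied to $\mathrm{IC}(\cM(\lambda,\mu-\alpha_i))$, it decomposes on the open locus as
\begin{equation*}
    \Phi_i(\mathrm{IC}(\cM(\lambda,\mu)))\boxtimes \Phi_i(\mathrm{IC}(\cF)).
\end{equation*}
Using the decomposition \eqref{eq:11} on both sides and matching summands via the $A_1$ factorization $\mathrm{IC}(\cM_{A_1}(\kappa',\mu'-2))\approx \mathrm{IC}(\cM_{A_1}(\kappa',\mu'))\boxtimes \mathrm{IC}(\cF^{\chi_i})$ (here $\Phi_i(\mathrm{IC}(\cF))$ is $\mathrm{IC}$ of the one-dimensional $A_1$-piece, with multiplicity one), we obtain the asserted natural isomorphism $M^{\lambda,\mu-\alpha_i}_{\kappa',\mu'-2}\cong M^{\lambda,\mu}_{\kappa',\mu'}$ summand by summand, the index $\kappa'$ being preserved because the factorization separates disjoint points and cannot mix strata.

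The main obstacle I anticipate is step two: verifying that on the $\chi_i$-fixed locus the factorization compatibility of \cref{thm:smaller}(2) upgrades from a statement about the factorization morphism to an actual local product decomposition of the fixed bow varieties themselves, and that the local factor $\cF^{\chi_i}$ is canonically the standard $A_1$-piece responsible for the $\mu'\mapsto \mu'-2$ shift (so that no nontrivial twist creeps into the identification of multiplicity spaces). This essentially reduces to the $A_1$ case of factorization together with \cref{prop:corrected}; the balanced condition on both sides keeps the combinatorics consistent.
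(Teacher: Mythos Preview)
Your proposal is correct and follows essentially the same route as the paper's proof: factorize $\cM(\lambda,\mu-\alpha_i)$ via \cref{thm:fact} with $\underline{\bv}''=\underline{\delta_i}$, observe the compatibility with $\chi_i$, and read off the isomorphism of multiplicity spaces from the matching of strata on the $\chi_i$-fixed locus.

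The paper's execution is slightly cleaner on precisely the point you flag as an obstacle. Rather than treating the extra factor as an unspecified $\cF$ and worrying about $\cF^{\chi_i}$, the paper first notes that since $\underline{\delta_i}$ is supported only at vertex $i$, the factor $\cM(\lambda,\lambda-\alpha_i)$ can be replaced by the $A_1$ Coulomb branch with $\bv=1$, $\bw=\bw_i$; then, restricting further to the locus where the $i$-th component of $\varpi$ on that factor is nonzero, one can drop $\bw$ to $0$, leaving just $\CC\times\CC^\times$. Since $\chi_i$ has $m_i=0$ and does not move the $i$-th component of $\varpi$, it acts \emph{trivially} on this $\CC\times\CC^\times$. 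Hence the $\chi_i$-fixed locus factors on the nose as $\cM_{A_1}(\kappa',\mu'-2)\approx\cM_{A_1}(\kappa',\mu')\times(\CC\times\CC^\times)$, compatibly with the ambient factorization, and no further analysis of $\Phi_i(\mathrm{IC}(\cF))$ or possible twists is needed. This dissolves your anticipated difficulty: the extra factor is smooth, $\chi_i$-fixed as a whole, and contributes a canonical one-dimensional space.
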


The construction will be explained during the proof.

\begin{NB}
Note that $\cM_{A_1}(\kappa',\mu'+2)$ is empty if $\kappa' < \mu'+2$.
\end{NB}%

\begin{proof}
  Recall that the $i$-th component of the factorization morphism
  $\Phi$ of $\cM(\lambda,\mu)$ is restricted to the factorization
  morphism of $\cM_{A_1}(\lambda',\mu')$ (for which
  $\cM_{A_1}(\kappa',\mu')$ is a stratum).

  We consider the Coulomb branch of another quiver gauge theory
  obtained by increasing the $i$-th entry of the vector
  $\underline{\bv}$ by $1$, i.e., $\cM(\lambda,\mu-\alpha_i)$. Let
  $\underline{\delta_i}$ be the dimension vector whose $i$-entry is
  $1$ and other entries are $0$. We take the open subset
  $(\BA^{\underline{\bv}}\times \mathbb
  G_m^{\underline{\delta_i}})_{\mathrm{disj}}$ of
  $\BA^{\underline{\bv}}\times\BA^{\underline{\delta_i}}$ consisting
  of pairs of colored configurations whose supports are disjoint as in
  \cref{subsec:factor}, and also the support of the second
  configuration is disjoint from $0$. Then we have the factorization
  isomorphism
  \begin{multline}\label{eq:h:3}
    \cM(\lambda,\mu-\alpha_i)
    \times_{\BA^{\underline{\bv}+\underline{\delta_i}}}
    (\BA^{\underline{\bv}}\times
    \mathbb G_m^{\underline{\delta_i}})_{\mathrm{disj}}\\
    \cong 
    (
    \cM(\lambda,\mu)\times \BA\times\mathbb G_m
    )\times_{
      \BA^{\underline{\bv}}\times\BA^{\underline{\delta_i}}} 
    (\BA^{\underline{\bv}}\times
    \mathbb G_m^{\underline{\delta_i}})_{\mathrm{disj}}.
  \end{multline}
  The second factor is $\cM(\lambda,\lambda-{\alpha_i})$ in the
  statement of \cref{thm:fact}. It can be replaced by the Coulomb
  branch of type $A_1$ theory with $\bw = {\bw}_i$, $\bv = 1$ as
  entries of $\underline{\delta_i}$ are $0$ except at $i$. Moreover we
  restrict it to the open subset where the factorization morphism is
  nonzero, hence we can further replace it by the Coulomb branch of
  the quiver gauge theory of type $A_1$ with $\bw = 0$, $\bv = 1$,
  which is just $\BA\times\mathbb G_m$ as above.

  This factorization is compatible with the one parameter subgroup
  $\chi_i$, as $\chi_i$ does not change the $i$-th component of
  $\Phi$. Note $\chi_i$ acts trivially on the factor
  $\BA\times\mathbb G_m$. We have the factorization
  $\cM_{A_1}^{\mathrm{s}}(\kappa',\mu'-2) \approx
  \cM_{A_1}^{\mathrm{s}}(\kappa',\mu')\times (\BA\times\mathbb G_m)$
  for strata of $\chi_i$-fixed point sets, compatible with
  $\cM(\lambda,\mu-\alpha_i)\approx
  \cM(\lambda,\mu)\times(\BA\times\mathbb G_m)$. Thus the desired
  isomorphism is given by the factorization.
\end{proof}

By \cref{prop:factor} and \eqref{eq:11} we define operators $e_i$,
$f_i$ on $\bigoplus_\mu \mathcal V_\mu(\lambda)$ induced from $e$, $f$
given by \cref{thm:A1} for the $A_1$ case, as explained in
Introduction.

\subsection{Tensor product}\label{subsec:tensor-product}

Let us slightly generalize the above construction of $e_i$, $f_i$. We
replace $\mathrm{IC}(\cM(\lambda,\mu))$ by
$\pi_*(\mathrm{IC}(\cM^{\nu^{\bullet,\RR}}(\lambda,\mu)))$. Here
$\nu^{\bullet,\RR}$ is as in \cref{subsec:deformed-case}. Then
$\Phi_i\circ\pi_*(\mathrm{IC}(\cM^{\nu^{\bullet,\RR}}(\lambda,\mu)))$
decomposes as
$\bigoplus_{\kappa'} \widetilde{M}^{\lambda,\mu}_{\kappa',\mu'}\otimes
\mathrm{IC}(\cM_{A_1}(\kappa',\mu'))$ as in \eqref{eq:11}.  Then we
construct an isomorphism
$\widetilde{M}^{\lambda,\mu}_{\kappa',\mu'} \cong
\widetilde{M}^{\lambda,\mu-\alpha_i}_{\kappa',\mu'-2}$ in the same way
as in the proof of \cref{prop:naturaliso} by the factorization.
Then we define $e_i$, $f_i$ as
$(\text{this isomorphism})\otimes (e, f \text{ for $A_1$ case})$.

This construction is compatible with the original one: Since
$\mathrm{IC}(\cM(\lambda,\mu))$ is a direct summand of
$\pi_*(\mathrm{IC}(\cM^{\nu^{\bullet,\RR}}(\lambda,\mu)))$, we
have the induced inclusion and projection 
\(
   M^{\lambda,\mu}_{\kappa',\mu'}
   \leftrightarrows\widetilde{M}^{\lambda,\mu}_{\kappa',\mu'}.
\)
They commute with isomorphisms
$M^{\lambda,\mu}_{\kappa',\mu'} \cong
M^{\lambda,\mu-\alpha_i}_{\kappa',\mu'-2}$,
$\widetilde{M}^{\lambda,\mu}_{\kappa',\mu'} \cong
\widetilde{M}^{\lambda,\mu-\alpha_i}_{\kappa',\mu'-2}$ by the
construction. Therefore maps in \eqref{eq:9} intertwine $e_i$, $f_i$.

Now recall
$\Phi\circ\pi_*(\mathrm{IC}(\cM^{\nu^{\bullet,\RR}}(\lambda,\mu)))$
decomposes into a sum of tensor product by \cref{cor:tensor}. Hence we
have
\begin{equation}\label{eq:13}
    \bigoplus_\mu\Phi\circ\pi_*(\mathrm{IC}(\cM^{\nu^{\bullet,\RR}}(\lambda,\mu)))
    \cong \mathcal V(\lambda^1)\otimes\mathcal V(\lambda^2).
\end{equation}

\begin{Proposition}\label{prop:tensor}
    \textup{(1)} Homomorphisms in \eqref{eq:9} intertwine
    operators $e_i$, $f_i$.

    \textup{(2)} The operators $e_i$, $f_i$ defined on the left hand
    side of \eqref{eq:13} just above is equal to the tensor product in
    the right hand side, i.e., $e_i$ is given by
    $1\otimes e_i + e_i \otimes 1$, etc.
\end{Proposition}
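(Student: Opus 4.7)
Part (1) follows essentially by construction. In \cref{subsec:tensor-product} the operators $e_i, f_i$ are defined on both $\bigoplus_\mu \mathcal V_\mu(\lambda)$ and $\bigoplus_\mu \Phi\circ\pi_*(\mathrm{IC}(\cM^{\nu^{\bullet,\RR}}(\lambda,\mu)))$ as the tensor of the $A_1$-operators with the factorization isomorphism of \cref{prop:naturaliso} (resp.\ its analogue for $\widetilde M$). The inclusion/projection $M^{\lambda,\mu}_{\kappa',\mu'}\leftrightarrows \widetilde M^{\lambda,\mu}_{\kappa',\mu'}$ is induced by the natural maps $\mathrm{IC}(\cM(\lambda,\mu))\leftrightarrows \pi_*(\mathrm{IC}(\cM^{\nu^{\bullet,\RR}}(\lambda,\mu)))$, and the latter is compatible with the defining factorization $\cM(\lambda,\mu-\alpha_i)\approx \cM(\lambda,\mu)\times(\CC\times\CC^\times)$ by functoriality of $\pi_*$ over the factorization open set. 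Hence the intertwining property is automatic.

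For part (2), the plan is to combine the ``horizontal'' factorization of \cref{prop:deformed} (parameter $\nu^{\bullet,\CC}$ separating $\lambda^1$ from $\lambda^2$) with the ``vertical'' $\chi_i$-analysis of \cref{thm:smaller} and \cref{lem:perm}. Near each $T$-fixed point $(p_1,p_2)\in \cM(\lambda^1,\mu^1)^T\times \cM(\lambda^2,\mu^2)^T$, \cref{prop:deformed} provides a local product decomposition $\cM^{\nu^{\bullet,\RR}}(\lambda,\mu)\approx \cM^{\nu^{\bullet,\RR}}(\lambda^1,\mu^1)\times \cM^{\nu^{\bullet,\RR}}(\lambda^2,\mu^2)$ which is $\chi_i$-equivariant because the flavor-direction deformation commutes with the gauge torus action. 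Passing to $\chi_i$-fixed loci and applying \cref{thm:smaller} to each factor, the corresponding connected component of the fixed locus is of the form $\cM_{A_1}(\kappa'^1,\mu'^1)\times \cM_{A_1}(\kappa'^2,\mu'^2)$ where $\kappa'=\kappa'^1+\kappa'^2$ and $\mu'=\mu'^1+\mu'^2$. Enumerating such components yields the isomorphism
\[
\widetilde M^{\lambda,\mu}_{\kappa',\mu'} \cong \bigoplus_{\substack{\mu=\mu^1+\mu^2\\ \kappa'=\kappa'^1+\kappa'^2,\ \mu'=\mu'^1+\mu'^2}} M^{\lambda^1,\mu^1}_{\kappa'^1,\mu'^1}\otimes M^{\lambda^2,\mu^2}_{\kappa'^2,\mu'^2}.
\]

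Granted this decomposition, the factorization isomorphism of \cref{prop:naturaliso}, which is induced by factoring over $\BA^{\underline{\delta_i}}$, splits into two pieces according to whether the new $i$-coloured point lies in the $\lambda^1$-configuration or in the $\lambda^2$-configuration. On the first summand it is the factorization isomorphism for $\lambda^1$ tensored with the identity on the $\lambda^2$-factor, and vice versa. Combined with the tensor-product compatibility of \cref{thm:A1}(1) for the $A_1$ case, this yields precisely the Leibniz rule $e_i\mapsto 1\otimes e_i + e_i\otimes 1$ and $f_i\mapsto 1\otimes f_i + f_i\otimes 1$, proving (2).

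The main obstacle will be verifying commutativity of the two factorization structures: we need to know that the ``horizontal'' product decomposition of \cref{prop:deformed} and the ``vertical'' one used to define the factorization isomorphism $\widetilde M^{\lambda,\mu}_{\kappa',\mu'}\cong \widetilde M^{\lambda,\mu-\alpha_i}_{\kappa',\mu'-2}$ are compatible, yielding a coherent two-parameter factorization $\cM\approx \cM(\lambda^1,\mu^1)\times\cM(\lambda^2,\mu^2)\times(\CC\times\CC^\times)$ over a suitable triply-disjoint open subset of $\BA^{\underline{\bv}^1}\times \BA^{\underline{\bv}^2}\times \BA^{\underline{\delta_i}}$. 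Since both decompositions originate from the single universal factorization morphism $\varpi$ of \cref{thm:fact}, the compatibility ultimately reduces to a combinatorial statement about the disjoint-locus stratification of this product, with the two cases (new point near $\lambda^1$- or $\lambda^2$-support) producing the two terms of the coproduct.
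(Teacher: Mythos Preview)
Your argument is essentially correct and follows the same overall strategy as the paper: reduce to the $A_1$ case via $\Phi_i$ and invoke \cref{thm:A1}(1). Part~(1) is handled exactly as in the paper (it is the paragraph preceding the proposition).

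For part~(2), the paper's execution differs in the order of operations. Rather than first applying the horizontal factorization of \cref{prop:deformed} and then analysing $\chi_i$-fixed loci on each factor, the paper applies $\Phi_i$ first to the whole family $\underline{\cM}(\lambda,\mu)$: using the nearby cycle identification $\psi\circ\Phi_i(\mathrm{IC}(\underline{\cM}(\lambda,\mu)))$ together with \cref{lem:perm}, one lands directly on a sum of $\pi_*(\mathrm{IC}(\cM_{A_1}^{{}'\nu^{\bullet,\RR}}(\kappa',\mu')))$ with induced real parameters. At that point the horizontal factorization is the $A_1$-factorization itself, so \cref{thm:A1}(1) applies immediately. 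This bypasses your ``main obstacle'': the compatibility of the two factorizations need not be checked over a triply-disjoint locus, because once you are in the $A_1$ situation with inherited parameter ${}'\nu^{\bullet,\RR}$, the tensor structure is already built into \cref{thm:A1}.

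Your route requires a $\chi_i$-analogue of \cref{prop:deformed} (describing $\chi_i$-fixed loci, not $T$-fixed loci, of $\cM^{\nu^{\bullet,\CC}}(\lambda,\mu)$), which is true by the same methods but is not stated separately in the paper. The paper's use of \cref{lem:perm} is precisely what replaces this step.
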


\begin{proof}
    The statement (1) is already proved above.

    In order to understand
    $\Phi\circ\pi_*(\mathrm{IC}(\cM^{\nu^{\bullet,\RR}}(\lambda,\mu)))$
    in \eqref{eq:13}, we consider
    $\psi\circ\Phi(\mathrm{IC}(\underline{\cM}(\lambda,\mu)))$ as in
    \cref{cor:tensor} and change it to
    $\Phi^i\circ\psi\circ\Phi_i(\mathrm{IC}(\underline{\cM}(\lambda,\mu)))$.
    Then the triviality of the family
    $\underline{\widetilde\cM}(\lambda,\mu)\to \CC$ and the
    commutativity of the nearby cycle and hyperbolic restriction
    functors give an isomorphism
    \begin{equation*}
      \Phi_i\circ\pi_*(\mathrm{IC}(\cM^{\nu^{\bullet,\RR}}(\lambda,\mu)))
      \cong
      \psi\circ\Phi_i(\mathrm{IC}(\underline{\cM}(\lambda,\mu))).
    \end{equation*}
    By \cref{lem:perm} (or more precisely its version for the complex
    parameter ${\nu^\CC}$),
    $\Phi_i(\mathrm{IC}(\underline{\cM}(\lambda,\mu)))$ is a direct
    sum
    $\bigoplus \overline{M}^{\lambda,\mu}_{\kappa',\mu'}\otimes
    \mathrm{IC}(\underline{\cM}_{A_1}(\kappa',\mu'))$ over the inverse
    image of $\CC\setminus\{0\}$ under
    $\underline{\cM}_{A_1}(\kappa',\mu')\to \CC$.
    \begin{NB}
      It is probably better to remember that
      $\overline{M}^{\lambda,\mu}_{\kappa',\mu'}$ is independent of
      $\mu$, $\mu'$. By applying $\Phi^i\circ\psi$, we get
      $\bigoplus \overline{M}^{\lambda,\mu}_{\kappa',\mu'}\otimes
      \mathcal V_{\mathfrak l_i,\mu^{1\prime}}(\kappa^{1\prime})
      \otimes\mathcal V_{\mathfrak
        l_i,\mu^{2\prime}}(\kappa^{2\prime})$. Then
      $\overline{M}^{\lambda,\mu}_{\kappa',\mu'}$ is a multiplicity of
      $\mathcal V_{\mathfrak l_i,\mu^{1\prime}}(\kappa^{1\prime})
      \otimes\mathcal V_{\mathfrak
        l_i,\mu^{2\prime}}(\kappa^{2\prime})$ in
      $\mathcal V(\lambda^1)\otimes \mathcal V(\lambda^2)$.
    \end{NB}%
    Moreover $\underline{\cM}(\lambda,\mu)$ factors as
    \begin{equation*}
      \underline{\cM}(\lambda^1,\mu^1)\times
      \underline{\cM}(\lambda^2,\mu^2)
    \end{equation*}
    around a $T$-fixed point by \cref{prop:deformed}. Therefore
    $\overline{M}^{\lambda,\mu}_{\kappa',\mu'}$ is the tensor product
    $M^{\lambda^1,\mu^1}_{\kappa^{1\prime},\mu^{1\prime}}
    \otimes M^{\lambda^2,\mu^2}_{\kappa^{2\prime},\mu^{2\prime}}$.

    The factorization gives an isomorphism
    $\overline{M}^{\lambda,\mu}_{\kappa',\mu'}\cong
    \overline{M}^{\lambda,\mu-\alpha_i}_{\kappa',\mu'-2}$ as in
    \cref{prop:naturaliso}. We can apply two factorization
    simultaneously.
    Therefore the isomorphism is induced from isomorphisms for factors
    $M^{\lambda^1,\mu^1}_{\kappa^{1\prime},\mu^{1\prime}}$,
    $M^{\lambda^2,\mu^2}_{\kappa^{2\prime},\mu^{2\prime}}$, and it is
    also independent of the choice of the decomposition $\mu=\mu^1+\mu^2$.
    
    Thus it is enough to check the assertion
    $e_i = e_i\otimes 1 + 1\otimes e_i$ for
    \begin{equation*}
      \psi\circ\Phi^i(\mathrm{IC}(\underline{\cM}_{A_1}(\kappa',\mu')))
      \cong \bigoplus_{\mu^{1\prime}+\mu^{2\prime} = \mu'}
      \Phi^i(\mathrm{IC}(\cM_{A_1}(\kappa^{1\prime},\mu^{1\prime})))
      \otimes
      \Phi^i(\mathrm{IC}(\cM_{A_1}(\kappa^{2\prime},\mu^{2\prime}))),
    \end{equation*}
    the isomorphism given by the factorization as a special case of
    \cref{cor:tensor} for type $A_1$. But this is clear from the
    definition as explained in the proof of \cref{thm:A1}.
    \begin{NB}
        Earlier attempt.....

    Recall \eqref{eq:9} comes from projection and inclusion
    $\mathrm{IC}(\cM(\lambda,\mu))\leftrightarrows
    \pi_*(\mathrm{IC}(\cM^{\nu^\RR}(\lambda,\mu)))
    $,
    applied with $\Phi$.
    Let us use the factorization $\Phi
    = \Phi^i\circ\Phi_i$ and $\Phi_i(\mathrm{IC}(\cM(\lambda,\mu))) =
    \bigoplus M^{\lambda,\mu}_{\kappa',\mu'}\otimes
    \mathrm{IC}(\cM_{A_1}(\kappa',\mu'))$.
    \begin{equation*}
        \mathrm{IC}(\cM_{A_1}(\kappa',\mu'))
        \leftrightarrows \pi_*(\mathrm{IC}(\cM_{A_1}^{{}'\nu^\RR}(\kappa',\mu')))
    \end{equation*}
    \end{NB}%
\end{proof}

\begin{NB}
  \begin{equation*}
    \Phi(\pi_*\mathrm{IC}(\cM^{\nu^\RR}(\lambda,\mu))) =
    \Phi\circ\psi(\mathrm{IC}(\underline{\cM}^{\CC\nu^\bullet}(\lambda,\mu)))
    = \psi\circ\Phi(\mathrm{IC}(\underline{\cM}^{\CC\nu^\bullet}(\lambda,\mu)))
  \end{equation*}

  $\psi\circ \Phi = \psi \circ \Phi^i\circ \Phi_i = \Phi^i\circ \psi \circ\Phi_i$.
\end{NB}%

\subsection{Type \texorpdfstring{$A_2$}{A2}}\label{subsec:a_2-case}

We next show the relation $[e_i, f_j] = 0$ if $i\neq j$ and the Serre
relation. This is reduced to the rank $2$ case. If $i$ and $j$ are not
connected in the Dynkin diagram, the bow variety decomposes into a
product. The assertion is trivial. Next we study the $A_2$
case. Thanks to \cref{prop:tensor}, we may assume $\lambda$ is a
fundamental weight.  We may further assume $\lambda = \Lambda_1$, the
first fundamental weight, by a diagram automorphism. The bow variety
$\cM_{A_2}(\lambda,\mu)$ has a fixed point if and only if
$\mu = \Lambda_1$, $\Lambda_1 - \alpha_1$,
$\Lambda_1 - \alpha_1 - \alpha_2$. We apply Hanany-Witten transition
to go to a bow diagram
\begin{equation*}
  \begin{tikzpicture}[baseline=(current  bounding  box.center)]
  \node at (-1,0)
        {$\vphantom{j^X}\boldsymbol\medcirc$};
  \node at (0,0)
        {$\vphantom{j^X}\boldsymbol\times$};
  \node at (1,0)
  {$\vphantom{j^X}\boldsymbol\times$};
  \node at (2,0)
  {$\vphantom{j^X}\boldsymbol\times$};
  \node[label=above:$0$] at (2.5,0)
  {};
  \node[label=above:$\bv_2$] at (1.5,0)
  {};
  \node[label=above:$\bv_1$] at (0.5,0)
  {};
  \node[label=above:$1$] at (-0.5,0)
  {};
  \node[label=above:$0$] at (-1.5,0)
  {};
  \draw[-] (3,0) -- (-2,0);
\end{tikzpicture}
\end{equation*}
with $(\bv_1,\bv_2) = (0,0)$, $(1,0)$, $(1,1)$.

In the case $\mu = \Lambda_1$, the bow variety is a single point. Let
$\fA_1$ denote the corresponding attracting set, which is also a
single point. We have $e_1 [\fA_1] = 0 = e_2[\fA_1]$ as the
corresponding bow varieties are empty. We also have $f_2[\fA_1] = 0$
since the corresponding bow variety does not have torus fixed points.

Next consider the case $\mu=\Lambda-\alpha_1$. We have
\begin{equation*}
  \begin{tikzcd}
    \CC \arrow[rr, "A_1"] \arrow[rd, "b_1"']
    \arrow[out=120,in=60,loop,looseness=3,"B_0=0"]
    && \CC 
    \arrow[out=120,in=60,loop,looseness=3,"B_1"]
    \arrow[rd,"b_2"']
    && 0 
    & \\
    & \CC \arrow[ru,"a_1"'] && \CC 
    && \CC
  \end{tikzcd}
\end{equation*}
We normalize $A_1 = b_2 = 1$, and determine $B_1$ from the equation
$B_1 + a_1 b_1 = 0$. Therefore $\cM_{A_2}(\lambda,\mu) \cong \CC^2$ by
the remaining variables $(a_1,b_1)$. The action is
\begin{equation*}
  (a_1,b_1) \mapsto (t^{m_1} a_1, t^{-m_1} b_1).
\end{equation*}
The attracting set is $\{ a_1 = 0\}$, which is $\CC$. Let us denote it
by $\fA_2$. We have $e_2[\fA_2] = 0 = f_1[\fA_2]$ as the corresponding
bow varieties are empty.

For the case $\mu = \Lambda_1 - \alpha_1 - \alpha_2$, we have
\begin{equation*}
  \begin{tikzcd}
    \CC \arrow[rr, "A_1"] \arrow[rd, "b_1"']
    \arrow[out=120,in=60,loop,looseness=3,"B_0=0"]
    && \CC \arrow[rr, "A_2"]
    \arrow[out=120,in=60,loop,looseness=3,"B_1"]
    \arrow[rd,"b_2"']
    && \CC \arrow[rd, "b_3"'] 
    \arrow[out=120,in=60,loop,looseness=3,"B_2"]
    & \\
    & \CC \arrow[ru,"a_1"'] && \CC \arrow[ru,"a_2"'] && \CC
  \end{tikzcd}
\end{equation*}
We normalize $A_1 = A_2 = b_3 = 1$, and determine $B_1$, $B_2$ from
the equations $B_1 + a_1 b_1 = 0$, $B_2 - B_1 + a_2 b_2 =
0$.
Therefore $\cM_{A_2}(\lambda,\mu) \cong \CC^4$ by the remaining
variables $(a_1,b_1,a_2,b_2)$. The action is
\begin{equation*}
  (a_1,b_1,a_2,b_2) \mapsto 
  (t^{m_1+m_2} a_1, t^{-m_1-m_2} b_1, t^{m_2}a_2, t^{-m_2}b_2).
\end{equation*}
The attracting set is $\{ a_1 = a_2 = 0\}$, which is $\CC^2$. Let us
denote it by $\fA_3$. We have $f_1[\fA_3] = 0 = f_2[\fA_3]$,
$e_1[\fA_3] = 0$ as above.

In order to calculate remaining actions of operators $e_1$, $e_2$,
$f_1$, $f_2$, we take one parameter subgroups with $m_1 = 0$,
$m_2 < 0$ and $m_1 < 0$, $m_2 = 0$ respectively.
\begin{NB}
  We are in the negative chamber $m_1$, $m_2 < 0$ and the boundaries
  are $m_1 = 0$ or $m_2 = 0$.
\end{NB}%

When $m_1 = 0$, the action is trivial in the case
$\mu = \Lambda_1 - \alpha_1-\alpha_2$. Therefore the hyperbolic
restriction does nothing. Hence we have
\begin{equation*}
   f_1 [\fA_1] = [\fA_2], \quad
   e_1 [\fA_2] = [\fA_1].
\end{equation*}

When $m_2 = 0$, the attracting set remains $\fA_2$, and the fixed
point is a single point $(a_1, b_1) = 0$ for $\mu=\Lambda_1-\alpha_1$,
while the attracting set is $\{ a_1 = 0\}\cong \CC^3$ and the fixed
point set is $\{ a_1 = b_1 = 0\} \cong\CC^2$ for
$\mu = \Lambda_1-\alpha_1-\alpha_2$. Therefore we have
\begin{equation*}
  f_2[\fA_2] = [\fA_3], \quad e_2[\fA_3] = [\fA_2].
\end{equation*}
This finishes the calculation, and we see that this gives the
$3$-dimensional standard representation of $\algsl(3)$.

\subsection{Reduction to \texorpdfstring{$A_\infty$}{A∞} case}

We are left to check the case affine $A_1$. As in
\cref{subsec:a_2-case} we may assume $\lambda$ is a fundamental
weight, and $\lambda = \Lambda_0$ by the diagram automorphism. The
following argument works for general $n\ge 2$.

We apply the method used in \cref{subsec:tensor-product} to
$\cM^{\nu^{\Box,\RR}}(\lambda,\mu)$,
$\cM^{\nu^{\Box,\CC}}(\lambda,\mu)$ studied in
\cref{subsec:anoth-choice-stab}. We have
\begin{equation}\label{eq:22}
     \bigoplus_\mu\Phi\circ\pi_*((\mathrm{IC}
   (\cM^{\nu^{\Box,\RR}}(\lambda,\mu))))
   \cong 
   \bigoplus_{\mu'} \Phi(\mathrm{IC}(\cM_{A_\infty}(\lambda',\mu'))),
\end{equation}
where $\lambda'$ is the $0$-th fundamental weight for $A_\infty$.  As
in \cref{prop:tensor}, we define operators $e_i$, $f_i$ on the left
hand side, and ask what they are in the right hand side. In
$\cM_{A_\infty}(\lambda',\mu')$ it is straightforward to check that
the fixed point set $\cM^{\nu^{\Box,\CC}}(\lambda,\mu)^{\chi_i}$ with
respect to the degenerate one-parameter subgroup $\chi_i$ is mapped to
the product of ${A_1}$-type bow varieties for $\chi_{i+mn}$
($m\in\ZZ$) under the isomorphism in \cref{prop:unwind}. Therefore
$e_i$, $f_i$ are given by $\sum_{m\in\ZZ} e_{i+mn}$,
$\sum_{m\in\ZZ} f_{i+mn}$ in the right hand side. This is nothing but
an embedding of $\widehat{\algsl}(n)$ into
$\widehat{\mathfrak{gl}}(\infty)$, see e.g.,
\cite[Lecture~9]{MR3185361}. In particular the relation
$[e_i, f_j] = 0$ for $i\neq j$ and the Serre relation are satisfied
also for $\widehat{\algsl}(2)$.

Let us look at \eqref{eq:21}. It is an isomorphism of
$\widehat{\algsl}(n)$-modules. The left hand side is the restriction
of the Fock space for $\widehat{\mathfrak{gl}}(\infty)$ to
$\widehat{\algsl}(n)$. In the right hand side the first factor is
$V(\Lambda_0)$, while the second factor corresponds to the Fock space
for the Heisenberg subalgebra in $\widehat{\mathfrak{gl}}(\infty)$.

\begin{Remark}\label{rem:Hilb2}
  Consider the case $n=1$. \eqref{eq:22} remains to be true. As we
  mentioned in \cref{rem:Hilb} the left hand side is
  $\bigoplus_{\underline{k}}
  \CC[\overline{S^{\underline{k}}(\CC)}]$. This space is equipped with
  the structure of the Fock space of the Heisenberg algebra so that
  $[\overline{S^{\underline{k}}(\CC)}]$ corresponds to the monomial
  symmetric function for the partition $\underline{k}$
  \cite[\S2]{MR3586508}.

  On the other hand the summand
  $\Phi(\mathrm{IC}(\cM_{A_\infty}(\lambda',\mu')))$ in \eqref{eq:22}
  corresponds to a fixed point in $\cM^{\nu^{\Box,\CC}}(\lambda,\mu)$
  corresponding to a partition given by
  $\lambda'-\mu' = \sum_m \bv(m)\alpha_m$. (See \cref{sec:Maya} how
  $\bv(m)$ corresponds to a Maya diagram. Then we use the standard
  bijection between a partition and a Maya diagram.) This fixed point
  was studied by Vasserot \cite{VasserotC2}: Its class is the Schur
  function for $\underline{k}$.

  Since the correspondence Maya diagrams and Schur functions appears
  in boson-fermion correspondence (see e.g.,
  \cite[Lectures~5,6]{MR3185361}), we see that \eqref{eq:22} respects
  the Heisenberg algebra action where we consider the Heisenberg
  subalgebra in $\widehat{\mathfrak{gl}}(\infty)$ in the right hand side.
\end{Remark}

\subsection{Kashiwara crystal}\label{subsec:crystal}

Recall $\mathcal V_\mu(\lambda)$ has a base parametrized by
irreducible components of the attracting set $\fA_\chi(\lambda,\mu)$
of $\dim = \dim\cM(\lambda,\mu)/2$. (\cref{rem:MV}) In
\cite[Remark~3.26(2)]{2016arXiv160403625B} it was conjectured that the
union of irreducible components
$\bigsqcup\operatorname{Irr}\fA_\chi(\lambda,\mu)$ has a structure of
Kashiwara crystal, isomorphic to $B(\lambda)$, the crystal of the
integrable highest weight module of the quantized enveloping algebra.

As we mentioned in the Introduction, our construction resembles the
construction of Kashiwara crystal structure in \cite{bragai}, it is
straightforward to apply the construction in \cite{bragai} to our
setting. Let us briefly sketch. We use the standard notation for
crystal, e.g., as in \cite{Na-tensor}.

\begin{enumerate}
\item We define Kashiwara operators $\widetilde e$, $\widetilde f$ for
  $\algsl(2)$ from the analysis in \cref{subsec:attrA1}. Namely they
  send $[\fA_\chi(\lambda,\mu)]$ to $[\fA_\chi(\lambda,\mu\pm 2)]$ or
  $0$.

\item We define $\widetilde e_i$, $\widetilde f_i$ in general by
  reduction to $\algsl(2)$ by the hyperbolic restriction with respect
  to $\chi_i$ as in \cref{subsec:definition of operators}. In
  particular, we use the factorization isomorphism appeared in the
  proof of \cref{prop:naturaliso}.

\item We consider irreducible components of attracting sets in
  $\cM^{\nu^{\bullet,\CC}}(\lambda,\mu)$ and
  $\cM^{\nu^{\bullet,\RR}}(\lambda,\mu)$ as in
  \cref{subsec:deformed-case}. They are naturally identified via the
  topologically trivial family
  $\widetilde{\underline{\cM}}(\lambda,\mu)$. Let us denote it by
  $\operatorname{Irr}\fA_\chi^{\nu^\bullet}(\lambda,\mu)$. Then
  $\bigsqcup_\mu\operatorname{Irr}\fA_\chi^{\nu^\bullet}(\lambda,\mu)$
  has a Kashiwara crystal structure. Moreover
  \begin{aenume}
  \item The projection
    $\pi\colon \cM^{\nu^{\bullet,\RR}}(\lambda,\mu)\to
    \cM(\lambda,\mu)$ induces an inclusion
    $\bigsqcup\operatorname{Irr}\fA_\chi(\lambda,\mu)\subset
    \bigsqcup_\mu\operatorname{Irr}\fA_\chi^{\nu^\bullet}(\lambda,\mu)$
    which is an embedding of crystals.
  \item the factorization in \cref{prop:deformed} induces an
    isomorphism
    \begin{equation*}
     \bigsqcup_\mu\operatorname{Irr}\fA_\chi^{\nu^\bullet}(\lambda,\mu)
    \cong 
    \bigsqcup_{\mu^1} 
    \operatorname{Irr}\fA_\chi(\lambda^1,\mu^1)\otimes
    \bigsqcup_{\mu^2}
    \operatorname{Irr}\fA_\chi(\lambda^1,\mu^1) 
    \end{equation*}
    of crystals.
  \end{aenume}
\item If we view $[Y]\in\operatorname{Irr}\fA_\chi(\lambda,\mu)$ as an
  element of $H_{2\dim}(\fA_\chi(\lambda,\mu))\cong V_\mu(\lambda)$,
  Kashiwara operator $\widetilde f_i$ and $f_i$ in the Lie algebra are
  related as
  \begin{equation*}
    f_i[Y] = (\varepsilon_i(Y)+1) \widetilde f_i[Y]
    + \sum_{Y': \varepsilon_i(Y') > \varepsilon_i(Y)+1} c_{Y'} [Y']
  \end{equation*}
  for some constants $c_{Y'}$. (This property was explained in a
  different way in \cite[Prop.~4.1]{bragai}.)
\end{enumerate}

\begin{NB}
  \begin{equation*}
    f (\frac{f^k}{k!}v_\lambda) = (k+1) \frac{f^{k+1}}{(k+1)!}v_\lambda.
  \end{equation*}
\end{NB}%

As in \cite{bragai} the only remaining property we need to check is
the highest weight property: for any
$[Y]\in\operatorname{Irr}\fA_\chi(\lambda,\mu)$ which is not
$[\fA_\chi(\lambda,\lambda)]$, there exists $i$ such that
$\widetilde e_i [Y] \neq 0$. This will be discussed in the next
subsection. At this moment, if we replace
$\bigsqcup\operatorname{Irr}\fA_\chi(\lambda,\mu)$ by the connected
component containing $[\fA_\chi(\lambda,\lambda)]$, it is isomorphic
to the crystal $B(\lambda)$.

\subsection{Irreducibility}

So far we have constructed a $\mathfrak g_{\mathrm{KM}}$-module
structure on $\mathcal V(\lambda)$. It is integrable and has a vector
$v_\lambda$ correspond to the fundamental class of
$\fA_\chi(\lambda,\lambda)$ which is killed by all $e_i$ by definition.
It remains to show that $\mathcal V(\lambda)$ is generated by
$v_\lambda$. By the construction in \cref{subsec:crystal} it follows
once we show that $\bigsqcup\operatorname{Irr}(\fA_\chi(\lambda,\mu))$
has the highest weight property. Conversely if we show that
$\mathcal V(\lambda)$ is generated by $v_\lambda$, there are no other
irreducible components, hence
$\bigsqcup\operatorname{Irr}(\fA_\chi(\lambda,\mu))\cong B(\lambda)$.

We expect that there is a direct argument showing the highest weight
property of crystal, but we give two indirect arguments.

Let us show that the number of irreducible components in
$\fA_\chi(\lambda,\mu)$ is equal to the weight multiplicities. Since
we have constructed a $\mathfrak g_{\mathrm{KM}}$-module structure, we
can assume $\mu$ is dominant. Then the bow variety $\cM(\lambda,\mu)$
is isomorphic to a quiver variety of affine type $A$, where the level
$\ell$ and rank $n$ are swapped. See
\cite[Prop.~7.20]{2016arXiv160602002N}. Moreover the attracting set
$\fA_\chi(\lambda,\mu)$ is the tensor product variety studied in
\cite{Na-tensor}. More precisely its intersection with
$\cM^{\mathrm{s}}(\lambda,\mu)$ is the modified version of the tensor
product variety $\mathfrak Z_0^{\mathrm{s}}(\bv,\bw)$ introduced in
\cite[\S6]{Na-branching}.
\begin{NB}
  Strictly speaking, the tensor product variety depends on the choice
  $\chi$. If we take it from the negative chamber with $s_{n-1}$ is
  much smaller than others, it is clear that the attracting set
  coincides with one for ${\mathfrak Z}_0^{\mathrm{s}}$.
\end{NB}%
It was proved in \cite[\S6]{Na-branching} that the number of
$\operatorname{Irr}{\mathfrak Z}_0^{\mathrm{s}}(\bv,\bw)$ is equal to
the the tensor product multiplicity of
$V_{\mathfrak{gl}(\ell)_{\mathrm{aff}}}({}^t\mu)$ in the tensor
product of fundamental representations. By level-rank duality, this is
equal to the weight multiplicity of $V_\mu(\lambda)$ for
$\algsl(n)_{\mathrm{aff}(n)}$.

The second argument uses the computation of the stalk of
$\mathrm{IC}(\cM(\lambda,\mu))$ when $\mu$ is dominant in
\cite{braverman-2007}. As far as dimension is concerned, the stalk and
hyperbolic restriction give the same answer. Hence the result in
\cite[\S7]{braverman-2007} can be used. Note that
\cite[\S7]{braverman-2007} used a geometric construction of affine Lie
algebra modules via quiver varieties and level rank duality. In this
sense, the second argument is not far away from the first one.

\subsection{Finite dimensional cases}\label{subsec:Krylov}

Suppose that $Q$ is a quiver with symmetrizer as in
\cite{2019arXiv190706552N}, mentioned in the Introduction. We further
assume that it is of finite type. We denote the corresponding Lie
algebra by $\fg^\vee$, and the Langlands dual Lie
algebra by $\fg$.
Then $\cM(\lambda,\mu)$ is isomorphic to a generalized affine
Grassmannian slice $\oW^{\lambda}_\mu$ by
\cite{2016arXiv160403625B,2019arXiv190706552N}.
Here the complex reductive group $\vG$ for the affine Grassmannian has
the Lie algebra $\fg^\vee$ and of adjoint type. Let us fix a Borel
subgroup $B^\vee$ and an opposite Borel $B_-^\vee$.
Let $\Gr_{\vG} = \vG((z))/\vG[[z]]$ be the affine Grassmannian for $\vG$. Let
${\Gr}^{\lambda}_{\vG}$ be the $\vG[[z]]$-orbit through $z^\lambda$,
$\overline{\Gr}^{\lambda}_{\vG}$ its closure.
When $\mu$ is dominant, $\oW^\lambda_\mu$ is a transversal slice to
$\Gr^\mu_{\vG}$ in $\overline{\Gr}^\lambda_{\vG}$. 
For general $\mu$, $\oW^\lambda_\mu$ is the moduli space parametrizing
\begin{aenume}
  \item a $\vG$-bundle $\mathscr P$ on $\proj^1$.
  \item A trivialization of $\mathscr P$ over $\proj^1\setminus\{0\}$
    having a pole of degree $\le\lambda$ at $0\in\proj^1$.
  \item A $B$-structure on $\mathscr P$ of degree $w_0\mu$ having
    fiber $B_-$ at $\infty\in\proj^1$ with respect to the
    trivialization in (b).
\end{aenume}
See \cite[\S2(ii)]{2016arXiv160403625B} for more detail. Here we
mention that it is equipped with
$\mathbf p\colon \oW^\lambda_\mu\to \overline{\Gr}^{\lambda}_{\vG}$ by
(b),(c). It is a closed embedding when $\mu$ is dominant, and gives a
slice to $\Gr^\mu_{\vG}$ in $\overline{\Gr}^\lambda_{\vG}$.

We identify $T = (\CC^\times)^{Q_0}$ with the maximal torus of $\vG$,
which is the intersection $B^\vee\cap B^\vee_-$. (We will not use
$\vG$ except in this subsection. Therefore we do not use the notation
$T^\vee$.)
The $T$-action is identified with the standard one on
$\oW^\lambda_\mu$ under the isomorphism
$\cM(\lambda,\mu)\cong \oW^\lambda_\mu$.
Let us use $\cM(\lambda,\mu)$ instead of $\oW^\lambda_\mu$ hereafter.

We take an anti-dominant cocharacter $\chi\colon\CC^\times\to T$
as above.
By a result of Krylov \cite{2017arXiv170900391K} mentioned in
Introduction,
$\mathcal V_\mu(\lambda) = \Phi(\mathrm{IC}(\cM(\lambda,\mu)))$ is
isomorphic to the hyperbolic restriction considered by
Mirkovi\'c-Vilonen \cite{MV2} via a homomorphism induced by $\mathbf
p$.
Therefore it is equipped with a $\fg$-module structure, same as a
$G$-module structure, as $G$ is simply-connected, by the usual
geometric Satake correspondence.
On the other hand, \cref{conj:1,conj:2} were proved in
\cite{2017arXiv170900391K}, as we will explain below. Therefore the
construction in \cref{subsec:new_conj} is applicable.

\begin{Theorem}\label{thm:Satake}
  \footnote{The author thanks D.~Muthiah who asks him how to prove this
    result.} Operators $e_i$, $f_i$, $h_i$ on
  $\bigoplus \mathcal V_\mu(\lambda)$ given by the construction in
  \cref{subsec:new_conj} are equal to ones given by the usual
  geometric Satake correspondence.
\end{Theorem}

The proof occupies the rest of this subsection.

For type $A_1$, both constructions are explicit. We can directly
check that they are the same.

We consider the one parameter subgroup $\chi_i$ as in
\cref{subsec:new_conj}. We then have a diagram
$\Gr_{L_i} \xleftarrow{\tilde{p}_i} \Gr_{P_i^{-}}
\xrightarrow{\tilde{j}_i} \Gr_\vG$, where $L_i$ (resp.\ $P_i^{-}$) is
the Levi (resp.\ parabolic) subgroup for $\chi_i$.
We further choose connected components $\Gr_{L_i,\overline\mu}$,
$\Gr_{P_i^{-},\overline\mu}$ for $\overline\mu\in\pi_1(L_i)$
corresponding to $\mu$.
This is an analog of a diagram in \cref{subsec:twosteps}, and two
diagrams sit in a commutative diagram
\begin{equation}\label{eq:h:4}
  \begin{tikzcd}
    \cM(\lambda,\mu)^{\chi_i} \arrow["\mathbf p_{L_i}"']{d}
    & \fA_i \arrow["p_i"']{l} \arrow{r}{j_i} \arrow{d} &
    \cM(\lambda,\mu) \arrow["\mathbf p"]{d} \\
    \Gr_{L_i,\overline{\mu}} & \Gr_{P_i^-,\overline{\mu}}
    \arrow{l}{\tilde{p}_i} \arrow["\tilde{j}_i"']{r} & \Gr_\vG \rlap{.}
  \end{tikzcd}
\end{equation}
See \cite[(5.1)]{2017arXiv170900391K}. Here we denote the composite of
$\mathbf p\colon \cM(\lambda,\mu)\to \overline{\Gr}^{\lambda}_{\vG}$
with the inclusion $\overline{\Gr}^{\lambda}_{\vG}\to \Gr_\vG$ also by
$\mathbf p$ for brevity. The leftmost vertical morphism
$\mathbf p_{L_i}$ is the corresponding morphism for $L_i$.
Furthermore $\cM(\lambda,\mu)^{\chi_i}$ is isomorphic to
$\cM_{A_1}(\lambda',\mu')$ by
\cite[Lemma~5.5]{2017arXiv170900391K}. (Note that we have the maximal
$\lambda'$ as we are considering the case when the semisimple part of
$L_i$ is of type $A_1$.)
Since the fixed point set is naturally regarded a generalized slice
for $L_i$, let us change the notation from $\cM_{A_1}(\lambda',\mu')$
to $\cM_{L_i}(\lambda',\mu')$. We also understand $\lambda'$, $\mu'$
as coweights of $L_i$, instead of integers.
\begin{NB}
  Therefore $\lambda'$ is the maximal one among dominant weights
  of $\mu+\bv\alpha_i$ such that corresponding $L_i$-modules appear
  in the restriction of $V(\lambda)$.
\end{NB}%

The multiplicity vector space $M^{\lambda,\mu}_{\kappa',\mu'}$ is the
top degree Borel-Moore homology group of a subvariety $p_i^{-1}(x)$,
where $x$ is a point in the smooth locus of $\cM_{L_i}(\kappa',\mu')$.
Here $\kappa'$ is understood as a coweight of $L_i$.
The left square of \eqref{eq:h:4} is Cartesian by
\cite[Lemma~5.11]{2017arXiv170900391K}. Hence $p_i^{-1}(x)$ is
isomorphic to
$\tilde{p}_i^{-1}(\mathbf p_{L_i}(x))
\cap\overline{\Gr}^\lambda_{\vG}$.
When we move a point $y$ in the ${L_i}[[z]]$-orbit
$\Gr^{\kappa'}_{L_i}$, the top degree Borel-Moore homology group of
$\tilde{p}_i^{-1}(y)\cap\overline{\Gr}^\lambda_{\vG}$ forms a local
system, which is trivial as the $\Gr^{\kappa'}_{L_i}$ is
simply-connected.
(This triviality has been used in the usual geometric Satake
correspondence. See e.g., \cite[\S3.1]{bragai}.)
As a consequence, we have
\begin{Lemma}\label{lem:identification}
  Let $X$ be an irreducible subvariety in $\Gr^{\kappa'}_{L_i}$.
  There is a natural bijection between irreducible components of
  $\tilde{p}_i^{-1}(X)\cap\overline{\Gr}^\lambda_{\vG}$ and those of
  $\tilde{p}_i^{-1}(\Gr^{\kappa'}_{L_i})\cap\overline{\Gr}^\lambda_{\vG}$.
\end{Lemma}

As an application, we obtain a bijection between irreducible
components of $p_i^{-1}(x)$ and those of $p_i^{-1}(x^-)$ for
$x\in \cM_{L_i}^{\mathrm{s}}(\kappa',\mu')$,
$x^-\in\cM_{L_i}^{\mathrm{s}}(\kappa',\mu'-\alpha_i)$, as
$\overline{\mu-\alpha_i} = \overline\mu$ and $\kappa'$ is common.
Hence we get an isomorphism
$M^{\lambda,\mu}_{\kappa',\mu'}\cong
M^{\lambda,\mu-\alpha_i}_{\kappa',\mu'-2}$.
It also shows \cref{conj:2}, as the local system given by the top
degree Borel-Moore homology of $p_i^{-1}(x)$ is trivial over
$\cM^{\mathrm{s}}_{L_i}(\kappa',\mu')$.
In fact, Krylov also compared
$p_{i*}j_i^!\mathrm{IC}(\cM(\lambda,\mu))$ with
$\tilde{p}_{i*}\tilde{j}_i^!\mathrm{IC}(\overline{\Gr}_{\vG}^\lambda)$
by homomorphisms associated with vertical morphisms in
\eqref{eq:h:4}. See \cite[Proof of
Th.~3.4]{2017arXiv170900391K}.

Since the usual geometric Satake correspondence is constructed so that
it is compatible with the restriction to Levi subgroups (see e.g.,
\cite[Prop.~5.3.29]{Beilinson-Drinfeld}), operators $e_i$, $f_i$ are
given by the same method as in \cref{subsec:new_conj}.
Thus we only need to check that the above isomorphism
$M^{\lambda,\mu}_{\kappa',\mu'}\cong
M^{\lambda,\mu-\alpha_i}_{\kappa',\mu'-2}$ is the same as one given by
the factorization.

Let us factorize $\cM(\lambda,\mu-\alpha_i)$
\begin{NB}
  $\lambda - (\mu-\alpha_i) = \alpha + \alpha_i$
\end{NB}%
over
$(\BA^{\underline{\bv}}\times \mathbb
G_m^{\underline{\delta_i}})_{\mathrm{disj}}$ as in \eqref{eq:h:3}.
\begin{NB}
\begin{equation*}
  \cM(\lambda,\mu-\alpha_i)\times_{\BA^{\underline{\bv}+\underline{\delta_i}}}
  (\BA^{\underline{\bv}}\times
  \mathbb G_m^{\underline{\delta_i}})_{\mathrm{disj}}
  \cong (\cM(\lambda,\mu)\times Z^{\alpha_i})\times_{
    \BA^{\underline{\bv}}\times\BA^{\underline{\delta_i}}} 
  (\BA^{\underline{\bv}}\times
  \mathbb G_m^{\underline{\delta_i}})_{\mathrm{disj}},
\end{equation*}
where $\alpha_i^* = -w_0(\alpha_i)$. (Here $w_0$ is the longest
element of the Weyl group.)
\end{NB}%
The factorization induces an isomorphism between $p_i^{-1}(x)$ and
$p_i^{-1}(x^-)$, where $x^-$ corresponds to $(x,p)$ where
$p$ is a generic point in $\cM(\lambda,\lambda-\alpha_i)$.
In particular, we have a bijection between irreducible components of
$p_i^{-1}(x)$ and $p_i^{-1}(x^-)$.
We need to check that this bijection is the same as one
given by \cref{lem:identification}.
Let $X$ be an irreducible component of $p_i^{-1}(x)$ and $X^-$ the
corresponding one of $p_i^{-1}(x^-)$ under the factorization. We show
the following.
\begin{Claim}
  $\mathbf p(X)$ and $\mathbf p(X^-)$ are contained in the same
  irreducible component of
  $\tilde{p}_i^{-1}(\Gr^{\kappa'}_{L_i})\cap\overline{\Gr}^\lambda_{\vG}$.
\end{Claim}

\begin{proof}[Proof of the claim]
By the definition of generalized slices in
\cite[\S2(ii)]{2016arXiv160403625B}, $\cM(\lambda,\mu)$ is an open
subvariety of a larger subvariety $\CG\CZ^{-\mu}_\lambda$, which is
the moduli space of (a),(b),(c) above, but the $B$-structure in (c)
could have defects in $\proj^1\setminus \{\infty\}$. It is equipped
with $\mathbf p\colon\CG\CZ^{-\mu}_\lambda\to\Gr_{\vG}$ such that the
above $\mathbf p$ is its restriction. The diagram \eqref{eq:h:4}
extends to $\CG\CZ^{-\mu}_\lambda$.
We regard $X$, $X^-$ as irreducible components of
$p_i^{-1}((\CG\CZ^{-\mu}_\lambda)^{\chi_i})$ and
$p_i^{-1}((\CG\CZ^{-\mu+\alpha_i}_\lambda)^{\chi_i})$ respectively.

We have a locally closed embedding
$\cM(\lambda,\mu)\times\BA\hookrightarrow \CG\CZ^{-\mu+\alpha_i}_\lambda$ by
regarding the factor $\BA$ as a defect of the $B$-structure. The open
subset $\cM(\lambda,\mu)\times\mathbb G_m$ is contained in the image
of the factorization isomorphism
$\CG\CZ^{-\mu+\alpha_i}_\lambda\approx \CG\CZ^{-\mu}_\lambda\times
(\BA\times\BA)$ via
$\mathbb G_m = \mathbb G_m\times\{0\}\subset \BA\times\BA$:
In fact, the second factor $\BA\times \mathbb G_m$ in \eqref{eq:h:4}
is the Coulomb branch for type $A_1$ quiver with $\bw = 0$, $\bv =
1$. It is the space of based maps $\proj^1\to\proj^1$ of degree
$1$. By allowing defects as above, we get the corresponding zastava
space, isomorphic to $\BA\times\BA$. Since we are restricting to the
inverse image of $\mathbb G_m^{\underline{\delta_i}}$ under the
factorization morphism, the first $\BA$ factor is restricted to
$\mathbb G_m$.
Note also that the factorization for $\CG\CZ^{-\mu+\alpha_i}_\lambda$
is an extension of that for $\cM(\lambda,\mu-\alpha_i)$ by its
construction.

The restriction of
$\mathbf p\colon \CG\CZ^{-\mu+\alpha_i}_\lambda \to \Gr_\vG$ to
$\cM(\lambda,\mu)\times\mathbb G_m$ is equal to
$\mathbf p\colon\cM(\lambda,\mu)\to \Gr_\vG$, composed with the first
projection, as the defect has nothing to do with the data (a),(b).
Therefore $X^-$ contains a subvariety that is mapped to
$\mathbf p(X)$ under $\mathbf p$. Hence the claim is proved.
\end{proof}

Therefore the isomorphism
$M^{\lambda,\mu}_{\kappa',\mu'}\cong
M^{\lambda,\mu-\alpha_i}_{\kappa',\mu'-2}$ is the same as one given by the
factorization. This completes the proof of \cref{thm:Satake}.
  
Let us also mention that Kashiwara crystal structure on the set of
irreducible components of the attracting set in $\cM(\lambda,\mu)$
given in \cref{subsec:crystal} is the same as one constructed
in \cite{2017arXiv170900391K} by the same reason as above.


\appendix{}
\section{Fixed points and Maya diagrams}\label{sec:Maya}

\begin{NB}
  This subsection is a detour, and will not be used later.
\end{NB}%
Let us choose a real parameter $\nu^\RR$ so that
$\nu^\RR_* + \nu^\RR_h < \nu^\RR_{h+1}$ or $\nu^\RR_h < \nu^\RR_{h+1}$
according to $\vin{h}$ is connected to $\vin{\xl_0}$ through triangle
parts or not. (See \cref{rem:inequ}.) A complex parameter $\nu^\CC$ is
arbitrary. It could be $0$.
%
%

Because of this choice of $\nu^\RR$, the condition
($\boldsymbol\nu\bf 1$) is automatically satisfied, and
($\boldsymbol\nu\bf 2$) says that a graded subspace $T$ as in
($\boldsymbol\nu\bf 2$) must be the whole $V$. In particular,
$\nu^\RR$-semistability and $\nu^\RR$-stability are equivalent, and
$\cM^\nu$ is smooth.



Let us study the torus fixed point set $(\cM^\nu)^T$ as in
\cref{prop:torus-fixed-points}. Let us also transform so that the bow
diagram is of form \eqref{eq:16}. The data $(A,B,C,D,a,b)$ decomposes
into a direct sum corresponding to $\CC_{\xl_i}$ ($0\le i\le
n-1$). And a summand corresponding to a bow diagram of a form
\eqref{eq:4}.

Let us move $\xl_i$ to the right by Hanany-Witten transitions until the
vector space at the right of $\xl_i$ becomes $0$. Then we get data as
\begin{equation*}
  \xymatrix@C=1.2em{
        \CC^{k_m} \ar@<.5ex>[r]^{C_{m-1}}
        & \CC^{k_{m-1}} \ar@<.5ex>[l]^{D_{m-1}} \ar@<.5ex>[r]^{C_{m-2}}
        & \cdots \ar@<.5ex>[l]^{D_{m-2}} \ar@<.5ex>[r]^{C_2}
        & \CC^{k_2} \ar@<.5ex>[l]^{D_2} \ar@<.5ex>[r]^{C_1}
        & \CC^{k_1} \ar@<.5ex>[l]^{D_1} \ar[dr]_{b} \ar@(ur,ul)_{B}
        &
        \\
        &&&&& \CC \rlap{.}}
    \begin{NB}
    \xymatrix@C=1.2em{
    & \CC^{k_0}  \ar@(ur,ul)_{B} \ar@<.5ex>[r]^{C_0}
    & \CC^{k_{-1}} \ar@<.5ex>[r]^{C_{-1}} \ar@<.5ex>[l]^{D_0}
    & \cdots \ar@<.5ex>[l]^{D_{-1}} \ar@<.5ex>[r]^{C_{2-m}}
    & \CC^{k_{1-m}} \ar@<.5ex>[l]^{D_{2-m}} \ar@<.5ex>[r]^{C_{1-m}}
        & \CC^{k_{-m}}. \ar@<.5ex>[l]^{D_{1-m}}
        \\
        \CC \ar[ur]_{a} &&&&&}
    \end{NB}%
\end{equation*}

Let us first take the reduction by $\GL(k_2)\times\cdots\times
\GL(k_m)$ and next take the reduction by $\GL(k_1)$. By the first step
we get the product of (a) a quiver variety of type $A_{m-1}$ with
dimension vectors $\underline{\bv} = (k_{m},\dots, k_{2})$,
$\underline{\bw} = (0,\dots,0,k_1)$, and (b) a pair $(B,b)$ of a
$k_1\times k_1$-matrix and a co-vector in $\CC^{k_1}$ such that $b$ is
co-cyclic with respect to $B$.
In the second step we set $B = - C_1 D_1$ and take the quotient by
$\GL(k_1)$.

\begin{NB}
    The condition (S1) says that $b$ is a co-cyclic vector with
    respect to $B$.
Therefore we can make $B = {}^t\! J_{k_1}$, $a = {}^t e_{k_1}$ by
the action of $\GL(k_1)$, and this normalization kills the action of
$\GL(k_1)$. Hence this data can be considered as a point of a quiver
variety of type $A_{m-1}$ with dimension vectors $\underline{\bv} =
(k_{m},\dots, k_{2})$, $\underline{\bw} = (0,\dots,0,k_1)$ with an
additional constraint $C_1 D_1 = -{}^t\! J_{k_1}$.
\end{NB}%
By the standard argument (cf.\ \cite[Th.~7.3]{Na-quiver}) $D_i$
($1\le i\le m-1$) is surjective. In particular, $\dim S_i$ is
decreasing. However $C_1 D_1 = -B$ has a cocyclic vector, we must have
$k_{s} - k_{s+1} = 0$ or $1$.
We fill $\medcirc$ for two way parts with $k_{s} - k_{s+1} = 1$ as
$\circfilled$.

The defining equation determines the characteristic polynomial of $C_1
D_1$ (e.g., it is $z^{k_1}$ if the complex parameter $\nu^\CC$ is
$0$). Then $B = - C_1 D_1$ together with $b$ forms a single free
$\GL(k_1)$-orbit.
Note also $C_1$, $D_1$, \dots are determined automatically once $k_1$,
$k_2$, \dots are specified, if $B$ is fixed. Namely the corresponding
bow variety is a single point.

Returning back to \eqref{eq:4} by Hanany-Witten transitions, we find that
$\medcirc$ for $h$ is filled as $\circfilled$ if and only if
\begin{equation*}
    \begin{cases}
    N_h = 1 & \text{if $h=h_j$ with $j > 0$},\\
    N_h = 0 & \text{if $h=h_j$ with $j \le 0$}.
    \end{cases}
\end{equation*}
Though we move $\xl_i$ only finite amount, we extend the above rule to
any $\medcirc$. Hence we have a sequence of $\medcirc$'s going to
infinite in both left and right with some filled as $\circfilled$ such
that they are filled for $h_j$ with sufficiently negative $j$, not filled sufficiently positive $j$.
We have an infinite sequence for each $\xl_i$ ($0\le i\le n-1$), and
we arrange them as follows:
\begin{equation}\label{eq:Maya}
   \text{\scriptsize $n$ rows }\Big\{
   \Yvcentermath1
   \cdots
   \underbrace{\overset{-3/2}{\young(\rf\rf\rf,\rf\rf\rf)}}_{\text{$\ell$ columns}}
   \,
   \overset{-1/2}{\young(\rf\hf\rf,\rf\rf\hf)}
   \,
   \overset{1/2}{\young(\rf\rf\rf,\hf\rf\hf)}
   \,
   \overset{3/2}{\young(\hf\hf\hf,\hf\hf\hf)}
   \cdots,
\end{equation}
where $h_1$ is the first column in the block $1/2$, $h_2$ is the
second one, and $h_0$ is the last column in the block $-1/2$, and so on. 
This is a variant of a Maya diagram.

Conversely a diagram above gives a torus fixed point: Reading
$(i+1)$-th row, we determine the bow diagram corresponding to
$\xl_{i}$ ($0\le i \le n-1$) including dimensions $R(\zeta)$. Then we
take the sum over $i$.

Note that $R(\vout{\xl_i})$ (resp.\ $R(\vin{\xl_i})$) is equal to the
number of $\graysquare$ (resp.\ $\square$) in blocks $1/2$,
$3/2$, \dots (resp.\ $-1/2$, $-3/2$, \dots) of $\xl_i$. In
particular,
\(
    N_{\xl_i}
    \begin{NB}
        = R(\vout{\xl_i}) - R(\vin{\xl_i})
    \end{NB}
\)
is the difference between these numbers. Since a summand corresponding to
$\xl_j$ ($j\neq i$) has isomorphic $A_{\xl_i}$, $N_{\xl_i}$ is the
same for this summand and the original bow diagram.
In other words, if a diagram corresponding to a $T$-fixed point in
$\cM^\nu$ of a bow diagram of form \eqref{eq:16}, we have a constraint
\begin{equation}\label{eq:15}
    N_{\xl_i} = 
    \begin{aligned}
        & \text{the number of $\graysquare$ in the $(i+1)$-th
          row in blocks $1/2$, $3/2$,\dots}\\
        & \quad - \text{the number of $\square$ in the $(i+1)$-th row
          in blocks $-1/2$, $-3/2$,\dots}
    \end{aligned}
\end{equation}

Let us consider numbers of $\graysquare$, $\square$ in each column on
the other hand. In blocks $1/2$, $3/2$, \dots, $\graysquare$
contributes $1$ to $N_h$. In blocks $-1/2$, $-3/2$, \dots, $\square$ contributes $-1$. Therefore
\begin{equation}\label{eq:17}
    N_{h_\sigma} = 
    \begin{aligned}
        & \text{the number of $\graysquare$ in the $\sigma$-th
          column in blocks $1/2$, $3/2$,\dots}\\
        & \quad - \text{the number of $\square$ in the $\sigma$-th row
          in blocks $-1/2$, $-3/2$,\dots}
    \end{aligned}
\end{equation}

Though $N_{\xl_i}$, $N_{h_\sigma}$ determine $R(\zeta)$ only up to
over all shifts, the number $\bv_0$ in \eqref{eq:16} is given by
\begin{equation}\label{eq:18}
  \bv_0
  =
  \begin{aligned}[t]
    & (\text{the number of $\square$ in blocks $-1/2$}) \\
    &\quad + 2 (\text{the number of $\square$ in blocks $-3/2$}) \\
    &\quad\quad + 3 (\text{the number of $\square$ in blocks $-5/2$})
    + \cdots.
  \end{aligned}
\end{equation}
\begin{NB}
\[
  = \sum_{i=0}^{n-1} R(\vin{\xl_i}) + \text{contribution after one round}
  + \text{contribution after two rounds} + \cdots
\]
\end{NB}%
Therefore
\begin{Theorem}
  The fixed point set $(\cM^\nu)^T$ is in bijection to the set of
  diagrams \eqref{eq:Maya} with constraint
  \cref{eq:15,,eq:17,,eq:18}.
\end{Theorem}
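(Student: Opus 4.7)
The plan is to reverse-engineer the bijection from the analysis already carried out in the paragraphs preceding the theorem, making explicit why the assignment is both well-defined and invertible, and then to check the three numerical constraints.

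First I would establish the forward map. Given a $T$-fixed point in $\cM^\nu$, apply \cref{prop:torus-fixed-points} (or rather its proof) to decompose the underlying data $(A,B,C,D,a,b)$ into a direct sum of $n$ summands, one for each $\CC_{\xl_i}$ with $0\le i\le n-1$. For a fixed $i$, transport the summand to a linear bow diagram of the form \eqref{eq:4} by unwinding the circle (this uses that $A_{\xl_0}$ shifts weights by $(s_0\cdots s_{n-1})^{-1}$, so each weight space determines a slot in $\ZZ$). Next, apply Hanany-Witten transitions to shift $\xl_i$ rightward until the segment on its right has trivial vector space. At this stage the summand becomes a type $A_{m-1}$ linear quiver together with a single triangle carrying $(B,b)$.

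Second, I would analyze this reduced datum. Split the hamiltonian reduction into two stages: first quotient by $\prod_{s\ge 2}\GL(k_s)$, which gives a quiver variety of type $A_{m-1}$ of dimension vectors $\underline\bv=(k_m,\dots,k_2)$, $\underline\bw=(0,\dots,0,k_1)$, coupled to a triangle piece $(B,b)$ with $b$ cocyclic (by (S1)); second, quotient by $\GL(k_1)$ after imposing $B=-C_1D_1$. Standard considerations for type $A$ quiver varieties (as in \cite[Th.~7.3]{Na-quiver}) show that each $D_s$ is surjective, so the sequence $(k_s)$ is non-increasing; the cocyclicity of $b$ with respect to $C_1D_1$ forces further $k_s-k_{s+1}\in\{0,1\}$. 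Moreover, once the sequence $(k_s)$ and the complex parameter $\nu^\CC$ are fixed, the characteristic polynomial of $C_1D_1$ is determined, and one checks that the remaining $(C_s,D_s,B,b)$ form a single free orbit, so the summand is rigid. This rigidity is exactly what makes the bijection well-defined on both sides.

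Third, I would encode the jumps. Mark $\medcirc$ at $h_\sigma$ as $\circfilled$ precisely when a jump $k_s-k_{s+1}=1$ occurs at the corresponding two-way part (and leave it open otherwise), and arrange the $n$ resulting infinite rows (one per $\xl_i$) into the diagram \eqref{eq:Maya}. Undoing Hanany-Witten to return to the form \eqref{eq:16} translates the marking rule to the condition stated in the text: $\graysquare$ in blocks of index $>0$ corresponds to $N_h=1$, while $\square$ in blocks of index $<0$ corresponds to $N_h=0$. Since the moves and the summand decomposition are invertible, any such diagram produces, reading row by row, a summand with prescribed $(k_s)$, and taking direct sums over $i$ reconstructs the fixed point.

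Finally I would verify the three constraints. Identities \eqref{eq:15} and \eqref{eq:17} follow from reading $N_{\xl_i}$ and $N_{h_\sigma}$, which are invariant under Hanany-Witten transitions by \cite[Lem.~7.6]{2016arXiv160602002N}, as net counts of filled versus empty boxes in the relevant row and column respectively; and \eqref{eq:18} follows by counting dimensions of vector spaces along the segment adjacent to $\xl_0$, where each $\square$ in block $-(k+1/2)$ contributes $(k+1)$ to $\bv_0$ because it persists through $k$ full windings around the circle before being absorbed. The only subtle point, and the step I expect to require the most care, is the second one: one must keep track simultaneously of the cocyclicity condition (S1) on $b$, the equation $B=-C_1D_1$, and the surjectivity of $D_s$ to justify the dichotomy $k_s-k_{s+1}\in\{0,1\}$ and the rigidity of the orbit. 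Everything else is bookkeeping under Hanany-Witten transitions.
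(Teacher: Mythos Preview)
Your proposal is correct and follows essentially the same approach as the paper: the theorem is stated as a summary of the analysis carried out in the paragraphs of the appendix preceding it, and you have accurately reconstructed that analysis (the summand decomposition via weight spaces, Hanany-Witten shift of $\xl_i$ to the right, the two-step reduction, the dichotomy $k_s-k_{s+1}\in\{0,1\}$ from surjectivity of $D_s$ together with cocyclicity of $b$, rigidity of the orbit, and the box-counting for \eqref{eq:15}, \eqref{eq:17}, \eqref{eq:18}). One minor slip: the translation of the marking rule via Hanany-Witten returns to the linear diagram \eqref{eq:4}, not \eqref{eq:16}; the arrangement into \eqref{eq:16} only happens after assembling all $n$ rows into the Maya picture.
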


In particular, we have a natural bijection between fixed point sets of
the bow variety and another bow variety given by the bow diagram with
$\boldsymbol\times$, $\boldsymbol\medcirc$ swapped. It includes the
case of Higgs and Coulomb branches of the same quiver gauge theory of
affine type $A$. This should be equal to the natural bijection given as
a consequence of Hikita conjecture
\cite{2015arXiv150102430H,2015arXiv150303676N}.

\bibliographystyle{myamsalpha}
\bibliography{nakajima,mybib,coulomb}

\end{document}